\newtheorem*{thm*}{Theorem}
\newtheorem{thm}{Theorem}[section]
\newtheorem{cor}[thm]{Corollary}
\newtheorem{lem}[thm]{Lemma}
\theoremstyle{definition}
\newtheorem{defn}[thm]{Definition}
\theoremstyle{remark}
\numberwithin{equation}{section}
\DeclareSymbolFont{bbold}{U}{bbold}{m}{n}
\DeclareSymbolFontAlphabet{\mathbbold}{bbold}
\newcommand{\N}{\mathbb{N}}
\newcommand{\Z}{\mathbb{Z}}
\newcommand{\R}{\mathbb{R}}
\newcommand{\id}{\mathrm{id}}
\newcommand{\acts}{\curvearrowright}
\newcommand{\Stab}{\mathrm{Stab}}
\newcommand{\Sym}{\mathrm{Sym}}
\newcommand{\pmp}{p{$.$}m{$.$}p{$.$}}
\newcommand{\cF}{\mathcal{F}}
\newcommand{\cP}{\mathcal{P}}
\newcommand{\cQ}{\mathcal{Q}}
\newcommand{\E}{\mathscr{E}}
\newcommand{\dist}{\mathrm{dist}}
\newcommand{\symd}{\triangle}
\newcommand{\salg}{\sigma \text{-}\mathrm{alg}}
\newcommand{\sH}{\mathrm{H}}
\newcommand{\dB}{d}
\newcommand{\dR}{d^{\mathrm{Rok}}}
\newcommand{\rh}{h}
\newcommand{\Borel}{\mathcal{B}}
\newcommand{\given}{\mathbin{|}}
\newcommand{\Given}{\mathbin{\Big|}}
\newcommand{\res}{\restriction}
\newcommand{\Prob}{\mathrm{Prob}}
\newcommand{\Aut}{\mathrm{Aut}}
\newcommand{\sjoin}{\mathrm{J}}
\newcommand{\sinv}{\mathscr{I}}
\newcommand{\Eavg}{\mathbb{E}}
\newcommand{\Var}{\mathrm{Var}}
\newcommand{\relprod}[1]{\times_{#1}}
\newcommand{\Sub}{\mathrm{Sub}}
\newcommand{\Cay}{\mathrm{Cay}}
\newcommand{\FMSF}{\mathrm{MSF}}
\newcommand{\Map}{\mathrm{Map}}
\begin{document}

\title[Weak containment and Rokhlin entropy]{Weak containment and Rokhlin entropy}
\author{Brandon Seward}
\address{Einstein Institute of Mathematics, The Hebrew University of Jerusalem, Givat Ram, Jerusalem 91904, Israel}
\email{b.m.seward@gmail.com}
\keywords{Rokhlin entropy, sofic entropy, weak containment, Pinsker algebra, Pinsker factor, subgroup formula}
\subjclass[2010]{37A35, 37A15}

\begin{abstract}
We define a new notion of weak containment for joinings, and we show that this notion implies an inequality between relative Rokhlin entropies. This leads to new upper bounds to Rokhlin entropy. We also use this notion to study how Pinsker algebras behave under direct products, and we study the Rokhlin entropy of restricted actions of finite-index subgroups.
\end{abstract}
\maketitle

\section{Introduction} \label{sec:intro}

We study the entropy theory of probability-measure-preserving ({\pmp}) actions of non-amenable groups. This research program was initiated through ground-breaking work of Bowen in 2008 \cite{B10b}. Bowen's work, combined with improvements by Kerr and Li \cite{KL11a}, created the notion of sofic entropy for {\pmp} actions of sofic groups which extends the classical notion of entropy for actions of amenable groups \cite{B12,KL13}. Among other things, this has led to the classification of many Bernoulli shifts over sofic groups up to isomorphism \cite{B10b,KL11b,B12b}. Drawing motivation from these developments, in \cite{S14} the author defined Rokhlin entropy for {\pmp} actions of general countable groups, in particular all non-amenable groups, which also extends the classical notion of entropy for free actions of amenable groups. While Rokhlin entropy is an upper bound to sofic entropy \cite{B10b,AS}, it is an open question if Rokhlin entropy and sofic entropy coincide for free actions of sofic groups when the sofic entropy is not minus infinity. It is also an open problem to compute the Rokhlin entropy for Bernoulli shifts over non-sofic groups \cite{S14a}.

We recall the definition of Rokhlin entropy. Let $G$ be a countable group, let $G \acts (X, \mu)$ be a (not necessarily free) {\pmp} action, and let $\sinv_G$ denote the $\sigma$-algebra of $G$-invariant sets. Write $\Borel(X)$ for the Borel $\sigma$-algebra of $X$. For a partition $\alpha$ of $X$, let $\salg_G(\alpha)$ denote the smallest $G$-invariant sub-$\sigma$-algebra containing $\alpha$. If $\cF$ is a $G$-invariant sub-$\sigma$-algebra, then the \emph{Rokhlin entropy of $G \acts (X, \mu)$ relative to $\cF$}, denoted $\rh_G(X, \mu \given \cF)$, is
$$\inf \Big\{ \sH(\alpha \given \cF \vee \sinv_G) : \alpha \text{ countable partition and } \salg_G(\alpha) \vee \cF \vee \sinv_G = \Borel(X) \Big\}.$$
The definition of conditional Shannon entropy $\sH(\cdot \given \cdot)$ is recalled in Section \ref{sec:pre}. When $\cF = \{\varnothing, X\}$ is trivial, then $\rh_G(X, \mu) = \rh_G(X, \mu \given \{\varnothing, X\})$ is called the \emph{Rokhlin entropy of $G \acts (X, \mu)$}. For free actions of amenable groups, Rokhlin entropy coincides with classical Kolmogorov--Sinai entropy \cite{AS}. For free ergodic actions of $\Z$ this is due to Rokhlin \cite{Ro67}.

In this paper we study Rokhlin entropy by using weak containment concepts. We first need a bit of notation. For a partition $\alpha$ of $X$ and a subset $T \subseteq G$ we write $\alpha^T = \bigvee_{t \in T} t^{-1} \cdot \alpha$, where $t^{-1} \cdot \alpha = \{t^{-1} \cdot A : A \in \alpha\}$. If $\alpha = \{A_1, \ldots, A_n\}$ is an ordered partition, then we write $\dist_\mu(\alpha)$ for the ordered tuple having $i^{\text{th}}$ coordinate $\mu(A_i)$. For $f \in \{1, \ldots, n\}^T$ set $A_f = \bigcap_{t \in T} t^{-1} \cdot A_{f(t)} \in \alpha^T$. By fixing an ordering on $G$ and applying the lexicographical order to $\{1, \ldots, n\}^T$, we obtain a canonical ordering of the partition $\alpha^T = \{A_f : f \in \{1, \ldots, n\}^T\}$. If $\alpha = \{A_1, \ldots, A_n\}$ and $\beta = \{B_1, \ldots, B_m\}$ are ordered partitions, then we similarly order $\alpha \vee \beta = \{C_{i,j} : 1 \leq i \leq n, \ 1 \leq j \leq m\}$ lexicographically, where $C_{i,j} = A_i \cap B_j$. We can now define the original notion of weak containment of actions as introduced by Kechris \cite{K12}. An action $G \acts (Z, \eta)$ \emph{weakly contains} another action $G \acts (Y, \nu)$ if for every finite ordered partition $\gamma$ of $Y$, every finite $T \subseteq G$, and every $\epsilon > 0$ there is an ordered partition $\zeta$ of $Z$ satisfying
$$|\dist_\eta(\zeta^T) - \dist_\nu(\gamma^T)| < \epsilon.$$
Here $| \cdot |$ denotes the $\ell^1$-norm. Weak containment can be equivalently defined by using the weak topology on the space of actions \cite{K12}. Two actions are \emph{weakly equivalent} if each weakly contains the other.

Now consider three {\pmp} actions: $G \acts (X, \mu)$, $G \acts (Y, \nu)$, and $G \acts (Z, \eta)$. Let $\lambda$ be a joining of $\mu$ with $\nu$ (i.e. a $G$-invariant probability measure on $X \times Y$ which has marginals $\mu$ and $\nu$), and let $\rho$ be a joining of $\mu$ with $\eta$. We say that $G \acts (X \times Z, \rho)$ \emph{weakly contains $G \acts (X \times Y, \lambda)$ as joinings with $G \acts (X, \mu)$} if for every finite ordered partition $\alpha$ of $X$, every finite ordered partition $\gamma$ of $Y$, every finite $T \subseteq G$, and every $\epsilon > 0$ there is a finite ordered partition $\zeta$ of $Z$ satisfying
$$|\dist_\rho(\alpha \vee \zeta^T) - \dist_\lambda(\alpha \vee \gamma^T)| < \epsilon.$$
It is immediately seen that when $\lambda = \mu \times \nu$ and $\rho = \mu \times \eta$, we have $G \acts (X \times Z, \rho)$ weakly contains $G \acts (X \times Y, \lambda)$ as joinings with $G \acts (X, \mu)$ if and only if $G \acts (Z, \eta)$ weakly contains $G \acts (Y, \nu)$.

This new notion of weak containment for joinings is discussed in greater detail in Sections \ref{sec:join} and \ref{sec:stab}. A well known theorem of Ab\'{e}rt and Weiss states that Bernoulli shift actions are weakly contained in all free actions \cite{AW13}. This result was extended to non-free actions by Tucker-Drob \cite{TD12}. We prove an analogous result for joinings in Section \ref{sec:bs}; see Lemma \ref{lem:awcent}.

Our study of Rokhlin entropy is based off of the following important lemma. Recall that a {\pmp} action $G \acts (X, \mu)$ is \emph{aperiodic} if $\mu$-almost-every orbit is infinite.

\begin{lem} \label{intro:lem}
Let $G \acts (X \times Y, \lambda)$ and $G \acts (X \times Z, \rho)$ be joinings with an aperiodic action $G \acts (X, \mu)$. Let $\cF$ be a $G$-invariant sub-$\sigma$-algebra of $X$. If $G \acts (X \times Z, \rho)$ weakly contains $G \acts (X \times Y, \lambda)$ as joinings with $G \acts (X, \mu)$, then
$$\rh_G(X \times Z, \rho \given \cF \vee \Borel(Z)) \leq \rh_G(X \times Y, \lambda \given \cF \vee \Borel(Y)).$$
\end{lem}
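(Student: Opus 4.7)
The plan is to take a near-optimal generating partition for the right-hand Rokhlin entropy, which I will argue can be taken to live on the $X$-factor alone, and to build from it a generating partition for the left-hand side by adjoining a suitable $Z$-partition produced by weak containment of joinings. The entropy cost of the new partition is then controlled by continuity of conditional Shannon entropy under $\ell^1$-perturbations of the joint distribution.

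\emph{Reduction to an $X$-partition.} I would first show that $\rh_G(X \times Y, \lambda \given \cF \vee \Borel(Y))$ is witnessed, up to $\epsilon$, by a countable partition $\alpha$ of $X$ pulled back to $X \times Y$. Given any generating $\beta$ on $X \times Y$, approximate $\beta$ by joins $\alpha \vee \gamma$ with $\alpha$ a countable partition of $X$ and $\gamma$ a countable partition of $Y$; since $\gamma$ is $\Borel(Y)$-measurable, the identity
\[ H_\lambda(\alpha \vee \gamma \given \cF \vee \Borel(Y) \vee \sinv_G) = H_\lambda(\alpha \given \cF \vee \Borel(Y) \vee \sinv_G) \]
holds and generation modulo $\Borel(Y)$ is preserved when $\gamma$ is discarded. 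A diagonal refinement then yields a single countable $\alpha$ on $X$ with $\salg_G(\alpha) \vee \cF \vee \Borel(Y) \vee \sinv_G = \Borel(X \times Y)$ and $H_\lambda(\alpha \given \cF \vee \Borel(Y) \vee \sinv_G)$ within $\epsilon$ of $\rh_G(X \times Y, \lambda \given \cF \vee \Borel(Y))$.

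\emph{Transfer and entropy bound.} Choose a finite sub-partition $\alpha' \leq \alpha$, a finite sub-partition $\xi$ of $\cF$, a fine finite partition $\gamma$ of $Y$, and a finite $T \subseteq G$, all refined enough that $H_\lambda(\alpha' \given \xi \vee \gamma^T \vee \sinv_G)$ is within $\epsilon$ of $H_\lambda(\alpha \given \cF \vee \Borel(Y) \vee \sinv_G)$ (possible by martingale convergence and the definition of the Shannon entropy of a countable partition). Apply weak containment of joinings to the $X$-partition $\alpha' \vee \xi$, the $Y$-partition $\gamma$, the set $T$, and a small $\delta > 0$ to obtain a finite $\zeta_0$ on $Z$ with $|\dist_\rho((\alpha' \vee \xi) \vee \zeta_0^T) - \dist_\lambda((\alpha' \vee \xi) \vee \gamma^T)| < \delta$. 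Set $\zeta := \alpha \vee \zeta_0$ on $X \times Z$. Since $\zeta_0 \subseteq \Borel(Z)$,
\[ H_\rho(\zeta \given \cF \vee \Borel(Z) \vee \sinv_G) = H_\rho(\alpha \given \cF \vee \Borel(Z) \vee \sinv_G) \leq H_\rho(\alpha \given \xi \vee \zeta_0^T \vee \sinv_G), \]
and continuity of conditional Shannon entropy under $\ell^1$-perturbations of finite partition distributions, combined with monotonicity for passing from $\alpha$ to $\alpha'$, bounds the right-hand side by $H_\lambda(\alpha' \given \xi \vee \gamma^T \vee \sinv_G) + O(\epsilon + \delta) \leq \rh_G(X \times Y, \lambda \given \cF \vee \Borel(Y)) + O(\epsilon)$ for $\delta$ small.

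\emph{Generation and the main obstacle.} The main obstacle is simultaneously achieving $\salg_G(\zeta) \vee \cF \vee \Borel(Z) \vee \sinv_G = \Borel(X \times Z)$ under $\rho$, since weak containment of joinings only controls finitely many atoms at a time while generation is a statement about all Borel sets. For any Borel $A \subseteq X$, generation on the $\lambda$ side gives an $\ell^1$-approximation of $A$ by a Boolean combination of shifted $\alpha$-atoms, $\xi$-atoms, and $\gamma^{T'}$-atoms modulo $\sinv_G$; weak containment implies the analogous combination with $\gamma^{T'}$ replaced by $\zeta_0^{T'}$ approximates $A \times Z$ in $\rho$-measure. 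Enumerating a countable $\rho$-dense family in $\Borel(X \times Z)$ and strengthening the parameters $T_n, \gamma_n, \alpha'_n, \delta_n$ at each step, one obtains by diagonalization a single countable partition $\zeta_0$ (and hence $\zeta = \alpha \vee \zeta_0$) that both generates and satisfies the entropy bound. Aperiodicity of $G \acts (X, \mu)$ enters via the structure of $\sinv_G$ along generic orbits, so that the conditional-entropy manipulations above are justified. Sending $\epsilon \to 0$ yields the claimed inequality.
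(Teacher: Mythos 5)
Your overall strategy --- transfer finitely many atom-measures via the weak containment and control conditional Shannon entropy by $\ell^1$-continuity --- is in the right spirit, but there are two genuine gaps, and they occur exactly where the paper has to invoke nontrivial machinery. First, the invariant $\sigma$-algebra. Every conditional entropy in your estimates carries a ``$\vee\,\sinv_G$'', but $\sinv_G$ of $(X\times Y,\lambda)$ and of $(X\times Z,\rho)$ are unrelated $\sigma$-algebras of non-product sets, and weak containment of joinings only matches the unconditioned distributions $\dist_\rho(\cP\vee\zeta_0^T)$ with $\dist_\lambda(\cP\vee\gamma^T)$; a quantity such as $\sH_\lambda(\alpha'\given\xi\vee\gamma^T\vee\sinv_G)$ is simply not a function of these finite distributions, and your generation-transfer step (``approximate $A$ by Boolean combinations \dots modulo $\sinv_G$'') cannot be carried across because the relevant invariant sets have no counterpart on the $\rho$-side. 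Relatedly, exact generation of $\Borel(X)$ over $\cF\vee\Borel(Z)\vee\sinv_G$ need not transfer at all: $\lambda$ could be a graph joining in which $\Borel(Y)$ already determines $\Borel(X)$ (so the trivial $\alpha$ generates), while $\rho=\mu\times\eta$ is independent. The paper circumvents both problems at once by quoting the characterization from [AS], valid for aperiodic actions, of the outer Rokhlin entropy of a finite partition $\cP$ as a limit of infima of expressions $\sH_\lambda(\beta\given\xi\vee\chi^T)$ over finite $\beta,\chi\leq\alpha\vee\xi\vee\gamma$ subject to the quantitative almost-generation constraint $\sH_\lambda(\chi)+\sH_\lambda(\cP\given\beta^T\vee\xi)<\epsilon$; no $\sinv_G$ and no exact generation appear, so the whole expression becomes an upper-semicontinuous function of finitely many atom measures (Lemma \ref{lem:upjoin}). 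This is where aperiodicity is genuinely used; your remark that it ``enters via the structure of $\sinv_G$ along generic orbits'' does not supply this input.

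Second, the passage from finite to countable partitions is broken. After transferring, you bound $\sH_\rho(\alpha\given\xi\vee\zeta_0^T)$ by $\sH_\lambda(\alpha'\given\xi\vee\gamma^T)+O(\epsilon+\delta)$ ``by monotonicity for passing from $\alpha$ to $\alpha'$'', but monotonicity goes the wrong way: $\sH_\rho(\alpha\given\cdot)\geq\sH_\rho(\alpha'\given\cdot)$, and the tail $\sH_\rho(\alpha\given\alpha'\vee\xi\vee\zeta_0^T)$ is not controlled by anything on the $\lambda$-side --- when $\sH_\mu(\alpha)=\infty$ (which the hypothesis $\sH_\lambda(\alpha\given\cF\vee\Borel(Y)\vee\sinv_G)<\infty$ does not rule out) this tail can be infinite, since $\Borel(Z)$ under $\rho$ may carry none of the information about $\alpha$ that $\Borel(Y)$ carries under $\lambda$. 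The paper deals with this by first proving the inequality for finite partitions of $X$ (clause (2) of Corollary \ref{cor:wcjoin}), then exhausting $\Borel(X)$ by $\salg_G(\alpha_n)$ for an increasing sequence of finite partitions $\alpha_n$ and assembling the result with the inverse-limit formula for Rokhlin entropy (Theorem \ref{thm:ks}). Your reduction of the witnessing partition to one living on $X$ is fine, but these two gaps require the [AS] input and the inverse-limit theorem (or substitutes for them) to be closed.
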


A natural conjecture is that in the case of direct product joinings we have $\rh_G(X \times Y, \mu \times \nu \given \Borel(Y)) = \rh_G(X, \mu)$ for all free actions $G \acts (X, \mu)$ and $G \acts (Y, \nu)$. This is known to hold when $G$ is amenable but is unknown otherwise. However, by using the above lemma we show that this equality holds under a weak containment assumption.

\begin{thm} \label{intro:prod}
Let $G$ be a countably infinite group, let $G \acts (X, \mu)$ be a free {\pmp} action, and let $\cF$ be a $G$-invariant sub-$\sigma$-algebra. If $G \acts (Y, \nu)$ is a {\pmp} action which is weakly contained in all free {\pmp} actions of $G$ then
$$\rh_G(X, \mu \given \cF) = \rh_G(X \times Y, \mu \times \nu \given \cF \vee \Borel(Y)).$$
\end{thm}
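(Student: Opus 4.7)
The proof establishes the two inequalities separately.

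The inequality $\rh_G(X \times Y, \mu \times \nu \given \cF \vee \Borel(Y)) \leq \rh_G(X, \mu \given \cF)$ is handled by direct lifting: given any countable partition $\alpha$ of $X$ satisfying $\salg_G(\alpha) \vee \cF \vee \sinv_G = \Borel(X)$, its pullback to $X \times Y$ via the projection $X \times Y \to X$ satisfies $\salg_G(\alpha) \vee \cF \vee \Borel(Y) \vee \sinv_G = \Borel(X \times Y)$. Since $\alpha$ is $\Borel(X)$-measurable, the marginal of $\mu \times \nu$ on $\Borel(X)$ equals $\mu$, and the $G$-invariant sets of $(X, \mu)$ lift to $G$-invariant sets of $(X \times Y, \mu \times \nu)$, monotonicity of conditional Shannon entropy gives
\begin{equation*}
H_{\mu \times \nu}\bigl(\alpha \given \cF \vee \Borel(Y) \vee \sinv_G\bigr) \leq H_\mu\bigl(\alpha \given \cF \vee \sinv_G\bigr),
\end{equation*}
and taking the infimum over $\alpha$ yields the inequality.

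For the reverse inequality the plan is to apply Lemma~\ref{intro:lem} with an auxiliary free pmp action $(Z, \eta)$ and the product joinings $\rho = \mu \times \eta$ on $X \times Z$ and $\lambda = \mu \times \nu$ on $X \times Y$. A short calculation using the tensor-product structure of the relevant distributions shows that $(X \times Z, \mu \times \eta)$ weakly contains $(X \times Y, \mu \times \nu)$ as joinings with $(X, \mu)$ if and only if $(Y, \nu)$ is weakly contained in $(Z, \eta)$ as actions. By hypothesis $Y$ is weakly contained in every free pmp action of $G$, so this holds for every free $(Z, \eta)$, and Lemma~\ref{intro:lem} yields
\begin{equation*}
\rh_G(X \times Z, \mu \times \eta \given \cF \vee \Borel(Z)) \leq \rh_G(X \times Y, \mu \times \nu \given \cF \vee \Borel(Y))
\end{equation*}
for every free action $(Z, \eta)$.

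To close the argument it therefore suffices to exhibit a free $(Z, \eta)$ for which $\rh_G(X \times Z, \mu \times \eta \given \cF \vee \Borel(Z)) \geq \rh_G(X, \mu \given \cF)$, since the easy direction applied with $Z$ in place of $Y$ gives the reverse inequality for any $Z$ and thus forces equality. The natural candidate is $(Z, \eta) = (X, \mu)$ with the independent self-joining, reducing matters to
\begin{equation*}
\rh_G(X, \mu \given \cF) \leq \rh_G(X \times X, \mu \times \mu \given \cF \vee \Borel(X_2)).
\end{equation*}
This is the main obstacle. My approach is a disintegration argument: for a nearly optimal generator $\alpha$ of $(X \times X, \mu \times \mu)$ modulo $\cF \vee \Borel(X_2) \vee \sinv_G$, under the product measure the fibers $\alpha^{x_2}$ are $\mu$-distributed partitions of $X$, but the diagonal $G$-action shifts slices across the $G$-orbit of $x_2$, so no single slice is automatically a generator of $(X, \mu)$ modulo $\cF \vee \sinv_G$. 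The hard technical step is to reassemble the slice data into a single partition of $X$ that generates $\Borel(X)$ modulo $\cF \vee \sinv_G$ and whose Shannon entropy is controlled by $H(\alpha \given \cF \vee \Borel(X_2) \vee \sinv_G)$; handling the equivariance of this reconstruction across orbits, exploiting that $\mu$ is aperiodic and that the product action is free, is where I expect the real work to lie.
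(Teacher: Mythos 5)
Your first two steps match the paper: the inequality $\rh_G(X \times Y, \mu \times \nu \given \cF \vee \Borel(Y)) \leq \rh_G(X, \mu \given \cF)$ is indeed immediate from the definitions, and the application of Lemma \ref{intro:lem} to product joinings $\mu \times \eta$ with $(Z,\eta)$ free is correct. The gap is in the final step. Reducing the problem to
$$\rh_G(X, \mu \given \cF) \leq \rh_G(X \times X, \mu \times \mu \given \cF \vee \Borel(X_2))$$
is not a reduction to something tractable: this is exactly an instance (with $Y = X$, an arbitrary free action) of the conjecture that the paper explicitly flags as open for non-amenable groups, namely that $\rh_G(X \times Y, \mu \times \nu \given \Borel(Y)) = \rh_G(X, \mu)$ for all pairs of free actions. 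Your sketched disintegration/reassembly argument is precisely where such attempts break down --- the slices of a near-optimal generator of the product are permuted across the orbit of the second coordinate, and there is no known equivariant way to recombine them without entropy loss; if this step worked it would settle the open conjecture for the pair $(X,X)$, which the theorem being proved does not claim.

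The paper avoids this by changing which joining plays the role of $(X \times Z, \rho)$ in Lemma \ref{intro:lem}: instead of a \emph{product} joining with an auxiliary free action, it uses a \emph{graph} joining $\rho = (\id \times f)_*(\mu)$, where $f : (X,\mu) \to (Z,\eta)$ is a factor map with $\rh_G(Z,\eta) < \epsilon$ supplied by Theorem \ref{thm:robin}. Then $(X \times Z, \rho) \cong (X,\mu)$, and sub-additivity of Rokhlin entropy gives $\rh_G(X \times Z, \rho \given \cF \vee \Borel(Z)) \geq \rh_G(X,\mu \given \cF) - \epsilon$ essentially for free --- this replaces your hard step. The price is that one must show this graph joining still weakly contains $(X \times Y, \mu \times \nu)$ as joinings with $(X,\mu)$; that is \emph{not} a consequence of $Z$ weakly containing $Y$ as actions (your if-and-only-if criterion only applies to product joinings), and is instead the content of Lemma \ref{lem:awcent} and Corollary \ref{cor:awstab}, an Ab\'{e}rt--Weiss-type theorem for joinings proved by a second-moment/random-partition argument. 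These two ingredients --- small-entropy stabilizer-preserving factors and the joining version of Ab\'{e}rt--Weiss --- are the ideas missing from your proposal.
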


We remark that a related but more difficult problem asks if $\rh_G(X \times Y, \mu \times \nu) = \rh_G(X, \mu) + \rh_G(Y, \nu)$ for free actions $G \acts (X, \mu)$ and $G \acts (Y, \nu)$. This is open for both Rokhlin and sofic entropy when $G$ is non-amenable, but Austin has made good progress on this problem for sofic entropy \cite{A15}.

For an amenable group $G$, every pair of free actions are weakly equivalent \cite{K12}. Thus the above theorem recovers what is known in the amenable case.

A particular instance of the above theorem is when $G \acts (Y, \nu)$ is a Bernoulli shift \cite{AW13}.

\begin{cor} \label{intro:bern}
Let $G$ be a countably infinite group, let $G \acts (X, \mu)$ be a free {\pmp} action, and let $\cF$ be a $G$-invariant sub-$\sigma$-algebra. For every standard probability space $(L, \lambda)$ we have
$$\rh_G(X, \mu \given \cF) = \rh_G(X \times L^G, \mu \times \lambda^G \given \cF \vee \Borel(L^G)).$$
\end{cor}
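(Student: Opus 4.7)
The plan is to observe that Corollary \ref{intro:bern} is essentially an immediate consequence of Theorem \ref{intro:prod} once we verify that the Bernoulli shift $G \acts (L^G, \lambda^G)$ satisfies the weak containment hypothesis. Specifically, I would take $G \acts (Y, \nu) = G \acts (L^G, \lambda^G)$ in Theorem \ref{intro:prod}, and invoke the Ab\'{e}rt--Weiss theorem \cite{AW13}, which asserts that every Bernoulli shift of a countably infinite group is weakly contained in every free {\pmp} action of $G$. The conclusion of Theorem \ref{intro:prod} then reads exactly as the desired equality.

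The only mild point that warrants care is that Ab\'{e}rt--Weiss is typically stated for the Bernoulli shift with base $([0,1], \mathrm{Leb})$, whereas $(L, \lambda)$ is an arbitrary standard probability space. I would handle this by noting that for any standard probability space $(L, \lambda)$, the Bernoulli shift $G \acts (L^G, \lambda^G)$ is a factor of the Bernoulli shift $G \acts ([0,1]^G, \mathrm{Leb}^G)$, and weak containment in a class of actions passes to factors (since any finite partition of the factor pulls back to a finite partition of the extension realizing the same joint distributions along any finite $T \subseteq G$). Thus $G \acts (L^G, \lambda^G)$ is weakly contained in every free {\pmp} action of $G$, and Theorem \ref{intro:prod} applies. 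I do not anticipate any real obstacle here; the content of the corollary lies entirely in Theorem \ref{intro:prod} and the Ab\'{e}rt--Weiss theorem.
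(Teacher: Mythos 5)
Your proposal is correct and is essentially the paper's own route: the paper derives this corollary by applying the product theorem (Theorem \ref{intro:prod}, in the body Corollary \ref{cor:prodfree}, via the more general Corollary \ref{cor:prod}) once the Ab\'{e}rt--Weiss theorem supplies the hypothesis that Bernoulli shifts are weakly contained in all free {\pmp} actions. Your extra remark about reducing an arbitrary standard base $(L,\lambda)$ to the Lebesgue base via a factor map is correct but not needed, since Ab\'{e}rt--Weiss (as cited in the paper) already covers Bernoulli shifts over arbitrary base spaces.
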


The value of this corollary is that the right-hand side is a bit more manageable and leads to new upper bounds to Rokhlin entropy (and thus upper bounds to sofic entropy as well). Using this corollary, we deduce Theorems \ref{intro:int} and \ref{intro:drop} below. We regard these two theorems to be the most important results of the paper.

In the special case of sofic entropy, the upper bound appearing below was obtained independently by both Andrei Alpeev and Lewis Bowen (personal communication).

\begin{thm} \label{intro:int}
Let $G$ be a countably infinite group, let $G \acts (X, \mu)$ be a free {\pmp} action, and let $\cF$ be a $G$-invariant sub-$\sigma$-algebra. Consider the Bernoulli shift $([0, 1]^G, \lambda^G)$ where $\lambda$ is Lebesgue measure. For $y \in [0,1]^G$ set $L_y = \{g \in G : y(g^{-1}) < y(1_G)\}$. If $\alpha$ is a partition with $\sH(\alpha \given \cF \vee \sinv_G) < \infty$ and $\salg_G(\alpha) \vee \cF \vee \sinv_G = \Borel(X)$ then
$$\rh_G(X, \mu \given \cF) \leq \int_{[0,1]^G} \sH_\mu(\alpha \given \alpha^{L_y} \vee \cF \vee \sinv_G) \ d \lambda^G(y).$$
\end{thm}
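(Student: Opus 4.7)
The plan is to reduce to constructing a small-entropy generator on the Bernoulli extension via Corollary~\ref{intro:bern} and then build that generator using finite-window approximations. By Corollary~\ref{intro:bern} applied with $L = [0,1]$ and Lebesgue measure, $\rh_G(X, \mu \given \cF) = \rh_G(X \times [0,1]^G, \mu \times \lambda^G \given \cF \vee \Borel([0,1]^G))$, so it suffices to exhibit, for each $\epsilon > 0$, a partition $\beta$ of $X \times [0,1]^G$ that generates modulo $\cF \vee \Borel([0,1]^G) \vee \sinv_G$ and whose conditional Shannon entropy over this $\sigma$-algebra lies within $\epsilon$ of the right-hand integral.

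I would work with finite approximations $F \subseteq G$ containing $1_G$. The crucial distributional fact is that $L_y \cap F$ has the same law as the ``$y$-past of $1_G$'' under a uniformly random ordering of $F$: conditional on $y(1_G) = u$, each $g \in F \setminus \{1_G\}$ lies in $L_y$ independently with probability $u$, and integrating over $u \in [0,1]$ recovers the uniform random past. Applying the chain rule to a realized $y$-ordering $\prec_y$ gives $\sH_\mu(\alpha^F \given \cF \vee \sinv_G) = \sum_{g \in F} \sH_\mu(g^{-1}\alpha \given \bigvee_{h \in F,\, h \prec_y g} h^{-1}\alpha \vee \cF \vee \sinv_G)$, and translating each summand by $g$ (which preserves entropies since $\cF \vee \sinv_G$ is $G$-invariant) and averaging over $y$ rewrites the right side as $\sum_{g \in F} \mathbb{E}_y\bigl[\sH_\mu(\alpha \given \alpha^{T_g(y)} \vee \cF \vee \sinv_G)\bigr]$, where $T_g(y)$ is a translated random past whose law approaches that of $L_y$ for $g$ deep inside $F$. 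Hence $\tfrac{1}{|F|} \sH_\mu(\alpha^F \given \cF \vee \sinv_G)$ converges to $\int \sH_\mu(\alpha \given \alpha^{L_y} \vee \cF \vee \sinv_G)\, d\lambda^G(y)$ as $F$ exhausts $G$.

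The partition $\beta_F$ on $X \times [0,1]^G$ is then defined so that, fiberwise in $y$, $\beta_F(x,y)$ records a rank-labeling of $\alpha(x)$ relative to the atom of $\alpha^{L_y \cap F} \vee \cF \vee \sinv_G$ containing $x$, with labels randomized using the continuous residual values of $y$ so as to be conditionally independent of $\alpha^{L_y \cap F}$ given $\cF \vee \sinv_G$. This independence forces the fiberwise entropy to equal $\sH_\mu(\alpha \given \alpha^{L_y \cap F} \vee \cF \vee \sinv_G)$; integrating over $y$ and letting $F$ exhaust $G$, together with monotone convergence for conditional entropy, yields the bound. The generating property of $\beta_F$ is verified by a recursive decoding along the $y$-order of any finite window, combined with the hypothesis $\salg_G(\alpha) \vee \cF \vee \sinv_G = \Borel(X)$ to bootstrap to the full $\sigma$-algebra.

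The main technical obstacle is arranging the randomization so that the rank-labels genuinely decorrelate from $\alpha^{L_y \cap F}$ while preserving the ability to reconstruct $\alpha$ from $\beta_F$-translates. A naive rank-encoding of $\alpha$ on each conditioning atom already suffices for generation, but its label distribution depends on the conditioning atom and so picks up a strictly positive entropy gap over $\sH_\mu(\alpha \given \alpha^{L_y \cap F} \vee \cF \vee \sinv_G)$; closing this gap by exploiting the continuous structure of $y \in [0,1]^G$ is the delicate heart of the argument.
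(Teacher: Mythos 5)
Your opening reduction (passing to the Bernoulli extension via Corollary \ref{intro:bern}) is exactly the paper's first step, but what follows has two genuine problems. The ``crucial distributional fact'' you extract from the chain rule --- that $\tfrac{1}{|F|}\,\sH_\mu(\alpha^F \given \cF \vee \sinv_G)$ converges to $\int \sH_\mu(\alpha \given \alpha^{L_y} \vee \cF \vee \sinv_G)\, d\lambda^G(y)$ as $F$ exhausts $G$ --- is false for non-amenable $G$. The chain rule for a generic $y$-order gives $\sH(\alpha^F \given \cF \vee \sinv_G) = \sum_{g \in F} \sH(\alpha \given \alpha^{L_{g \cdot y} \cap F g^{-1}} \vee \cF \vee \sinv_G)$, and each summand only \emph{dominates} $\sH(\alpha \given \alpha^{L_{g \cdot y}} \vee \cF \vee \sinv_G)$ because the conditioning is truncated to the window $F g^{-1}$; equality in the limit requires these windows to swallow the relevant part of $L_{g\cdot y}$ for most $g \in F$, which is a F{\o}lner condition. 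This is Kieffer's theorem, and the one-sided inequality you actually get is what the paper exploits in Theorem \ref{thm:drop}; the two expressions genuinely differ for non-amenable groups.

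The more serious gap is in the construction of $\beta_F$, which is where the theorem would actually be proved and which you leave unexecuted. Two things are missing. First, the exact conditional independence of the relabelled ranks from $\alpha^{L_y \cap F}$ given $\cF \vee \Borel([0,1]^G) \vee \sinv_G$ --- which you correctly identify as necessary to kill the entropy gap of the naive rank-encoding --- is precisely the hard content, and no mechanism for achieving it (while preserving decodability) is given. Second, the ``recursive decoding along the $y$-order'' does not terminate: for $\lambda^G$-a.e.\ $y$ the values $y(g^{-1})$ are dense in $[0,1]$, so every point admits infinite strictly $y$-decreasing chains of translates and the recursion has no base case; hence the generating property of $\beta_F$ is not established. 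The paper's proof of Theorem \ref{thm:int} is built around solving exactly this issue: it discretizes $r(y) = y(1_G)$ into finitely many intervals of width $w$, discards a set $Q_0$ of measure $< \epsilon$ (where some $t \in T$ moves $r$ by at most $w$) whose entropy contribution is $< \delta$, and thereby obtains a \emph{finite} chain of levels $Q_0, \ldots, Q_m$ along which the induction runs. It then sidesteps your encoding problem entirely: rather than building one generating partition of the correct entropy, it applies sub-additivity of Rokhlin entropy (Lemma \ref{lem:add2}) to the chain $\Psi_0 \subseteq \cdots \subseteq \Psi_m$ and uses only the trivial bound $\rh_{G,\mu}(\alpha_i \given \Psi_{i-1}) \leq \sH(\alpha_i \given \Psi_{i-1})$, which already gives the desired integrand because $\alpha^{T \cap L_y}$ lies in $\Psi_{i-1}$ fiberwise over $Q_i$. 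You would need substitutes for both the discretization and the sub-additivity step to make your route work.
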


Intuitively, one should view the above sets $L_y \subseteq G$ as providing a randomized past for the action of $G$ on $(X, \mu)$.

The next upper bound improves \cite[Theorem 1.3]{S14a}. In the special case of sofic entropy, the upper bound appearing below was independently obtained by Mikl\'{o}s Ab\'{e}rt, Tim Austin, Lewis Bowen, and Benjamin Weiss (personal communication). Peter Burton also independently obtained a related version of this upper bound for topological sofic entropy \cite{Bu15}.

\begin{thm} \label{intro:drop}
Let $G$ be a countably infinite group, let $G \acts (X, \mu)$ be a free {\pmp} action, and let $\cF$ be a $G$-invariant sub-$\sigma$-algebra. If $\alpha$ is any countable partition with $\salg_G(\alpha) \vee \cF \vee \sinv_G = \Borel(X)$ then
$$\rh_G(X, \mu \given \cF) \leq \inf_{\substack{T \subseteq G\\T \text{ finite}}} \frac{1}{|T|} \cdot \sH(\alpha^T \given \cF \vee \sinv_G).$$
\end{thm}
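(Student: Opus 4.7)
The plan is to deduce the theorem from Corollary \ref{intro:bern}. Fix a finite $T \subseteq G$; we may assume $1_G \in T$ (adding $1_G$ only enlarges $\sH(\alpha^T \given \cF \vee \sinv_G)$) and that this quantity is finite (otherwise the bound is trivial). Taking $(L, \lambda) = ([0,1], \mathrm{Lebesgue})$, Corollary \ref{intro:bern} gives
\begin{equation*}
\rh_G(X, \mu \given \cF) = \rh_G\bigl(X \times L^G,\, \mu \times \lambda^G \given \cF \vee \Borel(L^G)\bigr),
\end{equation*}
so for each $\epsilon > 0$ it suffices to exhibit a countable partition $\beta$ of $X \times L^G$ with $\salg_G(\beta) \vee \cF \vee \Borel(L^G) \vee \sinv_G = \Borel(X \times L^G)$ and $\sH(\beta \given \cF \vee \Borel(L^G) \vee \sinv_G) \leq \tfrac{1}{|T|}\sH(\alpha^T \given \cF \vee \sinv_G) + \epsilon$.

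The construction uses the $L^G$-coordinate to select a translation-equivariant random subset $S_y \subseteq G$ of density $1/|T|$ and records the full block $\alpha^T(x)$ only at points $(x,y)$ with $1_G \in S_y$. Concretely, set
\begin{equation*}
\beta(x, y) = \begin{cases} (\alpha^T(x),\, 1), & 1_G \in S_y, \\ (\ast,\, 0), & 1_G \notin S_y, \end{cases}
\end{equation*}
where the second coordinate is the $\Borel(L^G)$-measurable indicator of $\{1_G \in S_y\}$. Since $\mu$ and $\lambda^G$ are independent, a direct computation gives
\begin{equation*}
\sH\bigl(\beta \given \cF \vee \Borel(L^G) \vee \sinv_G\bigr) = \lambda^G\bigl(\{y : 1_G \in S_y\}\bigr) \cdot \sH(\alpha^T \given \cF \vee \sinv_G) = \tfrac{1}{|T|}\sH(\alpha^T \given \cF \vee \sinv_G).
\end{equation*}
For the generating property, the $g$-translate $\beta(gx, gy)$ exposes $\alpha^T(gx) = (\alpha(tgx))_{t \in T}$ exactly when $g^{-1} \in S_y$, so the $G$-orbit of $\beta$ determines $\alpha(hx)$ for every $h \in T \cdot S_y^{-1}$. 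Provided $S_y$ meets every left-translate $h^{-1}T$ almost surely, this recovers $\alpha$ everywhere, and since $\alpha$ generates modulo $\cF \vee \sinv_G$ the required $\sigma$-algebra equality follows.

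The main obstacle is constructing the random subset $S_y$ with density exactly $1/|T|$ that almost surely meets every left-translate of $T$. A naive choice---say $S_y = \{g : y(g) \leq 1/|T|\}$, or the set of $g$ at which $y$ attains its minimum on $gT$---achieves the correct density but covers each left-translate with probability only $1 - (1 - 1/|T|)^{|T|} < 1$. My plan is to overcome this either by enlarging $L$ to $[0,1]^{\mathbb{N}}$ and performing a $G$-equivariant iterative marker-type construction that patches uncovered left-translates using successive Bernoulli coordinates while preserving the exact density $1/|T|$---the extra coordinates being $\Borel(L^G)$-measurable and hence costing no conditional entropy---or by relaxing the density to $1/|T| + \delta$ so that a simpler covering argument applies, absorbing the extra $\delta \cdot \sH(\alpha^T \given \cF \vee \sinv_G)$ into $\epsilon$ and letting $\delta \downarrow 0$. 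Once such an $S_y$ is available, taking the infimum over finite $T \subseteq G$ yields the theorem.
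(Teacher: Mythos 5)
Your reduction to Corollary \ref{intro:bern} and the entropy computation for the proposed partition $\beta$ are fine, but the construction you defer to the end --- the equivariant random set $S_y$ --- is not a technical loose end; it is where the argument breaks. If $S_y$ has density exactly $1/|T|$ and almost surely meets every left-translate $g T$, then the expected number of elements of $S_y$ in $gT$ is $|T|\cdot \tfrac{1}{|T|}=1$ while the actual number is almost surely $\geq 1$, so it is almost surely $=1$: the translates $\{sT^{-1} : s\in S_y\}$ would form an exact tiling of $G$. Such tilings need not exist. For $G=\Z$ and $T=\{0,1,3\}$ no translate-tiling of $\Z$ by $T$ (or $T^{-1}$) exists, so no invariant random set with the two properties you need exists, and no iterative patching can produce one. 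The relaxed variant fails for the same reason: the minimal density of a covering of $\Z$ by translates of $\{0,-1,-3\}$ is a fixed rational strictly larger than $1/3$, so densities $1/|T|+\delta$ with $\delta$ small are equally unattainable. More generally, any scheme that records the full block $\alpha^T$ at a sparse set of sites and nothing elsewhere must pay $(\text{covering density})\cdot \sH(\alpha^T\given\cF\vee\sinv_G)$, and the covering density is bounded away from $1/|T|$ whenever $T$ does not tile $G$.

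The paper avoids this tiling obstruction entirely. It first proves a randomized-past bound (Theorem \ref{intro:int}, via Corollary \ref{intro:bern} and Theorem \ref{thm:int}): $\rh_{G,\mu}(\alpha\given\cF) \leq \int \sH(\alpha^T\given \alpha^{TL_y}\vee\cF)\,d\lambda^G(y)$, where $L_y$ is the random past. Conditioning on $\alpha^{TL_y}$ means overlapping information is never paid for twice, which is exactly what your disjointness requirement was trying to force by hand. The second key idea, which has no analogue in your proposal, is to choose $T$ of \emph{minimal cardinality} among finite sets with $\sH(\alpha^T\given\cF)/|T| < h$; a convexity argument then gives $\sH(\alpha^T\given\alpha^P\vee\cF) < h\cdot|T\setminus P|$ for every proper subset $P\subsetneq T$. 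Finally a mass-transport argument (reassigning the integrand $\phi(y)=\sH(\alpha^T\given\alpha^{TL_y}\vee\cF)$ from each $y$ to the points of $T^{-1}\cdot y$ it is responsible for) shows $\int\phi\,d\lambda^G < h$. You would need to import both of these ideas --- minimality of $T$ and the transport of conditional entropy --- to repair the proof; the sparse-block-coding skeleton alone cannot be completed.
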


The upper bounds in Theorems \ref{intro:int} and \ref{intro:drop} are optimal in the sense that for free actions of amenable groups $G$ the expressions coincide with classical entropy (equivalently Rokhlin entropy). For the expression in Theorem \ref{intro:int} this was proven by Kieffer \cite[Theorem 3]{Ki}. For the expression in Theorem \ref{intro:drop} this is a folklore fact which has appeared in \cite{DGRS,DFR}. However we mention that Theorem \ref{intro:drop} leads to a new proof that the right-hand expression in that theorem coincides with classical entropy when $G$ is amenable and the action is free (by definition the classical entropy is equal to the right-hand side when $T$ is restricted to a sequence of F{\o}lner sets; thus classical entropy is greater than or equal to the expression in Theorem \ref{intro:drop}, but we know that Rokhlin entropy and classical entropy coincide for free actions of amenable groups \cite{AS}). Unfortunately, for non-amenable groups these expressions do not coincide with Rokhlin entropy and are not even isomorphism invariants. For the expression in Theorem \ref{intro:int}, we prove this in Lemma \ref{lem:fail}. For the expression in Theorem \ref{intro:drop} this is due to Bowen and is recorded in \cite{Bu15}.

Using our new notion of weak containment of joinings and Lemma \ref{intro:lem}, we explore two additional topics in Rokhlin entropy theory. The first is the validity of the ``subgroup formula.'' This conjectured formula states that if $G \acts (X, \mu)$ is a free {\pmp} action and $\Gamma \leq G$ is a finite-index subgroup then $\rh_\Gamma(X, \mu) = |G : \Gamma| \cdot \rh_G(X, \mu)$. This is known to hold when $G$ is amenable (it follows from Theorems \ref{intro:prod} and \ref{intro:sub} here and can also be found in \cite[Theorem 2.16]{Da01}). For non-amenable groups it is unknown, both for sofic and Rokhlin entropy, but holds for a related quantity called the f-invariant \cite{S12a}. We find that this question is related to the earlier question of whether $\rh_G(X \times Y, \mu \times \nu \given \Borel(Y)) = \rh_G(X, \mu)$.

Recall that an action $G \acts (X, \mu)$ is \emph{finite} if there is a normal finite-index subgroup $\Delta \leq G$ such that $\Delta$ fixes every point in $X$. More generally, an action is called \emph{finitely modular} if it is an inverse limit of finite actions.

\begin{thm} \label{intro:sub}
Let $G \acts (X, \mu)$ be an aperiodic {\pmp} action, let $\cF$ be a $G$-invariant sub-$\sigma$-algebra, and assume $\rh_G(X, \mu \given \cF) < \infty$. The following are equivalent.
\begin{enumerate}
\item[\rm (i)] $\rh_G(X, \mu \given \cF) = \rh_G(X \times Y, \mu \times \nu \given \cF \vee \Borel(Y))$ for every finitely modular action $G \acts (Y, \nu)$.
\item[\rm (ii)] $\rh_\Gamma(X, \mu \given \cF) = |G : \Gamma| \cdot \rh_G(X, \mu \given \cF)$ for every finite-index subgroup $\Gamma \leq G$.
\end{enumerate}
\end{thm}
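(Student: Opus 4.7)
The plan is to reduce both implications to a single explicit computation of $\rh_G(X \times Y, \mu \times \nu \given \cF \vee \Borel(Y))$ when $Y = G/\Gamma$ is the coset space (with uniform measure) of a finite-index subgroup $\Gamma \leq G$. The key identity I would establish is
\begin{equation}
\rh_G\bigl(X \times (G/\Gamma), \mu \times \nu \bigm| \cF \vee \Borel(G/\Gamma)\bigr) = \tfrac{1}{|G : \Gamma|} \rh_\Gamma(X, \mu \given \cF). \tag{$\ast$}
\end{equation}
To prove $(\ast)$, decompose any partition $\alpha$ of $X \times (G/\Gamma)$ fiberwise as $\alpha = \bigsqcup_y \alpha_y \times \{y\}$. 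Since $G$ permutes the fibers, the generation condition $\salg_G(\alpha) \vee \cF \vee \Borel(Y) \vee \sinv_G = \Borel(X \times Y)$ reduces at each $y_0 \in Y$ to
\[
\bigvee_{y' \in Y} \salg_{\Gamma_{y_0}}\bigl(g_{y' \to y_0} \cdot \alpha_{y'}\bigr) \vee \cF \vee \sinv_{\Gamma_{y_0}} = \Borel(X),
\]
where $\Gamma_{y_0}$ is the stabilizer of $y_0$ and $g_{y' \to y_0} \cdot y' = y_0$. Since $\Gamma_{y_0}$ is conjugate to $\Gamma$ in $G$ and $\mu, \cF$ are $G$-invariant, $\rh_{\Gamma_{y_0}}(X \given \cF) = \rh_\Gamma(X \given \cF)$. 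Forming $\beta_{y_0} := \bigvee_{y'} g_{y' \to y_0} \cdot \alpha_{y'}$ and using subadditivity of Shannon entropy yields $\sum_{y'} \sH(\alpha_{y'} \given \cF \vee \sinv_{\Gamma_{y'}}) \geq \sH(\beta_{y_0} \given \cF \vee \sinv_{\Gamma_{y_0}}) \geq \rh_\Gamma(X \given \cF)$, so that
\[
\sH(\alpha \given \cF \vee \Borel(Y) \vee \sinv_G) = \tfrac{1}{|Y|} \sum_{y'} \sH(\alpha_{y'} \given \cF \vee \sinv_{\Gamma_{y'}}) \geq \tfrac{1}{|G : \Gamma|} \rh_\Gamma(X \given \cF).
\]
The matching upper bound comes from placing a near-optimal $\Gamma$-generator on the fiber $y_0 = e\Gamma$ and the trivial partition on the others; $G$-invariance of $\cF$ propagates generation to every other fiber.

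\textbf{(i) $\Rightarrow$ (ii).} The coset space $Y = G/\Gamma$ is a finite (hence finitely modular) action, so (i) combined with $(\ast)$ yields $\rh_G(X \given \cF) = \tfrac{1}{|G : \Gamma|} \rh_\Gamma(X \given \cF)$, which is (ii).

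\textbf{(ii) $\Rightarrow$ (i).} For a general finite $Y$, orbit-decompose $Y = \bigsqcup_i O_i$ with $O_i \simeq G/\Gamma_i$ and $\nu(O_i) = p_i$; applying $(\ast)$ orbit-by-orbit gives
\[
\rh_G(X \times Y \given \cF \vee \Borel(Y)) = \sum_i \tfrac{p_i}{|O_i|} \rh_{\Gamma_i}(X \given \cF),
\]
which under (ii) simplifies to $\sum_i p_i \rh_G(X \given \cF) = \rh_G(X \given \cF)$. For a general finitely modular $Y = \varprojlim_n Y_n$, the upper bound $\rh_G(X \times Y \given \cF \vee \Borel(Y)) \leq \rh_G(X \given \cF)$ is standard (any generator of $X$ is one of $X \times Y$ over $\Borel(Y)$). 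For the reverse, given any $\alpha$ generating $X \times Y$ over $\cF \vee \Borel(Y)$ with finite conditional entropy and any $\epsilon > 0$, approximate $\alpha$ by a $\Borel(X \times Y_n)$-measurable partition $\alpha_n$ using reverse martingale convergence of $\Eavg[\pone_A \given \Borel(X \times Y_n)]$ over atoms $A$ of $\alpha$; a small entropy-controlled perturbation makes $\alpha_n$ generate $X \times Y_n$ over $\cF \vee \Borel(Y_n)$ exactly, so the finite case yields $\rh_G(X \given \cF) = \rh_G(X \times Y_n \given \cF \vee \Borel(Y_n)) \leq \sH(\alpha_n \given \cF \vee \Borel(Y_n)) \leq \sH(\alpha \given \cF \vee \Borel(Y)) + O(\epsilon)$.

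\textbf{Main obstacle.} The most delicate step is the inverse-limit approximation: one must simultaneously control the conditional entropy $\sH(\alpha_n \given \cF \vee \Borel(Y_n))$ and ensure \emph{exact} generation of $X \times Y_n$ over $\cF \vee \Borel(Y_n)$. Conditional expectation alone provides only approximate generation, and restoring exactness via a perturbation of negligibly small conditional entropy is the main technical hurdle.
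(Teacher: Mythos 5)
Your computation in the finite transitive case is correct and is in fact a cleaner route than the paper takes: the trace of $\sinv_G$ on the fiber over $y_0$ is exactly $\sinv_{\Gamma_{y_0}}$, conjugating the fiber partitions $\alpha_{y'}$ back to a single fiber and using subadditivity of conditional Shannon entropy gives the lower bound in $(\ast)$, and planting a near-optimal $\Gamma$-generator on one fiber gives the matching upper bound -- and this works for arbitrary finite-index $\Gamma$, not just normal ones. The paper instead routes everything through the normal core $\Delta = \bigcap_g g\Gamma g^{-1}$: it proves the subgroup formula under an ergodic-decomposition hypothesis (Lemmas \ref{lem:split} and \ref{lem:subform}), applies it to $X \times (G/\Delta)$, and for non-normal $\Gamma$ obtains only an inequality (Corollary \ref{cor:lower}); your $(\ast)$ implies that inequality since $G/\Delta$ factors onto $G/\Gamma$. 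One caveat: in this paper a ``finite action'' need not have a finite underlying space (only the acting quotient $G/\Delta$ is finite), so your finite orbit decomposition must be replaced by the affineness of Rokhlin entropy over the ergodic decomposition, as in the proof of Lemma \ref{lem:ergfin}; this is a citation to \cite{AS}, not a new difficulty.

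The genuine gap is the one you flag yourself: the passage from the finite stages $Y_n$ to the inverse limit $Y$. Approximating a generating partition $\alpha$ of $X \times Y$ by a $\Borel(X) \vee \Borel(Y_n)$-measurable partition $\alpha_n$ via conditional expectation gives only $\dB_{\mu\times\nu}$- (hence $\dR$-) closeness, and this provides no mechanism for restoring the \emph{exact} generation $\salg_G(\alpha_n) \vee \cF \vee \Borel(Y_n) \vee \sinv_G \supseteq \Borel(X)$. The needed correction is not a small measure-theoretic perturbation of $\alpha_n$ but an auxiliary partition $\chi$ of small \emph{Shannon entropy} with $\salg_G(\alpha_n \vee \chi) \vee \cF \vee \Borel(Y_n) \vee \sinv_G \supseteq \Borel(X)$; the existence of such a $\chi$ uses aperiodicity in an essential way and is precisely the nontrivial formula from \cite{AS} quoted in the proof of Lemma \ref{lem:upjoin}. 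It does not follow from martingale convergence. This is exactly where the paper deploys its weak containment machinery: $(\mu \times \nu_n)$ converges to $(\mu \times \nu)$ in the topology on $\sjoin(a)$, and Corollary \ref{cor:upjoin} (upper semicontinuity of $\lambda \mapsto \rh_G(X \times Y, \lambda \given \cF \vee \Borel(Y))$, available here because $\rh_G(X, \mu \given \cF) < \infty$) yields $\rh_G(X \times Y, \mu\times\nu \given \cF \vee \Borel(Y)) \geq \limsup_n \rh_G(X \times Y_n, \mu\times\nu_n \given \cF \vee \Borel(Y_n))$, which is all that is required. A secondary unaddressed point in your sketch: even granting exact generation, the estimate $\sH(\alpha_n \given \cF \vee \Borel(Y_n)) \leq \sH(\alpha \given \cF \vee \Borel(Y)) + O(\epsilon)$ needs $\sH(\alpha \given \cF \vee \Borel(Y_n)) \downarrow \sH(\alpha \given \cF \vee \Borel(Y))$, and Lemma \ref{lem:shan}(vi) requires a finiteness reduction before this can be invoked for countable $\alpha$.
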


We also relate the subgroup formula to the Rokhlin versus sofic entropy problem. Below, for a sofic approximation $\Sigma$ to $G$, we write $h_G^\Sigma$ for the corresponding sofic entropy (see Section \ref{sec:pre} for definitions). Also, for a finite set $S$ we write $u_S$ for the normalized counting measure on $S$.

\begin{thm} \label{intro:sofic}
Let $G$ be a sofic group with sofic approximation $\Sigma$, and let $G \acts (X, \mu)$ be an aperiodic {\pmp} action. Assume that for all finite-index normal subgroups $\Delta \lhd G$ we have $h_G^\Sigma(G / \Delta, u_{G / \Delta}) \neq - \infty$. Then
\begin{enumerate}
\item[\rm (i)] for every finitely modular action $G \acts (Y, \nu)$
$$h_G^\Sigma(X, \mu) \leq \rh_G(X \times Y, \mu \times \nu \given \Borel(Y)) \leq \rh_G(X, \mu)$$
\item[\rm (ii)] for every finite-index subgroup $\Gamma \leq G$
$$|G : \Gamma| \cdot h_G^\Sigma(X, \mu) \leq \rh_\Gamma(X, \mu) \leq |G : \Gamma| \cdot \rh_G(X, \mu)$$
\end{enumerate}
\end{thm}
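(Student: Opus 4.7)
The plan is to first prove part~(ii) using the known sofic subgroup formula, and then to derive part~(i) by combining part~(ii) with the identification
\[
\rh_G(X \times G/\Delta,\, \mu \times u_{G/\Delta} \given \Borel(G/\Delta)) = \frac{1}{|G:\Delta|}\,\rh_\Delta(X, \mu),
\]
which is one of the key calculations underlying Theorem~\ref{intro:sub}.

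For the upper bound in~(ii), I would use the following standard argument: given a near-optimal countable partition $\alpha$ with $\salg_G(\alpha) \vee \sinv_G = \Borel(X)$ and a coset transversal $T \subseteq G$ of $\Gamma$, the refinement $\alpha^T$ satisfies $\salg_\Gamma(\alpha^T) = \salg_G(\alpha)$, so $\alpha^T$ is a $\Gamma$-generator modulo $\sinv_\Gamma$ (using the inclusion $\sinv_G \subseteq \sinv_\Gamma$), and subadditivity of Shannon entropy combined with the $G$-invariance of $\sinv_G$ yields $\sH_\mu(\alpha^T \given \sinv_\Gamma) \leq |G : \Gamma| \cdot \sH_\mu(\alpha \given \sinv_G)$. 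For the lower bound in~(ii), I would combine the general inequality $\rh_\Gamma(X, \mu) \geq h_\Gamma^{\Sigma|_\Gamma}(X, \mu)$ (Rokhlin entropy bounds sofic from above) with the known sofic subgroup formula $h_\Gamma^{\Sigma|_\Gamma}(X, \mu) = |G : \Gamma| \cdot h_G^\Sigma(X, \mu)$ for an appropriately induced sofic approximation $\Sigma|_\Gamma$ of $\Gamma$. The hypothesis that $h_G^\Sigma(G/\Delta, u_{G/\Delta}) \neq -\infty$ for every finite-index normal $\Delta \lhd G$ is precisely what guarantees that the restriction of $\Sigma$ along the normal core of $\Gamma$ yields a sofic approximation of $\Gamma$ for which this subgroup formula is valid.

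For part~(i), the upper bound is immediate: any countable partition $\alpha$ of $X$ with $\salg_G(\alpha) \vee \sinv_G = \Borel(X)$, lifted to $X \times Y$ via the first projection, still generates $X \times Y$ modulo $\Borel(Y) \vee \sinv_G^{X \times Y}$, and $\sH_{\mu \times \nu}(\alpha \given \Borel(Y) \vee \sinv_G^{X \times Y}) \leq \sH_\mu(\alpha \given \sinv_G^X)$. For the lower bound, I would first treat the case $Y = G/\Delta$ with $\Delta$ finite-index normal: the displayed identity above, together with part~(ii) applied to $\Gamma = \Delta$, yields
\[
\rh_G(X \times G/\Delta \given \Borel(G/\Delta)) = \frac{\rh_\Delta(X, \mu)}{|G:\Delta|} \geq \frac{|G:\Delta| \cdot h_G^\Sigma(X, \mu)}{|G:\Delta|} = h_G^\Sigma(X, \mu).
\]

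The hard part, and what I expect to be the main obstacle, is to extend the lower bound in~(i) from $Y = G/\Delta$ to an arbitrary finitely modular action $Y = \varprojlim_n G/\Delta_n$. A naive reduction via factor-map monotonicity fails: relative Rokhlin entropy decreases as the conditioning $\sigma$-algebra is enlarged, but moves the opposite way as one passes to a factor, so the two monotonicity directions point against each other when comparing $\rh_G(X \times Y \given \Borel(Y))$ with $\rh_G(X \times G/\Delta_n \given \Borel(G/\Delta_n))$. My approach would be to bypass this by invoking a relative form of the Rokhlin-vs-sofic inequality, $\rh_G(X \times Y \given \Borel(Y)) \geq h_G^\Sigma(X \times Y \given (Y, \nu))$, together with a relative product bound $h_G^\Sigma(X \times Y \given (Y, \nu)) \geq h_G^\Sigma(X, \mu)$. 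The latter should hold because the standing hypothesis forces every finite quotient of $Y$ to have sofic entropy at least $0$, so that microstates for $(Y, \nu)$ can be combined compatibly with microstates for $(X, \mu)$ over the sofic approximation $\Sigma$.
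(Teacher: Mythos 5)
The central step of your argument for the lower bound in (ii) is the ``known sofic subgroup formula'' $h_\Gamma^{\Sigma|_\Gamma}(X,\mu) = |G:\Gamma|\cdot h_G^\Sigma(X,\mu)$. No such formula is known: the paper itself states in the introduction that the subgroup formula is open for both sofic and Rokhlin entropy for non-amenable groups (it is only known for the f-invariant). So you are assuming exactly the kind of statement the theorem is designed to circumvent, and the rest of (ii) and the $Y=G/\Delta$ case of (i) collapse with it. The paper's actual route is different and entirely avoids restricted sofic approximations: for a finite-index normal $\Delta\lhd G$ one uses the hypothesis $h_G^\Sigma(G/\Delta,u_{G/\Delta})\neq-\infty$ to produce good microstates $\psi_n$ for $G/\Delta$, pairs them with microstates for $(X,\mu)$, and extracts (by a compactness argument in the style of the variational principle) a \emph{joining} $\lambda$ of $\mu$ with $u_{G/\Delta}$ satisfying $h_G^\Sigma(X\times(G/\Delta),\lambda)\geq h_G^\Sigma(X,\mu)$. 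Then sofic entropy is bounded above by Rokhlin entropy, and the Rokhlin subgroup formula \emph{is} available in the special case where the $G$-action factors onto $G/\Delta$ (Lemma \ref{lem:subform}, via Lemma \ref{lem:split}); combined with affineness of Rokhlin entropy over the decomposition $\lambda=\frac{1}{|G:\Delta|}\sum\lambda_{g\Delta}\times\delta_{g\Delta}$, this gives $\rh_G(X\times(G/\Delta),\mu\times u_{G/\Delta})=|G:\Delta|^{-1}\rh_\Delta(X,\mu)=\rh_G(X\times(G/\Delta),\lambda)\geq h_G^\Sigma(X,\mu)$, from which (ii) and the finite-quotient case of (i) both follow via Corollary \ref{cor:lower}.

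Your diagnosis of the second obstacle (passing from $Y=G/\Delta$ to a general finitely modular $Y$) is accurate, but the proposed fix via a relative sofic entropy inequality $h_G^\Sigma(X\times Y\given(Y,\nu))\geq h_G^\Sigma(X,\mu)$ is not developed anywhere in the paper and is itself problematic: for a general inverse limit $Y$ you would need microstates for $(Y,\nu)$ itself, not merely for its finite quotients, and the claimed relative product bound is a relative of the (open) product formula problem. The paper instead stays entirely on the Rokhlin side: Lemma \ref{lem:ergfin} shows $\rh_G(X\times Y,\mu\times\nu\given\Borel(Y))\geq\inf_\Delta\rh_G(X\times(G/\Delta),\mu\times u_{G/\Delta}\given\Borel(G/\Delta))$ by observing that $X\times Y_n\to X\times Y$ in the topology on $\sjoin(a)$ and invoking upper semicontinuity of relative Rokhlin entropy there (Corollary \ref{cor:upjoin}), together with the fact that each finite $Y_n$ is a factor of some $G/\Delta$. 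This weak-containment-of-joinings machinery is the tool you are missing; without it (or the nonexistent sofic subgroup formula) the proposal does not close.
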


When combined with Lemma \ref{intro:lem}, the previous two theorems take a stronger form when $G$ has property MD. Recall that a countable residually finite group $G$ is said to have \emph{property MD} if there is a finitely modular action which weakly contains all other {\pmp} actions of $G$ \cite{K12}. It is known that all residually finite amenable groups, all free groups, all free products of finite groups, all surface groups, and all fundamental groups of closed hyperbolic $3$-manifolds have property MD \cite{K12,BTD,Ag13}. Also, property MD is preserved under passage to subgroups and extensions by residually finite amenable groups \cite{K12,BTD}.

The important feature of the following corollary is that it discusses product actions $G \acts (X \times Y, \mu \times \nu)$ where $G \acts (Y, \nu)$ varies over all {\pmp} actions, rather than only the finitely modular actions.

\begin{cor}
Let $G$ be a residually finite group with property MD, let $G \acts (X, \mu)$ be an aperiodic {\pmp} action, and let $\cF$ be a $G$-invariant sub-$\sigma$-algebra with $\rh_G(X, \mu \given \cF) < \infty$. The following are equivalent.
\begin{enumerate}
\item[\rm (1)] $\rh_G(X, \mu \given \cF) = \rh_G(X \times Y, \mu \times \nu \given \cF \vee \Borel(Y))$ for all {\pmp} actions $G \acts (Y, \nu)$.
\item[\rm (2)] $\rh_\Gamma(X, \mu \given \cF) = |G : \Gamma| \cdot \rh_G(X, \mu \given \cF)$ for every finite-index subgroup $\Gamma \leq G$.
\end{enumerate}
Furthermore, if $\Sigma$ is a sofic approximation to $G$ with $h_G^\Sigma(G / \Delta, u_{G / \Delta}) \neq - \infty$ for every finite-index normal subgroup $\Delta \lhd G$ and $h_G^\Sigma(X, \mu) = \rh_G(X, \mu) < \infty$, then (1) and (2) hold with $\cF = \{\varnothing, X\}$.
\end{cor}

The final topic we consider is outer Pinsker algebras. Recall that for a {\pmp} action $G \acts (X, \mu)$ of an amenable group $G$, the Pinsker algebra is defined to be the largest $G$-invariant sub-$\sigma$-algebra of $X$ for which the corresponding factor, called the Pinsker factor, has entropy $0$. This definition still makes sense for actions of non-amenable groups and leads to the concept of a (Rokhlin) Pinsker algebra. However, the corresponding Pinsker factor is a bit strange as it may admit factors of positive entropy (since entropy can increase under factor maps for actions of non-amenable groups). There is an alternate notion which in some ways behaves better. For this, we recall the definition of outer Rokhlin entropy from \cite{S14a}. If $G \acts (X, \mu)$ is a {\pmp} action, $\cF$ is a $G$-invariant sub-$\sigma$-algebra, and $\mathcal{C} \subseteq \Borel(X)$, then the \emph{outer Rokhlin entropy of $\mathcal{C}$ relative to $\cF$}, denoted $\rh_{G,\mu}(\mathcal{C} \given \cF)$, is
$$\inf \Big\{ \sH(\alpha \given \cF \vee \sinv_G) : \alpha \text{ countable partition and } \mathcal{C} \subseteq \salg_G(\alpha) \vee \cF \vee \sinv_G \Big\}.$$
If $G \acts (Y, \nu)$ is a factor of $G \acts (X, \mu)$ via $f : X \rightarrow Y$, then we define $\rh_{G, \mu}(Y, \nu) = \rh_{G, \mu}(f^{-1}(\Borel(Y)))$. Note that
$$\rh_{G,\mu}(Y, \nu) \leq \min(\rh_G(Y, \nu), \rh_G(X, \mu)).$$
The \emph{outer (Rokhlin) Pinsker algebra of $G \acts (X, \mu)$ relative to $\cF$}, denoted $\Pi(\mu \given \cF)$, is defined to be the largest $G$-invariant sub-$\sigma$-algebra for which $\rh_{G,\mu}(\Pi(\mu \given \cF) \given \cF) = 0$. Note that $\cF \subseteq \Pi(\mu \given \cF)$. A similar notion of outer sofic Pinsker algebra was introduced by Hayes in \cite{H15}.

Using weak containment concepts, we study how outer Pinsker algebras behave for direct products.

We remark that Ben Hayes has obtained sofic entropy versions of Theorem \ref{intro:pinsk} and Corollary \ref{intro:pinsk2} below (personal communication).

\begin{thm} \label{intro:pinsk}
Let $G$ be a countably infinite group, let $G \acts (X, \mu)$ be a free {\pmp} action, and let $\cF$ be a $G$-invariant sub-$\sigma$-algebra. If $G \acts (Y, \nu)$ is a {\pmp} action which is weakly contained in all free {\pmp} actions of $G$, then
$$\Pi(\mu \times \nu \given \cF \vee \Borel(Y)) = \Pi(\mu \given \cF) \vee \Borel(Y).$$
\end{thm}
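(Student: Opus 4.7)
The argument splits into two inclusions. Write $\Pi = \Pi(\mu \given \cF)$ and $\Pi' = \Pi(\mu \times \nu \given \cF \vee \Borel(Y))$. For the easy inclusion $\Pi \vee \Borel(Y) \subseteq \Pi'$, fix $\epsilon > 0$ and choose a countable partition $\alpha$ of $X$ with $\sH_\mu(\alpha \given \cF \vee \sinv_G) < \epsilon$ and $\Pi \subseteq \salg_G(\alpha) \vee \cF \vee \sinv_G$, then view $\alpha$ as a partition of $X \times Y$ via the first-coordinate projection. The product structure of $\mu \times \nu$ yields both $\sH_{\mu \times \nu}(\alpha \given \cF \vee \Borel(Y) \vee \sinv_G) \leq \sH_\mu(\alpha \given \cF \vee \sinv_G) < \epsilon$ and $\Pi \vee \Borel(Y) \subseteq \salg_G(\alpha) \vee \cF \vee \Borel(Y) \vee \sinv_G$, and letting $\epsilon \to 0$ gives $\rh_{G, \mu \times \nu}(\Pi \vee \Borel(Y) \given \cF \vee \Borel(Y)) = 0$, hence $\Pi \vee \Borel(Y) \subseteq \Pi'$.

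For the reverse inclusion $\Pi' \subseteq \Pi \vee \Borel(Y)$, I first reduce to the special case $\Pi(\mu \given \cF) = \cF$. The outer Pinsker algebra is idempotent (by sub-additivity of outer Rokhlin entropy, $\Pi(\mu \given \Pi) = \Pi$) and monotone under enlarging the conditioning algebra, so $\Pi' \subseteq \Pi(\mu \times \nu \given \Pi \vee \Borel(Y))$; thus it suffices to establish the theorem under the additional hypothesis $\Pi(\mu \given \cF) = \cF$, applied with $\Pi$ in place of $\cF$. In this reduced case the desired conclusion is $\Pi' = \cF \vee \Borel(Y)$, and by Fubini this amounts to showing that for every $A$ in a $G$-invariant sub-$\sigma$-algebra $\cC \supseteq \cF \vee \Borel(Y)$ with $\rh_{G, \mu \times \nu}(\cC \given \cF \vee \Borel(Y)) = 0$, the $Y$-sections $A_y = \{x \in X \: (x,y) \in A\}$ lie in $\cF$ for $\nu$-a.e.\ $y$.

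The sections are controlled through the product-measure identity
$$\sH_{\mu \times \nu}(\beta \given \cF \vee \Borel(Y) \vee \sinv_G) = \int_Y \sH_\mu(\beta_y \given \cF \vee \sinv_G) \, d\nu(y)$$
for countable partitions $\beta$ of $X \times Y$, where $\beta_y$ denotes the slice at $y$. Applied to $\beta$ witnessing $\rh_{G, \mu \times \nu}(\cC \given \cF \vee \Borel(Y)) = 0$ up to $\epsilon$, and combined with the transformation rule $(g \cdot \beta)_y = g \cdot \beta_{g^{-1} y}$, each section $A_y$ lies in $\sigma(\{g \cdot \beta_{g^{-1} y} : g \in G\}) \vee \cF \vee \sinv_G$ for $\nu$-a.e.\ $y$.

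The main obstacle is that this sub-$\sigma$-algebra of $\Borel(X)$ is generated by slices of $\beta$ at infinitely many distinct $G$-translates of $y$ and is itself $y$-dependent rather than $G$-invariant on $X$, so it is not directly a witness to small outer Rokhlin entropy over $\cF$. To bridge this I would invoke Lemma~\ref{intro:lem} together with the weak containment hypothesis on $(Y, \nu)$: coupling $G \acts (X \times Y, \mu \times \nu)$ with an auxiliary free Bernoulli shift $G \acts (Z, \lambda^G)$, the weak containment of $(Y, \nu)$ in $(Z, \lambda^G)$ passes to joinings with $(X, \mu)$ and permits the slice analysis to be transferred to the $Z$-coordinate, where the $G$-equivariant i.i.d.\ structure of $\lambda^G$ allows the family $\{g \cdot \beta_{g^{-1} z}\}_{g \in G}$ to be assembled into a single $G$-invariant partition of $X$ with Shannon entropy bounded by $\int \sH_\mu(\beta_z \given \cF) \, d\lambda^G(z)$, and hence by $\epsilon$. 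Under the reduced-case hypothesis $\Pi(\mu \given \cF) = \cF$ this forces $A_y \in \cF$ for $\nu$-a.e.\ $y$, completing the proof.
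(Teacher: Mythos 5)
Your easy inclusion and the reduction to the case $\Pi(\mu \given \cF) = \cF$ are both fine, but the hard inclusion has a genuine gap, and it sits exactly at the step you flag as ``the main obstacle.'' The integral identity $\sH_{\mu \times \nu}(\beta \given \cF \vee \Borel(Y) \vee \sinv_G) = \int_Y \sH_\mu(\beta_y \given \cF \vee \sinv_G)\, d\nu(y)$ controls the entropy of one slice $\beta_y$ at a time, on average over $y$. To conclude that a section $A_y$ lies in $\Pi(\mu \given \cF)$ you must produce a single $G$-invariant sub-$\sigma$-algebra of $\Borel(X)$ with small outer Rokhlin entropy over $\cF$ that contains $A_y$; the natural candidate $\salg_G$ of $\bigvee_{g \in G} g \cdot \beta_{g^{-1} \cdot y}$ is an infinite join of distinct partitions of $X$ whose Shannon entropy is in no way bounded by the fiberwise quantity $\int \sH_\mu(\beta_z \given \cF)\, d\lambda^G(z)$. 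The assertion that the i.i.d.\ structure of $\lambda^G$ lets these slices be ``assembled into a single $G$-invariant partition of $X$'' with that entropy bound is unsubstantiated, and it is precisely where the content of the theorem lies: if such an assembly were possible one would essentially be constructing a small-entropy generating partition on $X$ out of fiberwise data, which is not something Lemma \ref{intro:lem} provides --- that lemma compares relative Rokhlin entropies of two joinings, and says nothing about sections or about Pinsker algebras.

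The paper's proof avoids sectioning entirely and runs through two ingredients your sketch does not touch. First, Lemma \ref{lem:cpejoin}: if one joining weakly contains another as joinings with $(X,\mu)$, then for every finite partition $\cP$ of $X$ the conditional Shannon entropy of $\cP$ relative to the relative outer Pinsker algebra is monotone, $\sH_{\lambda_1}(\cP \given \Pi(\lambda_1 \given \cF \vee \Borel(Y_1))) \leq \sH_{\lambda_2}(\cP \given \Pi(\lambda_2 \given \cF \vee \Borel(Y_2)))$. Second --- and this is the decisive choice --- the comparison joining is not $X$ with a Bernoulli shift but the \emph{graph joining} $(\id \times f)_*(\mu)$ of $X$ with a stabilizer-preserving factor $(Z,\eta)$ of itself having $\rh_G(Z,\eta)$ arbitrarily small (Theorem \ref{thm:robin}); this joining is isomorphic to $(X,\mu)$, its relative outer Pinsker algebra is essentially $\Pi(\mu \given \cF)$ (after the $\Phi_n$ approximation), and by Corollary \ref{cor:awstab} it weakly contains $(X \times Y, \mu \times \nu)$ as joinings with $(X,\mu)$. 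Squeezing the resulting inequalities gives $\sH_{\mu \times \nu}(\cP \given \Pi(\mu \given \cF) \vee \Borel(Y)) = \sH_{\mu \times \nu}(\cP \given \Pi(\mu \times \nu \given \cF \vee \Borel(Y)))$ for all finite $\cP \subseteq \Borel(X)$, and Lemma \ref{lem:equal} converts this entropy equality into equality of $\sigma$-algebras. If you want to salvage your approach, you would need to either prove the assembly claim (which I do not believe is true as stated) or replace the sectioning argument with an entropy-comparison argument of the above kind.
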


For actions of amenable groups, it is well known that the Pinsker algebra of a direct product action is the join of the Pinsker algebras of the two factors \cite{GTW}. It is unknown if this property holds for sofic entropy or Rokhlin entropy. Under the assumption that both actions are free and weakly contained in all free actions, we prove this holds for Rokhlin entropy. Again, since all free actions of an amenable group $G$ are weakly equivalent, this recovers what is known for amenable groups.

\begin{cor} \label{intro:pinsk2}
Let $G$ be a countably infinite group, let $G \acts (X, \mu)$ and $G \acts (Y, \nu)$ be free {\pmp} actions which are weakly contained in all free {\pmp} actions of $G$. Let $\cF$ and $\Sigma$ be $G$-invariant sub-$\sigma$-algebras of $X$ and $Y$, respectively. Then
$$\Pi(\mu \times \nu \given \cF \vee \Sigma) = \Pi(\mu \given \cF) \vee \Pi(\nu \given \Sigma).$$
\end{cor}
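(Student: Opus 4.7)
The plan is to establish the two containments separately. Only the reverse inclusion requires the weak-containment hypothesis; the forward inclusion is a formal consequence of the definitions.

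For $\Pi(\mu \given \cF) \vee \Pi(\nu \given \Sigma) \subseteq \Pi(\mu \times \nu \given \cF \vee \Sigma)$, subadditivity of outer Rokhlin entropy (obtained by joining candidate partitions) and the maximality property defining the outer Pinsker algebra reduce the task to showing
$$\rh_{G, \mu \times \nu}(\Pi(\mu \given \cF) \given \cF \vee \Sigma) = \rh_{G, \mu \times \nu}(\Pi(\nu \given \Sigma) \given \cF \vee \Sigma) = 0.$$
For the first equality, given $\epsilon > 0$, I would choose a countable partition $\alpha$ of $X$ with $\Pi(\mu \given \cF) \subseteq \salg_G(\alpha) \vee \cF \vee \sinv_G$ and $\sH_\mu(\alpha \given \cF \vee \sinv_G) < \epsilon$, and pull it back along the projection $X \times Y \to X$. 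The key observation is that $\sinv_G^X$ embeds into $\sinv_G^{X \times Y}$ under pullback, so the generation condition persists in the product. The identity $\sH_{\mu \times \nu}(\alpha \given \cF \vee \sinv_G^X) = \sH_\mu(\alpha \given \cF \vee \sinv_G)$ (since all objects involved are pulled back from $X$), combined with monotonicity of conditional entropy in the conditioning algebra, supplies the required bound. The symmetric argument handles $\Pi(\nu \given \Sigma)$.

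For the reverse inclusion, I would apply Theorem \ref{intro:pinsk} twice. In its original form, with $(Y, \nu)$ as the second factor, it yields $\Pi(\mu \times \nu \given \cF \vee \Borel(Y)) = \Pi(\mu \given \cF) \vee \Borel(Y)$. Because the hypotheses here are symmetric (both actions are free and weakly contained in all free actions), interchanging the roles of the two factors also gives $\Pi(\mu \times \nu \given \Borel(X) \vee \Sigma) = \Borel(X) \vee \Pi(\nu \given \Sigma)$. Monotonicity of the outer Pinsker algebra in its conditioning argument (a direct consequence of the analogous monotonicity of outer Rokhlin entropy) then yields
$$\Pi(\mu \times \nu \given \cF \vee \Sigma) \subseteq \bigl(\Pi(\mu \given \cF) \vee \Borel(Y)\bigr) \cap \bigl(\Borel(X) \vee \Pi(\nu \given \Sigma)\bigr).$$

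The argument concludes with the standard product-measure identity
$$\bigl(\mathcal{A} \vee \Borel(Y)\bigr) \cap \bigl(\Borel(X) \vee \mathcal{B}\bigr) = \mathcal{A} \vee \mathcal{B} \pmod{\mu \times \nu}$$
for sub-$\sigma$-algebras $\mathcal{A} \subseteq \Borel(X)$ and $\mathcal{B} \subseteq \Borel(Y)$; this follows by checking that for any $E$ in the intersection, iterating conditional expectations first against $\mathcal{A} \vee \Borel(Y)$ and then against $\mathcal{A} \vee \mathcal{B}$ shows $1_E = \Eavg_{\mu \times \nu}[1_E \given \mathcal{A} \vee \mathcal{B}]$. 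I anticipate the main difficulty to be not conceptual but bookkeeping, specifically tracking how $\sinv_G$ interacts with pullbacks and products; the only substantive nonformal input is Theorem \ref{intro:pinsk}, whose symmetric hypotheses make both applications available.
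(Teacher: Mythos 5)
Your proof is correct, and its skeleton (the easy inclusion plus a double application of Theorem \ref{intro:pinsk}) matches the paper's, but the way you close the argument is genuinely different. The paper does not intersect the two $\sigma$-algebras $\Pi(\mu \given \cF) \vee \Borel(Y)$ and $\Borel(X) \vee \Pi(\nu \given \Sigma)$ directly; instead it extracts from the end of the proof of Theorem \ref{thm:cpeprod} the identity $\sH_\mu(\cP \given \Pi(\mu \given \cF)) = \sH_{\mu \times \nu}(\cP \given \Pi(\mu \given \cF) \vee \Borel(Y))$, squeezes $\sH_{\mu \times \nu}(\cP \vee \cQ \given \cdot)$ between the two candidate algebras using monotonicity of conditional Shannon entropy, and then runs the argument of Lemma \ref{lem:equal} to upgrade equality of conditional entropies to equality of $\sigma$-algebras. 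You instead invoke the exact-intersection identity $(\mathcal{A} \vee \Borel(Y)) \cap (\Borel(X) \vee \mathcal{B}) = \mathcal{A} \vee \mathcal{B}$ modulo $\mu \times \nu$-null sets, which is valid for a genuine product measure because the conditional expectation onto $\mathcal{A} \vee \Borel(Y)$ factors as $\Eavg_{\mathcal{A}} \otimes \id$, so the two projections commute and compose to $\Eavg[\,\cdot \given \mathcal{A} \vee \mathcal{B}]$ --- exactly the iterated-conditional-expectation computation you sketch. For the statement as given (free actions, direct product) this is a perfectly sound and arguably cleaner finish. What the paper's entropy-based route buys is that it survives in the non-free generalization (Corollary \ref{cor:pinsk2}), where the measure is the relatively independent joining $\mu \relprod{\theta} \nu$ rather than a product and the naive intersection identity is no longer automatic; your route would need an additional argument there. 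Your treatment of the forward inclusion (pulling back an $\epsilon$-small generating partition, noting $\sinv_G^X \subseteq \sinv_G^{X \times Y}$, and applying the sub-additivity of Lemma \ref{lem:add2}) is fine and simply fills in what the paper dismisses as ``clear.''
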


We remark that Theorems \ref{intro:prod}, \ref{intro:int}, \ref{intro:pinsk} and Corollaries \ref{intro:bern} and \ref{intro:pinsk2} are stated for free actions for simplicity. We prove these results for actions which are not necessarily free. See Theorem \ref{thm:prod}, Corollary \ref{cor:int}, Theorem \ref{thm:cpeprod}, and Corollaries \ref{cor:prod} and \ref{cor:pinsk2}, respectively.

\vspace{3mm}
\noindent\emph{Acknowledgments.}
This research was partially supported by NSF RTG grant 1045119 and ERC grant 306494. The author is thankful for valuable conversations with Mikl\'{o}s Ab\'{e}rt, Andrei Alpeev, Tim Austin, Lewis Bowen, Damien Gaboriau, Ben Hayes, Mike Hochman, Russell Lyons, Ralf Spatzier, and Benjy Weiss.

\section{Preliminaries} \label{sec:pre}

Throughout this paper probability spaces $(X, \mu)$ will always be assumed to be standard, meaning that $X$ is a standard Borel space and $\mu$ is a Borel probability measure. If $f : X \rightarrow Y$ is a Borel map, then we write $f_*(\mu)$ for the push-forward measure. If $X$ is a set and $x \in X$, we write $\delta_x$ for the Borel probability measure of $X$ which is supported on the singleton $\{x\}$. If $X$ is finite then we write $u_X$ for the normalized counting measure on $X$. For $n \in \N$ we identify $n$ with the set $\{0, 1, \ldots, n - 1\}$. Thus $2^G = \{0, 1\}^G$ and $u_2^G = u_{\{0,1\}}^G$.

We write $\Borel(X)$ for the $\sigma$-algebra of Borel subsets of $X$. A sub-$\sigma$-algebra $\cF$ of $X$ is \emph{countably generated} if there is a countable algebra $\mathcal{A}$ such that $\cF$ is the smallest $\sigma$-algebra containing $\mathcal{A}$. For a standard Borel space $X$, it is well known that $\Borel(X)$ is countably generated. Furthermore, if $\mu$ is a Borel probability measure on $X$ and $\cF \subseteq \Borel(X)$ is a sub-$\sigma$-algebra, then it is well known that there is a countably generated $\sigma$-algebra $\cF' \subseteq \Borel(X)$ which coincides with $\cF$ up to $\mu$-null sets. We will frequently ignore null sets without mention. For instance, we will write $A = B$ if $A, B \subseteq X$ have null symmetric difference $\mu(A \symd B) = 0$.

For a product $X \times Y$, we write $\pi^X$ and $\pi^Y$ for the coordinate projection maps. If $X$ and $Y$ are standard Borel spaces, then we will naturally view $\Borel(X)$ and $\Borel(Y)$ as subsets of $\Borel(X \times Y)$ via their pre-images under $\pi^X$ and $\pi^Y$. Therefore if $A \subseteq X$ and $B \subseteq Y$ are Borel then we will write $A \cap B$ for the set $A \times B \subseteq X \times Y$. If $(X, \mu)$, $(Y, \nu)$, and $(Z, \eta)$ are probability spaces and $p : (X, \mu) \rightarrow (Z, \eta)$ and $q : (Y, \nu) \rightarrow (Z, \eta)$ are measure-preserving maps, then the \emph{relatively independent coupling} of $\mu$ and $\nu$ over $\eta$ is the measure $\mu \relprod{\eta} \nu$ on $X \times Y$ defined by
$$\mu \relprod{\eta} \nu = \int_Z \mu_z \times \nu_z \ d \eta(z),$$
where $\mu = \int_Z \mu_z \ d \eta(z)$ and $\nu = \int_Z \nu_z \ d \eta(z)$ are the disintegrations of $\mu$ and $\nu$ over $\eta$ (as given by the maps $p$ and $q$). The measure $\mu \relprod{\eta} \nu$ depends on the maps $p$ and $q$, but this dependence is omitted from the notation. When a group $G$ acts on each of $(X, \mu)$, $(Y, \nu)$, and $(Z, \eta)$ and the maps $p$ and $q$ are $G$-equivariant, we call $\mu \relprod{\eta} \nu$ the \emph{relatively independent joining} of $\mu$ and $\nu$ over $\eta$.

If two labeled partitions $\alpha = \{A_i : i \in I\}$ and $\beta = \{B_i : i \in I\}$ of $(X, \mu)$ have the same set of labels, then we define $\dB_\mu(\alpha, \beta) = \sum_{i \in I} \mu(A_i \symd B_i)$. For two (possibly unlabeled) partitions $\alpha$ and $\beta$, we write $\alpha \geq \beta$ if $\alpha$ is finer than $\beta$. The \emph{Shannon entropy} of a countable partition $\alpha$ of $(X, \mu)$ is $\sH(\alpha) = \sum_{A \in \alpha} - \mu(A) \cdot \log(\mu(A))$. We write $\sH_\mu(\alpha)$ when we wish to emphasize the measure. For a sub-$\sigma$-algebra $\cF \subseteq \Borel(X)$, let $f : (X, \mu) \rightarrow (Y, \nu)$ be the associated factor, and let $\mu = \int_Y \mu_y \ d \nu(y)$ be the corresponding disintegration of $\mu$ over $\nu$. If $\alpha$ is a countable partition of $X$ then the \emph{conditional Shannon entropy} of $\alpha$ relative to $\cF$ is
$$\sH(\alpha \given \cF) = \int_Y \sH_{\mu_y}(\alpha) \ d \nu(y).$$
If $\beta$ is a partition then we define $\sH(\alpha \given \beta) = \sH(\alpha \given \cF)$ where $\cF$ is the $\sigma$-algebra generated by $\beta$. A simple exercise shows that if $\alpha$ and $\beta$ are countable partitions and $\cF$ is a sub-$\sigma$-algebra with $\beta \subseteq \cF$ then
$$\sH(\alpha \given \cF) = \sum_{B \in \beta} \mu(B) \cdot \sH_B(\alpha \given \cF),$$
where we write $\sH_B$ for $\sH_{\mu_B}$ where $\mu_B$ is the measure defined by $\mu_B(C) = \mu(B \cap C) / \mu(B)$.

We recall some well known properties of Shannon entropy.

\begin{lem}[see \cite{Do11}] \label{lem:shan}
Let $(X, \mu)$ be a standard probability space, let $\alpha$ and $\beta$ be countable Borel partitions of $X$, and let $\cF$, $\Sigma$, and $(\cF_n)_{n \in \N}$ be sub-$\sigma$-algebras. Then
\begin{enumerate}
\item[\rm (i)] $\sH(\alpha \given \cF) = 0$ if and only if $\alpha \subseteq \cF$ mod null sets;
\item[\rm (ii)] if $\sH(\alpha) < \infty$ then $\sH(\alpha \given \cF) = \sH(\alpha)$ if and only if $\alpha$ and $\cF$ are independent;
\item[\rm (iii)] $\sH(\alpha \vee \beta \given \cF) = \sH(\beta \given \cF) + \sH(\alpha \given \beta \vee \cF)$;
\item[\rm (iv)] $\sH(\alpha \given \cF)$ equals the supremum of $\sH(\beta \given \cF)$ over finite partitions $\beta \leq \alpha$;
\item[\rm (v)] if $\Sigma \subseteq \cF$ then $\sH(\alpha \given \Sigma) \geq \sH(\alpha \given \cF)$;
\item[\rm (vi)] $\sH(\alpha \given \bigvee_{n \in \N} \cF_n) = \inf_{n \in \N} \sH(\alpha \given \cF_n)$ if the $\cF_n$'s are increasing and the right-hand side is finite;
\item[\rm (vii)] $\sH(\alpha \given \bigcap_{n \in \N} \cF_n) = \lim_{n \rightarrow \infty} \sH(\alpha \given \cF_n)$ if the $\cF_n$'s are decreasing and $\sH(\alpha) < \infty$.
\end{enumerate}
\end{lem}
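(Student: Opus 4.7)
The plan is to treat (i)--(vii) as seven separate but closely related assertions, each classical and documented in Downarowicz's monograph \cite{Do11}; my sketch simply indicates where each identity comes from. The uniform strategy is to reduce to the definition
\[ \sH(\alpha \given \cF) = \int_Y \sH_{\mu_y}(\alpha) \, d\nu(y), \]
where $\mu = \int_Y \mu_y \, d\nu(y)$ is the disintegration corresponding to $\cF$, and then to exploit concavity of $\phi(x) = -x \log x$ on $[0,1]$.

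First I would handle (i)--(iii). Since the integrand is non-negative, (i) reduces to the pointwise observation that $\sH_{\mu_y}(\alpha) = 0$ iff $\mu_y$ is supported on a single atom of $\alpha$, i.e., iff $\alpha$ is $\cF$-measurable modulo null sets. For (ii), Jensen applied to $\phi$ yields $\sH(\alpha \given \cF) \leq \sH(\alpha)$, with equality (when finite) forcing $\mu_y(A) = \mu(A)$ for $\nu$-a.e.\ $y$ and every $A \in \alpha$ by strict concavity, which is exactly independence of $\alpha$ and $\cF$. For (iii), the pointwise chain rule $\sH_{\mu_y}(\alpha \vee \beta) = \sH_{\mu_y}(\beta) + \sH_{\mu_y}(\alpha \given \beta)$ holds fiberwise; integrating, and using that the disintegration of $\mu$ through $\beta \vee \cF$ is obtained from the one through $\cF$ by further conditioning on atoms of $\beta$, yields the identity.

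Next I would derive (iv)--(vii). For (iv), choose an increasing sequence of finite partitions $\beta_n \leq \alpha$ with $\bigvee_n \beta_n = \alpha$; since $\sH_{\mu_y}(\beta_n) \nearrow \sH_{\mu_y}(\alpha)$ pointwise, monotone convergence delivers the supremum formula. Property (v) is conditional Jensen: with $\Sigma \subseteq \cF$, for each $A \in \alpha$ the identity $\mu[A \given \Sigma] = \Eavg[\mu[A \given \cF] \mid \Sigma]$ combined with concavity of $\phi$ gives the required inequality. For (vi) and (vii), I would invoke the martingale convergence theorem: for each $A \in \alpha$, the conditional probabilities $\mu[A \given \cF_n]$ converge a.s.\ to $\mu[A \given \bigvee_n \cF_n]$ (respectively $\mu[A \given \bigcap_n \cF_n]$), so by continuity of $\phi$ on $[0,1]$ the fiberwise entropies converge; monotonicity (for (vi)) or the finite-entropy dominator supplied by $\sH(\alpha) < \infty$ (for (vii)) then allows passage to the limit under the integral.

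The main obstacle is bookkeeping around possibly infinite entropies: one must consistently use monotone rather than dominated convergence when $\sH(\alpha) = \infty$, and conversely one must genuinely use the hypothesis $\sH(\alpha) < \infty$ in (vii) to secure a dominator, since (vii) can fail without it. Apart from this, every assertion reduces to a pointwise computation on the disintegration combined with concavity of $\phi$; once the measure-theoretic framework is in place, the proofs are routine, and I would refer the reader to \cite{Do11} for the full details.
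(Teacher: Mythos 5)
The paper offers no proof of this lemma at all---it is stated with the citation ``see \cite{Do11}'' and the standard facts are taken as known---so there is nothing to compare your argument against except that reference. Your sketch (disintegration over $\cF$, concavity of $\phi(t)=-t\log t$ with Jensen for (ii) and (v), the fiberwise chain rule for (iii), monotone convergence for (iv), and martingale convergence for (vi)--(vii)) is the standard route and is essentially correct. The only loose point is in (vi): the fiberwise quantities $\sum_{A\in\alpha}\phi(\mu[A\given\cF_n])$ are not pointwise monotone in $n$ (only their integrals are, by (v)), so ``passage to the limit under the integral by monotonicity'' is not literally available for countable $\alpha$; the usual fix is to prove (vi) first for finite partitions via bounded convergence and then upgrade using (iii) and (iv) together with the finiteness hypothesis, which is exactly the bookkeeping you flag and defer to \cite{Do11}.
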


The next lemma illustrates a useful property of countably generated $\sigma$-algebras.

\begin{lem}
Let $(X, \mu)$ be a standard probability space, let $\Sigma \subseteq \Borel(X)$ be a sub-$\sigma$-algebra, let $f : (X, \mu) \rightarrow (Y, \nu)$ be the factor associated to $\Sigma$, and let $\mu = \int_Y \mu_y \ d \nu(y)$ be the corresponding disintegration of $\mu$ over $\nu$. If $\cF \subseteq \Borel(X)$ is a countably generated $\sigma$-algebra, then for every countable partition $\alpha$
$$\sH(\alpha \given \cF \vee \Sigma) = \int_Y \sH_{\mu_y}(\alpha \given \cF) \ d \nu(y).$$
\end{lem}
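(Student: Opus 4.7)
The plan is to proceed in three steps: reduce to the case of a finite partition $\alpha$, handle the case where $\cF$ is generated by a single finite partition, and then bootstrap to countably generated $\cF$ by a limit argument.

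First I would reduce to finite $\alpha$. Enumerate $\alpha = \{A_1, A_2, \ldots\}$ and set $\beta_k = \{A_1, \ldots, A_{k-1}, \bigcup_{j \geq k} A_j\}$, a finite partition with $\beta_k \leq \beta_{k+1} \leq \alpha$. By Lemma \ref{lem:shan}(iv) applied to the ambient measure and pointwise to each $\mu_y$, we have $\sH(\beta_k \given \cF \vee \Sigma) \nearrow \sH(\alpha \given \cF \vee \Sigma)$ and $\sH_{\mu_y}(\beta_k \given \cF) \nearrow \sH_{\mu_y}(\alpha \given \cF)$ for $\nu$-a.e.\ $y$. Monotone convergence then propagates the finite-partition identity to the countable case, so from now on assume $\sH(\alpha) \leq \log|\alpha| < \infty$.

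Next I would prove the identity when $\cF = \salg(\beta)$ for a finite partition $\beta$. The preliminaries give the formula $\sH(\alpha \given \beta \vee \Sigma) = \sum_{B \in \beta} \mu(B) \cdot \sH_B(\alpha \given \Sigma)$, and by definition of conditional entropy $\sH_B(\alpha \given \Sigma) = \int_Y \sH_{(\mu_B)_y}(\alpha) \, df_*(\mu_B)(y)$, where $(\mu_B)_y$ denotes the disintegration of $\mu_B$ along $f$. The key computation is to identify these fiber measures: by a uniqueness-of-disintegration argument one checks that $f_*(\mu_B)$ is absolutely continuous with respect to $\nu$ with density $y \mapsto \mu_y(B)/\mu(B)$, and that $(\mu_B)_y = (\mu_y)_B$ (that is, $\mu_y$ restricted to $B$ and normalized) for $\nu$-a.e.\ $y$ with $\mu_y(B) > 0$. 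Substituting and exchanging the sum and integral yields
$$\sH(\alpha \given \beta \vee \Sigma) = \int_Y \sum_{B \in \beta} \mu_y(B) \cdot \sH_{(\mu_y)_B}(\alpha) \, d\nu(y) = \int_Y \sH_{\mu_y}(\alpha \given \beta) \, d\nu(y),$$
which is the desired identity in this case.

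Finally, for general countably generated $\cF$, since $\cF$ is genuinely the smallest $\sigma$-algebra containing a countable algebra, I can write $\cF = \bigvee_n \salg(\beta_n)$ for an increasing sequence $\beta_n$ of finite Borel partitions, the equality holding as set-theoretic $\sigma$-algebras (not merely mod null sets). Since $\sH(\alpha) < \infty$, Lemma \ref{lem:shan}(vi) applies both in the ambient space and pointwise in each $\mu_y$, giving
$$\sH(\alpha \given \cF \vee \Sigma) = \inf_n \sH(\alpha \given \beta_n \vee \Sigma) \quad \text{and} \quad \sH_{\mu_y}(\alpha \given \cF) = \inf_n \sH_{\mu_y}(\alpha \given \beta_n)$$
for $\nu$-a.e.\ $y$. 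Applying the finite case of step two to each $\beta_n$ and invoking dominated convergence (the integrands are bounded by $\log|\alpha|$) completes the proof. The main obstacle is the bookkeeping in step two: correctly identifying $(\mu_B)_y$ with $(\mu_y)_B$ and the Radon--Nikodym density of $f_*(\mu_B)$, which amounts to verifying uniqueness of disintegrations on a standard Borel space; everything else is routine manipulation of Shannon entropy.
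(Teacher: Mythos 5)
Your proof is correct and follows essentially the same route as the paper's: reduce to finite $\alpha$ via Lemma \ref{lem:shan}(iv) and monotone convergence, exhaust the genuinely countably generated $\cF$ by finite partitions $\beta_n$ and apply Lemma \ref{lem:shan}(vi) both for $\mu$ and fiberwise for each $\mu_y$, then pass to the limit. The only (harmless) divergence is in the finite-$\cF$ step, where the paper deduces $\sH_\mu(\alpha \given \beta_n \vee \Sigma) = \int_Y \sH_{\mu_y}(\alpha \given \beta_n)\, d\nu(y)$ from the chain rule (iii), while you obtain it by explicitly identifying the disintegration of $\mu_B$ as $(\mu_y)_B$ with density $\mu_y(B)/\mu(B)$ --- both are routine and valid.
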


\begin{proof}
By the monotone convergence theorem and Lemma \ref{lem:shan}.(iv) it suffices to prove this for finite partitions $\alpha$. Since $\cF$ is countably generated, there is an increasing sequence of finite partitions $\beta_n$ such that $\cF$ is the smallest $\sigma$-algebra containing every $\beta_n$. By Lemma \ref{lem:shan}.(vi), for every Borel probability measure $\eta$ on $X$ and every finite partition $\alpha$ we have $\sH_\eta(\alpha \given \cF) = \inf_n \sH_\eta(\alpha \given \beta_n)$. From Lemma \ref{lem:shan}.(iii) one can deduce that $\sH_\mu(\alpha \given \beta_n \vee \Sigma) = \int_Y \sH_{\mu_y}(\alpha \given \beta_n) \ d \nu(y)$. Now apply the monotone convergence theorem.
\end{proof}

The set of all countable partitions $\alpha$ with $\sH(\alpha) < \infty$ becomes a complete metric space under the \emph{Rokhlin metric} $\dR_\mu$ defined by
$$\dR_\mu(\alpha, \beta) = \sH(\alpha \given \beta) + \sH(\beta \given \alpha).$$
It is known that for every $k \in \N$, $\dR_\mu(\alpha, \beta)$ is uniformly bounded above by $\dB_\mu(\alpha, \beta)$ for $k$-piece labeled partitions $\alpha$ and $\beta$ \cite[Fact 1.7.7]{Do11}. It is not difficult to check that $|\sH(\alpha) - \sH(\beta)| \leq \dR_\mu(\alpha, \beta)$ and that $|\sH(\gamma \given \alpha) - \sH(\gamma \given \beta)| \leq 2 \cdot \dR_\mu(\alpha, \beta)$. Also, if $G \acts (X, \mu)$ is a {\pmp} action and $T \subseteq G$ is finite then $\dR_\mu(\alpha^T, \beta^T) \leq |T| \cdot \dR_\mu(\alpha, \beta)$. Proofs of these facts can be found in the appendix to \cite{S14a}.

Next we recall some results on Rokhlin entropy which we will need.

\begin{thm}[Seward--Tucker-Drob, in preparation (see \cite{ST14} for free actions)] \label{thm:robin}
Let $G$ be a countable group and let $G \acts (X, \mu)$ be an aperiodic {\pmp} action. Then for every $\epsilon > 0$ there is a factor $G \acts (Y, \nu)$ of $(X, \mu)$, say via $f : (X, \mu) \rightarrow (Y, \nu)$, such that $\rh_G(Y, \nu) < \epsilon$ and $\Stab(f(x)) = \Stab(x)$ for every $x \in X$.
\end{thm}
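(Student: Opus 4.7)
The plan is to realize $(Y,\nu)$ as the quotient $(X,\mu)/\Sigma$, where $\Sigma = \salg_G(\alpha)$ for a single finite partition $\alpha$ of $X$ with $\sH_\mu(\alpha) < \epsilon$. The key property to arrange is that the equivariant name map $\pi_\alpha \colon X \to \alpha^G$, defined by $\pi_\alpha(x)(g) = $ the piece of $\alpha$ containing $g^{-1}x$, satisfies $\Stab(\pi_\alpha(x)) = \Stab(x)$ for $\mu$-a.e.\ $x \in X$. Granted such an $\alpha$, the factor map $f$ coming from $\Sigma$ satisfies $f(x) = f(y)$ iff $\pi_\alpha(x) = \pi_\alpha(y)$, so $\Stab(f(x)) = \Stab(\pi_\alpha(x)) = \Stab(x)$; and the descent $\bar\alpha$ of $\alpha$ to $Y$ generates $\Borel(Y)$ as a $G$-invariant $\sigma$-algebra with $\sH_\nu(\bar\alpha) = \sH_\mu(\alpha)$, so directly from the definition of Rokhlin entropy $\rh_G(Y,\nu) \leq \sH_\mu(\alpha) < \epsilon$.

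To produce such an $\alpha$, I would first reduce to the ergodic case via ergodic decomposition. For an ergodic aperiodic action, note that $\Stab(x) \subseteq \Stab(\pi_\alpha(x))$ is automatic by equivariance, so the ``bad set'' is $B_g = \{x : g \in \Stab(\pi_\alpha(x)) \setminus \Stab(x)\}$ for $g \in G$. Using the equivariance $\pi_\alpha(jx) = j \cdot \pi_\alpha(x)$, one checks $j \cdot B_g = B_{jgj^{-1}}$, so the union $\widetilde{B}_g$ over all conjugates of $g$ is $G$-invariant. By ergodicity $\mu(\widetilde{B}_g) \in \{0,1\}$, and a direct computation shows $\mu(\widetilde{B}_g) = 0$ provided $\alpha$ is not ``$g$-symmetric'' on a positive-measure subset of $X \setminus F_g$, where $F_g = \{x : gx = x\}$. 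I would then run a Baire category argument on the Polish space $\Prt_k$ of ordered $k$-piece partitions under the Rokhlin metric $\dR_\mu$, choosing $k$ large enough that the open subset $\{\alpha : \sH_\mu(\alpha) < \epsilon\}$ is non-empty. For each of the countably many relevant $g \in G$, the set of $\alpha$ failing the non-symmetry condition is closed and nowhere dense, since given any $\alpha$ and any $\delta > 0$ one may perturb $\alpha$ by swapping labels on a small positive-measure non-$g$-invariant subset of $X \setminus F_g$ to produce a nearby $\alpha'$ that breaks the symmetry. Baire's theorem, using $|\sH_\mu(\alpha) - \sH_\mu(\beta)| \leq \dR_\mu(\alpha,\beta)$ to keep the perturbation inside the entropy budget, yields a dense $G_\delta$ of admissible $\alpha$.

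The main obstacle is handling the non-free case cleanly. When $\Stab(x)$ varies measurably and $\mu(F_g) > 0$ for various $g \neq 1_G$, one must avoid introducing accidental symmetries of $\pi_\alpha(x)$ lying in $\Stab(\pi_\alpha(x)) \setminus \Stab(x)$, and this must be done coherently across different orbit types (in particular for elements of the normalizer $N_G(\Stab(x))$, which preserve the stabilizer under conjugation but need not lie in it). A clean implementation is to stratify $X$ via the IRS factor $\sigma \colon X \to \Sub(G)$, with $G$ acting on $\Sub(G)$ by conjugation, and carry out the Baire category construction fiberwise above $\sigma$, combining the resulting partitions through a measurable selection argument. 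Since small Rokhlin-metric perturbations produce small changes in Shannon entropy, the combined partition $\alpha$ stays below the $\epsilon$-budget, and the fiberwise stabilizer conditions assemble to the required global condition $\Stab(\pi_\alpha(x)) = \Stab(x)$ almost everywhere.
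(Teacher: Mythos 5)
First, a point of comparison: the paper does not prove this statement at all. It is quoted as a theorem of Seward--Tucker-Drob ``in preparation,'' with the free case appearing in \cite{ST14}, so there is no in-paper argument to measure your proposal against; the result is the content of a separate paper, which already signals that it is not expected to follow from a short genericity argument. Your overall architecture is nonetheless the right one: it does suffice to produce a single finite (or countable, finite-entropy) partition $\alpha$ with $\sH(\alpha \given \sinv_G) < \epsilon$ whose name map $\pi_\alpha$ satisfies $\Stab(\pi_\alpha(x)) = \Stab(x)$ a.e., and then take $Y$ to be the factor of $\salg_G(\alpha)$; the inclusion $\Stab(x) \subseteq \Stab(\pi_\alpha(x))$ is indeed automatic, and the conjugation identity $j \cdot B_g = B_{jgj^{-1}}$ is correct.

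The genuine gap is in the step where you claim the bad sets can be killed by a cheap perturbation: ``one may perturb $\alpha$ by swapping labels on a small positive-measure non-$g$-invariant subset of $X \setminus F_g$.'' To remove $x$ from $B_g$ you must produce \emph{some} $h \in G$ with $\alpha'(h^{-1}x) \neq \alpha'(h^{-1}gx)$, and you must do this for \emph{almost every} $x \in B_g$ simultaneously. A single swap set $D$ only helps those $x$ for which some pair $(h^{-1}x, h^{-1}gx)$ is split by $D$ (and whose common $\alpha$-class is one of the two swapped classes); the hypothesis that $D$ is not $g$-invariant is a measure-theoretic condition that guarantees this for a positive-measure set of orbit-pairs, not for almost every orbit, and the relevant pairs involve the varying conjugates $h^{-1}gh$ rather than $g$ itself. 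Making the symmetry fail along a.e.\ orbit while keeping the Shannon cost of the modification below an arbitrary $\delta$ is precisely the hard content of the theorem: the known route is a biased-coin version of the Ab\'{e}rt--Weiss random-coloring argument (compare Lemma \ref{lem:awcent} and the separating partitions of Lemma \ref{lem:part}), where one colors the classes of a partition separating $(T^{-1} \cup T^{-1}g)\cdot x$ independently with probabilities $(\delta', 1-\delta')$ and uses a second-moment estimate; this produces a refinement rather than a label swap, so it also breaks your fixed-$k$ Baire-category framework (and the upper semicontinuity of $\alpha \mapsto \mu(B_g(\alpha))$, which you need for the good sets to be open, holds for the labeled metric $\dB_\mu$ but is not clear for $\dR_\mu$, under which coarsenings are close to their refinements). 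Finally, the non-free case --- which you correctly identify as the main obstacle --- is where essentially all of the difficulty of the Seward--Tucker-Drob theorem lives, and ``stratify via the IRS factor and run the argument fiberwise with a measurable selection'' is a placeholder rather than an argument: the fibers of the IRS map are not $G$-invariant, so there is no induced action on them to which a fiberwise Baire category argument could apply. As it stands the proposal reduces the theorem to a claim that is essentially equivalent to the theorem itself.
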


The following is an important and quite useful property of Rokhlin entropy. We call this the \emph{sub-additivity property} of Rokhlin entropy.

\begin{lem}[Alpeev--Seward \cite{AS}] \label{lem:add2}
Let $G \acts (X, \mu)$ be a {\pmp} action, let $\mathcal{C} \subseteq \Borel(X)$, let $\Sigma$ be a $G$-invariant sub-$\sigma$-algebra, and let $(\cF_n)_{n \in \N}$ be an increasing sequence of $G$-invariant sub-$\sigma$-algebras with $\mathcal{C} \subseteq \bigvee_{n \in \N} \cF_n \vee \Sigma$. Then
\begin{equation} \label{eqn:add2}
\rh_{G,\mu}(\mathcal{C} \given \Sigma) \leq \rh_{G,\mu}(\cF_1 \given \Sigma) + \sum_{n \geq 2} \rh_{G,\mu}(\cF_n \given \cF_{n-1} \vee \Sigma).
\end{equation}
\end{lem}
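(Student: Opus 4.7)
Fix $\epsilon > 0$ and write $\cF_0 = \{\varnothing, X\}$. The strategy is to build a single countable partition $\alpha$ that is admissible in the definition of $\rh_{G,\mu}(\mathcal{C} \given \Sigma)$ and whose Shannon entropy is within $\epsilon$ of the right-hand side of \eqref{eqn:add2}. For each $n \geq 1$, I invoke the definition of outer Rokhlin entropy to select a countable partition $\alpha_n$ with $\cF_n \subseteq \salg_G(\alpha_n) \vee \cF_{n-1} \vee \Sigma \vee \sinv_G$ and
$\sH(\alpha_n \given \cF_{n-1} \vee \Sigma \vee \sinv_G) < \rh_{G,\mu}(\cF_n \given \cF_{n-1} \vee \Sigma) + \epsilon / 2^n.$
A short induction on $n$ then shows $\salg_G(\alpha_1 \vee \cdots \vee \alpha_n) \vee \Sigma \vee \sinv_G \supseteq \cF_n$ (using that $\cF_{n-1}$ is $G$-invariant and already contained in the inductive hypothesis), so the countable partition $\alpha := \bigvee_n \alpha_n$ satisfies $\mathcal{C} \subseteq \bigvee_n \cF_n \vee \Sigma \subseteq \salg_G(\alpha) \vee \Sigma \vee \sinv_G$ and is therefore admissible.

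The core task is to bound $\sH(\alpha \given \Sigma \vee \sinv_G)$. By iterating the chain rule (Lemma \ref{lem:shan}.(iii)) on finite truncations $\alpha_1 \vee \cdots \vee \alpha_N$ and taking the supremum over $N$ via Lemma \ref{lem:shan}.(iv), this entropy equals $\sum_{n \geq 1} \sH(\alpha_n \given \sigma(\alpha_1 \vee \cdots \vee \alpha_{n-1}) \vee \Sigma \vee \sinv_G)$, where $\sigma(\cdot)$ denotes the $\sigma$-algebra generated by a partition (without any $G$-closure). The goal is to bound each summand by $\sH(\alpha_n \given \cF_{n-1} \vee \Sigma \vee \sinv_G) + \epsilon / 2^n$, which on summation and sending $\epsilon \to 0$ yields \eqref{eqn:add2}.

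The main technical obstacle is that $\sigma(\alpha_1 \vee \cdots \vee \alpha_{n-1})$ is strictly smaller than its $G$-closure $\salg_G(\alpha_1 \vee \cdots \vee \alpha_{n-1})$, and only the latter is guaranteed to contain $\cF_{n-1}$; direct monotonicity of Shannon entropy (Lemma \ref{lem:shan}.(v)) therefore points the wrong way. The fix is to refine the inductive choice of $\alpha_n$: at stage $n$, having fixed $\alpha_1, \ldots, \alpha_{n-1}$, I instead select $\alpha_n$ nearly optimal with respect to the $G$-invariant conditioning $\salg_G(\alpha_1 \vee \cdots \vee \alpha_{n-1}) \vee \Sigma$, which contains $\cF_{n-1} \vee \Sigma$ and hence, by monotonicity of outer Rokhlin entropy in its conditioning $\sigma$-algebra, realizes a value no larger than $\rh_{G,\mu}(\cF_n \given \cF_{n-1} \vee \Sigma)$. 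I then use Lemma \ref{lem:shan}.(vi) to approximate this $G$-closure from below by $\sigma((\alpha_1 \vee \cdots \vee \alpha_{n-1})^{T_n})$ for a sufficiently large finite $T_n \subseteq G$, so that the conditional entropy at this finite-translate level lies within $\epsilon / 2^n$ of the infimum, and arrange the construction so that the needed translates are absorbed into the generating partition without inflating the chain-rule conditioning at stage $n$. The hard part is performing this absorption while keeping the bookkeeping consistent across all stages, but once this is in place the required estimate for each summand follows and summation completes the proof.
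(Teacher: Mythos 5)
First, a point of comparison: the paper does not prove this lemma at all --- it is quoted from Alpeev--Seward \cite{AS}, which is cited as in preparation --- so there is no in-paper argument to measure your proposal against. Judged on its own terms, your proposal gets the setup right: the choice of near-optimal partitions $\alpha_n$, the inductive verification that $\alpha = \bigvee_n \alpha_n$ is admissible, and, most importantly, the correct identification of the crux, namely that the chain rule conditions each increment on $\sigma(\alpha_1 \vee \cdots \vee \alpha_{n-1})$ rather than on its $G$-closure, so that monotonicity of conditional Shannon entropy points the wrong way.

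The proposed repair, however, does not close this gap. Approximating $\salg_G(\alpha_1 \vee \cdots \vee \alpha_{n-1})$ from below by $\sigma\bigl((\alpha_1 \vee \cdots \vee \alpha_{n-1})^{T_n}\bigr)$ is fine, but to use the resulting estimate you must make the translated partition appear in the conditioning of the chain-rule identity, and that identity only conditions on the join of the partitions that actually make up $\alpha$. If you add the translates to $\alpha$, their entropy is charged to the earlier terms of the telescoping sum: $\sH(\beta^{T} \given \Sigma \vee \sinv_G)$ can equal $|T| \cdot \sH(\beta \given \Sigma \vee \sinv_G)$ (take an independent generator of a Bernoulli factor), so the inflation is by a factor of $|T_n|$ and cannot be absorbed into any $\epsilon/2^n$ bookkeeping; if you do not add them, then conditioning on them at stage $n$ yields $\sH(\alpha_n \given \sigma(\cdots) \vee \Sigma \vee \sinv_G) \geq \sH(\alpha_n \given \sigma(\cdots^{T_n}) \vee \Sigma \vee \sinv_G)$, a lower bound on the increment rather than the upper bound you need. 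No choice of the $T_n$ escapes this: the entropy cost of the translates is precisely the information you are trying not to pay for twice. Closing the gap requires a genuinely different tool --- a conditional encoding/generator lemma of the type underlying Lemma \ref{lem:mg} of this paper (also imported from \cite{AS}), which replaces each $\alpha_n$ by a partition $\alpha_n'$ whose \emph{absolute} Shannon entropy, not merely its entropy conditioned on $\cF_{n-1} \vee \Sigma \vee \sinv_G$, is close to $\rh_{G,\mu}(\cF_n \given \cF_{n-1} \vee \Sigma)$ while still generating $\cF_n$ over $\cF_{n-1} \vee \Sigma \vee \sinv_G$. With such $\alpha_n'$ one bounds $\sH(\bigvee_n \alpha_n' \given \Sigma \vee \sinv_G) \leq \sum_n \sH(\alpha_n')$ crudely and the chain-rule difficulty disappears; that encoding lemma is where the real work lies, and it is not derivable from the elementary properties in Lemma \ref{lem:shan}.
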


The last result we need is a formula for the Rokhlin entropy of an inverse limit of actions.

\begin{thm}[Alpeev--Seward \cite{AS}] \label{thm:ks}
Let $G \acts (X, \mu)$ be a {\pmp} action and let $\cF$ be a $G$-invariant sub-$\sigma$-algebra. Suppose that $G \acts (X, \mu)$ is the inverse limit of actions $G \acts (X_n, \mu_n)$. Identify each $\Borel(X_n)$ as a sub-$\sigma$-algebra of $X$ in the natural way. Then
\begin{equation*}
\rh_G(X, \mu \given \cF) < \infty \Longleftrightarrow
\left\{\begin{array}{c}
\displaystyle{\inf_{n \in \N} \sup_{m \geq n} \rh_{G,\mu}(\Borel(X_m) \given \Borel(X_n) \vee \cF) = 0}\\
\displaystyle{\text{and} \quad \forall m \ \rh_{G,\mu}(\Borel(X_m) \given \cF) < \infty.}
\end{array}\right\}\end{equation*}
Furthermore, when $\rh_G(X, \mu \given \cF) < \infty$ we have
\begin{equation*}
\rh_G(X, \mu \given \cF) = \sup_{m \in \N} \rh_{G,\mu}(\Borel(X_m) \given \cF).
\end{equation*}
\end{thm}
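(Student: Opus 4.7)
My plan is to bootstrap the entire theorem from the sub-additivity property (Lemma \ref{lem:add2}) applied along a carefully chosen subsequence $(n_k)$. Two standing observations drive the argument. First, since $(X, \mu)$ is the inverse limit of the $(X_n, \mu_n)$, the sub-$\sigma$-algebras $\Borel(X_n)$ are $G$-invariant, increasing in $n$, and satisfy $\bigvee_n \Borel(X_n) = \Borel(X)$. Second, $\rh_G(X, \mu \given \cF) = \rh_{G,\mu}(\Borel(X) \given \cF)$ because the containment $\salg_G(\alpha) \vee \cF \vee \sinv_G \subseteq \Borel(X)$ is automatic, so the two infima are taken over the same partitions. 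The easy direction follows immediately: any partition $\alpha$ feasible in $\rh_G(X, \mu \given \cF)$ also satisfies $\Borel(X_m) \subseteq \salg_G(\alpha) \vee \cF \vee \sinv_G$, giving $\rh_{G,\mu}(\Borel(X_m) \given \cF) \leq \rh_G(X, \mu \given \cF)$ for every $m$. This yields the supremum inequality and, when $\rh_G(X, \mu \given \cF)$ is finite, the pointwise finiteness condition.

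To complete the forward implication of the equivalence, I would assume $\rh_G(X, \mu \given \cF) < \infty$ and pick a partition $\alpha$ with $\sH(\alpha \given \cF \vee \sinv_G) < \infty$ and $\salg_G(\alpha) \vee \cF \vee \sinv_G = \Borel(X)$. For each $n \leq m$ this same $\alpha$ witnesses $\Borel(X_m) \subseteq \salg_G(\alpha) \vee \Borel(X_n) \vee \cF \vee \sinv_G$, so
$$\sup_{m \geq n} \rh_{G,\mu}(\Borel(X_m) \given \Borel(X_n) \vee \cF) \leq \sH(\alpha \given \Borel(X_n) \vee \cF \vee \sinv_G).$$
The sequence $\Borel(X_n) \vee \cF \vee \sinv_G$ is increasing with join $\Borel(X)$, and the right-hand conditional entropies are all dominated by $\sH(\alpha \given \cF \vee \sinv_G) < \infty$ via Lemma \ref{lem:shan}.(v). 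Hence Lemma \ref{lem:shan}.(vi) combined with Lemma \ref{lem:shan}.(i) gives $\inf_n \sH(\alpha \given \Borel(X_n) \vee \cF \vee \sinv_G) = 0$, which proves the inf-sup condition.

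Conversely, assume the inf-sup condition and pointwise finiteness, and let $s = \sup_m \rh_{G,\mu}(\Borel(X_m) \given \cF)$. Fix $\epsilon > 0$ and recursively select indices $n_1 < n_2 < \cdots$ using the inf-sup hypothesis so that $\sup_{m \geq n_k} \rh_{G,\mu}(\Borel(X_m) \given \Borel(X_{n_k}) \vee \cF) < 2^{-k-1}\epsilon$; this forces $\rh_{G,\mu}(\Borel(X_{n_k}) \given \Borel(X_{n_{k-1}}) \vee \cF) < 2^{-k}\epsilon$ for every $k \geq 2$. Since $\bigvee_k \Borel(X_{n_k}) = \Borel(X)$, applying Lemma \ref{lem:add2} with $\mathcal{C} = \Borel(X)$, $\Sigma = \cF$, and $\cF_k = \Borel(X_{n_k})$ yields
$$\rh_G(X, \mu \given \cF) \leq \rh_{G,\mu}(\Borel(X_{n_1}) \given \cF) + \tfrac{\epsilon}{2}.$$
Pointwise finiteness bounds the first term, establishing $\rh_G(X, \mu \given \cF) < \infty$. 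Once finiteness is in hand the easy direction gives $s \leq \rh_G(X, \mu \given \cF) < \infty$, so the same displayed inequality reads $\rh_G(X, \mu \given \cF) \leq s + \tfrac{\epsilon}{2}$; letting $\epsilon \to 0$ produces the matching inequality $\rh_G(X, \mu \given \cF) \leq s$ and hence the supremum formula.

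The only genuinely delicate step is the recursive extraction of a subsequence with geometrically summable increments, since this is what converts the qualitative inf-sup hypothesis into the quantitative summable bound that sub-additivity requires. Verifying $\bigvee_k \Borel(X_{n_k}) = \Borel(X)$ is free here because the $\Borel(X_n)$'s already increase to $\Borel(X)$, and the remaining work amounts to verifying the hypotheses of Lemma \ref{lem:shan}.(vi) at the right moment.
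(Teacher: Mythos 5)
The paper does not actually prove this statement: it is quoted as Theorem \ref{thm:ks} from Alpeev--Seward \cite{AS} (listed as ``in preparation''), so there is no in-paper proof to compare against. Judged on its own, your argument is correct and is exactly the kind of proof one would expect: it uses only the identification $\rh_G(X,\mu\given\cF)=\rh_{G,\mu}(\Borel(X)\given\cF)$, monotonicity of outer Rokhlin entropy under shrinking the target $\sigma$-algebra, Lemma \ref{lem:shan}.(i),(v),(vi) for the forward implication, and the sub-additivity Lemma \ref{lem:add2} along a cofinal subsequence with geometrically summable increments for the converse. I checked the delicate points and they all go through: the hypothesis of Lemma \ref{lem:shan}.(vi) that $\inf_n \sH(\alpha\given\Borel(X_n)\vee\cF\vee\sinv_G)$ be finite is supplied by $\sH(\alpha\given\cF\vee\sinv_G)<\infty$; the $\sigma$-algebras $\Borel(X_{n_k})$ are $G$-invariant, increasing, and join to $\Borel(X)$ as Lemma \ref{lem:add2} requires; and the bound $\rh_{G,\mu}(\Borel(X_{n_1})\given\cF)\le s$ converts your displayed inequality into $\rh_G(X,\mu\given\cF)\le s+\epsilon/2$, which together with the unconditional easy inequality $s\le\rh_G(X,\mu\given\cF)$ and the forward implication yields the supremum formula whenever $\rh_G(X,\mu\given\cF)<\infty$. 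This is a complete derivation from results stated in the paper.
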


Finally, we briefly review the definition of sofic entropy. A \emph{sofic approximation} to a group $G$ is a sequence $(d_n)_{n \in \N}$ of integers with $\lim_{n \rightarrow \infty} d_n = \infty$ and a sequence of maps (not necessarily homomorphisms) $\sigma_n : G \rightarrow \Sym(d_n)$ such that
\begin{enumerate}
\item[\rm (1)] for all $g, h \in G$, $\lim_{n \rightarrow \infty} \frac{1}{d_n} \cdot | \{0 \leq i < d_n : \sigma_n(g) \circ \sigma_n(h)(i) = \sigma_n(g h)(i)\}| = 1$, and
\item[\rm (2)] for all $1_G \neq g \in G$, $\lim_{n \rightarrow \infty} \frac{1}{d_n} \cdot |\{0 \leq i < d_n : \sigma_n(g)(i) \neq i\}| = 1$.
\end{enumerate}
The group $G$ is called \emph{sofic} if it admits a sofic approximation $\Sigma$. The class of sofic groups contains the class of countable amenable groups and the class of residually finite groups. It is a well known open problem to determine if all countable groups are sofic. For a survey of sofic groups, see \cite{Pe08}.

Now let $G$ be a sofic group with sofic approximation $\Sigma = (\sigma_n : G \rightarrow \Sym(d_n))_{n \in \N}$, and let $G \acts (X, \mu)$ be a {\pmp} action. We will use the notation of \cite[Definition 3.3]{KL13} in order to define the sofic entropy $h_G^\Sigma(X, \mu)$. It is well known that without loss of generality, we may assume that $X$ is a compact metric space with metric $\rho$ and that $G$ acts continuously on $X$. For two maps $\phi, \psi : \{0, \ldots, d-1\} \rightarrow X$ define
$$\rho_2(\phi, \psi) = \Bigg( \frac{1}{d} \cdot \sum_{i = 0}^{d-1} \rho(\phi(i), \psi(i))^2 \Bigg)^{1/2}.$$
For $Y \subseteq X^{\{0, \ldots, d-1\}}$ and $\epsilon > 0$ write $N_\epsilon(Y, \rho_2)$ for the maximum cardinality of sets $A \subseteq Y$ consisting of points which are pairwise $\rho_2$-distance at least $\epsilon$ apart. Let $\Prob(X)$ denote the compact space of all Borel probability measures on $X$, equipped with the weak$^*$-topology. For an open neighborhood $U \subseteq \Prob(X)$ of $\mu$, finite $T \subseteq G$, and $\delta > 0$, define $\Map(\rho, T, U, \delta, \sigma_n)$ to be the set of all maps $\phi : \{0, \ldots, d_n-1\} \rightarrow X$ satisfying
\begin{enumerate}
\item[\rm (1)] $\rho_2(\phi \circ \sigma_n(t), t \cdot \phi) < \delta$ for all $t \in T$, and
\item[\rm (2)] $\phi_*(u_{d_n}) \in U$.
\end{enumerate}
For $\epsilon > 0$ define
$$h_G^{\Sigma, \epsilon}(X, \mu) = \inf_T \inf_U \inf_{\delta > 0} \limsup_{n \rightarrow \infty} \frac{1}{d_n} \cdot \log N_\epsilon(\Map(\rho, T, U, \delta, \sigma_n), \rho_2),$$
where $T$ ranges over all finite subsets of $G$ and $U$ ranges over all open neighborhoods of $\mu$. The \emph{$\Sigma$-sofic entropy of $G \acts (X, \mu)$} is then defined to be $h_G^\Sigma(X, \mu) = \sup_{\epsilon > 0} h_G^{\Sigma, \epsilon}(X, \mu)$. Note that $h_G^\Sigma(X, \mu) \in \{- \infty\} \cup [0, + \infty]$, with the case $h_G^\Sigma(X, \mu) = - \infty$ occurring when $\Map(\rho, T, U, \delta, \sigma_n)$ is empty for some $T$, $U$, and $\delta$ and all sufficiently large $n$.

\section{Weak containment of joinings} \label{sec:join}

For a group homomorphism $a : G \rightarrow \Aut(X, \mu)$, we write $G \acts^a (X, \mu)$ for the corresponding {\pmp} action. Recall that if $G \acts^a (X, \mu)$ and $G \acts^b (Y, \nu)$ are {\pmp} actions, then a \emph{joining} of $a$ with $b$ is an $a \times b$-invariant probability measure $\lambda$ on $X \times Y$ which has marginals $\mu$ and $\nu$ on $X$ and $Y$, respectively. We will typically view $a$ as fixed while both $G \acts^b (Y, \nu)$ and $\lambda$ vary. Note that from a joining $\lambda$ one can recover $\nu$ by projecting to $Y$, and the particular choice of $Y$ is unimportant since all uncountable standard Borel spaces are Borel isomorphic. Therefore, for a fixed {\pmp} action $G \acts^a (X, \mu)$, we can fix an uncountable standard Borel space $Y$ and then describe all possible joinings with $a$ by pairs $(b, \lambda)$ where $G \acts^b Y$ is a Borel action and $\lambda$ is an $a \times b$-invariant probability measure on $X \times Y$ whose marginal on $X$ is $\mu$ (note the marginal on $Y$ is automatically $b$-invariant).

\begin{defn} \label{defn:wcj}
Fix a {\pmp} action $G \acts^a (X, \mu)$ and fix an uncountable standard Borel space $Y$. We define the \emph{space of joinings with $a$}, $\sjoin(a)$, to be the set of all pairs $(b, \lambda)$ where $G \acts^b Y$ is a Borel action and $\lambda$ is a $a \times b$-invariant Borel probability measure on $X \times Y$ with $\pi^X_*(\lambda) = \mu$. We topologize $\sjoin(a)$ as follows. For $(b_2, \lambda_2) \in \sjoin(a)$, a neighborhood base for this point is given by the sets
\begin{equation*}
\left\{(b_1, \lambda_1) :
\begin{array}{l}
\exists \text{ ordered partition } \gamma_1 \text{ of } Y \text{ with}\\
\quad\left|\dist_{\lambda_1}(\cP \vee \gamma_1^{b_1(T)}) - \dist_{\lambda_2}(\cP \vee \gamma_2^{b_2(T)}) \right| < \epsilon
\end{array}
\right\}
\end{equation*}
where $\cP$ is a finite ordered partition of $X$, $T \subseteq G$ is finite, $\epsilon > 0$, and $\gamma_2$ is a finite ordered partition of $Y$. We remark that this topology is not metrizable and not even Hausdorff.
\end{defn}

At times we will use an alternate system of neighborhood bases which give the same topology.

\begin{lem} \label{lem:iso}
Let $G \acts^a (X, \mu)$ be a {\pmp} action, and let $(b_2, \lambda_2) \in \sjoin(a)$. Let $\mathcal{A}$ be an algebra of Borel subsets of $Y$ with the property that the smallest $b_2$-invariant sub-$\sigma$-algebra containing $\mathcal{A}$ is the entire Borel $\sigma$-algebra $\Borel(Y)$ modulo $\pi^Y_*(\lambda_2)$-null sets. Then, for the topology defined in Definition \ref{defn:wcj}, a neighborhood base for $(b_2, \lambda_2)$ is given by the sets
\begin{equation*}
\left\{(b_1, \lambda_1) :
\begin{array}{l}
\exists \text{ ordered partition } \alpha_1 \text{ of } Y \text{ with}\\
\quad\left|\dist_{\lambda_1}(\cP \vee \alpha_1^{b_1(T)}) - \dist_{\lambda_2}(\cP \vee \alpha_2^{b_2(T)}) \right| < \epsilon
\end{array}
\right\}
\end{equation*}
where $\cP$ is a finite ordered partition of $X$, $T \subseteq G$ is finite, $\epsilon > 0$, and $\alpha_2 \subseteq \mathcal{A}$ is a finite ordered partition.
\end{lem}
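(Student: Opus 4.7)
\emph{Proof plan.} One containment is immediate: any restricted neighborhood of $(b_2,\lambda_2)$ in Lemma \ref{lem:iso} is also a neighborhood in the base of Definition \ref{defn:wcj}, since $\alpha_2 \subseteq \mathcal{A}$ is in particular a finite ordered partition of $Y$. The nontrivial direction is the reverse: given an original neighborhood $V$ of $(b_2,\lambda_2)$ with data $(\cP, T, \epsilon, \gamma_2)$, I want to produce a restricted neighborhood $V' \subseteq V$. Set $\nu_2 = \pi^Y_*(\lambda_2)$. My plan has three steps.

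\emph{Step 1 (approximation inside the shift algebra of $\mathcal{A}$).} By hypothesis, the algebra of finite Boolean combinations of sets $b_2(g)\cdot A$ with $g \in G$, $A \in \mathcal{A}$ is $\dB_{\nu_2}$-dense in $\Borel(Y)$. A routine approximation then yields, for any prescribed $\delta>0$, a finite $S \subseteq G$, a finite ordered partition $\alpha_2 \subseteq \mathcal{A}$, and a labeled partition $\gamma_2'$ of $Y$ (with the same label set as $\gamma_2$) whose pieces are specified unions of atoms of $\alpha_2^{b_2(S)}$, such that $\dB_{\nu_2}(\gamma_2, \gamma_2') < \delta$. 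I record the combinatorial rule $\Phi$ that expresses each label of $\gamma_2'$ as a union of indices of atoms of $\alpha_2^{b_2(S)}$.

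\emph{Step 2 (transfer to the $b_1$-side).} Let $V'$ be the restricted neighborhood of $(b_2,\lambda_2)$ with data $(\cP, TS, \epsilon', \alpha_2)$, where $\epsilon'>0$ will be chosen at the end. If $(b_1,\lambda_1) \in V'$ with witness partition $\alpha_1$, apply the same combinatorial rule $\Phi$ to $\alpha_1^{b_1(S)}$ to define a labeled partition $\gamma_1$ of $Y$. By construction, $\cP \vee \gamma_1^{b_1(T)}$ is a fixed coarsening of $\cP \vee \alpha_1^{b_1(TS)}$ under the same label map that coarsens $\cP \vee \alpha_2^{b_2(TS)}$ to $\cP \vee (\gamma_2')^{b_2(T)}$. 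Since coarsening under a common label map does not increase the $\ell^1$ distance between distributions, the bound $|\dist_{\lambda_1}(\cP \vee \alpha_1^{b_1(TS)}) - \dist_{\lambda_2}(\cP \vee \alpha_2^{b_2(TS)})| < \epsilon'$ yields $|\dist_{\lambda_1}(\cP \vee \gamma_1^{b_1(T)}) - \dist_{\lambda_2}(\cP \vee (\gamma_2')^{b_2(T)})| < \epsilon'$.

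\emph{Step 3 (triangle inequality).} From $\dB_{\nu_2}(\gamma_2, \gamma_2') < \delta$ one obtains $\dB_{\nu_2}(\gamma_2^{b_2(T)}, (\gamma_2')^{b_2(T)}) \leq |T|\delta$, and therefore $|\dist_{\lambda_2}(\cP \vee \gamma_2^{b_2(T)}) - \dist_{\lambda_2}(\cP \vee (\gamma_2')^{b_2(T)})| \leq 2|T|\delta$. Choosing $\delta = \epsilon/(4|T|)$ and $\epsilon' = \epsilon/2$, the triangle inequality gives $|\dist_{\lambda_1}(\cP \vee \gamma_1^{b_1(T)}) - \dist_{\lambda_2}(\cP \vee \gamma_2^{b_2(T)})| < \epsilon$, so $(b_1,\lambda_1) \in V$. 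Hence $V' \subseteq V$, finishing the proof.

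\emph{Main obstacle.} The only real subtlety is synchronizing Steps 1 and 2: the combinatorial rule $\Phi$ that builds $\gamma_2'$ from the atoms of $\alpha_2^{b_2(S)}$ must be applied verbatim on the $b_1$-side to construct $\gamma_1$ from $\alpha_1^{b_1(S)}$, so that the coordinate-wise closeness of the finer distributions $\cP \vee \alpha_j^{b_j(TS)}$ passes to the coarser distributions $\cP \vee \gamma_j^{b_j(T)}$. Once the label sets and the rule $\Phi$ are matched carefully on both sides, the remaining estimates are routine.
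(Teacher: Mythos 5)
Your proposal is correct and is essentially the paper's own argument: the paper likewise approximates $\gamma_2$ by a coarsening $\hat\gamma_2$ of $\alpha_2^{b_2(F)}$ for some finite $F$ and $\alpha_2\subseteq\mathcal{A}$ with $\dB_{\lambda_2}(\hat\gamma_2,\gamma_2)<\epsilon/(2|T|)$, uses the restricted neighborhood $U(\cP,\alpha_2,FT,\epsilon/2)$, builds $\gamma_1$ from $\alpha_1^{b_1(F)}$ by the same combinatorial rule, and concludes via the coarsening and triangle inequalities. The only nit is bookkeeping: with the paper's convention $\alpha^T=\bigvee_{t\in T}t^{-1}\cdot\alpha$ one has $(\alpha^S)^T=\alpha^{ST}$, so your enlarged set should be $ST$ rather than $TS$.
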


\begin{proof}
Write $U(\cP, \gamma_2, T, \epsilon)$ for the open neighborhoods of $(b_2, \lambda_2)$ defined in Definition \ref{defn:wcj}. The sets defined in the statement of the lemma are simply the sets $U(\cP, \alpha_2, T, \epsilon)$ where $\alpha_2 \subseteq \mathcal{A}$. These are open neighborhoods of $(b_2, \lambda_2)$ by definition. So we only need to show that for every choice of $\cP$, $\gamma_2$, $T$, and $\epsilon$ there are $\cP'$, $\alpha_2$, $T'$, and $\epsilon'$ with $U(\cP', \alpha_2, T', \epsilon') \subseteq U(\cP, \gamma_2, T, \epsilon)$.

Fix a finite ordered partition $\cP$ of $X$, a finite ordered partition $\gamma_2$ of $Y$, a finite $T \subseteq G$, and $\epsilon > 0$. By our assumption on $\mathcal{A}$, there is a finite partition $\alpha_2 \subseteq \mathcal{A}$, a finite $F \subseteq G$, and a coarsening $\hat{\gamma}_2 \leq \alpha_2^{b_2(F)}$ satisfying $\dB_{\lambda_2}(\hat{\gamma}_2, \gamma_2) < \epsilon / (2 |T|)$. Now consider a joining $(b_1, \lambda_1) \in U(\cP, \alpha_2, F T, \epsilon / 2)$. Then there is a partition $\alpha_1$ of $Y$ satisfying
$$|\dist_{\lambda_1}(\cP \vee \alpha_1^{b_1(F T)}) - \dist_{\lambda_2}(\cP \vee \alpha_2^{b_2(F T)})| < \epsilon / 2.$$
Let $\gamma_1 \leq \alpha_1^{b_1(F)}$ be built from $\alpha_1^{b_1(F)}$ as $\hat{\gamma}_2$ is built from $\alpha_2^{b_2(F)}$. Then $\gamma_1^{b_1(T)} \leq \alpha_1^{b_1(F T)}$ and $\hat{\gamma_2}^{b_2(T)} \leq \alpha_2^{b_2(F T)}$ and hence
$$|\dist_{\lambda_1}(\cP \vee \gamma_1^{b_1(T)}) - \dist_{\lambda_2}(\cP \vee \hat{\gamma}_2^{b_2(T)})| < \epsilon / 2.$$
By construction we also have
$$|\dist_{\lambda_2}(\cP \vee \hat{\gamma}_2^{b_2(T)}) - \dist_{\lambda_2}(\cP \vee \gamma_2^{b_2(T)})| \leq \dB_{\lambda_2}(\hat{\gamma}_2^{b_2(T)}, \gamma_2^{b_2(T)}) < |T| \cdot \dB_{\lambda_2}(\hat{\gamma}_2, \gamma_2) < \epsilon / 2.$$
Thus $(b_1, \lambda_1) \in U(\cP, \gamma_2, T, \epsilon)$. This shows that $U(\cP, \alpha_2, F T, \epsilon / 2) \subseteq U(\cP, \gamma_2, T, \epsilon)$.
\end{proof}

Our main interest in this topology on the space of joinings is the following notion of comparison which it provides.

\begin{defn}
Fix a {\pmp} action $G \acts^a (X, \mu)$. For $(b_1, \lambda_1), (b_2, \lambda_2) \in \sjoin(a)$, we say that $G \acts^{a \times b_1} (X \times Y, \lambda_1)$ \emph{weakly contains} $G \acts^{a \times b_2} (X \times Y, \lambda_2)$ \emph{as joinings with} $G \acts^a (X, \mu)$ or, more briefly, $(b_1, \lambda_1)$ \emph{weakly contains $(b_2, \lambda_2)$ as joinings with} $a$, if every open set containing $(b_2, \lambda_2)$ contains $(b_1, \lambda_1)$. Similarly, we say that $(b_1, \lambda_1)$ and $(b_2, \lambda_2)$ are \emph{weakly equivalent as joinings with $a$} if each weakly contains the other as joinings with $a$.
\end{defn}

In other words, $(b_1, \lambda_1)$ weakly contains $(b_2, \lambda_2)$ as joinings with $a$ if and only if for every $\epsilon > 0$, every finite $T \subseteq G$, every finite ordered partition $\cP$ of $X$, and every finite ordered partition $\gamma_2$ of $Y$, there is a finite ordered partition $\gamma_1$ of $Y$ such that
$$|\dist_{\lambda_1}(\cP \vee \gamma_1^{b_1(T)}) - \dist_{\lambda_2}(\cP \vee \gamma_2^{b_2(T)})| < \epsilon.$$

We say that two joinings $(b_1, \lambda_1), (b_2, \lambda_2) \in \sjoin(a)$ are \emph{isomorphic} if there is an isomorphism $\psi$ from $G \acts^{b_1} (Y, \pi^Y_*(\lambda_1))$ to $G \acts^{b_2} (Y, \pi^Y_*(\lambda_2))$ which extends to an isomorphism $\id \times \psi : (X \times Y, \lambda_1) \rightarrow (X \times Y, \lambda_2)$. It is immediate from the definitions that the topology on $\sjoin(a)$ and the notion of weak containment for joinings with $a$ are both invariant under isomorphisms. One could define more general notions of isomorphism for joinings, but we caution the reader that more general isomorphism notions might not respect the topology on $\sjoin(a)$ or the notion of weak containment of joinings. Since all uncountable standard Borel spaces are Borel isomorphic, there is no harm in replacing $Y$ with any uncountable standard Borel space we like. Frequently, when working with two joinings $(b_1, \lambda_1)$ and $(b_2, \lambda_2)$, we will find it notationally helpful to have two versions of $Y$: $Y_1$ for $(b_1, \lambda_1)$ and $Y_2$ for $(b_2, \lambda_2)$. When we do this, we will frequently omit indicating the action $b_i$ as it is implicitly associated with the space $Y_i$ being acted upon.

An easy consequence of the definitions is that if $G \acts (Y_1, \nu_1)$ weakly contains $G \acts (Y_2, \nu_2)$ and $G \acts (X, \mu)$ is any {\pmp} action, then $G \acts (X \times Y_1, \mu \times \nu_1)$ weakly contains $G \acts (X \times Y_2, \mu \times \nu_2)$ as joinings with $G \acts (X, \mu)$. The lemma below generalizes this fact.

\begin{lem} \label{lem:triprod}
Let $G \acts (X, \mu)$, $G \acts (Z, \eta)$, and $G \acts (Y_i, \nu_i)$, $i = 1, 2$, be {\pmp} actions. Assume that $G \acts (X, \mu)$, $G \acts (Y_1, \nu_1)$, and $G \acts (Y_2, \nu_2)$ factor onto $G \acts (Z, \eta)$ via maps $p$, $q_1$, and $q_2$ respectively. If $G \acts (Z \times Y_1, (q_1 \times \id)_*(\nu_1))$ weakly contains $G \acts (Z \times Y_2, (q_2 \times \id)_*(\nu_2))$ as joinings with $G \acts (Z, \eta)$, then $G \acts (X \times Y_1, \mu \relprod{\eta} \nu_1)$ weakly contains $G \acts (X \times Y_2, \mu \relprod{\eta} \nu_2)$ as joinings with $G \acts (X, \mu)$.
\end{lem}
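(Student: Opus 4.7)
The plan is to unfold the definition of weak containment for joinings and exploit the conditional independence of $X$ and $Y_i$ given $Z$ under $\mu \relprod{\eta} \nu_i$. Fix a finite ordered partition $\cP$ of $X$, a finite ordered partition $\gamma_2$ of $Y_2$ indexed by a label set $I$, a finite $T \subseteq G$, and $\epsilon > 0$. I must produce a finite ordered partition $\gamma_1$ of $Y_1$, indexed by the same $I$, such that $|\dist_{\mu \relprod{\eta} \nu_1}(\cP \vee \gamma_1^T) - \dist_{\mu \relprod{\eta} \nu_2}(\cP \vee \gamma_2^T)| < \epsilon$.

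The first step is to replace $\cP$ by a partition ``visible from $Z$'' plus coefficients. Let $\mu = \int_Z \mu_z \, d\eta(z)$ be the disintegration given by $p$. For each $A \in \cP$ the function $z \mapsto \mu_z(A)$ lies in $L^1(Z, \eta; [0,1])$, so I can pick parameters $\delta, \epsilon' > 0$ (to be fixed at the end) and then choose a finite measurable partition $\cQ$ of $Z$ together with constants $c_{A, Q} \in [0,1]$ for $A \in \cP$, $Q \in \cQ$, satisfying $\int_Q |\mu_z(A) - c_{A, Q}| \, d\eta(z) < \delta$ for each $A, Q$. Now apply the weak containment hypothesis with $\cQ$ as the partition on the $Z$-coordinate together with $\gamma_2$, $T$, and $\epsilon'$ to obtain a finite ordered partition $\gamma_1$ of $Y_1$ satisfying
\[
|\dist_{(q_1 \times \id)_*(\nu_1)}(\cQ \vee \gamma_1^T) - \dist_{(q_2 \times \id)_*(\nu_2)}(\cQ \vee \gamma_2^T)| < \epsilon'.
\]

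The second step is to express the target joint distribution as an approximately linear function of the distribution handed back by the hypothesis. Using the relative-independence identity
\[
(\mu \relprod{\eta} \nu_i)(A \cap C) = \int_Z \mu_z(A) \, \nu_{i,z}(C) \, d\eta(z)
\]
for any Borel $C \subseteq Y_i$, split the integral over the cells of $\cQ$ and substitute $c_{A,Q}$ for $\mu_z(A)$ on $Q$. Since $(q_i \times \id)_*(\nu_i)(Q \cap C) = \int_Q \nu_{i,z}(C) \, d\eta(z)$, the substitution error is at most $\delta$ per pair $(A, C)$. The crucial observation is that the coefficients $c_{A, Q}$ depend only on $\mu, p, \cQ$ and hence are the \emph{same} for $i = 1$ and $i = 2$. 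Subtracting the two approximations and applying the weak containment inequality (noting that for any fixed label $f \in I^T$ the partial sum $\sum_Q |(q_1 \times \id)_*\nu_1(Q \cap \gamma_1^{T,f}) - (q_2 \times \id)_*\nu_2(Q \cap \gamma_2^{T,f})|$ is bounded by $\epsilon'$, and that $c_{A,Q} \leq 1$) yields
\[
\bigl|(\mu \relprod{\eta} \nu_1)(A \cap \gamma_1^{T,f}) - (\mu \relprod{\eta} \nu_2)(A \cap \gamma_2^{T,f})\bigr| \leq 2\delta + \epsilon'
\]
for each $A \in \cP$ and each label $f \in I^T$. Summing over the $|\cP| \cdot |I|^{|T|}$ cells of $\cP \vee \gamma_i^T$ and choosing $\delta, \epsilon' < \epsilon / (4 |\cP| |I|^{|T|})$ gives the desired bound.

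I do not anticipate a significant obstacle; the argument is essentially careful bookkeeping. The only conceptual point is the observation that conditional independence given $Z$ forces the joint distribution of $\cP$ and $\gamma_i^T$ to be determined, up to error controlled by $\delta$, by the joint distribution of $p^{-1}(\cQ)$ and $\gamma_i^T$ together with the $i$-independent coefficients $c_{A,Q}$. Once this is seen, the weak-containment hypothesis on the $Z$-side transfers mechanically to the $X$-side.
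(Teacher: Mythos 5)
Your proposal is correct and follows essentially the same route as the paper's proof: disintegrate over $Z$, approximate $z \mapsto \mu_z(A)$ by a step function subordinate to a finite partition $\cQ$ of $Z$, feed $\cQ$ into the weak-containment hypothesis on the $Z$-side, and exploit that the coefficients $c_{A,Q}$ are the same for $i=1,2$. The only (cosmetic) difference is that the paper uses a uniform approximation $|\mu_z(A)-c_{A,Q}|<\delta$ on each cell while you use a per-cell $L^1$ bound, so your ``error $\leq\delta$ per pair $(A,C)$'' should read $|\cQ|\cdot\delta$ after summing over $Q$ --- trivially repaired by demanding the total $L^1$ error over $Z$ be less than $\delta$.
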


\begin{proof}
Let $\mu = \int_Z \mu^z \ d \eta(z)$ and $\nu_i = \int_Z \nu_i^z \ d \eta(z)$ be the disintegrations of $\mu$ and $\nu_i$ over $\eta$. Note that $(q_i \times \id)_*(\nu_i) = \int_Z (q_i \times \id)_*(\nu_i^z) \ d \eta(z) = \int_Z \delta_z \times \nu_i^z \ d \eta(z)$. Therefore for $C \subseteq Y_i$ and $D \subseteq Z$ we have
$$\int_D \nu_i^z(C) \ d \eta(z) = \int_Z (\delta_z \times \nu_i^z)(D \cap C) \ d \eta(z) = (q_i \times \id)_*(\nu_i)(D \cap C).$$

Fix $\epsilon > 0$, finite $T \subseteq G$, and finite ordered partitions $\cP$ of $X$ and $\gamma_2 = \{C_1^2, \ldots, C_n^2\}$ of $Y_2$. By approximating the functions $z \mapsto \mu^z(P)$, $P \in \cP$, by step-functions, we can find a finite partition $\xi$ of $Z$ and real numbers $\{\mu^D(P) : D \in \xi, \ P \in \cP\}$ satisfying $\mu^D(P) \leq 1$ and $|\mu^z(P) - \mu^D(P)| < \epsilon / (2 \cdot |\cP| \cdot n^{|T|})$ for all $P \in \cP$, $D \in \xi$, and $z \in D$. By assumption, there is a partition $\gamma_1 = \{C_1^1, \ldots, C_n^1\}$ of $Y_1$ satisfying
$$|\dist_{(q_1 \times \id)_*(\nu_1)}(\xi \vee \gamma_1^T) - \dist_{(q_2 \times \id)_*(\nu_2)}(\xi \vee \gamma_2^T)| < \epsilon / (2 \cdot |\cP| \cdot n^{|T|}).$$
We index the sets in $\gamma_1^T$ and $\gamma_2^T$ by functions $f \in \{1, \ldots, n\}^T$ as follows. For $i = 1, 2$ and $f \in \{1, \ldots, n\}^T$ we set $C_f^i = \bigcap_{t \in T} t^{-1} \cdot C_{f(t)}^i$. For $P \in \cP$ and $f \in \{1, \ldots, n\}^T$ we have
\begin{align*}
& |\mu \relprod{\eta} \nu_1(P \cap C_f^1) - \mu \relprod{\eta} \nu_2(P \cap C_f^2)|\\
 & = \left| \int_Z \mu^z(P) \cdot \nu_1^z(C_f^1) - \mu^z(P) \cdot \nu_2^z(C_f^2) \ d \eta(z) \right|\\
 & < \frac{\epsilon}{2 \cdot |\cP| \cdot n^{|T|}} + \left| \sum_{D \in \xi} \int_D \mu^D(P) \cdot \Big( \nu_1^z(C_f^1) - \nu_2^z(C_f^2) \Big) \ d \eta(z) \right|\\
 & = \frac{\epsilon}{2 \cdot |\cP| \cdot n^{|T|}} + \left| \sum_{D \in \xi} \mu^D(P) \cdot \Big( (q_1 \times \id)_*(\nu_1)(D \cap C_f^1) - (q_2 \times \id)_*(\nu_2)(D \cap C_f^2) \Big) \right|\\
 & < \frac{\epsilon}{|\cP| \cdot n^{|T|}}.
\end{align*}
By summing over all $P \in \cP$ and $f \in \{1, \ldots, n\}^T$ we conclude
\begin{equation*}
|\dist_{\mu \relprod{\eta} \nu_1}(\cP \vee \gamma_1^T) - \dist_{\mu \relprod{\eta} \nu_2}(\cP \vee \gamma_2^T)| < \epsilon.\qedhere
\end{equation*}
\end{proof}

\section{Stabilizers and invariant random subgroups} \label{sec:stab}

For a countable group $G$ we let $\Sub(G)$ denote the space of all subgroups of $G$. A base for the topology on $\Sub(G)$ is given by the basic open sets $\{H \in \Sub(G) : H \cap T = F\}$ as $F \subseteq T$ range over the finite subsets of $G$. An \emph{invariant random subgroup}, or \emph{IRS}, of $G$ is a Borel probability measure $\theta$ on $\Sub(G)$ which is invariant under the conjugation action of $G$. This concept was first introduced in \cite{AGV}. Every {\pmp} action $G \acts (X, \mu)$ produces an IRS $\Stab_*(\mu)$ via the push-forward of $\mu$ under the stabilizer map $\Stab : X \rightarrow \Sub(G)$. We call $\Stab_*(\mu)$ the \emph{stabilizer type} of $G \acts (X, \mu)$.

Tucker-Drob proved that if two actions are weakly equivalent, then they must have the same stabilizer type \cite{TD12}. The main lemma of this section is a technical elaboration on this fact, showing that one can witness the weak containment while approximately preserving the stabilizer map. We will need the following simple notion and lemma.

\begin{defn}
Let $X$ be a set, $S \subseteq X$, and $\beta$ a partition of $X$. We say that $\beta$ \emph{separates} $S$ if every class of $\beta$ contains at most one element of $S$.
\end{defn}

\begin{lem} \label{lem:part}
Let $G \acts (X, \mu)$ be a {\pmp} action and let $T \subseteq G$ be finite. Then there is a finite Borel partition $\beta$ of $X$ such that $\beta$ separates $T \cdot x$ for every $x \in X$.
\end{lem}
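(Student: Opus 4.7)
The plan is to reduce the problem to properly coloring a Borel graph of bounded degree on $X$. Let $F = T T^{-1} \setminus \{1_G\}$ and define a Borel graph $\Gamma$ on $X$ by declaring $y$ and $z$ adjacent when $y \neq z$ and $z = g \cdot y$ for some $g \in F$. Every vertex has at most $|F|$ neighbors in $\Gamma$, since the neighbors of $y$ all lie in the finite set $\{g \cdot y : g \in F\}$. So $\Gamma$ has finite maximum degree.

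Next I would observe that a finite Borel partition $\beta$ separates $T \cdot x$ for every $x \in X$ if and only if $\beta$ (viewed as a coloring $X \to \beta$) is a proper vertex coloring of $\Gamma$. Indeed, if $t_1, t_2 \in T$ satisfy $y := t_1 \cdot x \neq t_2 \cdot x =: z$, then with $g := t_2 t_1^{-1} \in F$ we have $z = g \cdot y$, so $y$ and $z$ are $\Gamma$-adjacent and must lie in different classes of $\beta$; conversely, any proper coloring of $\Gamma$ yields a partition with the required separation property.

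It therefore suffices to show that $\Gamma$ admits a Borel proper coloring with finitely many colors, which is the Borel version of the greedy algorithm: any Borel graph of finite maximum degree $d$ on a standard Borel space has Borel chromatic number at most $d+1$ (a standard consequence of the Kechris--Solecki--Todorcevic techniques, proved by fixing a Borel isomorphism $X \cong [0,1]$ and partitioning $X$ into countably many Borel independent sets and then coloring greedily). Applying this to $\Gamma$ produces the desired finite Borel partition $\beta$ with at most $|F|+1 = |T T^{-1}|$ classes. I do not anticipate any real obstacle here; the only delicate point is the appeal to (or reconstruction of) the Borel bounded-degree coloring theorem, but its use is completely routine and the precise number of classes is irrelevant for the lemma's intended application.
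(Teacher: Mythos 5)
Your proposal is correct and follows essentially the same route as the paper: the paper also defines the Borel graph with edge relation $x \neq y$ and $x \in TT^{-1}\cdot y$, notes the degree bound, and invokes the Kechris--Solecki--Todorcevic bounded-degree Borel coloring theorem (Prop.\ 4.6 of that paper) to obtain the finite Borel partition. The only cosmetic difference is that you excise $1_G$ from $TT^{-1}$ explicitly, which changes nothing.
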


\begin{proof}
Let $\Gamma$ be the Borel graph on $X$ defined by $(x, y) \in \Gamma \Leftrightarrow (x \neq y) \wedge (x \in T T^{-1} \cdot y)$. The degree of every $x \in X$ is bounded by $|T|^2 < \infty$, and thus by \cite[Prop. 4.6]{KST99} there is a finite Borel partition $\beta$ of $X$ such that $x$ and $y$ lie in different classes of $\beta$ whenever $(x, y) \in \Gamma$. Then, for every $x \in X$, $\beta$ separates $T \cdot x$ since there is an edge in $\Gamma$ between every pair of points in $T \cdot x$.
\end{proof}

We now present this section's main lemma.

\begin{lem} \label{lem:stab}
Let $G \acts (Y, \nu)$ and $G \acts (Z, \eta)$ be {\pmp} actions having the same stabilizer type $\theta$. Assume that $G \acts (Y, \nu)$ weakly contains $G \acts (Z, \eta)$. Then the factor joining $G \acts (\Sub(G) \times Y, (\Stab \times \id)_*(\nu))$ weakly contains $G \acts (\Sub(G) \times Z, (\Stab \times \id)_*(\eta))$ as joinings with $G \acts (\Sub(G), \theta)$.
\end{lem}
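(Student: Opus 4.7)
The plan is to unpack the definition of weak containment for joinings (Definition \ref{defn:wcj}): via the $G$-equivariant isomorphism $y \mapsto (\Stab(y), y)$ from $(Y, \nu)$ to $(\Sub(G) \times Y, (\Stab \times \id)_*(\nu))$ (and likewise for $Z$), it suffices to produce, for each finite ordered partition $\cP$ of $\Sub(G)$, each finite ordered partition $\gamma_Z$ of $Z$, each finite $T \subseteq G$, and each $\epsilon > 0$, a finite ordered partition $\gamma_Y$ of $Y$ satisfying
\[
\bigl| \dist_\nu(\Stab^{-1}(\cP) \vee \gamma_Y^T) - \dist_\eta(\Stab^{-1}(\cP) \vee \gamma_Z^T) \bigr| < \epsilon.
\]
As a first simplification I would approximate $\cP$ in $\theta$-measure by a \emph{cylinder partition} $\cP^{(S)}$ whose classes are unions of sets of the form $\{H \in \Sub(G) : H \cap S = F\}$, for a finite symmetric $S \subseteq G$ containing $T \cup T^{-1}T \cup \{1_G\}$. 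Since such cylinder sets generate $\Borel(\Sub(G))$ and $\Stab_*(\nu) = \Stab_*(\eta) = \theta$, the error in replacing $\cP$ by $\cP^{(S)}$ on either side of the target inequality is controlled by the total $\theta$-mass of the symmetric differences, which can be made as small as desired by enlarging $S$. It therefore suffices to treat the case where $\cP$ itself is $S$-cylindrical.

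For such $\cP$, my plan is to invoke Lemma \ref{lem:part} on $(Z, \eta)$ and $S$ to produce a finite partition $\beta_Z$ of $Z$ separating $S \cdot z$ for every $z \in Z$, and then apply the hypothesized weak containment to the enriched ordered partition $\alpha_Z = \Stab^{-1}(\cP) \vee \beta_Z \vee \gamma_Z$ with window $S$, producing an ordered partition $\alpha_Y = \tilde{\Phi} \vee \tilde{\beta} \vee \tilde{\gamma}$ of $Y$ bearing the same labels, with
\[
\bigl| \dist_\nu(\alpha_Y^S) - \dist_\eta(\alpha_Z^S) \bigr| < \delta
\]
for some $\delta > 0$ to be chosen later. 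I would then take $\gamma_Y := \tilde{\gamma}$ and, writing $\Phi := \Stab^{-1}(\cP)$ as a partition of $Y$, split
\[
\bigl| \dist_\nu(\Phi \vee \gamma_Y^T) - \dist_\eta(\Stab^{-1}(\cP) \vee \gamma_Z^T) \bigr|
\leq \dB_\nu(\Phi, \tilde{\Phi}) + \bigl| \dist_\nu(\tilde{\Phi} \vee \tilde{\gamma}^T) - \dist_\eta(\Stab^{-1}(\cP) \vee \gamma_Z^T) \bigr|,
\]
the second summand being at most $\delta$ by marginalizing from $S$ to $T \subseteq S$.

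The main obstacle, and the combinatorial heart of the proof, will be the bound $\dB_\nu(\Phi, \tilde{\Phi}) = O(\delta|S|)$. Let $\Pi_y^S$ denote the orbit partition of $S$ given by $t \sim t'$ iff $ty = t'y$, and similarly $\Pi_z^S$ on $Z$; because $\beta_Z$ separates $S$-orbits and $\cP$ is $S$-cylindrical, there is an explicit function $f$ on partitions of $S$ such that $\Stab^{-1}(\cP) = f(\Pi_z^S)$ holds $\eta$-almost everywhere on $Z$ and $\Phi = f(\Pi_y^S)$ holds $\nu$-almost everywhere on $Y$. Let $\Pi_{\tilde{\beta},y}^S$ be the partition of $S$ induced by the equality pattern of the tuple $(\tilde{\beta}(ty))_{t \in S}$. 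Since $ty = t'y$ forces $\tilde{\beta}(ty) = \tilde{\beta}(t'y)$, one has $\Pi_y^S \geq \Pi_{\tilde{\beta},y}^S$ (a refinement) pointwise. The matching-stabilizer hypothesis $\Stab_*(\nu) = \Stab_*(\eta) = \theta$ makes the laws of $\Pi_y^S$ under $\nu$ and $\Pi_z^S$ under $\eta$ coincide, while the $S$-approximation forces the law of $\Pi_{\tilde{\beta},y}^S$ under $\nu$ to be $\delta$-close in $\ell^1$ to the law of $\Pi_{\beta_Z,z}^S = \Pi_z^S$ under $\eta$; hence the laws of $\Pi_y^S$ and $\Pi_{\tilde{\beta},y}^S$ under $\nu$ are $\delta$-close. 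Since the number of classes of any such partition is at most $|S|$, this yields $\Eavg_\nu\bigl[|\Pi_y^S| - |\Pi_{\tilde{\beta},y}^S|\bigr] \leq O(\delta|S|)$, and since the integrand is a nonnegative integer, Markov's inequality gives $\nu(\Pi_y^S \neq \Pi_{\tilde{\beta},y}^S) \leq O(\delta|S|)$; on the complementary event $f(\tilde{\beta}^S(y)) = f(\Pi_y^S) = \Phi(y)$. Combined with the bound $\nu(\tilde{\Phi} \neq f(\tilde{\beta}^S)) \leq \delta$ (itself a consequence of the $\eta$-almost sure identity $\Stab^{-1}(\cP) = f(\beta_Z^S)$ on $Z$ and the $S$-approximation), this yields $\nu(\tilde{\Phi} \neq \Phi) \leq O(\delta|S|)$ and hence $\dB_\nu(\Phi, \tilde{\Phi}) \leq O(\delta|S|)$. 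Choosing $\delta$ sufficiently small relative to $\epsilon/|S|$ then completes the argument.
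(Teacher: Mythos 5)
Your proposal is correct, and it follows the same overall strategy as the paper's proof: reduce to a cylinder partition of $\Sub(G)$ determined by $H \mapsto H \cap S$, invoke Lemma \ref{lem:part} to separate $S$-orbits in $Z$, transfer the separating partition to $Y$ using the hypothesized weak containment over an enlarged window, and then use equality of stabilizer types to upgrade a distributional approximation to an almost-everywhere one. The genuine difference is in how that last upgrade is executed. The paper works cell by cell with the events $Q_K^+ = \{H : K \subseteq H \cap W\}$ and their counterparts $A_K^+ \subseteq Y$: the one-sided containment $\Stab^{-1}(Q_K^+) \subseteq A_K^+$ plus the smallness of $|\nu(A_K^+) - \theta(Q_K^+)|$ forces $\nu(A_K^+ \symd \Stab^{-1}(Q_K^+))$ to be small, and a sum over $K \subseteq U \subseteq W$ then controls the individual cells $A_K$. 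You package the same monotonicity globally: the pointwise refinement $\Pi_y^S \geq \Pi_{\tilde{\beta},y}^S$ together with $\delta$-closeness of the laws of the bounded, monotone class-counting statistic lets Markov's inequality deliver $\nu(\Pi_y^S \neq \Pi_{\tilde{\beta},y}^S) = O(\delta|S|)$ in one stroke. Both arguments rest on the identical mechanism --- a pointwise inequality whose two sides have $\delta$-close means must be an equality outside a set of measure $O(\delta)$ --- so your route is a cleaner repackaging rather than a new idea; it trades the paper's $2^{-2|W|-2}$ bookkeeping for an $O(\delta|S|)$ bound. Two small points worth making explicit in a final write-up: you need $1_G \in S$ (which you do require) so that $\Stab(y) \cap S$ is recoverable as the $\Pi_y^S$-class of $1_G$, and the law of $\Pi_y^S$ is determined by $\Stab(y) \cap S^{-1}S$ rather than $\Stab(y) \cap S$, so it is the full hypothesis $\Stab_*(\nu) = \Stab_*(\eta) = \theta$, and not merely agreement of the laws of $H \cap S$, that makes the laws of $\Pi_y^S$ and $\Pi_z^S$ coincide.
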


\begin{proof}
Fix $\epsilon > 0$, a finite $T \subseteq G$, a finite partition $\cP$ of $\Sub(G)$, and a finite partition $\gamma_2$ of $Z$. We must find a partition $\gamma_1$ of $Y$ with
$$|\dist_{(\Stab \times \id)_*(\nu)} (\cP \vee \gamma_1^T) - \dist_{(\Stab \times \id)_*(\eta)} (\cP \vee \gamma_2^T)| < \epsilon.$$
Since the sets $\{H \in \Sub(G) : H \cap W = K\}$, where $K \subseteq W \subseteq G$ are finite, form a base for the topology on $\Sub(G)$, there is a finite $W \subseteq G$ and a partition $\cP'$ of $\Sub(G)$ which is measurable with respect to the map $H \mapsto H \cap W$ and satisfies $\dB_\theta(\cP', \cP) < \epsilon / 2$. Let $\cQ = \{Q_K : K \subseteq W\}$ be the partition of $\Sub(G)$ defined by setting $Q_K = \{H \in \Sub(G) : H \cap W = K\}$. Since $\cP' \leq \cQ$, it suffices to find a partition $\gamma_1$ of $Y$ with
$$|\dist_{(\Stab \times \id)_*(\nu)} (\cQ \vee \gamma_1^T) - \dist_{(\Stab \times \id)_*(\eta)} (\cQ \vee \gamma_2^T)| < \epsilon / 2.$$
Denote by $Q_K^+$ the set $\{H \in \Sub(G) : K \subseteq H \cap W\}$.

Fix a finite partition $\beta$ of $Z$ which separates points in $W \cdot z$ for all $z \in Z$. Then the map $z \mapsto \Stab(z) \cap W$ is $\beta^W$-measurable. Let $\hat{\beta}$, $B_K$, and $B_K^+$ be the pre-images under the stabilizer map $\Stab : Z \rightarrow \Sub(G)$ of $\cQ$, $Q_K$, and $Q_K^+$, respectively. Note that $\hat{\beta} \leq \beta^W$ and
\begin{equation} \label{eqn:stab1}
\dist_{(\Stab \times \id)_*(\eta)}(\cQ \vee \gamma_2^T) = \dist_\eta(\hat{\beta} \vee \gamma_2^T).
\end{equation}

Since $G \acts (Y, \nu)$ weakly contains $G \acts (Z, \eta)$, there are partitions $\alpha, \gamma_1$ of $Y$ with
$$|\dist_\nu(\alpha^W \vee \gamma_1^T) - \dist_\eta(\beta^W \vee \gamma_2^T)| < 2^{-2|W|-2} \cdot \epsilon.$$
For $K \subseteq W$ let $A_K$ be the set of $y \in Y$ with the property that for every $w \in W$, $w \cdot y$ lies in the same piece of $\alpha$ as $y$ if and only if $w \in K$. Also let $A_K^+$ be the set of $y \in Y$ with the property that $K \cdot y$ is contained in a single class of $\alpha$. Let $\hat{\alpha} = \{A_K : K \subseteq W\}$. It is important to note that $\hat{\alpha}$ coarsens $\alpha^W$ in the same manner $\hat{\beta}$ coarsens $\beta^W$, and thus
\begin{equation} \label{eqn:stab2}
|\dist_\nu(\hat{\alpha} \vee \gamma_1^T) - \dist_\eta(\hat{\beta} \vee \gamma_2^T)| < \epsilon / 4.
\end{equation}
Since $y \in A_K^+$ whenever $\Stab(y) \in Q_K^+$, we see that for every $K \subseteq W$
$$\theta(Q_K^+) \leq \nu(A_K^+) <  \eta(B_K^+) + 2^{-2|W|-2} \cdot \epsilon = \theta(Q_K^+) + 2^{-2|W|-2} \cdot \epsilon.$$
Therefore $(\Stab \times \id)_*(\nu)(Q_K^+ \symd A_K^+) < 2^{-2|W|-2} \cdot \epsilon$, and
$$(\Stab \times \id)_*(\nu)(Q_K \symd A_K) \leq \sum_{K \subseteq U \subseteq W} (\Stab \times \id)_*(\nu)(Q_U^+ \symd A_U^+) < 2^{-|W|-2} \cdot \epsilon.$$
Summing over $K \subseteq W$ we obtain $\dB_{(\Stab \times \id)_*(\nu)}(\cQ, \hat{\alpha}) < \epsilon / 4$. The proof is now completed by combining (\ref{eqn:stab1}) and (\ref{eqn:stab2}) with the inequality
\begin{equation*}
|\dist_{(\Stab \times \id)_*(\nu)}(\cQ \vee \gamma_1^T) - \dist_{\nu}(\hat{\alpha} \vee \gamma_1^T)| < \epsilon / 4.\qedhere
\end{equation*}
\end{proof}

\begin{cor} \label{cor:wcstimes}
Let $G \acts (Y, \nu)$ and $G \acts (Z, \eta)$ be {\pmp} actions having the same stabilizer type $\theta$, and assume that $G \acts (Y, \nu)$ weakly contains $G \acts (Z, \eta)$. Then for any {\pmp} action $G \acts (X, \mu)$ with stabilizer type $\theta$, we have $G \acts (X \times Y, \mu \relprod{\theta} \nu)$ weakly contains $G \acts (X \times Z, \mu \relprod{\theta} \eta)$ as joinings with $G \acts (X, \mu)$.
\end{cor}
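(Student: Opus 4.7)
The plan is to obtain the corollary as an essentially immediate composition of Lemma \ref{lem:stab} and Lemma \ref{lem:triprod}. The key observation is that all three actions $G \acts (X, \mu)$, $G \acts (Y, \nu)$, $G \acts (Z, \eta)$ have stabilizer type $\theta$, so each admits the stabilizer map as a $G$-equivariant factor map onto $G \acts (\Sub(G), \theta)$. This gives us the common factor structure required by Lemma \ref{lem:triprod}.

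First I would apply Lemma \ref{lem:stab} to $G \acts (Y, \nu)$ and $G \acts (Z, \eta)$ (whose hypotheses are exactly those of the corollary), concluding that the factor joining $G \acts (\Sub(G) \times Y, (\Stab \times \id)_*(\nu))$ weakly contains $G \acts (\Sub(G) \times Z, (\Stab \times \id)_*(\eta))$ as joinings with $G \acts (\Sub(G), \theta)$.

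Next I would invoke Lemma \ref{lem:triprod} with the intermediate factor taken to be $(\Sub(G), \theta)$, the maps $p$, $q_1$, $q_2$ taken to be the stabilizer maps from $X$, $Y$, $Z$ respectively, and $Y_1 = Y$, $Y_2 = Z$, $\nu_1 = \nu$, $\nu_2 = \eta$. The hypothesis of Lemma \ref{lem:triprod} is precisely the weak containment established in the previous paragraph. Its conclusion gives that $G \acts (X \times Y, \mu \relprod{\theta} \nu)$ weakly contains $G \acts (X \times Z, \mu \relprod{\theta} \eta)$ as joinings with $G \acts (X, \mu)$, which is exactly the desired statement, since by definition the relatively independent joinings $\mu \relprod{\theta} \nu$ and $\mu \relprod{\theta} \eta$ are formed using the stabilizer maps as the factor maps to $(\Sub(G), \theta)$.

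There is no real obstacle here; the whole point is that Lemmas \ref{lem:stab} and \ref{lem:triprod} were formulated precisely so that this corollary would be an immediate composition. The only minor care needed is to verify that the notation for the relatively independent joining matches: in our setup the disintegrations of $\mu$, $\nu$, $\eta$ over $\theta$ are exactly the ones induced by the stabilizer maps, so $\mu \relprod{\theta} \nu$ and $\mu \relprod{\theta} \eta$ are well-defined and coincide with the joinings produced by Lemma \ref{lem:triprod}.
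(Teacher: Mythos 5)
Your proposal is correct and matches the paper's proof exactly: the paper's argument for this corollary is precisely to combine Lemma \ref{lem:stab} with Lemma \ref{lem:triprod}, using the stabilizer maps as the factor maps onto $(\Sub(G), \theta)$. Your added remark that the relatively independent joinings are formed with respect to these stabilizer maps is a worthwhile clarification but does not change the argument.
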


\begin{proof}
Combine Lemmas \ref{lem:stab} and \ref{lem:triprod}.
\end{proof}

\section{Non-free Bernoulli shifts} \label{sec:bs}

Every IRS of $G$ is the stabilizer type of some {\pmp} action of $G$ \cite{AGV}. This fact follows from the construction of non-free Bernoulli shifts which we now discuss. Let $(L, \lambda)$ be a standard probability space with $\lambda$ not a single-point mass. We let $G$ act on $L^G$ by the standard left-shift action: $(g \cdot x)(t) = x(g^{-1} t)$ for $g, t \in G$ and $x \in L^G$. For $H \in \Sub(G)$, we identify $L^{H \backslash G}$ with the set of points $x \in L^G$ with $H \subseteq \Stab(x)$, and we consider the corresponding Borel probability measure $\lambda^{H \backslash G}$ on $L^G$ which is supported on $L^{H \backslash G}$. If $\theta$ is an IRS of $G$ which is supported on the infinite-index subgroups of $G$, then we define the \emph{non-free Bernoulli shift with stabilizer type $\theta$ and with base space $(L, \lambda)$} to be the standard shift-action of $G$ on $L^G$ equipped with the $G$-invariant probability measure
$$\lambda^{\theta \backslash G} : = \int_{H \in \Sub(G)} \lambda^{H \backslash G} \ d \theta(H).$$
If $H \in \Sub(G)$ has infinite index in $G$, then $\Stab(x) = H$ for $\lambda^{H \backslash G}$-almost-every $x \in L^G$. Thus $\theta$ is indeed the stabilizer type of $G \acts (L^G, \lambda^{\theta \backslash G})$. Note that if $\theta = \Stab_*(\mu)$ for a {\pmp} action $G \acts (X, \mu)$, then $\theta$ is supported on the infinite-index subgroups of $G$ if and only if the action $G \acts (X, \mu)$ is aperiodic.

It was proven by Ab\'{e}rt--Weiss that every free {\pmp} action of $G$ weakly contains all (free) Bernoulli shifts over $G$ \cite{AW13}. This was extended by Tucker-Drob, who proved that every aperiodic {\pmp} action $G \acts (X, \mu)$ of stabilizer type $\theta$ is weakly equivalent to $G \acts (X \times L^G, \mu \relprod{\theta} \lambda^{\theta \backslash G})$, and in particular weakly contains $G \acts (L^G, \lambda^{\theta \backslash G})$ \cite{TD12}. We reconstruct the proofs of Ab\'{e}rt--Weiss and Tucker-Drob in our context of weak containment of joinings. Our main interest, however, is the class of actions which both have stabilizer type $\theta$ and are weakly contained in all {\pmp} actions of stabilizer type $\theta$. This includes, by the result of Tucker-Drob, the non-free Bernoulli shifts $(L^G, \lambda^{\theta \backslash G})$.

\begin{lem} \label{lem:awcent}
Let $G \acts (X, \mu)$ and $G \acts (Z, \eta)$ be {\pmp} actions with $G \acts (Z, \eta)$ aperiodic, and let $\lambda$ be a joining of these two actions. Let $\theta = \Stab \circ \pi^Z_*(\lambda)$ be the stabilizer type of $G \acts (Z, \eta)$. Assume that $G \acts (Y, \nu)$ has stabilizer type $\theta$ and is weakly contained in all {\pmp} actions of stabilizer type $\theta$. Then the joinings $G \acts (X \times Z, \lambda)$ and $G \acts (X \times Z \times Y, \lambda \relprod{\theta} \nu)$ are weakly equivalent as joinings with $G \acts (X, \mu)$.
\end{lem}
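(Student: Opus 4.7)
The lemma asserts a weak equivalence, so my plan is to prove the two weak-containment directions separately. The direction that $G \acts (X \times Z \times Y, \lambda \relprod{\theta} \nu)$ weakly contains $G \acts (X \times Z, \lambda)$ as joinings with $G \acts (X, \mu)$ is straightforward: the coordinate projection $\pi^{X \times Z} \colon X \times Z \times Y \to X \times Z$ is $G$-equivariant, pushes $\lambda \relprod{\theta} \nu$ to $\lambda$, and restricts to the identity on $X$, so lifting any ordered partition of $Z$ to $Z \times Y$ along this map reproduces the identical joint distribution with partitions of $X$.

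For the reverse direction, I would first reduce to the case where $(Y, \nu)$ is a non-free Bernoulli shift $(L^G, \kappa^{\theta \backslash G})$ over some standard atomless base $(L, \kappa)$. Aperiodicity of $(Z, \eta)$ forces $\theta$ to concentrate on the infinite-index subgroups of $G$, so this Bernoulli is defined and has stabilizer type $\theta$. Tucker-Drob's theorem (discussed in the paragraph preceding the lemma) implies that $(L^G, \kappa^{\theta \backslash G})$ is weakly contained in every aperiodic {\pmp} action of stabilizer type $\theta$; combined with the hypothesis on $(Y, \nu)$, this shows that $(Y, \nu)$ and $(L^G, \kappa^{\theta \backslash G})$ are weakly equivalent as actions. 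Applying Lemma \ref{lem:stab} in both directions promotes this to a weak equivalence of the factor joinings $(\Sub(G) \times Y, (\Stab \times \id)_*(\nu))$ and $(\Sub(G) \times L^G, (\Stab \times \id)_*(\kappa^{\theta \backslash G}))$ as joinings with $(\Sub(G), \theta)$. Then Lemma \ref{lem:triprod} with base $(X \times Z, \lambda)$ and common factor $(\Sub(G), \theta)$ via $\Stab \circ \pi^Z$ yields weak equivalence of $\lambda \relprod{\theta} \nu$ and $\lambda \relprod{\theta} \kappa^{\theta \backslash G}$ as joinings with $(X \times Z, \lambda)$. Using Lemma \ref{lem:iso} to restrict to partitions of $Z \times Y$ (resp.\ $Z \times L^G$) that split as joins of a $Z$-partition and a partition of the Bernoulli factor, this descends to weak equivalence as joinings with $(X, \mu)$.

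It thus suffices to show that $(X \times Z, \lambda)$ weakly contains $(X \times Z \times L^G, \lambda \relprod{\theta} \kappa^{\theta \backslash G})$ as joinings with $(X, \mu)$; this is the joining analog of the Ab\'{e}rt--Weiss theorem and I expect it to be the main technical obstacle. Given partitions $\cP$ of $X$ and $\alpha$ of $Z$, a cylinder partition $\beta$ of $L^G$ based on a finite window $F \subseteq G$ (to which we may reduce by Lemma \ref{lem:iso}), a finite $T \subseteq G$, and $\epsilon > 0$, I would construct the approximant as $\gamma_1 = \alpha \vee \delta$ where $\delta \colon Z \to L$ is built as follows. Invoking aperiodicity of $G \acts (Z, \eta)$, apply a stabilizer-preserving Rokhlin-type lemma to find a Borel set $B \subseteq Z$ whose $FT$-translates are essentially pairwise disjoint modulo the stabilizer of each point (Lemma \ref{lem:part} enables separation of $FT$-orbits by a Borel partition) and whose union covers nearly all of $Z$. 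Assign independent $\kappa$-distributed $L$-labels at the points of $B$ as a Borel function of $z$ and extend equivariantly to all of $Z$. The near-disjointness guarantees that, for $\eta$-generic $z$ with $\Stab(z) = H$, the labels that $\delta$ reads along the $T$-translates of $z$ are essentially independent $\kappa$-samples, matching the conditional distribution of $\beta^T$ under $\kappa^{H \backslash G}$. The quasi-invariance built into the Rokhlin construction forces these labels to be approximately independent of $(\cP, \alpha^T)$ under $\lambda$ conditional on $\Stab(z)$, matching the relatively independent structure of $\lambda \relprod{\theta} \kappa^{\theta \backslash G}$ over $\theta$; integrating over $\theta$ then yields the desired approximation of joint distributions.
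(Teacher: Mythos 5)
Your first direction and the reduction to a non-free Bernoulli shift are sound and essentially identical to the paper's route (the paper reduces to $(2^G, u_2^{\theta \backslash G})$ using only the hypothesis that $(Y, \nu)$ is weakly contained in all actions of stabilizer type $\theta$ together with Lemmas \ref{lem:stab} and \ref{lem:triprod}; your additional appeal to Tucker-Drob's theorem to upgrade this to a weak \emph{equivalence} with the Bernoulli shift is superfluous, and descending from joinings with $(X \times Z, \lambda)$ to joinings with $(X, \mu)$ is automatic since partitions of $X$ are in particular partitions of $X \times Z$). The genuine gap is in the construction of the approximating partition on $Z$. You invoke a ``stabilizer-preserving Rokhlin-type lemma'' producing a Borel set $B \subseteq Z$ whose $FT$-translates are essentially pairwise disjoint and cover nearly all of $Z$. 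No such lemma is available here: Lemma \ref{lem:part} only yields a finite Borel partition separating each $FT$-orbit (a Borel chromatic number statement), not a tower base with almost-disjoint translates of a prescribed shape covering most of the space. For non-amenable $G$, towers with a prescribed finite shape $FT$ do not exist in general, and this lemma must apply to aperiodic actions of arbitrary countable groups. Even granting such a tower, your concluding claim that the resulting labels are ``approximately independent of $(\cP, \alpha^T)$ under $\lambda$'' is asserted rather than proved; a deterministic equivariant labelling built from a tower carries no mechanism forcing decorrelation from an arbitrary prescribed partition $\cP \vee \cQ$ of $X \times Z$.

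The paper's actual argument replaces the tower with a probabilistic construction. It takes the finite partition $\beta$ of $Z$ from Lemma \ref{lem:part} separating $T$-orbits, refines it (using aperiodicity, hence atomlessness of $\eta$) so that $\sum_{B \in \beta} |T|^2 \eta(B)^2 < \delta$, and then colors the cells of $\beta$ by independent uniform bits $\omega$. Because $\beta$ separates $T \cdot z$, the expected value of $1_{C_f(\omega)}(x,z)$ equals $u_2^{\Stab(z) \backslash G}(D_f)$ exactly, so integrating over a cell $P \cap Q$ reproduces $\lambda \relprod{\theta} u_2^{\theta \backslash G}(P \cap Q \cap D_f)$; the smallness of the cells gives a variance bound of order $\delta$, and Chebyshev together with a union bound over the finitely many cells produces a single deterministic coloring that works. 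It is this randomization-plus-concentration step that simultaneously delivers the correct conditional distribution along $T$-orbits and the approximate independence from $\cP \vee \cQ$; your proposal is missing it, and the tower you substitute for it does not exist.
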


\begin{proof}
It is immediate from the definitions that $G \acts (X \times Z \times Y, \lambda \relprod{\theta} \nu)$ weakly contains $G \acts (X \times Z, \lambda)$ as joinings with $G \acts (X, \mu)$. So it suffices to show the reverse weak containment. By our assumption on $G \acts (Y, \nu)$ and Lemmas \ref{lem:triprod} and \ref{lem:stab} we know that $G \acts (X \times Z \times 2^G, \lambda \relprod{\theta} u_2^{\theta \backslash G})$ weakly contains $G \acts (X \times Z \times Y, \lambda \relprod{\theta} \nu)$ as joinings with $G \acts (X, \mu)$. Therefore it suffices to show that $G \acts (X \times Z, \lambda)$ weakly contains $G \acts (X \times Z \times 2^G, \lambda \relprod{\theta} u_2^{\theta \backslash G})$ as joinings with $G \acts (X, \mu)$.

Let $\lambda = \int_{\Sub(G)} \lambda_H \ d \theta(H)$ be the disintegration of $\lambda$ with respect to the map $\Stab \circ \pi^Z : X \times Z \rightarrow \Sub(G)$. We note that $\lambda \relprod{\theta} u_2^{\theta \backslash G} = \int_{\Sub(G)} \lambda_H \cdot u_2^{H \backslash G} \ d \theta(H)$. Throughout the proof we will implicitly identify $\Borel(X) \subseteq \Borel(X \times Z)$ and $\Borel(Z) \subseteq \Borel(X \times Z)$ in the natural way. Let $\xi = \{D_0, D_1\}$ be the canonical generating partition for $2^G$, where
$$D_i = \{y \in 2^G : y(1_G) = i\}.$$
Fix a finite partition $\cP$ of $X$, a finite partition $\cQ$ of $Z$, a finite set $T \subseteq G$, and $0 < \epsilon < 1$. We will build a partition $\gamma = \{C_0, C_1\}$ of $Z$ such that
$$\left| \dist_\lambda(\cP \vee \cQ \vee \gamma^T) - \dist_{\lambda \relprod{\theta} u_2^{\theta \backslash G}}(\cP \vee \cQ \vee \xi^T) \right| < \epsilon.$$
Lemma \ref{lem:iso} implies that this will be sufficient to prove this lemma.

Fix $0 < \delta < 2^{-3 |T|} \cdot |\cP|^{-3} \cdot |\cQ|^{-3} \cdot \epsilon^3$. Apply Lemma \ref{lem:part} to $Z$ to obtain a finite partition $\beta$ of $Z$ such that $\beta$ separates $T \cdot z$ for every $z \in Z$. Since any partition finer than $\beta$ has this same property and since $(Z, \eta)$ has no atoms (by aperiodicity), we may make $\beta$ finer if necessary so that
$$\sum_{B \in \beta} |T|^2 \cdot \eta(B)^2 < \delta.$$
We will implicitly also view $\beta$ as a partition of $X \times Z$.

Say $\beta = \{B_0, \ldots, B_{k-1}\}$. Let $\omega \in \{0, 1\}^k$ be a random variable with law $u_2^k$. Define a random partition $\gamma(\omega) = \{C_0(\omega), C_1(\omega)\} \subseteq \Borel(Z)$ of $X \times Z$ by setting
$$C_i(\omega) = \bigcup \{B_m : 0 \leq m < k \ \omega(m) = i\}.$$
We will check that with high probability the random partition $\gamma(\omega)$ has the desired property.

We index the sets in $\gamma(\omega)^T$ by the functions $f \in \{0, 1\}^T$, where
$$C_f(\omega) = \bigcap_{t \in T} t^{-1} \cdot C_{f(t)}(\omega).$$
We similarly define
$$D_f = \bigcap_{t \in T} t^{-1} \cdot D_{f(t)} \in \xi^T.$$
Fix $f \in \{0, 1\}^T$ and let $1_{C_f(\omega)}$ denote the characteristic function of $C_f(\omega)$. Since for $z \in  Z$ the partition $\beta$ separates $T \cdot z$, we see that for $(x, z) \in X \times Z$ the quantity $1_{C_f(\omega)}(x, z)$ has expected value
$$\Eavg_\omega 1_{C_f(\omega)}(x, z) = u_2^{\Stab(z) \backslash G}(D_f).$$
For $P \in \cP$ and $Q \in \cQ$ we can integrate $1_{C_f(\omega)}(x, z)$ over $(x, z) \in P \cap Q$ and use Fubini's theorem to obtain
\begin{align*}
\Eavg_\omega \lambda(P \cap Q \cap C_f(\omega)) & = \int_{P \cap Q} u_2^{\Stab(z) \backslash G}(D_f) \ d \lambda(x, z)\\
 & = \int_{\Sub(G)} \lambda_H(P \cap Q) \cdot u_2^{H \backslash G}(D_f) \ d \theta(H)\\
 & = \lambda \relprod{\theta} u_2^{\theta \backslash G}(P \cap Q \cap D_f).
\end{align*}

Now we estimate the variance of $\lambda(P \cap Q \cap C_f(\omega))$. Set
$$\Delta = \bigcup_{B \in \beta} (T^{-1} \cdot B) \times (T^{-1} \cdot B) \subseteq (X \times Z) \times (X \times Z).$$
Note that
$$\lambda \times \lambda (\Delta) \leq \sum_{B \in \beta} |T|^2 \cdot \eta(B)^2 < \delta.$$
Also observe that if $((x, z), (x', z')) \not\in \Delta$ then $\beta$ separates $T \cdot z \cup T \cdot z'$. Therefore, for $((x, z), (x', z')) \not\in \Delta$ the product $1_{C_f(\omega)}(x, z) \cdot 1_{C_f(\omega)}(x', z')$ has expected value $u_2^{\Stab(z) \backslash G}(D_f) \cdot u_2^{\Stab(z') \backslash G}(D_f)$. So we have
\begin{align*}
\Eavg_\omega & \lambda(P \cap Q \cap C_f(\omega))^2\\
 & = \int_{(P \cap Q) \times (P \cap Q)} \Eavg_\omega 1_{C_f(\omega)}(x, z) \cdot 1_{C_f(\omega)}(x', z') \ d (\lambda \times \lambda)((x, z), (x', z))\\
 & < \delta + \int_{(P \cap Q) \times (P \cap Q)} u_2^{\Stab(z) \backslash G}(D_f) \cdot u_2^{\Stab(z') \backslash G}(D_f) \ d (\lambda \times \lambda)((x, z), (x', z'))\\
& = \delta + \left( \int_{\Sub(G)} \lambda_H(P \cap Q) \cdot u_2^{H \backslash G}(D_f) \ d \theta(H) \right)^2\\
& = \delta + \lambda \relprod{\theta} u_2^{\theta \backslash G}(P \cap Q \cap D_f)^2.
\end{align*}
Therefore the variance is
$$\Var_\omega \lambda(P \cap Q \cap C_f(\omega)) = \Eavg_\omega \lambda(P \cap Q \cap C_f(\omega))^2 - (\Eavg_\omega \lambda(P \cap Q \cap C_f(\omega)))^2 < \delta.$$
The Chebyshev inequality implies that for every $r > 0$
$$u_2^k \Big( \Big\{ \omega : |\lambda(P \cap Q \cap C_f(\omega)) - \Eavg_\omega \lambda(P \cap Q \cap C_f(\omega))| > r \Big\} \Big) \leq \frac{\Var_\omega \lambda(P \cap Q \cap C_f(\omega))}{r^2}.$$
Using $r = \delta^{1/3}$ we obtain
$$u_2^k \Big( \Big\{ \omega : |\lambda(P \cap Q \cap C_f(\omega)) - \Eavg_\omega \lambda(P \cap Q \cap C_f(\omega))| > \delta^{1/3} \Big\} \Big) \leq \delta^{1/3}.$$
Since $2^{|T|} \cdot |\cP| \cdot |\cQ| \cdot \delta^{1/3} < \epsilon < 1$, it follows that there is $\omega \in \{0, 1\}^k$ such that
\begin{equation*}
\left| \dist_\lambda(\cP \vee \cQ \vee \gamma(\omega)^T) - \dist_{\lambda \relprod{\theta} u_2^{\theta \backslash G}}(\cP \vee \cQ \vee \xi^T) \right| < \epsilon.\qedhere
\end{equation*}
\end{proof}

Given two {\pmp} actions $G \acts (X, \mu)$ and $G \acts (Y, \nu)$ having the same stabilizer type, and given a joining $\lambda$ of $\mu$ with $\nu$, we say that $\lambda$ \emph{preserves stabilizers} if
$$\lambda(\{(x, y) \in X \times Y : \Stab(x) = \Stab(y)\}) = 1.$$
When $G \acts (X, \mu)$ and $G \acts (Y, \nu)$ have the same stabilizer type $\theta$, there always exists at least one joining which preserves stabilizers, namely the relatively independent joining $\mu \relprod{\theta} \nu$.

\begin{cor} \label{cor:awstab}
Let $G \acts (X, \mu)$ and $G \acts (Z, \eta)$ be aperiodic {\pmp} actions having the same stabilizer type $\theta$, and let $\lambda$ be a stabilizer-preserving joining of these two actions. Assume that $G \acts (Y, \nu)$ has stabilizer type $\theta$ and is weakly contained in all other {\pmp} actions of stabilizer type $\theta$. Then $G \acts (X \times Z, \lambda)$ weakly contains $G \acts (X \times Y, \mu \relprod{\theta} \nu)$ as joinings with $G \acts (X, \mu)$.
\end{cor}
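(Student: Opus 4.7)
The plan is to deduce this corollary essentially immediately from Lemma \ref{lem:awcent}, once we identify the correct projection.

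First I would apply Lemma \ref{lem:awcent} to the situation at hand. That lemma takes a joining $\lambda$ of an action $G \acts (X,\mu)$ with an aperiodic action $G \acts (Z,\eta)$ and an action $G \acts (Y,\nu)$ of the common stabilizer type that is weakly contained in all {\pmp} actions of that stabilizer type, and concludes that $G \acts (X \times Z, \lambda)$ is weakly equivalent to $G \acts (X \times Z \times Y, \lambda \relprod{\theta} \nu)$ as joinings with $G \acts (X,\mu)$. All the hypotheses here are exactly what we have. In particular, $G \acts (X \times Z, \lambda)$ weakly contains $G \acts (X \times Z \times Y, \lambda \relprod{\theta} \nu)$ as joinings with $G \acts (X,\mu)$.

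Second, I would identify the projection of $\lambda \relprod{\theta} \nu$ onto $X \times Y$. Because $\lambda$ is stabilizer-preserving, the disintegration of $\lambda$ over the map $(x,z) \mapsto \Stab(x) = \Stab(z) \in \Sub(G)$ is $\lambda = \int_{\Sub(G)} \lambda_H \, d\theta(H)$, whose marginal on $X$ is $\mu_H$ and marginal on $Z$ is $\eta_H$, matching the respective disintegrations of $\mu$ and $\eta$ over the stabilizer map. Then by definition of the relatively independent joining
\[
\lambda \relprod{\theta} \nu \;=\; \int_{\Sub(G)} \lambda_H \times \nu_H \, d\theta(H),
\]
and pushing this forward by $\pi^{X \times Y} : X \times Z \times Y \to X \times Y$ yields $\int_{\Sub(G)} \mu_H \times \nu_H \, d\theta(H) = \mu \relprod{\theta} \nu$.

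Finally, the weak containment transfers through this $G$-equivariant factor map. Given a finite ordered partition $\cP$ of $X$, a finite ordered partition $\delta$ of $Y$, a finite $T \subseteq G$, and $\epsilon > 0$, I would lift $\delta$ to a partition $\gamma_2$ of $Z \times Y$ via the projection $Z \times Y \to Y$, so that $\dist_{\lambda \relprod{\theta} \nu}(\cP \vee \gamma_2^T) = \dist_{\mu \relprod{\theta} \nu}(\cP \vee \delta^T)$. Applying the weak containment from Step 1 produces a finite ordered partition $\gamma_1$ of $Z$ with $|\dist_\lambda(\cP \vee \gamma_1^T) - \dist_{\lambda \relprod{\theta} \nu}(\cP \vee \gamma_2^T)| < \epsilon$, and combining these gives exactly the weak-containment condition we want for joinings with $G \acts (X,\mu)$. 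There is no serious obstacle here: Lemma \ref{lem:awcent} does all the real work, and the only new ingredient is the routine but crucial identification of marginals enabled by stabilizer-preservation of $\lambda$.
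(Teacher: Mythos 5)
Your proposal is correct and follows essentially the same route as the paper: invoke Lemma \ref{lem:awcent} for the weak equivalence of $(X \times Z, \lambda)$ with $(X \times Z \times Y, \lambda \relprod{\theta} \nu)$, use stabilizer-preservation and uniqueness of disintegrations to see that $(\pi^X \times \pi^Y)_*(\lambda \relprod{\theta} \nu) = \mu \relprod{\theta} \nu$, and conclude since factoring onto implies weak containment of joinings and weak containment is transitive. The paper phrases your final lifting-of-partitions step simply as the factor map $\pi^X \times \pi^Y$ inducing weak containment, but the content is identical.
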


\begin{proof}
Let $\mu = \int \mu_H \ d \theta(H)$, $\nu = \int \nu_H \ d \theta(H)$, and $\lambda = \int \lambda_H \ d \theta(H)$ be the disintegrations of $\mu$, $\nu$, and $\lambda$ over $\theta$. Since $\lambda$ preserves stabilizers, we have that $\pi^X_*(\lambda_H)$ is supported on points in $X$ having stabilizer $H$. Since also $\mu = \int \pi^X_*(\lambda_H) \ d \theta(H)$, uniqueness of disintegrations implies that $\pi^X_*(\lambda_H) = \mu_H$ for $\theta$-almost-every $H$. It follows that
$$(\pi^X \times \pi^Y)_*(\lambda \relprod{\theta} \nu) = \int (\pi^X \times \pi^Y)_*(\lambda_H \times \nu_H) \ d \theta(H) = \int \mu_H \times \nu_H \ d \theta(H) = \mu \relprod{\theta} \nu.$$
Therefore $G \acts (X \times Z \times Y, \lambda \relprod{\theta} \nu)$ factors onto $G \acts (X \times Y, \mu \relprod{\theta} \nu)$ via the map $\pi^X \times \pi^Y$. In particular, $G \acts (X \times Z \times Y, \lambda \relprod{\theta} \nu)$ weakly contains $G \acts (X \times Y, \mu \relprod{\theta} \nu)$ as joinings with $G \acts (X, \mu)$. By the previous lemma $G \acts (X \times Z, \lambda)$ weakly contains $G \acts (X \times Z \times Y, \lambda \relprod{\theta} \nu)$ as joinings with $G \acts (X, \mu)$. Now note that weak containment is a transitive property.
\end{proof}

\section{Relative Rokhlin entropy and joinings} \label{sec:ineq}

In this section we show that weak containment of joinings leads to inequalities in relative Rokhlin entropies.

Recall that a real-valued function $f$ on a topological space $X$ is called \emph{upper-semicontinuous} if for every $x \in X$ and $\epsilon > 0$ there is an open set $U$ containing $x$ with $f(y) < f(x) + \epsilon$ for all $y \in U$. When $X$ is first countable, this is equivalent to saying that $f(x) \geq \limsup f(x_n)$ whenever $(x_n)$ is a sequence converging to $x$. We observe a simple property.

\begin{lem} \label{lem:limups}
Let $X$ be a topological space, let $f_\epsilon : X \rightarrow [0, \infty)$, $\epsilon > 0$, be a family of upper-semicontinuous functions and set $g = \lim_{\epsilon \rightarrow 0} f_\epsilon$. Assume that
$$g(x) - \epsilon \leq f_\epsilon(x) \leq g(x)$$
for all $\epsilon > 0$ and all $x \in X$. Then $g : X \rightarrow \R$ is upper-semicontinuous.
\end{lem}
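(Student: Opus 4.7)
The plan is to verify the upper-semicontinuity of $g$ directly from the $\epsilon/2$-type argument that the sandwich inequality $g(x) - \epsilon \leq f_\epsilon(x) \leq g(x)$ makes available. Since each $f_\epsilon$ is upper-semicontinuous and differs from $g$ by at most $\epsilon$ uniformly on $X$, $g$ should inherit upper-semicontinuity with only an additional arbitrarily small error.

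Concretely, I would fix $x \in X$ and $\delta > 0$ and aim to produce an open neighborhood $U$ of $x$ with $g(y) < g(x) + \delta$ for all $y \in U$. I would set $\epsilon = \delta / 2$ and invoke the upper-semicontinuity of $f_\epsilon$ at $x$ to obtain an open neighborhood $U$ of $x$ satisfying $f_\epsilon(y) < f_\epsilon(x) + \delta/2$ for all $y \in U$.

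Then for any $y \in U$, I would chain the sandwich inequalities:
\[
g(y) \leq f_\epsilon(y) + \epsilon < f_\epsilon(x) + \delta/2 + \epsilon \leq g(x) + \delta/2 + \delta/2 = g(x) + \delta,
\]
using $g(y) \leq f_\epsilon(y) + \epsilon$ on the left, the choice of $U$ in the middle, and $f_\epsilon(x) \leq g(x)$ on the right. This gives the desired neighborhood and completes the proof.

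There is no real obstacle here; the sandwich hypothesis is strong enough to trivialize the issue, since the family $\{f_\epsilon\}$ converges uniformly to $g$. The only mild subtlety is being careful to split the budget $\delta$ between the uniform-approximation error ($\epsilon = \delta/2$) and the upper-semicontinuity error of $f_\epsilon$ at $x$ (also $\delta/2$); as long as these are chosen in that order, the argument goes through verbatim without any appeal to first countability or sequential characterizations.
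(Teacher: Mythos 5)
Your proof is correct and is essentially identical to the paper's: both fix the target tolerance, apply upper-semicontinuity of $f_{\epsilon/2}$ (your $f_{\delta/2}$) to get a neighborhood, and chain the sandwich inequalities $g(y) \leq f_{\epsilon/2}(y) + \epsilon/2$ and $f_{\epsilon/2}(x) \leq g(x)$ to conclude. Nothing to change.
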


\begin{proof}
Fix $x \in X$ and $\epsilon > 0$. Since $f_{\epsilon/2}$ is upper-semicontinuous, there is an open neighborhood $U$ of $x$ with $f_{\epsilon/2}(y) < f_{\epsilon/2}(x) + \epsilon/2$ for all $y \in U$. Then for $y \in U$ we have $g(y) \leq f_{\epsilon/2}(y) + \epsilon/2 \leq f_{\epsilon/2}(x) + \epsilon \leq g(x) + \epsilon$.
\end{proof}

Fix an action $G \acts^a (X, \mu)$. Let $f : \sjoin(a) \rightarrow \R$ be an upper-semicontinuous function. Recall that the topology on $\sjoin(a)$ is such that $(b_1, \lambda_1)$ weakly contains $(b_2, \lambda_2)$ as joinings with $a$ if and only if every open neighborhood of $(b_2, \lambda_2)$ contains $(b_1, \lambda_1)$. Thus, if $(b_1, \lambda_1)$ weakly contains $(b_2, \lambda_2)$ as joinings with $a$ then $f(b_1, \lambda_1) \leq f(b_2, \lambda_2)$.

For an action $G \acts^a (X, \mu)$, when we wish to emphasize the action $a$ we write $\rh_a(X, \mu \given \cF)$ and $\rh_{a, \mu}(\mathcal{C} \given \cF)$ for the Rokhlin entropies $\rh_G(X, \mu \given \cF)$ and $\rh_{G,\mu}(\mathcal{C} \given \cF)$, respectively.

\begin{lem} \label{lem:upjoin}
Let $G$ be a countable group, let $G \acts^a (X, \mu)$ be an aperiodic {\pmp} action, and let $\cF$ be a $G$-invariant sub-$\sigma$-algebra. If $\cP$ is a countable partition of $X$ with $\sH(\cP \given \cF) < \infty$ then the maps
\begin{align*}
(b, \lambda) \in \sjoin(a) & \mapsto \rh_{a \times b, \lambda}(\cP \given \cF), \quad \text{and}\\
(b, \lambda) \in \sjoin(a) & \mapsto \rh_{a \times b, \lambda}(\cP \given \cF \vee \Borel(Y))
\end{align*}
are upper-semicontinuous.
\end{lem}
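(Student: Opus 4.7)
Fix $(b_2, \lambda_2) \in \sjoin(a)$ and $\epsilon > 0$, and write $\Sigma$ for either $\cF$ or $\cF \vee \Borel(Y)$. Upper-semicontinuity amounts to producing an open neighborhood of $(b_2, \lambda_2)$ on which $\rh_{a \times b, \lambda}(\cP \given \Sigma) < \rh_{a \times b_2, \lambda_2}(\cP \given \Sigma) + \epsilon$. The plan is to produce a near-optimal product-form witness $\cP_0 \vee \gamma_2$ at $(b_2, \lambda_2)$, transport it to a nearby joining via the topology of $\sjoin(a)$ to obtain $\cP_0 \vee \gamma_1$, and use $\cP \vee \cP_0 \vee \gamma_1$ as the witness at $(b_1, \lambda_1)$.

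The first step is to find finite partitions $\cP_0 \subseteq \Borel(X)$ and $\gamma_2 \subseteq \Borel(Y)$, together with a finite $T_0 \subseteq G$, such that, with $\sinv_G$ taken for $a \times b_2$,
$$\sH_{\lambda_2}(\cP_0 \vee \gamma_2 \given \Sigma \vee \sinv_G) < \rh_{a \times b_2, \lambda_2}(\cP \given \Sigma) + \epsilon/4 \quad \text{and} \quad \sH_{\lambda_2}(\cP \given (\cP_0 \vee \gamma_2)^{T_0} \vee \Sigma \vee \sinv_G) < \epsilon/16.$$
Such a partition exists via the following recipe: start with any near-optimal generator $\alpha$ of $\cP$ over $\Sigma \vee \sinv_G$; invoke Lemma \ref{lem:shan}.(vi) (applicable because $\sH(\cP \given \cF) < \infty$ dominates the conditional entropies in question) to select a finite $T_0 \subseteq G$ with $\sH_{\lambda_2}(\cP \given \alpha^{T_0} \vee \Sigma \vee \sinv_G) < \epsilon/32$; and then approximate $\alpha$ in the Rokhlin metric by a finite product-form partition $\cP_0 \vee \gamma_2$, possible because finite unions of rectangles generate $\Borel(X \times Y)$.

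The second step is the transport. By Lemma \ref{lem:iso}, for any prescribed finite $\xi \subseteq \Borel(X)$, finite $T \supseteq T_0$, and $\delta > 0$, a sufficiently small open neighborhood of $(b_2, \lambda_2)$ consists of joinings $(b_1, \lambda_1)$ admitting a finite partition $\gamma_1 \subseteq \Borel(Y)$ with $|\dist_{\lambda_1}(\xi \vee \cP_0 \vee \gamma_1^T) - \dist_{\lambda_2}(\xi \vee \cP_0 \vee \gamma_2^T)| < \delta$. Choose $\xi$ to refine a finite truncation of $\cP$ along with finite approximations to a countable generator of $\cF$ and to $(F,\eta)$-almost-invariant approximants for the members of $\sinv_G$ that enter the estimates above. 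Continuity of conditional Shannon entropy in joint distributions of finite partitions then yields both $\sH_{\lambda_1}(\cP_0 \vee \gamma_1 \given \Sigma \vee \sinv_G) < \sH_{\lambda_2}(\cP_0 \vee \gamma_2 \given \Sigma \vee \sinv_G) + \epsilon/2$ and $\sH_{\lambda_1}(\cP \given (\cP_0 \vee \gamma_1)^{T_0} \vee \Sigma \vee \sinv_G) < \epsilon/8$. The witness $\cP \vee \cP_0 \vee \gamma_1$ trivially generates $\cP$, and combining the two estimates via $\sH(\beta \vee \cP \given \cdots) \leq \sH(\beta \given \cdots) + \sH(\cP \given \beta^{T_0} \vee \cdots)$ gives $\sH_{\lambda_1}(\cP \vee \cP_0 \vee \gamma_1 \given \Sigma \vee \sinv_G) < \rh_{a \times b_2, \lambda_2}(\cP \given \Sigma) + \epsilon$, hence $\rh_{a \times b_1, \lambda_1}(\cP \given \Sigma) < \rh_{a \times b_2, \lambda_2}(\cP \given \Sigma) + \epsilon$.

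The hardest point is the dependence of $\sinv_G$ on the joining: the conditioning $\sigma$-algebras on the two sides of the entropy comparison are not literally the same. The standard remedy is to replace $\sinv_G$ by sub-$\sigma$-algebras generated by $(F,\eta)$-almost-invariant sets for large finite $F \subseteq G$ and small $\eta > 0$; the measures of such sets depend only on finite joint distributions and therefore transfer continuously through the weak-containment topology on $\sjoin(a)$. For the second map in the lemma, where $\Borel(Y)$ is absorbed into the conditioning, the $\gamma_i$ terms drop out of the conditional entropy entirely and the estimate reduces to comparing $\sH_{\lambda_i}(\cP_0 \given \cF \vee \Borel(Y) \vee \sinv_G)$ across joinings, which simplifies the transport but still relies on the same $\sinv_G$-approximation device.
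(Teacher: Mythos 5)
Your overall strategy --- replace the Rokhlin entropy at $(b_2,\lambda_2)$ by finite combinatorial data and transport that data to nearby joinings through the topology on $\sjoin(a)$ --- has the right shape, but the step you yourself flag as ``the hardest point,'' the dependence of $\sinv_G$ on the joining, is where the argument genuinely breaks, and the almost-invariant-set remedy does not repair it. Both of your key estimates at $(b_2,\lambda_2)$ (the entropy of the witness and the generation estimate for $\cP$) hold only \emph{after} conditioning on $\sinv_G$ of $(X\times Y,\lambda_2)$, since the definition of $\rh$ only requires generation modulo $\sinv_G$. If you replace $\sinv_G$ by a finite partition $\iota_2$ of exactly invariant sets and transport it, the resulting $\iota_1$ at $(b_1,\lambda_1)$ is $(F,\eta)$-almost invariant but in general not close to any exactly $G$-invariant set of $(X\times Y,\lambda_1)$ (an ergodic joining can lie in every neighborhood of a highly non-ergodic one), so $\iota_1$ cannot be discarded into $\sinv_G^{\lambda_1}$; nor can it be absorbed into the witness, because a partition approximating $\sinv_G^{\lambda_2}$ well enough for your entropy estimates carries no a priori bound on its own Shannon entropy. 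This is precisely the difficulty the paper outsources to \cite{AS}: for \emph{aperiodic} actions there is a formula computing $\rh_{a\times b,\lambda}(\cP\given\cF)$ up to $\epsilon$ as an infimum of quantities $\sH_\lambda(\beta\given\xi\vee\chi^{a\times b(T)})$ over coarsenings $\beta,\chi$ of finite partitions subject to $\sH_\lambda(\chi)+\sH_\lambda(\cP\given\beta^{a\times b(T)}\vee\xi)<\epsilon$; the auxiliary partition $\chi$, constrained to have \emph{small Shannon entropy}, simultaneously replaces the conditioning on $\sinv_G$ and the exact generation requirement, and the resulting expression depends only on finite joint distributions. Your argument never uses aperiodicity, which is a telltale sign that this essential input is missing.

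There are also two smaller but real slips. First, a near-optimal witness $\alpha$ for $\rh_{a\times b_2,\lambda_2}(\cP\given\Sigma)$ is approximated by a \emph{coarsening} of some $\cP_0\vee\gamma_2$, not by $\cP_0\vee\gamma_2$ itself; the full product partition can have far larger Shannon entropy than $\alpha$, so the quantity to transport must be $\sH_{\lambda_2}(\beta\given\cdots)$ for a coarsening $\beta\le\cP_0\vee\gamma_2$, with the coarsening pattern carried along (as in Lemma \ref{lem:iso} and in the paper's functions $f^{\alpha,\xi}_{\epsilon,T}$, which quantify over coarsenings $\beta,\chi\le\alpha\vee\xi\vee\gamma$). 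Second, the combination inequality $\sH(\beta\vee\cP\given\cdots)\le\sH(\beta\given\cdots)+\sH(\cP\given\beta^{T_0}\vee\cdots)$ is false: the chain rule produces $\sH(\cP\given\beta\vee\cdots)$, which is $\ge\sH(\cP\given\beta^{T_0}\vee\cdots)$ when $1_G\in T_0$, so the inequality points the wrong way. The intended conclusion should instead be reached through sub-additivity of Rokhlin entropy (Lemma \ref{lem:add2}), via $\rh_{G,\lambda_1}(\cP\given\Sigma)\le\rh_{G,\lambda_1}(\salg_G(\beta)\given\Sigma)+\rh_{G,\lambda_1}(\cP\given\salg_G(\beta)\vee\Sigma)$. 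Finally, note that the paper first reduces to finite $\cP$ using Lemma \ref{lem:limups} together with sub-additivity, which cleanly handles the truncation issue you gesture at with ``a finite truncation of $\cP$.''
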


\begin{proof}
Fix a countable partition $\cP$ of $X$ with $\sH(\cP \given \cF) < \infty$. For each $\epsilon > 0$ fix a finite partition $\cP_\epsilon$ which is coarser than $\cP$ and satisfies $\sH(\cP \given \cP_\epsilon \vee \cF) < \epsilon$. By sub-additivity of Rokhlin entropy, for every $(b, \lambda) \in \sjoin(a)$ we have
$$\rh_{a \times b, \lambda}(\cP \given \cF) - \epsilon \leq \rh_{a \times b, \lambda}(\cP_\epsilon \given \cF) \leq \rh_{a \times b, \lambda}(\cP \given \cF).$$
So by Lemma \ref{lem:limups}, the map $(b, \lambda) \mapsto \rh_{a \times b, \lambda}(\cP \given \cF)$ is upper-semicontinuous provided that $(b, \lambda) \mapsto \rh_{a \times b, \lambda}(\cP_\epsilon \given \cF)$ is upper-semicontinuous for every $\epsilon$. The same is true for $\rh_{a \times b, \lambda}(\cP \given \cF \vee \Borel(Y))$. Thus, it suffices to consider the case where $\cP$ is finite.

Fix a finite partition $\cP$ of $X$. Fix finite labeled partitions $\alpha \subseteq \Borel(X)$ and $\xi \subseteq \cF$, fix a finite $T \subseteq G$, and fix $\epsilon > 0$. For a finite partition $\gamma \subseteq \Borel(Y)$ define $f_{\epsilon,T}^{\alpha,\xi}(b, \lambda, \gamma)$ to be
$$\inf \Big\{ \sH_\lambda(\beta \given \chi^{a \times b(T)} \vee \xi) : \beta, \chi \leq \alpha \vee \gamma, \ \sH_\lambda(\chi) + \sH_\lambda(\cP \given (\beta \vee \chi)^{a \times b(T)} \vee \xi) < \epsilon \Big\}.$$
Since $G \acts (X, \mu)$ is aperiodic, \cite[Lem. 6.3]{AS} states that for every $\epsilon > 0$
$$\rh_{a \times b, \lambda}(\cP \given \cF) - \epsilon \leq \inf_{\alpha, \xi, T} \inf_{\gamma} f_{\epsilon,T}^{\alpha,\xi}(b, \lambda, \gamma) \leq \rh_{a \times b, \lambda}(\cP \given \cF).$$
Since $\beta, \chi \leq \alpha \vee \gamma$, we have that
$$\cP \vee \xi \vee \beta^{a \times b(T)} \vee \chi^{a \times b(T)} \leq \cP \vee \xi \vee \alpha^{a(T)} \vee \gamma^{b(T)}.$$
Since there are only finitely many choices for $\beta$ and $\chi$, $f_{\epsilon,T}^{\alpha,\xi}(b, \lambda, \gamma)$ is an upper-semicontinuous function of the (labeled) distribution
$$\dist_\lambda(\cP \vee \xi \vee \alpha^{a(T)} \vee \gamma^{b(T)}).$$
Since $\cP$, $\alpha$, and $\xi$ are partitions of $X$, from the definition of the topology on $\sjoin(a)$ it follows that
$$f_{\epsilon,T}^{\alpha,\xi}(b, \lambda) = \inf_{\gamma \subseteq \Borel(Y)} f_{\epsilon,T}^{\alpha,\xi}(b, \lambda, \gamma)$$
is an upper-semicontinuous function of $(b, \lambda)$. Taking more infimums does not destroy upper-semicontinuity, so
$$f_\epsilon(b, \lambda) = \inf_{\alpha,\xi,T} f_{\epsilon,T}^{\alpha,\xi}(b, \lambda)$$
is upper-semicontinuous. By Lemma \ref{lem:limups} it follows that
$$\lim_{\epsilon \rightarrow 0} f_\epsilon(b, \lambda) = \rh_{a \times b, \lambda}(\cP \given \cF)$$
is an upper-semicontinuous function of $(b, \lambda)$.

The proof for $\rh_{a \times b, \lambda}(\cP \given \cF \vee \Borel(Y))$ is nearly identical and merely involves defining $f_{\epsilon,T}^{\alpha,\xi}(b, \lambda, \gamma)$ to be
\begin{equation*}
\inf \Big\{ \sH_\lambda(\beta \given \chi^{a \times b(T)} \vee \xi \vee \gamma) : \beta, \chi \leq \alpha \vee \gamma, \ \sH_\lambda(\chi) + \sH_\lambda(\cP \given (\beta \vee \chi)^{a \times b(T)} \vee \xi \vee \gamma) < \epsilon \Big\}.\qedhere
\end{equation*}
\end{proof}

\begin{cor} \label{cor:upjoin}
Let $G$ be a countable group, let $G \acts^a (X, \mu)$ be an aperiodic {\pmp} action, and let $\cF$ be a $G$-invariant sub-$\sigma$-algebra. If $\rh_G(X, \mu \given \cF) < \infty$ then the maps
\begin{align*}
(b, \lambda) \in \sjoin(a) & \mapsto \rh_{a \times b, \lambda}(\Borel(X) \given \cF), \quad \text{and}\\
(b, \lambda) \in \sjoin(a) & \mapsto \rh_{a \times b}(X \times Y, \lambda \given \cF \vee \Borel(Y))
\end{align*}
are upper-semicontinuous.
\end{cor}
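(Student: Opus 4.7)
The plan is to fix a single countable partition $\cP$ of $X$ which generates $\Borel(X)$ modulo $\cF \vee \sinv_G^a$ under the action $a$, and then to reduce both maps to Lemma \ref{lem:upjoin} applied to $\cP$.

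Using $\rh_a(X, \mu \given \cF) < \infty$, the definition of Rokhlin entropy provides a countable Borel partition $\cP$ of $X$ with $\salg_G^a(\cP) \vee \cF \vee \sinv_G^a = \Borel(X)$ (mod null) and $\sH_\mu(\cP \given \cF \vee \sinv_G^a) < \infty$. We view $\cP$ as a partition of $X \times Y$ by pullback along $\pi^X$. The first step is to show that for every $(b, \lambda) \in \sjoin(a)$,
\[\rh_{a \times b, \lambda}(\Borel(X) \given \cF) = \rh_{a \times b, \lambda}(\cP \given \cF)\]
and
\[\rh_{a \times b}(X \times Y, \lambda \given \cF \vee \Borel(Y)) = \rh_{a \times b, \lambda}(\cP \given \cF \vee \Borel(Y)).\]
The inequalities $\geq$ are immediate, since $\cP \subseteq \Borel(X) \subseteq \Borel(X) \vee \Borel(Y) = \Borel(X \times Y)$. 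For $\leq$, observe that the $(a \times b)$-action restricted to pullbacks from $X$ reduces to $a$, so $\salg_G^{a \times b}(\cP) \supseteq \salg_G^a(\cP)$; and $\sinv_G^a \subseteq \sinv_G^{a \times b}$ via pullback. Therefore
\[\Borel(X) = \salg_G^a(\cP) \vee \cF \vee \sinv_G^a \subseteq \salg_G^{a \times b}(\cP) \vee \cF \vee \sinv_G^{a \times b},\]
so any partition $\alpha$ with $\cP \subseteq \salg_G^{a \times b}(\alpha) \vee \cF \vee \sinv_G^{a \times b}$ also generates $\Borel(X)$, and after joining $\Borel(Y)$ it generates $\Borel(X \times Y)$ as well.

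The second step is to apply Lemma \ref{lem:upjoin} to $\cP$ with the two conditioning $\sigma$-algebras $\cF$ and $\cF \vee \Borel(Y)$. The hypothesis of that lemma is $\sH(\cP \given \cF) < \infty$, which may fail for us; however, its proof employs this assumption only to find, for each $\epsilon > 0$, a finite coarsening $\cP_\epsilon \leq \cP$ with $\sH_\lambda(\cP \given \cP_\epsilon \vee \cF \vee \sinv_G^{a \times b}) < \epsilon$ uniformly in $(b, \lambda) \in \sjoin(a)$. Because $\sinv_G^a$ pulls back into $\sinv_G^{a \times b}$ and $\lambda$ restricts to $\mu$ on $\Borel(X)$, this quantity is bounded above by $\sH_\mu(\cP \given \cP_\epsilon \vee \cF \vee \sinv_G^a)$, which can be made arbitrarily small by choosing $\cP_\epsilon$ sufficiently fine (invoking Lemma \ref{lem:shan}(vi) together with the finiteness of $\sH_\mu(\cP \given \cF \vee \sinv_G^a)$).

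The main obstacle is this last technical point: verifying that the argument of Lemma \ref{lem:upjoin} carries through under the weaker hypothesis $\sH(\cP \given \cF \vee \sinv_G^a) < \infty$ rather than $\sH(\cP \given \cF) < \infty$. Once this refinement is granted, the upper-semicontinuity of both maps is an immediate consequence of the displayed reductions.
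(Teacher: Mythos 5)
Your reduction to a single partition $\cP$ with $\salg_G(\cP) \vee \cF \vee \sinv_G^X = \Borel(X)$ and $\sH_\mu(\cP \given \cF \vee \sinv_G^X) < \infty$ is exactly the paper's first move, and the identities $\rh_{a \times b, \lambda}(\Borel(X) \given \cF) = \rh_{a \times b, \lambda}(\cP \given \cF)$ and $\rh_{a \times b}(X \times Y, \lambda \given \cF \vee \Borel(Y)) = \rh_{a \times b, \lambda}(\cP \given \cF \vee \Borel(Y))$ are correct for the reasons you give. The problem is that you then leave the decisive step --- the mismatch between the hypothesis $\sH(\cP \given \cF) < \infty$ of Lemma \ref{lem:upjoin} and what you actually have --- as an unverified claim that the proof of that lemma ``carries through'' under the weaker hypothesis. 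As written, this is a gap: you have not proved the corollary, only reduced it to a modified version of Lemma \ref{lem:upjoin} that you do not establish.

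The gap closes without touching the proof of Lemma \ref{lem:upjoin} at all. That lemma is stated for an \emph{arbitrary} $G$-invariant sub-$\sigma$-algebra of $X$, and $\cF \vee \sinv_G^X$ is such a $\sigma$-algebra; moreover $\sH(\cP \given \cF \vee \sinv_G^X) < \infty$ is precisely its hypothesis for this choice. Since $\sinv_G^X \subseteq \sinv_G^{X \times Y}$ and the definition of outer Rokhlin entropy already joins $\sinv_G^{X \times Y}$ into both the generating condition and the conditioning, one has
$$\rh_{a \times b, \lambda}(\cP \given \cF) = \rh_{a \times b, \lambda}(\cP \given \cF \vee \sinv_G^X) \quad \text{and} \quad \rh_{a \times b, \lambda}(\cP \given \cF \vee \Borel(Y)) = \rh_{a \times b, \lambda}(\cP \given \cF \vee \sinv_G^X \vee \Borel(Y)),$$
so applying Lemma \ref{lem:upjoin} verbatim with $\cF \vee \sinv_G^X$ in place of $\cF$ gives upper-semicontinuity of exactly the two maps you need. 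This is the paper's proof; your version would work too if you carried out the promised modification (the only place finiteness of $\sH(\cP \given \cF)$ is used in Lemma \ref{lem:upjoin} is the reduction to finite $\cP$, and $\sH_\mu(\cP \given \cP_\epsilon \vee \cF \vee \sinv_G^X) < \epsilon$ does suffice there by sub-additivity), but that labor is unnecessary given how the lemma is stated.
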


\begin{proof}
Let $\sinv_G^X$ denote the $\sigma$-algebra of $G$-invariant Borel subsets of $X$. Since $\rh_G(X, \mu \given \cF) < \infty$, there is a partition $\cP$ of $X$ with $\sH(\cP \given \cF \vee \sinv_G^X) < \infty$ and $\Borel(X) = \salg_G(\cP) \vee \cF \vee \sinv_G^X$. Now apply Lemma \ref{lem:upjoin} and use the facts that
\begin{align*}
\rh_{a \times b, \lambda}(\Borel(X) \given \cF) & = \rh_{a \times b, \lambda}(\cP \given \cF \vee \sinv_G^X), \quad \text{and}\\
\rh_{a \times b}(X \times Y, \lambda \given \cF \vee \Borel(Y)) & = \rh_{a \times b, \lambda}(\cP \given \cF \vee \sinv_G^X \vee \Borel(Y)).\qedhere
\end{align*}
\end{proof}

We do not know if $(b, \lambda) \in \sjoin(a) \mapsto \rh_{a \times b}(X \times Y, \lambda \given \cF \vee \Borel(Y))$ is an upper-semicontinuous function in general. However, weak containment of joinings always produces an inequality in relative Rokhlin entropies.

\begin{cor} \label{cor:wcjoin}
Let $G$ be a countable group, let $G \acts^a (X, \mu)$ be an aperiodic {\pmp} action, and let $\cF$ be a $G$-invariant sub-$\sigma$-algebra. Let $(b_1, \lambda_1), (b_2, \lambda_2) \in \sjoin(a)$ and assume that $(b_1, \lambda_1)$ weakly contains $(b_2, \lambda_2)$.
\begin{enumerate}
\item[\rm (1)] If $\cP$ is a countable partition of $X$ with $\sH(\cP \given \cF) < \infty$ then
$$\rh_{a \times b_1, \lambda_1}(\cP \given \cF) \leq \rh_{a \times b_2, \lambda_2}(\cP \given \cF).$$
\item[\rm (2)] If $\cP$ is a countable partition of $X$ with $\sH(\cP \given \cF) < \infty$ then
$$\rh_{a \times b_1, \lambda_1}(\cP \given \cF \vee \Borel(Y_1)) \leq \rh_{a \times b_2, \lambda_2}(\cP \given \cF \vee \Borel(Y_2)).$$
\item[\rm (3)] If $\rh_G(X, \mu \given \cF) < \infty$ then
$$\rh_{a \times b_1, \lambda_1}(\Borel(X) \given \cF) \leq \rh_{a \times b_2, \lambda_2}(\Borel(X) \given \cF).$$
\item[\rm (4)] Without any additional assumptions we have
$$\rh_{a \times b_1}(X \times Y_1, \lambda_1 \given \cF \vee \Borel(Y_1)) \leq \rh_{a \times b_2}(X \times Y_2, \lambda_2 \given \cF \vee \Borel(Y_2)).$$
\end{enumerate}
\end{cor}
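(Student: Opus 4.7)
Parts (1)--(3) will follow immediately from the upper-semicontinuity results already established. For any upper-semicontinuous function $f : \sjoin(a) \to \R$, the set $\{(b, \lambda) : f(b, \lambda) < f(b_2, \lambda_2) + \epsilon\}$ is open and contains $(b_2, \lambda_2)$ for every $\epsilon > 0$; by the definition of weak containment of joinings it must therefore contain $(b_1, \lambda_1)$, forcing $f(b_1, \lambda_1) \leq f(b_2, \lambda_2)$. Applying this to the two maps of Lemma \ref{lem:upjoin} proves (1) and (2); applying it to the first map of Corollary \ref{cor:upjoin} (whose hypothesis $\rh_G(X, \mu \given \cF) < \infty$ is exactly what (3) assumes) proves (3).

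For (4) the approach must differ, since the paper explicitly notes that upper-semicontinuity of $(b, \lambda) \mapsto \rh_{a \times b}(X \times Y, \lambda \given \cF \vee \Borel(Y))$ is not known without a finiteness hypothesis on $(X, \mu)$. The plan is to reduce part (4) to part (2) via Theorem \ref{thm:ks}. Choose an increasing sequence of finite partitions $(\cP_n)$ of $X$ whose union generates $\Borel(X)$ as a $\sigma$-algebra. Then the $G$-invariant sub-$\sigma$-algebras $\salg_G(\cP_n) \vee \Borel(Y_i)$ are increasing in $n$ with join $\Borel(X \times Y_i)$, exhibiting the action of $G$ on $(X \times Y_i, \lambda_i)$ as the inverse limit of the corresponding quotients. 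Since $\Borel(Y_i)$ lies in the conditioning $\cF \vee \Borel(Y_i)$ and $\cP_m$ generates $\salg_G(\cP_m)$ as a $G$-invariant $\sigma$-algebra, the Rokhlin entropies appearing in Theorem \ref{thm:ks} simplify to $\rh_{a \times b_i, \lambda_i}(\cP_m \given \cF \vee \salg_G(\cP_n) \vee \Borel(Y_i))$.

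Assume the right-hand side of (4) is finite (otherwise there is nothing to prove). The finiteness clause of Theorem \ref{thm:ks} applied to $(b_2, \lambda_2)$ then yields
$$\inf_n \sup_m \rh_{a \times b_2, \lambda_2}(\cP_m \given \cF \vee \salg_G(\cP_n) \vee \Borel(Y_2)) = 0,$$
while $\rh_{a \times b_2, \lambda_2}(\cP_m \given \cF \vee \Borel(Y_2)) \leq \sH(\cP_m) < \infty$ is automatic. Applying part (2) with $\cF$ replaced by the $G$-invariant sub-$\sigma$-algebra $\cF \vee \salg_G(\cP_n)$---which is legitimate since $\sH(\cP_m) < \infty$---transfers both conditions to $(b_1, \lambda_1)$. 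The equality clause of Theorem \ref{thm:ks} therefore applies to both sides of (4), and combined with part (2) applied to each $\cP_m$ it gives
\begin{align*}
\rh_{a \times b_1}(X \times Y_1, \lambda_1 \given \cF \vee \Borel(Y_1))
 & = \sup_m \rh_{a \times b_1, \lambda_1}(\cP_m \given \cF \vee \Borel(Y_1)) \\
 & \leq \sup_m \rh_{a \times b_2, \lambda_2}(\cP_m \given \cF \vee \Borel(Y_2)) \\
 & = \rh_{a \times b_2}(X \times Y_2, \lambda_2 \given \cF \vee \Borel(Y_2)).
\end{align*}
The main obstacle is the lack of an a priori finiteness hypothesis in (4); the inverse-limit approximation via the $\cP_n$ is the technical device that circumvents it.
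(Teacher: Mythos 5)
Your proposal is correct and follows essentially the same route as the paper: parts (1)--(3) via the upper-semicontinuity of the maps in Lemma \ref{lem:upjoin} and Corollary \ref{cor:upjoin}, and part (4) by expressing each $(X \times Y_i, \lambda_i)$ as an inverse limit over an increasing sequence of finite partitions of $X$, transferring the finiteness criteria of Theorem \ref{thm:ks} from $(b_2,\lambda_2)$ to $(b_1,\lambda_1)$ using part (2) with $\cF$ replaced by $\cF \vee \salg_G(\cP_n)$, and comparing the resulting suprema. The only difference is that you make explicit the reduction to the case where the right-hand side is finite, which the paper leaves implicit.
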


\begin{proof}
Items (1) and (2) are immediate consequences of Lemma \ref{lem:upjoin}, and (3) is an immediate consequence of Corollary \ref{cor:upjoin}. Item (4) also follows from Corollary \ref{cor:upjoin} when $\rh_G(X, \mu \given \cF) < \infty$. So we must prove (4) in general.

Let $(\alpha_n)_{n \in \N}$ be an increasing sequence of finite partitions satisfying $\Borel(X) = \bigvee_{n \in \N} \salg_{a(G)}(\alpha_n)$. For each $n \in \N$ and $i = 1, 2$ let $G \acts (Z_{n,i}, \eta_{n,i})$ be the factor of $G \acts^{a \times b_i} (X \times Y_i, \lambda_i)$ associated to $\salg_{a(G)}(\alpha_n) \vee \cF \vee \Borel(Y_i)$. By clause (2), for all $n \leq m \in \N$ we have
\begin{align*}
 & \rh_{a \times b_1, \lambda_1}(\Borel(Z_{m,1}) \given \Borel(Z_{n,1}) \vee \cF \vee \Borel(Y_1))\\
 & = \rh_{a \times b_1, \lambda_1}(\alpha_m \given \salg_{a(G)}(\alpha_n) \vee \cF \vee \Borel(Y_1))\\
 & \leq \rh_{a \times b_2, \lambda_2}(\alpha_m \given \salg_{a(G)}(\alpha_n) \vee \cF \vee \Borel(Y_2))\\
 & = \rh_{a \times b_2, \lambda_2}(\Borel(Z_{m,2}) \given \Borel(Z_{n,2}) \vee \cF \vee \Borel(Y_2))
\end{align*}
and similarly
$$\rh_{a \times b_1, \lambda_1}(\Borel(Z_{m,1}) \given \cF \vee \Borel(Y_1)) \leq \rh_{a \times b_2, \lambda_2}(\Borel(Z_{m,2}) \given \cF \vee \Borel(Y_2)).$$
So by Theorem \ref{thm:ks} we conclude that
\begin{equation*}
\rh_{a \times b_1}(X \times Y_1, \lambda_1 \given \cF \vee \Borel(Y_1)) \leq \rh_{a \times b_2}(X \times Y_2, \lambda_2 \given \cF \vee \Borel(Y_2)).\qedhere
\end{equation*}
\end{proof}

We point out that clause (4) of the previous corollary is Lemma \ref{intro:lem} from the introduction.

For free actions $G \acts (X, \mu)$ and $G \acts (Y, \nu)$ of an amenable group $G$, it is well known that $h_G(X \times Y, \mu \times \nu \given \Borel(Y)) = h_G(X, \mu)$. It is an interesting question to ask if the same is true for non-amenable groups $G$. Below we answer this question positively under a weak containment assumption on $G \acts (Y, \nu)$. We also allow for non-free actions.

\begin{thm} \label{thm:prod}
Let $G \acts (X, \mu)$ be an aperiodic {\pmp} action of stabilizer type $\theta$, and let $\cF$ be a $G$-invariant sub-$\sigma$-algebra. Let $G \acts (Y, \nu)$ have stabilizer type $\theta$ and be weakly contained in all {\pmp} actions of stabilizer type $\theta$.
\begin{enumerate}
\item[\rm (1)] If $\cP$ is a countable partition of $X$ with $\sH(\cP \given \cF) < \infty$ then
$$\rh_{G,\mu}(\cP \given \cF) = \rh_{G, \mu \relprod{\theta} \nu}(\cP \given \cF) = \rh_{G, \mu \relprod{\theta} \nu}(\cP \given \cF \vee \Borel(Y)).$$
\item[\rm (2)] Without any additional assumptions we have
$$\rh_G(X, \mu \given \cF) = \rh_{G, \mu \relprod{\theta} \nu}(\Borel(X) \given \cF) = \rh_G(X \times Y, \mu \relprod{\theta} \nu \given \cF \vee \Borel(Y)).$$
\end{enumerate}
\end{thm}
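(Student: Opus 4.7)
The plan is to first establish (1), from which (2) will follow by approximation and the inverse-limit formula of Theorem \ref{thm:ks}. For (1), the chain
\begin{equation*}
\rh_{G, \mu \relprod{\theta} \nu}(\cP \given \cF \vee \Borel(Y)) \leq \rh_{G, \mu \relprod{\theta} \nu}(\cP \given \cF) \leq \rh_{G, \mu}(\cP \given \cF)
\end{equation*}
is immediate: the first step is monotonicity under conditioning, and the second follows by viewing any near-optimal witness $\alpha$ for $\rh_{G, \mu}(\cP \given \cF)$ as a partition of $X \times Y$ via $\pi^X$, noting that $\sinv_G^X \subseteq \sinv_G^{X \times Y}$ and that the $\mu \relprod{\theta} \nu$-distribution of $\alpha$ equals its $\mu$-distribution. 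It therefore suffices to prove the reverse bound
\begin{equation*}
\rh_{G, \mu}(\cP \given \cF) \leq \rh_{G, \mu \relprod{\theta} \nu}(\cP \given \cF \vee \Borel(Y)).
\end{equation*}

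The idea is to simulate the factor $(Y, \nu)$ by a small-entropy factor of $(X, \mu)$ itself. Fix $\epsilon > 0$ and use Theorem \ref{thm:robin} to obtain a factor $f : (X, \mu) \to (Y', \nu')$ with $\rh_G(Y', \nu') < \epsilon$ and $\Stab(f(x)) = \Stab(x)$ for every $x$. Then $(Y', \nu')$ has stabilizer type $\theta$, and the graph measure $\lambda_f := (\id \times f)_*(\mu)$ is a stabilizer-preserving joining of $\mu$ with $\nu'$. By Corollary \ref{cor:awstab}, $G \acts (X \times Y', \lambda_f)$ weakly contains $G \acts (X \times Y, \mu \relprod{\theta} \nu)$ as joinings with $(X, \mu)$, so Corollary \ref{cor:wcjoin}(2) gives
\begin{equation*}
\rh_{G, \lambda_f}(\cP \given \cF \vee \Borel(Y')) \leq \rh_{G, \mu \relprod{\theta} \nu}(\cP \given \cF \vee \Borel(Y)).
\end{equation*}
Since $\lambda_f$ is supported on the graph of $f$, $\pi^X$ is an isomorphism $(X \times Y', \lambda_f) \cong (X, \mu)$ under which $\Borel(Y')$ identifies with the $G$-invariant sub-$\sigma$-algebra $\cF' := f^{-1}(\Borel(Y')) \subseteq \Borel(X)$, so the left side equals $\rh_{G, \mu}(\cP \given \cF \vee \cF')$. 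The main obstacle is now to absorb the $\cF'$-conditioning at a cost of only $O(\epsilon)$: applying sub-additivity (Lemma \ref{lem:add2}) with $\cF_1 = \cF'$ and $\cF_2 = \cF' \vee \salg_G(\alpha')$, for $\alpha'$ a near-optimal witness of $\rh_{G, \mu}(\cP \given \cF \vee \cF')$, together with the estimate $\rh_{G, \mu}(\cF' \given \cF) \leq \rh_G(Y', \nu') < \epsilon$, yields $\rh_{G, \mu}(\cP \given \cF) < 2\epsilon + \rh_{G, \mu \relprod{\theta} \nu}(\cP \given \cF \vee \Borel(Y))$. Letting $\epsilon \to 0$ completes (1).

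For (2), fix an increasing sequence $(\alpha_n)_{n \in \N}$ of finite Borel partitions of $X$ satisfying $\bigvee_n \salg_G(\alpha_n) \vee \cF \vee \sinv_G^X = \Borel(X)$. Applying (1) to $\cP = \alpha_m$ (with $\cF$ either as given or replaced by $\cF \vee \salg_G(\alpha_n)$ for $n \leq m$) shows that the three sequences $\rh_{G, \mu}(\alpha_n \given \cF)$, $\rh_{G, \mu \relprod{\theta} \nu}(\alpha_n \given \cF)$, and $\rh_{G, \mu \relprod{\theta} \nu}(\alpha_n \given \cF \vee \Borel(Y))$ agree term-wise, and the corresponding $\inf_n \sup_{m \geq n}$ quantities that appear in Theorem \ref{thm:ks} also agree. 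Applying Theorem \ref{thm:ks} to $(X, \mu)$, to the factor of $(X \times Y, \mu \relprod{\theta} \nu)$ with $\sigma$-algebra $\Borel(X) \vee \sinv_G^{X \times Y}$, and to $(X \times Y, \mu \relprod{\theta} \nu)$ relative to $\cF \vee \Borel(Y)$ then expresses each of the three quantities in (2) as the common supremum $\sup_n \rh_{G, \mu}(\alpha_n \given \cF)$. This yields the equalities when $\rh_G(X, \mu \given \cF) < \infty$; in the infinite case either some $\rh_{G, \mu}(\alpha_n \given \cF)$ is infinite or the $\inf \sup$ condition fails, and in either situation the term-wise correspondence from (1) forces all three quantities to be infinite.
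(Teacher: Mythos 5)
Your proposal is correct and follows essentially the same route as the paper: the key inequality is obtained exactly as in the paper by using Theorem \ref{thm:robin} to produce a small-entropy stabilizer-preserving factor of $(X,\mu)$, invoking Corollary \ref{cor:awstab} and Corollary \ref{cor:wcjoin} for the weak-containment comparison, and absorbing the auxiliary factor via sub-additivity at a cost of $O(\epsilon)$. The only organizational difference is that you derive part (2) from part (1) through Theorem \ref{thm:ks} applied to a generating sequence, whereas the paper invokes Corollary \ref{cor:wcjoin}(4) directly; since that clause is itself proved by precisely this reduction (and the middle quantity $\rh_{G,\mu\relprod{\theta}\nu}(\Borel(X) \given \cF)$ is in any case sandwiched between the outer two by the immediate inequalities, so it need not be computed via Theorem \ref{thm:ks} at all), the two arguments coincide.
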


\begin{proof}
Fix a partition $\cP$ of $X$ with $\sH(\cP \given \cF) < \infty$. It is immediate from the definitions that
$$\rh_{G,\mu}(\cP \given \cF) \geq \rh_{G, \mu \relprod{\theta} \nu}(\cP \given \cF) \geq \rh_{G, \mu \relprod{\theta} \nu}(\cP \given \cF \vee \Borel(Y)), \quad \text{and}$$
$$\rh_G(X, \mu \given \cF) \geq \rh_{G, \mu \relprod{\theta} \nu}(\Borel(X) \given \cF) \geq \rh_G(X \times Y, \mu \relprod{\theta} \nu \given \cF \vee \Borel(Y)).$$
So in both cases we only need to show that the right-most expression is greater than or equal to the left-most expression.

Fix $\epsilon > 0$. By Theorem \ref{thm:robin} there is a factor $G \acts (Z, \eta)$ of $(X, \mu)$, say via $f : (X, \mu) \rightarrow (Z, \eta)$, such that $\rh_G(Z, \eta) < \epsilon$ and $\Stab(f(x)) = \Stab(x)$ for every $x \in X$. The factor map $f$ from $(X, \mu)$ to $(Z, \eta)$ naturally produces a stabilizer-preserving joining $\rho = (\id \times f)_*(\mu)$. Of course, $G \acts (X \times Z, \rho)$ is isomorphic to $G \acts (X, \mu)$. By sub-additivity of Rokhlin entropy we have
\begin{align*}
\rh_G(X, \mu \given \cF) - \epsilon & < \rh_G(X, \mu \given \cF) - \rh_G(Z, \eta)\\
 & \leq \rh_G(X, \mu \given \cF \vee f^{-1}(\Borel(Z)))\\
 & = \rh_G(X \times Z, \rho \given \cF \vee \Borel(Z)).
\end{align*}
By the same reasoning, $\rh_{G,\mu}(\cP \given \cF) - \epsilon < \rh_{G, \rho}(\cP \given \cF \vee \Borel(Z))$. By Corollary \ref{cor:awstab} $G \acts (X \times Z, \rho)$ weakly contains $G \acts (X \times Y, \mu \relprod{\theta} \nu)$ as joinings with $G \acts (X, \mu)$. So Corollary \ref{cor:wcjoin}.(4) gives
$$\rh_G(X, \mu \given \cF) - \epsilon < \rh_G(X \times Z, \rho \given \cF \vee \Borel(Z)) \leq \rh_G(X \times Y, \mu \relprod{\theta} \nu \given \cF \vee \Borel(Y)),$$
and similarly Corollary \ref{cor:wcjoin}.(2) gives
$$\rh_{G,\mu}(\cP \given \cF) - \epsilon < \rh_{G, \rho}(\cP \given \cF \vee \Borel(Z)) \leq \rh_{G, \mu \relprod{\theta} \nu}(\cP \given \cF \vee \Borel(Y)).$$
Now let $\epsilon$ go to $0$.
\end{proof}

We restate the previous theorem in terms of free actions. This allows us to slightly relax our assumptions. Specifically, we assume that $G \acts (Y, \nu)$ is weakly contained in all free {\pmp} actions of $G$, but we do not assume that $G \acts (Y, \nu)$ is itself free. Since all free {\pmp} actions of an amenable group are weakly equivalent \cite{K12}, this recovers what is known in the amenable case. The corollary below in particular implies Theorem \ref{intro:prod} from the introduction.

\begin{cor} \label{cor:prodfree}
Let $G$ be a countably infinite group, let $G \acts (X, \mu)$ be a free {\pmp} action, and let $\cF$ be a $G$-invariant sub-$\sigma$-algebra. Let $G \acts (Y, \nu)$ be a {\pmp} action which is weakly contained in all free {\pmp} actions of $G$.
\begin{enumerate}
\item[\rm (1)] If $\cP$ is a countable partition of $X$ with $\sH(\cP \given \cF) < \infty$ then
$$\rh_{G,\mu}(\cP \given \cF) = \rh_{G, \mu \times \nu}(\cP \given \cF) = \rh_{G, \mu \times \nu}(\cP \given \cF \vee \Borel(Y)).$$
\item[\rm (2)] Without any additional assumptions we have
$$\rh_G(X, \mu \given \cF) = \rh_{G, \mu \times \nu}(\Borel(X) \given \cF) = \rh_G(X \times Y, \mu \times \nu \given \cF \vee \Borel(Y)).$$
\end{enumerate}
\end{cor}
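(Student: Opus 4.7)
My plan is to reduce to Theorem~\ref{thm:prod} by inflating $(Y, \nu)$ with a free Bernoulli factor so that it becomes free with the same stabilizer type as $(X, \mu)$, apply that theorem, and then trade the Bernoulli factor back out using monotonicity of outer Rokhlin entropy.

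Set $(Y', \nu') := (Y \times 2^G, \nu \times u_2^G)$ with the diagonal shift action. Since the Bernoulli shift $G \acts (2^G, u_2^G)$ is free, so is $(Y', \nu')$; hence it has stabilizer type $\theta = \delta_{\{1_G\}}$, the same as $(X, \mu)$. The first substantive step is to show that $(Y', \nu')$ is weakly contained in every free {\pmp} action $G \acts (W, \omega)$. The hypothesis gives $(Y, \nu) \prec (W, \omega)$; a routine product-partition argument using that both $\nu \times u_2^G$ and $\omega \times u_2^G$ are products with the same $u_2^G$-factor promotes this to $(Y \times 2^G, \nu \times u_2^G) \prec (W \times 2^G, \omega \times u_2^G)$. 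Tucker-Drob's theorem \cite{TD12} (extending Ab\'{e}rt--Weiss \cite{AW13}) states that any free action is weakly equivalent to its Bernoulli product, so $(W \times 2^G, \omega \times u_2^G) \sim (W, \omega)$; composing gives $(Y', \nu') \prec (W, \omega)$.

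With both $(X, \mu)$ and $(Y', \nu')$ free and $(Y', \nu')$ weakly contained in all free actions, Theorem~\ref{thm:prod} applies. Since for the trivial stabilizer type the relatively independent joining is the product, this gives
\[
\rh_{G, \mu}(\cP \given \cF) = \rh_{G, \mu \times \nu \times u_2^G}(\cP \given \cF \vee \Borel(Y \times 2^G)),
\]
together with the analogous identity with $\Borel(X)$ in place of $\cP$ for clause~(2). To conclude I will sandwich the three quantities in the corollary between the endpoints of this identity via
\[
\rh_{G,\mu}(\cP \given \cF) \geq \rh_{G, \mu \times \nu}(\cP \given \cF) \geq \rh_{G, \mu \times \nu}(\cP \given \cF \vee \Borel(Y)) \geq \rh_{G, \mu \times \nu \times u_2^G}(\cP \given \cF \vee \Borel(Y \times 2^G)).
\]
Each inequality is a monotonicity statement: the first two follow by lifting any partition of $X$ to $X \times Y$ along the projection (here using that $\sinv_G^X$ pulls back into $\sinv_G^{X \times Y}$) combined with the elementary fact that enlarging the conditioning $\sigma$-algebra can only decrease outer Rokhlin entropy; the third is the same pullback argument along $\pi^{X \times Y} \colon X \times Y \times 2^G \to X \times Y$, using that the pullback of $\sinv_G^{X \times Y}$ is contained in $\sinv_G^{X \times Y \times 2^G}$. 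This forces all three quantities in clause~(1) to coincide; clause~(2) is handled identically using Theorem~\ref{thm:prod}.(2).

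The main point requiring care is the first step, namely promoting universal weak containment of $(Y, \nu)$ to that of $(Y \times 2^G, \nu \times u_2^G)$, where Tucker-Drob's theorem is essential: without it one cannot make the final reduction $(W \times 2^G, \omega \times u_2^G) \sim (W, \omega)$. Everything after that is routine bookkeeping with conditional Shannon entropy.
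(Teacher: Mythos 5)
Your proposal is correct, and it reaches the conclusion by a route that differs from the paper's in one substantive step. Both arguments reduce to Theorem~\ref{thm:prod} via an auxiliary free action $(Y',\nu')$ that is weakly contained in all free actions, but they differ in how the product over $Y$ is compared to the product over $Y'$. The paper takes $(Y',\nu')$ to be a free Bernoulli shift, unrelated to $Y$, notes that $(Y',\nu')$ weakly contains $(Y,\nu)$, upgrades this to weak containment of $(X\times Y',\mu\times\nu')$ over $(X\times Y,\mu\times\nu)$ as joinings with $(X,\mu)$, and then invokes Corollary~\ref{cor:wcjoin}.(2) and (4) to transfer the entropy identity from the $Y'$-product to the $Y$-product. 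You instead take $Y'=Y\times 2^G$, so that $X\times Y'$ is an extension of $X\times Y$ and the comparison becomes an elementary monotonicity of outer Rokhlin entropy under further products and larger conditioning algebras --- no appeal to Corollary~\ref{cor:wcjoin} is needed at that stage. The price is the extra verification that $Y\times 2^G$ is still weakly contained in every free action, for which you correctly isolate the essential input: Tucker-Drob's theorem that a free action $(W,\omega)$ is weakly equivalent to $(W\times 2^G,\omega\times u_2^G)$ (this is also recoverable from Lemma~\ref{lem:awcent} of the paper with trivial $X$), combined with the routine fact that weak containment passes to products with a common Bernoulli factor. Your route trades the joining-containment machinery for a slightly stronger weak-equivalence input plus bookkeeping; the paper's route keeps the auxiliary action simplest and lets Corollary~\ref{cor:wcjoin} do the comparison. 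Both are valid, and your sandwich
$$\rh_{G,\mu}(\cP \given \cF) \geq \rh_{G, \mu \times \nu}(\cP \given \cF) \geq \rh_{G, \mu \times \nu}(\cP \given \cF \vee \Borel(Y)) \geq \rh_{G, \mu \times \nu \times u_2^G}(\cP \given \cF \vee \Borel(Y \times 2^G)) = \rh_{G,\mu}(\cP \given \cF)$$
closes the argument correctly; the same chain with $\Borel(X)$ in place of $\cP$ and Theorem~\ref{thm:prod}.(2) handles clause~(2).
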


\begin{proof}
As in the proof of the previous theorem, it suffices to show $\rh_{G,\mu \times \nu}(\cP \given \cF \vee \Borel(Y)) \geq \rh_{G,\mu}(\cP \given \cF)$ and $\rh_G(X \times Y, \mu \times \nu \given \cF \vee \Borel(Y)) \geq \rh_G(X, \mu \given \cF)$. Let $G \acts (Y', \nu')$ be a free {\pmp} action which is weakly contained in all other free {\pmp} actions. For example, one can let $(Y', \nu')$ be a (free) Bernoulli shift \cite{AW13}. Since $G \acts (Y', \nu')$ weakly contains $G \acts (Y, \nu)$, it follows that $G \acts (X \times Y', \mu \times \nu')$ weakly contains $G \acts (X \times Y, \mu \times \nu)$ as joinings with $G \acts (X, \mu)$. So by Corollary \ref{cor:wcjoin} and Theorem \ref{thm:prod} we have
$$\rh_{G,\mu}(\cP \given \cF) = \rh_{G, \mu \times \nu'}(\cP \given \cF \vee \Borel(Y')) \leq \rh_{G, \mu \times \nu}(\cP \given \cF \vee \Borel(Y)) \quad \text{and}$$
\begin{equation*}
\rh_G(X, \mu \given \cF) = \rh_G(X \times Y', \mu \times \nu' \given \cF \vee \Borel(Y')) \leq \rh_G(X \times Y, \mu \times \nu \given \cF \vee \Borel(Y)).\qedhere
\end{equation*}
\end{proof}

\begin{cor} \label{cor:prod}
Let $G \acts (X, \mu)$ be an aperiodic {\pmp} action of stabilizer type $\theta$ and let $\cF$ be a $G$-invariant sub-$\sigma$-algebra. For every standard probability space $(L, \lambda)$ we have
$$\rh_G(X, \mu \given \cF) = \rh_G(X \times L^G, \mu \relprod{\theta} \lambda^{\theta \backslash G} \given \cF \vee \Borel(L^G)).$$
Furthermore, $\rh_{G,\mu}(\cP \given \cF) = \rh_{G, \mu \relprod{\theta} \lambda^{\theta \backslash G}}(\cP \given \cF \vee \Borel(L^G))$ for every partition $\cP$ of $X$ with $\sH(\cP \given \cF) < \infty$.
\end{cor}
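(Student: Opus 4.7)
The plan is to deduce this corollary directly from Theorem \ref{thm:prod} by taking the auxiliary action $G \acts (Y, \nu)$ to be the non-free Bernoulli shift $G \acts (L^G, \lambda^{\theta \backslash G})$. The entire content of the argument is in verifying the hypotheses of that theorem; once this is done, the two claimed equalities are immediate from clauses (2) and (1) respectively.

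First I would dispose of the degenerate case where $\lambda$ is a point mass. In that situation $L^G$ (equipped with $\lambda^{\theta \backslash G}$) collapses to a single atom, $\Borel(L^G)$ is trivial modulo null sets, and both identities are tautologies. So assume from now on that $\lambda$ is not a point mass.

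Since $G \acts (X, \mu)$ is aperiodic, its stabilizer type $\theta$ is supported on the infinite-index subgroups of $G$ (as observed in Section \ref{sec:bs} just after the definition of the non-free Bernoulli shift). Hence the construction in Section \ref{sec:bs} gives a genuine non-free Bernoulli shift $G \acts (L^G, \lambda^{\theta \backslash G})$ whose stabilizer type is precisely $\theta$. The Tucker-Drob extension of the Ab\'{e}rt--Weiss theorem recalled at the start of Section \ref{sec:bs} states that every aperiodic pmp action of stabilizer type $\theta$ weakly contains $G \acts (L^G, \lambda^{\theta \backslash G})$; and since any pmp action with stabilizer type $\theta$ is automatically aperiodic (as $\theta$ is supported on infinite-index subgroups), this is the same as saying that $G \acts (L^G, \lambda^{\theta \backslash G})$ is weakly contained in all pmp actions of stabilizer type $\theta$.

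Thus the hypotheses of Theorem \ref{thm:prod} hold with $(Y, \nu) = (L^G, \lambda^{\theta \backslash G})$. Clause (2) of that theorem immediately yields
$$\rh_G(X, \mu \given \cF) = \rh_G(X \times L^G, \mu \relprod{\theta} \lambda^{\theta \backslash G} \given \cF \vee \Borel(L^G)),$$
and clause (1) applied to any countable partition $\cP$ with $\sH(\cP \given \cF) < \infty$ gives
$$\rh_{G, \mu}(\cP \given \cF) = \rh_{G, \mu \relprod{\theta} \lambda^{\theta \backslash G}}(\cP \given \cF \vee \Borel(L^G)).$$
There is no real obstacle; the only thing that needs care is the point-mass case, and even the non-degenerate case is just a bookkeeping exercise in checking that the non-free Bernoulli shift qualifies as a $(Y, \nu)$ satisfying the weak-containment hypothesis of Theorem \ref{thm:prod}.
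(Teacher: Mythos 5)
Your proposal is correct and is essentially the paper's own proof: the paper simply cites Tucker-Drob's result that $G \acts (L^G, \lambda^{\theta \backslash G})$ has stabilizer type $\theta$ and is weakly contained in all {\pmp} actions of stabilizer type $\theta$, and then applies Theorem \ref{thm:prod}. Your extra care with the point-mass case (where the construction of Section \ref{sec:bs} formally excludes $\lambda$ a single atom but the statement is trivially true) is a reasonable small addition that the paper leaves implicit.
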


\begin{proof}
Tucker-Drob \cite{TD12} proved that $G \acts (L^G, \lambda^{\theta \backslash G})$ has stabilizer type $\theta$ and is weakly contained in all {\pmp} actions of stabilizer type $\theta$ (extending a similar result by Ab\'{e}rt--Weiss for free actions \cite{AW13}). Now apply Theorem \ref{thm:prod}.
\end{proof}

In the case of a free action $G \acts (X, \mu)$, the above corollary implies Corollary \ref{intro:bern} from the introduction.

\section{Upper bounds to Rokhlin entropy} \label{sec:drop}

In this section we exhibit expressions which are upper bounds to Rokhlin entropy. Of course, these expressions are also upper bounds to sofic entropy. The formulas we obtain can be approximated in some specific situations, and in the case of a free action of an amenable group they are in fact equal to the classical entropy.

The group $\Z$ with its natural linear order creates a natural notion of past and future for actions of $\Z$. This notion of past plays a fundamental role in the entropy theory for $\Z$-actions. We begin this section by investigating how a linear order, and its induced notion of past, relate to the Rokhlin entropy of actions of non-amenable groups. In this setting, the linear ordering will most likely not come directly from the acting group, but instead will come from a (partial) ordering of the underlying space being acted upon. 

\begin{thm} \label{thm:int}
Let $G \acts (X, \mu)$ be a {\pmp} action and let $\cF$ and $\Sigma$ be $G$-invariant sub-$\sigma$-algebras with $\cF$ countably generated. Let $G \acts (Y, \nu)$ be the factor of $(X, \mu)$ associated to $\Sigma$, and let $\mu = \int \mu_y \ d \nu(y)$ be the disintegration of $\mu$ over $\nu$. Let $r : Y \rightarrow \R$ be a Borel function and for $y \in Y$ set $L_y = \{g \in G : r(g \cdot y) < r(y)\}$. If $\alpha$ is a partition of $X$ satisfying $\sH(\alpha \given \cF \vee \Sigma) < \infty$ then
$$\rh_{G,\mu}(\alpha \given \cF \vee \Sigma) \leq \int_Y \sH_{\mu_y}(\alpha \given \alpha^{L_y} \vee \cF) \ d \nu(y).$$
In particular, if $\salg_G(\alpha) \vee \cF \vee \Sigma \vee \sinv_G = \Borel(X)$ then $\rh_G(X, \mu \given \cF \vee \Sigma)$ is bounded above by this expression.
\end{thm}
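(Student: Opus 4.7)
The plan is to exhibit, for every $\epsilon > 0$, a countable partition $\gamma$ of $X$ with $\salg_G(\gamma) \vee \cF \vee \Sigma \vee \sinv_G \supseteq \sigma(\alpha)$ and with
$$\sH(\gamma \given \cF \vee \Sigma \vee \sinv_G) \leq \int_Y \sH_{\mu_y}(\alpha \given \alpha^{L_y} \vee \cF) \ d \nu(y) + \epsilon,$$
at which point the definition of Rokhlin entropy yields the inequality. First I would reduce to the case where $\alpha$ is finite: a countable $\alpha$ with $\sH(\alpha \given \cF \vee \Sigma) < \infty$ can be written as a limit of an increasing sequence of finite coarsenings $\alpha_n$ with $\sH(\alpha \given \alpha_n \vee \cF \vee \Sigma) \to 0$, and by the sub-additivity of Rokhlin entropy (Lemma \ref{lem:add2}) together with the monotone convergence in Lemma \ref{lem:shan}.(vii) the general case would follow.

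For the construction, I would work fiber-wise over $(Y, \nu)$ using the disintegration $\mu = \int \mu_y \ d\nu(y)$. For each $y$, the conditional entropy $\sH_{\mu_y}(\alpha \given \alpha^{L_y} \vee \cF)$ represents the ``residual'' information in $\alpha(x)$ once one knows the past values $\alpha^{L_y}(x)$ and $\cF(x)$. I would encode this residual by a canonical coding: given the atom of $\alpha^{L_y}(x) \vee \cF(x)$ containing $x$, sort the cells of $\alpha$ by decreasing conditional $\mu_y$-probability and let $\gamma(x)$ be the rank of $\alpha(x)$ (a value in $\N$). A measurable-selection argument ensures $\gamma$ is Borel, and the decomposition formula for conditional Shannon entropy combined with Shannon--Fano--style bounds would give the desired entropy estimate for $\gamma$ relative to $\cF \vee \Sigma \vee \sinv_G$.

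The main obstacle will be verifying that $\salg_G(\gamma)$, enriched by $\cF \vee \Sigma \vee \sinv_G$, recovers $\alpha$. The idea is to use the ordering induced by $r$ on each orbit to decode $\alpha$ inductively: a point with empty past in its orbit has $\gamma(x)$ equal to $\alpha(x)$ directly (since the conditional distribution collapses), and successive points are decoded using $\gamma$ plus the already-decoded past. The real difficulty is that an orbit need not be well-ordered by $r$---its order type could have no minimum---so naive transfinite induction fails. I expect the fix to involve approximating $L_y$ by a nested sequence of finite truncations (for instance cutting off by $r$-values exceeding thresholds), defining a refined partition $\gamma^{(k)}$ that also records the necessary ``initialization'' data on a small set of seed points (where $\sinv_G$ plays the role of supplying orbit-level boundary data), and passing to a limit while controlling the additional entropy contribution by $\epsilon$. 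Once decoding is established, combining it with the entropy estimate for $\gamma$ and the definition of $\rh_{G,\mu}(\alpha \given \cF \vee \Sigma)$ closes the argument; the final sentence of the statement follows at once from $\rh_G(X, \mu \given \cF \vee \Sigma) = \rh_{G,\mu}(\alpha \given \cF \vee \Sigma)$ under the generating hypothesis on $\alpha$.
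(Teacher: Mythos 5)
There is a genuine gap, and it sits at the heart of your construction. You propose to build a single partition $\gamma$ by rank-coding: sort the cells of $\alpha$ by decreasing conditional $\mu_y$-probability given the atom of $\alpha^{L_y} \vee \cF$ and record the rank. But the Shannon entropy of the rank variable is \emph{not} bounded by the conditional entropy you are trying to achieve. If $R$ denotes the rank and $W$ the conditioning data, then $\sH(R) = \sH(R \given W) + I(R;W) = \sH(\alpha \given W) + I(R;W) \geq \sH(\alpha \given W)$, with the inequality typically strict (concavity of $-t\log t$ shows $\sH(R) = \sH(\alpha\given W)$ only when the sorted conditional probabilities are a.s.\ constant in $W$). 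Since $\alpha^{L_y}$ is not contained in $\cF \vee \Sigma \vee \sinv_G$, conditioning $\gamma$ on the latter does not recover the conditioning on the past, and the excess $I(R;W)$ is not controlled by $\epsilon$. The second gap is the one you acknowledge: decoding $\alpha$ from $\gamma$ requires the $r$-order on each orbit to be well-founded, which it need not be, and your proposed repair is only a sketch; in particular $\sinv_G$ cannot supply per-orbit ``seed'' data of positive entropy at zero cost. A further, more minor, issue is the reduction to finite $\alpha$: replacing $\alpha$ by a coarsening $\alpha_n$ also coarsens the conditioning algebra $\alpha_n^{L_y} \leq \alpha^{L_y}$, so $\sH_{\mu_y}(\alpha_n \given \alpha_n^{L_y} \vee \cF)$ is not obviously dominated by $\sH_{\mu_y}(\alpha \given \alpha^{L_y} \vee \cF)$.

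The paper's proof avoids all of this by never constructing a generating code at all. It fixes a finite $T \subseteq G$ approximating the integral, then discretizes the order induced by $r$ into finitely many levels: a small exceptional set $Q_0$ (where $r$ leaves a compact interval or where some $t \in T$ moves $r$ by a nonzero amount at most $w$) and level sets $Q_1, \ldots, Q_m$ coming from a partition of the interval into subintervals of width less than $w$, arranged so that for $y \in Q_i$ with $i \geq 1$ one has $(T \cap L_y)\cdot y \subseteq \bigcup_{j<i} Q_j$. Setting $\alpha_i$ to be $\alpha$ restricted to $\pi^{-1}(Q_i)$ and $\Psi_i = \salg_G(\alpha_0 \vee \cdots \vee \alpha_i) \vee \cF \vee \Sigma$, one has $\alpha \leq \alpha_0 \vee \cdots \vee \alpha_m$ outright (no decoding needed, since the $Q_i$ are $\Sigma$-measurable), and the sub-additivity of Rokhlin entropy (Lemma \ref{lem:add2}) gives
$$\rh_{G,\mu}(\alpha \given \cF \vee \Sigma) \leq \sH(\alpha_0 \given \cF \vee \Sigma) + \sum_{i=1}^m \rh_{G,\mu}(\alpha_i \given \Psi_{i-1}) \leq \delta + \sum_{i=1}^m \sH(\alpha_i \given \Psi_{i-1}),$$
where each $\rh_{G,\mu}(\alpha_i \given \Psi_{i-1})$ is bounded by the conditional Shannon entropy trivially (take $\alpha_i$ itself as the witness), and $\sH(\alpha_i \given \Psi_{i-1}) \leq \int_{Q_i} \sH_{\mu_y}(\alpha \given \alpha^{T\cap L_y} \vee \cF)\, d\nu(y)$ because the relevant past is already $\Psi_{i-1}$-measurable fiber-wise. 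The coding difficulty you are fighting is exactly what Lemma \ref{lem:add2} is designed to absorb; if you want to rescue your approach, the cleanest fix is to replace the hand-built $\gamma$ with this telescoping application of sub-additivity.
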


\begin{proof}
Let $\pi : (X, \mu) \rightarrow (Y, \nu)$ denote the factor map. Since $\int \sH_{\mu_y}(\alpha \given \cF) \ d \nu(y) = \sH(\alpha \given \cF \vee \Sigma) < \infty$, we see that the functions $y \mapsto \sH_{\mu_y}(\alpha \given \cF)$ and $y \mapsto \sH_{\mu_y}(\alpha \given \alpha^{L_y} \vee \cF)$ are finite almost-everywhere and have finite integral. Fix $\delta > 0$. By the monotone convergence theorem and Lemma \ref{lem:shan} we can fix a finite $T \subseteq G$ with
$$\int_Y \sH_{\mu_y}(\alpha \given \alpha^{T \cap L_y} \vee \cF) \ d \nu(y) < \int_Y \sH_{\mu_y}(\alpha \given \alpha^{L_y} \vee \cF) \ d \nu(y) + \delta.$$
Let $\epsilon > 0$ be such that
$$\nu(C) \cdot \sH_{\pi^{-1}(C)}(\alpha \given \cF \vee \Sigma) = \int_C \sH_{\mu_y}(\alpha \given \cF) \ d \nu(y) < \delta$$
for every Borel set $C \subseteq Y$ having measure less than $\epsilon$. Fix $w > 0$ with
$$\nu( \{y \in Y : \exists t \in T \ 0 < |r(t \cdot y) - r(y)| \leq w\} ) < \frac{\epsilon}{2}.$$
Also fix a compact interval $K \subseteq \R$ satisfying $\nu(r^{-1}(K)) > 1 - \frac{\epsilon}{2}$. Finally, let $\cP = \{P_i : 1 \leq i \leq m\}$ be a finite partition of $K$ such that each $P_i$ is an interval of width less than $w$.

The function $r$ produces a partial ordering of $Y$. We first create a discrete approximation to this partial order. First set
$$Q_0 = \{y \in Y : r(y) \not\in K \text{ or } \exists t \in T \ 0 < |r(t \cdot y) - r(y)| \leq w\},$$
and then for $1 \leq i \leq m$ define
$$Q_i = r^{-1}(P_i) \setminus Q_0.$$
Then $\{Q_i : 0 \leq i \leq m\}$ is a partition of $Y$ and $\nu(Q_0) < \epsilon$. This partition is, with respect to $T$, a good discreet model for the partial order induced by $r : Y \rightarrow \R$ in the sense that for all $y \in Q_i$ with $i > 0$ we have
$$(T \cap L_y) \cdot y \subseteq \bigcup_{0 \leq j < i} Q_j.$$

For each $0 \leq i \leq m$ we set
$$\alpha_i = \Big\{X \setminus \pi^{-1}(Q_i) \Big\} \cup \Big\{ A \cap \pi^{-1}(Q_i) : A \in \alpha \Big\}$$
and $\Psi_i = \salg_G(\alpha_0 \vee \cdots \vee \alpha_i) \vee \cF \vee \Sigma$. Since $\nu(Q_0) < \epsilon$ and the restriction of $\alpha_0$ to $X \setminus \pi^{-1}(Q_0)$ is trivial, our choice of $\epsilon$ gives
\begin{equation} \label{eqn:past1}
\sH(\alpha_0 \given \cF \vee \Sigma) = \nu(Q_0) \cdot \sH_{\pi^{-1}(Q_0)}(\alpha \given \cF \vee \Sigma) < \delta.
\end{equation}
Also note that $\alpha \leq \alpha_0 \vee \cdots \vee \alpha_m \subseteq \Psi_m$.

We claim that for every $1 \leq i \leq m$
\begin{equation} \label{eqn:past3}
\rh_{G,\mu}(\alpha_i \given \Psi_{i-1}) \leq \int_{Q_i} \sH_{\mu_y}(\alpha \given \alpha^{T \cap L_y} \vee \cF) \ d \nu(y).
\end{equation}
As we noted previously, for $i \geq 1$ and $y \in Q_i$ we have
$$(T \cap L_y) \cdot y \subseteq \bigcup_{0 \leq j < i} Q_j.$$
Consider $i \geq 1$, $y \in Q_i$, and $t \in T \cap L_y$. Let $j(t) < i$ be such that $t \cdot y \in Q_{j(t)}$. Then we have
$$t \cdot \Big( (t^{-1} \cdot \alpha) \res \pi^{-1}(y) \Big) = \alpha \res \pi^{-1}(t \cdot y) = \alpha_{j(t)} \res \pi^{-1}(t \cdot y).$$
So
$$(t^{-1} \cdot \alpha) \res \pi^{-1}(y) = (t^{-1} \cdot \alpha_{j(t)}) \res \pi^{-1}(y) \subseteq \Psi_{i-1} \res \pi^{-1}(y).$$
Letting $t \in T \cap L_y$ vary, we conclude that for $y \in Q_i$
$$\alpha^{T \cap L_y} \res \pi^{-1}(y) \subseteq \Psi_{i-1} \res \pi^{-1}(y).$$
Therefore for $1 \leq i \leq m$
\begin{align*}
\rh_{G,\mu}(\alpha_i \given \Psi_{i-1}) & \leq \sH(\alpha_i \given \Psi_{i-1})\\
 & = \int_Y \sH_{\mu_y}(\alpha_i \given \Psi_{i-1}) \ d \nu(y)\\
 & = \int_{Q_i} \sH_{\mu_y}(\alpha \given \Psi_{i-1}) \ d \nu(y)\\
 & \leq \int_{Q_i} \sH_{\mu_y}(\alpha \given \alpha^{T \cap L_y} \vee \cF) \ d \nu(y)
\end{align*}
as claimed.

Since $\alpha \subseteq \Psi_m$, sub-additivity of Rokhlin entropy together with (\ref{eqn:past1}) and (\ref{eqn:past3}) gives
\begin{align*}
\rh_{G,\mu}(\alpha \given \cF \vee \Sigma) & \leq \rh_{G,\mu}(\alpha_0 \given \cF \vee \Sigma) + \sum_{i = 1}^m \rh_{G,\mu}(\alpha_i \given \Psi_{i-1})\\
 & \leq \sH(\alpha_0 \given \cF \vee \Sigma) + \sum_{i = 1}^m \int_{Q_i} \sH_{\mu_y}(\alpha \given \alpha^{T \cap L_y} \vee \cF) \ d \nu(y)\\
 & < \delta + \int_Y \sH_{\mu_y}(\alpha \given \alpha^{T \cap L_y} \vee \cF) \ d \nu(y)\\
 & < \int_Y \sH_{\mu_y}(\alpha \given \alpha^{L_y} \vee \cF) \ d \nu(y) + 2 \delta.
\end{align*}
Now let $\delta$ tend to $0$.
\end{proof}

By combining the previous theorem with Corollary \ref{intro:bern}, we obtain a canonical randomized past which can be used to bound the Rokhlin entropy of any free action. In the case of sofic entropy, the corollary below was independently obtained by Andrei Alpeev and Lewis Bowen (personal communication).

\begin{cor} \label{cor:intfree}
Let $G$ be a countably infinite group, let $G \acts (X, \mu)$ be a free {\pmp} action, and let $\cF$ be a $G$-invariant sub-$\sigma$-algebra. Consider the Bernoulli shift $([0, 1]^G, \lambda^G)$ where $\lambda$ is Lebesgue measure, and for $y \in [0,1]^G$ define $L_y = \{g \in G : y(g^{-1}) < y(1_G)\}$. If $\alpha$ is a partition with $\sH(\alpha \given \cF) < \infty$ then
$$\rh_{G,\mu}(\alpha \given \cF) \leq \int_{[0,1]^G} \sH_\mu(\alpha \given \alpha^{L_y} \vee \cF) \ d \lambda^G(y).$$
In particular, if $\salg_G(\alpha) \vee \cF \vee \sinv_G = \Borel(X)$ then $\rh_G(X, \mu \given \cF)$ is bounded above by this expression.
\end{cor}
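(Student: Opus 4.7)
The plan is to derive Corollary \ref{cor:intfree} by combining Theorem \ref{thm:int} with Corollary \ref{cor:prodfree}, applied to the product of $G \acts (X, \mu)$ with the Bernoulli shift $G \acts ([0, 1]^G, \lambda^G)$.

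First I would consider the product action $G \acts (X \times [0, 1]^G, \mu \times \lambda^G)$, together with the $G$-invariant sub-$\sigma$-algebra $\cF \vee \Borel([0,1]^G)$ (where $\cF$ is viewed as a sub-$\sigma$-algebra of $X \times [0,1]^G$ via the projection $\pi^X$). Let $\Sigma = \Borel([0,1]^G)$ so that the associated factor is $G \acts ([0,1]^G, \lambda^G)$. Define $r : [0,1]^G \to \R$ by $r(y) = y(1_G)$. Then for $g \in G$ we have $r(g \cdot y) = (g \cdot y)(1_G) = y(g^{-1})$, so that
\[
\{g \in G \: r(g \cdot y) < r(y)\} = \{g \in G \: y(g^{-1}) < y(1_G)\} = L_y,
\]
matching the definition in the corollary.

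Next I would apply Theorem \ref{thm:int} to this setup, using the partition $\alpha$ of $X$ viewed as a partition of $X \times [0,1]^G$ (pulled back via $\pi^X$). The disintegration of $\mu \times \lambda^G$ over $\lambda^G$ has fiber $\mu$ (supported on $X \times \{y\}$), and since $\alpha$, $\alpha^{L_y}$, and $\cF$ are all pullbacks from $X$, the conditional Shannon entropy on the fiber over $y$ equals $\sH_\mu(\alpha \given \alpha^{L_y} \vee \cF)$. Theorem \ref{thm:int} therefore yields
\[
\rh_{G, \mu \times \lambda^G}(\alpha \given \cF \vee \Borel([0,1]^G)) \leq \int_{[0,1]^G} \sH_\mu(\alpha \given \alpha^{L_y} \vee \cF) \ d \lambda^G(y).
\]

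Finally, by the Ab\'ert--Weiss theorem \cite{AW13} the Bernoulli shift $G \acts ([0,1]^G, \lambda^G)$ is weakly contained in every free {\pmp} action of $G$, so Corollary \ref{cor:prodfree}(1) applies and gives
\[
\rh_{G,\mu}(\alpha \given \cF) = \rh_{G, \mu \times \lambda^G}(\alpha \given \cF \vee \Borel([0,1]^G)).
\]
Chaining these two relations yields the desired inequality. The ``in particular'' clause follows because when $\salg_G(\alpha) \vee \cF \vee \sinv_G = \Borel(X)$ one has $\rh_G(X, \mu \given \cF) = \rh_{G,\mu}(\alpha \given \cF)$ by definition. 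No serious obstacle arises: the only subtle point is the identification of the fiber conditional entropies, but this is a routine application of the product-disintegration formula together with the fact that $\alpha$, $\alpha^{L_y}$, and $\cF$ are all pulled back from $X$.
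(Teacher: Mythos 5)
Your proposal is correct and follows essentially the same route as the paper: the author derives this corollary (and its non-free generalization, Corollary \ref{cor:int}) precisely by taking $r(y) = y(1_G)$ on $[0,1]^G$, applying Theorem \ref{thm:int} to the product $G \acts (X \times [0,1]^G, \mu \times \lambda^G)$ with $\Sigma = \Borel([0,1]^G)$, and then invoking the product formula (Corollary \ref{cor:prodfree}, resp.\ Corollary \ref{cor:prod}) via Ab\'ert--Weiss. The fiberwise identification of conditional entropies is handled exactly as you describe.
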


The above corollary is a special case of the more general result below which does not require a free action.

\begin{cor} \label{cor:int}
Let $G \acts (X, \mu)$ be an aperiodic {\pmp} action of stabilizer type $\theta$, let $\mu = \int \mu_H \ d \theta(H)$ be the disintegration of $\mu$ over $\theta$, and let $\cF$ be a countably generated $G$-invariant sub-$\sigma$-algebra. For $y \in [0,1]^G$ define $L_y = \{g \in G : y(g^{-1}) < y(1_G)\}$, and let $\lambda$ be Lebesgue measure on $[0, 1]$. If $\alpha$ is a partition with $\sH(\alpha \given \cF) < \infty$ then
$$\rh_{G,\mu}(\alpha \given \cF) \leq \int_{[0,1]^G} \sH_{\mu_{\Stab(y)}}(\alpha \given \alpha^{L_y} \vee \cF) \ d \lambda^{\theta \backslash G}(y).$$
In particular, if $\salg_G(\alpha) \vee \cF \vee \sinv_G = \Borel(X)$ then $\rh_G(X, \mu \given \cF)$ is bounded above by this expression.
\end{cor}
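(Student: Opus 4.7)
The plan is to deduce this from Theorem \ref{thm:int} applied to the joined action $G \acts (X \times [0,1]^G, \mu \relprod{\theta} \lambda^{\theta \backslash G})$, using Corollary \ref{cor:prod} to reduce to the joined system. First, by Corollary \ref{cor:prod} applied to $(L, \lambda) = ([0,1], \text{Lebesgue})$,
\[
\rh_{G,\mu}(\alpha \given \cF) = \rh_{G, \mu \relprod{\theta} \lambda^{\theta \backslash G}}(\alpha \given \cF \vee \Borel([0,1]^G)),
\]
where $\alpha$ is viewed as a partition of $X \times [0,1]^G$ via pullback along the projection to $X$. Note that $\sH_\mu(\alpha \given \cF) < \infty$ implies $\sH_{\mu \relprod{\theta} \lambda^{\theta \backslash G}}(\alpha \given \cF \vee \Borel([0,1]^G)) < \infty$, so the hypotheses of Corollary \ref{cor:prod} and Theorem \ref{thm:int} are met.

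Next, I would apply Theorem \ref{thm:int} to the joined action with $\Sigma = \Borel([0,1]^G)$ (pulled back from the second factor) and with the Borel function $r : [0,1]^G \to \R$ defined by $r(y) = y(1_G)$. Since the shift action satisfies $(g \cdot y)(1_G) = y(g^{-1})$, the set $L_y$ produced by Theorem \ref{thm:int} is exactly $\{g \in G : y(g^{-1}) < y(1_G)\}$, matching the statement. The factor associated to $\Sigma$ is $G \acts ([0,1]^G, \lambda^{\theta \backslash G})$, and one checks that the disintegration of $\mu \relprod{\theta} \lambda^{\theta \backslash G}$ over $\lambda^{\theta \backslash G}$ assigns to $\lambda^{\theta \backslash G}$-almost-every $y$ the conditional measure $\mu_{\Stab(y)}$ on the fiber $X \times \{y\}$. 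This follows from the definition $\lambda^{\theta \backslash G} = \int \lambda^{H \backslash G} \, d\theta(H)$, the identity $\mu \relprod{\theta} \lambda^{\theta \backslash G} = \int \mu_H \times \lambda^{H \backslash G} \, d\theta(H)$, and the fact that $\lambda^{H \backslash G}$-a.e. $y$ has stabilizer exactly $H$ (since $\theta$ is supported on infinite-index subgroups by aperiodicity).

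Since $\alpha$ is pulled back from $X$, the partition $g^{-1} \cdot \alpha$ in the joined system restricts on each fiber $X \times \{y\}$ to the ordinary $g^{-1} \cdot \alpha$ in $X$, so $\alpha^{L_y}$ restricted to the fiber is the usual $\alpha^{L_y}$ in $X$. Theorem \ref{thm:int} therefore yields
\[
\rh_{G, \mu \relprod{\theta} \lambda^{\theta \backslash G}}(\alpha \given \cF \vee \Borel([0,1]^G)) \leq \int_{[0,1]^G} \sH_{\mu_{\Stab(y)}}(\alpha \given \alpha^{L_y} \vee \cF) \, d \lambda^{\theta \backslash G}(y),
\]
which combined with the equality from Corollary \ref{cor:prod} gives the desired bound. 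The ``in particular'' clause then follows since $\salg_G(\alpha) \vee \cF \vee \sinv_G = \Borel(X)$ implies $\rh_G(X, \mu \given \cF) = \rh_{G,\mu}(\alpha \given \cF)$.

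The main technical point, though routine, is the identification of the disintegration of $\mu \relprod{\theta} \lambda^{\theta \backslash G}$ over $\lambda^{\theta \backslash G}$ with $y \mapsto \mu_{\Stab(y)}$; everything else is a clean pullback verification and a direct invocation of the two earlier results.
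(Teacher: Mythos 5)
Your proposal is correct and matches the paper's proof essentially verbatim: the paper also combines Corollary \ref{cor:prod} with Theorem \ref{thm:int} applied to $\Sigma = \Borel([0,1]^G)$ and $r(y) = y(1_G)$, identifying the disintegration of $\mu \relprod{\theta} \lambda^{\theta \backslash G}$ over $\lambda^{\theta \backslash G}$ as $y \mapsto \mu_{\Stab(y)} \times \delta_y$. The fiberwise identifications you spell out are exactly the (implicit) content of the paper's two-line computation.
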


\begin{proof}
Define $r : [0, 1]^G \rightarrow \R$ by $r(y) = y(1_G)$. Using this $r$, the definition of $L_y$ in the statement of the corollary coincides with the definition of $L_y$ in the statement of Theorem \ref{thm:int}. So by Corollary \ref{cor:prod} and Theorem \ref{thm:int} we have
\begin{align*}
\rh_{G,\mu}(\alpha \given \cF) & = \rh_{G, \mu \relprod{\theta} \lambda^{\theta \backslash G}}(\alpha \given \cF \vee \Borel([0,1]^G))\\
 & \leq \int_{[0,1]^G} \sH_{\mu_{\Stab(y)} \times \delta_y}(\alpha \given \alpha^{L_y} \vee \cF) \ d \lambda^{\theta \backslash G}(y)\\
 & = \int_{[0,1]^G} \sH_{\mu_{\Stab(y)}}(\alpha \given \alpha^{L_y} \vee \cF) \ d \lambda^{\theta \backslash G}(y).\qedhere
\end{align*}
\end{proof}

Surprisingly, the upper bound described in Corollary \ref{cor:intfree} coincides with classical Kolmogorov--Sinai entropy for free actions of amenable groups \cite[Theorem 3]{Ki}. However, the next lemma demonstrates the unfortunate truth that this expression is not an isomorphism invariant for actions of non-amenable groups.

\begin{lem} \label{lem:fail}
Let $G$ be a countable non-amenable group. Then there exists a free ergodic {\pmp} action $G \acts (X, \mu)$ which is a factor of a Bernoulli shift and satisfies the following property. For every $r > 0$ there is a partition $\alpha$ with $\salg_G(\alpha) = \Borel(X)$ and
$$r < \int_{[0, 1]^G} \sH_\mu(\alpha \given \alpha^{L_y}) \ d \lambda^G(y) < \infty,$$
where $\lambda$ and $L_y$ are as in Corollary \ref{cor:intfree}. Furthermore, if $G$ is a free group then the action $G \acts (X, \mu)$ can be chosen to be isomorphic to a Bernoulli shift.
\end{lem}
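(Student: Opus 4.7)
The plan is to take $(X,\mu) = (2^G, u_2^G)$, the two-symbol Bernoulli shift; it is free, ergodic, and a Bernoulli shift (hence trivially a factor of a Bernoulli shift, and automatically isomorphic to a Bernoulli shift when $G$ is a free group). The central ingredient I would use is the Ornstein--Weiss/Bowen paradoxical phenomenon for non-amenable groups: for every $n \geq 1$ there exists a $G$-equivariant measure-preserving Borel factor map $\phi_n : (2^G, u_2^G) \to (n^G, u_n^G)$. Such maps are impossible for amenable groups by Kolmogorov--Sinai, and their existence is exactly what allows the Kieffer-type bound of Corollary \ref{cor:intfree} to strictly exceed the Rokhlin entropy.

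Given $r > 0$, I would pick $n = n(r)$ large (specified below), let $\beta_n$ denote the canonical generator of $(n^G, u_n^G)$ and $\gamma$ the canonical generator of $(2^G, u_2^G)$, and set $\alpha = \phi_n^{-1}(\beta_n) \vee \gamma$. Since $\gamma$ alone already generates $\Borel(2^G)$ under the $G$-shift, so does $\alpha$; and $\sH_\mu(\alpha) \leq \log(2n) < \infty$, so the Kieffer integral is automatically finite.

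For the lower bound the first step is a direct computation inside $(n^G, u_n^G)$: since $1_G \notin L_y$ for every $y$ (because $y(1_G^{-1}) = y(1_G) \not< y(1_G)$) and the coordinates of the Bernoulli shift are independent, one has pointwise $\sH_{u_n^G}(\beta_n \given \beta_n^{L_y}) = \log n$. Pulling this back through the measure-preserving $G$-equivariant map $\phi_n$ yields
$$\int \sH_\mu\bigl(\phi_n^{-1}(\beta_n) \given \phi_n^{-1}(\beta_n)^{L_y}\bigr)\, d\lambda^G(y) \;=\; \log n.$$
Since $\alpha \geq \phi_n^{-1}(\beta_n)$ and $\alpha^{L_y} = \phi_n^{-1}(\beta_n)^{L_y} \vee \gamma^{L_y}$, one gets $\sH_\mu(\alpha \given \alpha^{L_y}) \geq \sH_\mu(\phi_n^{-1}(\beta_n) \given \phi_n^{-1}(\beta_n)^{L_y} \vee \gamma^{L_y})$, so the lemma reduces to showing that enlarging the conditioning by $\gamma^{L_y}$ drops the value $\log n$ by only an amount growing strictly slower than $\log n$.

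This last estimate is the main obstacle, since a priori $\gamma^{L_y}$ encodes roughly half of the coordinates of $y$ and could determine $\phi_n^{-1}(\beta_n)$ almost surely, collapsing the $\log n$ lower bound. The remedy is to arrange that $\phi_n$ is sufficiently ``spread out'': one picks a coding window $A_n \subseteq G$ of size much larger than $\log_2 n$ and a local rule $f_n : 2^{A_n} \to n$ that remains approximately equidistributed when restricted to any fixed half of its coordinates. A standard mutual-information estimate then bounds the expected drop
$$\sH_\mu\bigl(\phi_n^{-1}(\beta_n) \given \phi_n^{-1}(\beta_n)^{L_y}\bigr) - \sH_\mu\bigl(\phi_n^{-1}(\beta_n) \given \phi_n^{-1}(\beta_n)^{L_y} \vee \gamma^{L_y}\bigr)$$
by $\tfrac{1}{2}\log n + O(1)$, giving a Kieffer integral lower bound of at least $\tfrac{1}{2}\log n - O(1)$; sending $n \to \infty$ then makes this exceed any prescribed $r$. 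Producing $\phi_n$ with both $G$-equivariance and the required spreading, compatible with the paradoxical decomposition underlying its existence, is the technical heart of the proof.
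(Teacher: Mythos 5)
Your overall strategy --- exploit non-amenability to produce a generating partition whose randomized past leaves a lot of residual entropy --- is in the right spirit, but the proof has a genuine gap at exactly the point you yourself flag as ``the technical heart.'' The reduction to showing that conditioning additionally on $\gamma^{L_y}$ drops $\sH_\mu(\phi_n^{-1}(\beta_n) \given \phi_n^{-1}(\beta_n)^{L_y}) = \log n$ by at most $\tfrac12 \log n + O(1)$ is not justified by any standard mutual-information estimate. The set $L_y^{-1}$ is a random subset of $G$ containing roughly half the group, and the known equivariant measure-preserving factor maps $\phi_n : (2^G, u_2^G) \rightarrow (n^G, u_n^G)$ (Ball, Bowen; for general non-amenable $G$ via Gaboriau--Lyons) come from paradoxical/orbit-equivalence machinery and are highly non-local: there is no control over how much of $\phi_n(x)(1_G)$ is determined by $x \res L_y^{-1}$, and nothing rules out that it is determined almost completely, which would collapse your lower bound. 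Your proposed remedy --- a finite coding window $A_n$ and a local rule $f_n : 2^{A_n} \rightarrow n$ that stays approximately equidistributed on any half of its coordinates --- is not constructed, and it is not clear that a finite-window block code can push $u_2^G$ onto $u_n^G$ at all (the output coordinates must be i.i.d.\ uniform on an $n$-point set, a constraint the known constructions do not meet by local rules). So the conclusion does not follow from what is written.

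For comparison, the paper avoids coding arguments entirely. It takes $X = 2^G / 2$ (the quotient of the Bernoulli shift by the constants, i.e.\ the Ornstein--Weiss example), identifies it with an edge-labeling system $(2^E, \nu)$ on the Cayley graph $\Cay(G,S)$ via the difference map, and takes $\alpha$ to be the partition by the labels of the edges leaving $1_G$. The measure $\nu$ makes edge labels independent along any forest, and the modified minimal spanning forest $\FMSF'(G,S)$, sampled from the same $y \in [0,1]^G$, identifies for a.e.\ $y$ a set of edges at $1_G$ whose labels are jointly independent of $\alpha^{L_y}$ (because any simple cycle through such an edge must use a vertex outside $\{1_G\} \cup L_y^{-1}$). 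This gives the pointwise bound $\sH(\alpha \given \alpha^{L_y}) \geq \log(2) \cdot \deg_{\Psi(y)}(1_G)$, and Thom's theorem that the expected degree of the minimal spanning forest is unbounded over generating sets $S$ precisely when $G$ is non-amenable supplies the lower bound $r$. This also explains why the paper only obtains a factor of a Bernoulli shift in general and an actual Bernoulli shift only for free groups, whereas your argument, if it worked, would give the stronger conclusion for all non-amenable groups --- a further sign that the missing estimate is where the real difficulty lives.
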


\begin{proof}
We assume that $G$ is finitely generated. The general case will immediately follow from a standard coinduction argument and the observation that every non-amenable group contains a finitely generated non-amenable subgroup. Our argument will involve random graphs. If $(V, E)$ is vertex-transitive graph and $\eta$ is a probability measure on the space of subgraphs of $(V, E)$ which is invariant under the automorphism group of $(V, E)$, then $\int \deg_\Gamma(v) \ d \eta(\Gamma)$ is independent of the choice of $v \in V$ and is called the \emph{expected degree} of $\eta$.

We recall the notion of the minimal spanning forest. Let $G$ be a finitely generated group with finite generating set $S$, let $\Cay(G, S)$ be the associated directed Cayley graph where each $g \in G$ has an outgoing edge to $g s$ for each $s \in S$. Let $E$ be the set of directed edges in $\Cay(G, S)$. For $y \in [0, 1]^E$ define $\Gamma(y)$ to be the subgraph of $\Cay(G, S)$ obtained by removing from each simple (undirected) cycle in $\Cay(G, S)$ all edges having maximum $y$-value within that cycle. Note that $\Gamma(y)$ has no cycles and is thus a forest. The measure $\lambda^E$, where $\lambda$ is Lebesgue measure on $[0, 1]$, pushes forward to a measure $\FMSF(G, S) = \Gamma_*(\lambda^E)$ on the set of forest subgraphs of $\Cay(G, S)$. The measure $\FMSF(G,S)$ is called the \emph{minimal spanning forest} of $\Cay(G, S)$. A theorem of Thom states that if $G$ is non-amenable then the expected degree of $\FMSF(G, S)$ is unbounded as the generating set $S$ varies \cite{Th15, Th}.

We will need a slight modification of the minimal spanning forest. Let $G$ and $S$ be as before and consider $\Cay(G, S)$. For $y \in [0, 1]^G$ we let $\Psi(y)$ be the subgraph of $\Cay(G, S)$ obtained by removing from each simple (undirected) cycle in $\Cay(G, S)$ all edges whose source vertices have maximum $y$-value within that cycle. We define the modified minimal spanning forest to be $\FMSF'(G, S) = \Psi_*(\lambda^G)$. Thom's argument in \cite{Th} is easily modified to show that for each non-amenable group $G$ the expected degree of $\FMSF'(G, S)$ is unbounded as the generating set $S$ varies.

Now consider the Bernoulli shift $G \acts (2^G, u_2^G)$. Note that $2^G$ is a compact abelian group under the product topology and coordinate-wise addition mod $2$, and also note that $u_2^G$ is the unique Haar probability measure on $2^G$. Write $2$ for the set of constant functions in $2^G$ (constantly $0$ and constantly $1$). This is a normal closed (finite) $G$-invariant subgroup of $2^G$. Let $X$ be the compact abelian quotient $2^G / 2$ and let $\mu$ be the Haar probability measure on $X$. Then $G \acts (X, \mu)$ is a {\pmp} action and $\mu$ is the push-forward measure of $u_2^G$ under the quotient map. For a discussion on the Rokhlin entropy of $G \acts (X, \mu)$, see \cite{GS15}.

Fix $r > 0$. Fix a finite generating set $S$ so that the expected degree of $\FMSF'(G, S)$ is greater than $r / \log(2)$. Let $E$ be the set of directed edges in $\Cay(G, S)$. Define $\phi : 2^G \rightarrow 2^E$ by letting $\phi(w)(e)$ be the mod $2$ difference in the $w$-values of the head and tail of $e$. Set $\nu = \phi_*(u_2^G)$. The map $\phi$ is easily seen to be a continuous $G$-equivariant group homomorphism having kernel $2$. Therefore $G \acts (2^E, \nu)$ is isomorphic to $G \acts (X, \mu)$. We will proceed to study $G \acts (2^E, \nu)$. We first comment on the nature of $\nu$. It is easily checked that for any finite subgraph $(V, E')$ of $\Cay(G, S)$ which is a forest, the random variables $z \in 2^E \rightarrow z(e)$, $e \in E'$, have $\nu$-distribution $u_2$ and are $\nu$-independent. However, along any (undirected) cycle in $\Cay(G, S)$ the labels must sum to $0$ mod $2$ $\nu$-almost-always. These two facts completely describe the measure $\nu$.

Let $\alpha$ be the partition of $2^E$ determined by the values of the directed edges originating from $1_G$. Then $\alpha$ is a generating partition. Consider a point $y \in [0, 1]^G$ and the set $L_y = \{g \in G : y(g^{-1}) < y(1_G)\}$. Observe $L_y^{-1} = \{g \in G : y(g) < y(1_G)\}$. Assume that $y : G \rightarrow [0, 1]$ is injective (this is satisfied on a $\lambda^G$-conull set). Note that $\alpha^{L_y}$ is the smallest $\sigma$-algebra for which the maps $z \in 2^E \mapsto z(e)$ are measurable for every directed edge $e$ originating from $L_y^{-1}$. If $E'$ is the set of edges in $\Psi(y)$ which have source vertex $1_G$, then by definition there cannot exist any simple (undirected) cycle in $\Cay(G, S)$ which traverses an edge in $E'$ and only uses vertices in $\{1_G\} \cup L_y^{-1}$. Therefore the random variables $z \in 2^E \mapsto z(e)$, $e \in E'$, are mutually independent and collectively independent of $\alpha^{L_y}$. It follows that $\sH(\alpha \given \alpha^{L_y}) \geq \log(2) \cdot \deg_{\Psi(y)}(1_G)$. Therefore
$$\int_{y \in [0, 1]^G} \sH(\alpha \given \alpha^{L_y}) \ d \lambda^G(y) \geq \log(2) \cdot \int \deg_\Gamma(1_G) \ d \FMSF'(G, S)(\Gamma) > r.$$
Note that the left-most expression is bounded above by $|S| \cdot \log(2) < \infty$.

Finally, when $G$ is a free group, it is easily checked (by letting $S$ be a free generating set above) that $G \acts (X, \mu)$ is isomorphic to a Bernoulli shift. Indeed, this is the well known example of Ornstein and Weiss \cite{OW87}.
\end{proof}

We now include our final upper bound to Rokhlin entropy. It is well known that this upper bound coincides with classical entropy for free actions of amenable groups \cite{DGRS,DFR}. For actions of non-amenable groups it is not an isomorphism invariant, as discovered by Bowen and recorded in \cite{Bu15}.

\begin{thm} \label{thm:drop}
Let $G$ be a countably infinite group, let $G \acts (X, \mu)$ be a free {\pmp} action, and let $\cF$ be a $G$-invariant sub-$\sigma$-algebra. If $\alpha$ is any countable partition then
$$\rh_{G,\mu}(\alpha \given \cF) \leq \inf_{\substack{T \subseteq G\\T \text{ finite}}} \frac{1}{|T|} \cdot \sH(\alpha^T \given \cF \vee \sinv_G).$$
In particular, if $\salg_G(\alpha) \vee \cF \vee \sinv_G = \Borel(X)$ then $\rh_G(X, \mu \given \cF)$ is bounded above by this expression.
\end{thm}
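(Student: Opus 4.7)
My plan is to combine Corollary \ref{cor:intfree} (the Bernoulli-past upper bound) with the Shannon chain rule applied to the uniform random ordering of $T^{-1}$ encoded by the Bernoulli shift $([0,1]^G, \lambda^G)$. Throughout I write $\Sigma := \cF \vee \sinv_G$. First I reduce to the case where $1_G \in T$: for any fixed $t \in T$ the set $Tt^{-1}$ has the same size as $T$, contains $1_G$, and by applying the automorphism $t$ together with $G$-invariance of $\Sigma$ we have $\sH(\alpha^{Tt^{-1}} \given \Sigma) = \sH(\alpha^T \given \Sigma)$. I may also assume $\sH(\alpha^T \given \Sigma) < \infty$, for otherwise the bound is trivial; this forces $\sH(\alpha \given \cF) \leq \sH(\alpha \given \Sigma) < \infty$. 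Since outer Rokhlin entropy already conditions on $\sinv_G$ we have $\rh_{G,\mu}(\alpha \given \cF) = \rh_{G,\mu}(\alpha \given \Sigma)$, and applying Corollary \ref{cor:intfree} with $\cF$ replaced by $\Sigma$ yields
$$\rh_{G,\mu}(\alpha \given \cF) \leq \int_{[0,1]^G} \sH_\mu(\alpha \given \alpha^{L_y} \vee \Sigma) \, d\lambda^G(y),$$
where $L_y = \{g \in G : y(g^{-1}) < y(1_G)\}$.

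The heart of the argument is the identity
$$\sH(\alpha^{T^{-1}} \given \Sigma) = \sum_{s \in T^{-1}} \int_{[0,1]^G} \sH(\alpha \given \alpha^{L_y \cap T^{-1}s^{-1}} \vee \Sigma) \, d\lambda^G(y).$$
Almost every $y$ induces a uniform random linear ordering $<_y$ on $T^{-1}$ via the rule $s' <_y s$ iff $y(s'^{-1}) < y(s^{-1})$. The Shannon chain rule applied to this ordering gives $\sH(\alpha^{T^{-1}} \given \Sigma) = \sum_{s \in T^{-1}} \sH(s^{-1}\alpha \given \bigvee_{s' <_y s}(s')^{-1}\alpha \vee \Sigma)$ for every such $y$. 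Integrating over $y$ and applying the measure-preserving automorphism $s$ (using $G$-invariance of $\Sigma$), the $s$-th integrand rewrites as $\sH(\alpha \given \alpha^{R(s,y)} \vee \Sigma)$ with $R(s,y) = \{s' s^{-1} : s' \in T^{-1},\ y(s'^{-1}) < y(s^{-1})\} \subseteq T^{-1}s^{-1}$. The substitution $y \mapsto y' := s \cdot y$, which preserves $\lambda^G$, together with the definition of $L_{y'}$ identifies $R(s,y)$ with $L_{y'} \cap T^{-1}s^{-1}$, establishing the identity.

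To conclude, I use that $L_y \cap T^{-1}s^{-1} \subseteq L_y$ for each $s \in T^{-1}$, so Lemma \ref{lem:shan}.(v) gives $\sH(\alpha \given \alpha^{L_y} \vee \Sigma) \leq \sH(\alpha \given \alpha^{L_y \cap T^{-1}s^{-1}} \vee \Sigma)$. Summing over the $|T|$ values $s \in T^{-1}$ and applying the identity above yields
$$|T| \cdot \int \sH(\alpha \given \alpha^{L_y} \vee \Sigma) \, d\lambda^G(y) \leq \sH(\alpha^{T^{-1}} \given \Sigma).$$
Combining with the Bernoulli-past bound gives $\rh_{G,\mu}(\alpha \given \cF) \leq \frac{1}{|T|}\sH(\alpha^{T^{-1}} \given \Sigma)$ for every finite $T$ containing $1_G$. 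The infimum over such $T$ coincides with the infimum over all finite $T$ (via the bijection $T \leftrightarrow T^{-1}$ and the translation reduction from the first paragraph), so this yields the desired upper bound. The main technical obstacle I anticipate is the bookkeeping in the chain-rule step, specifically confirming under the paper's $G$-action conventions that the change of variable $y \mapsto s \cdot y$ correctly converts $R(s,y)$ into $L_{y'} \cap T^{-1}s^{-1}$; a misplaced inverse would break the identity and the resulting estimate.
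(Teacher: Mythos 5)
Your argument is correct, and it takes a genuinely different route from the paper's. Both proofs rest on the randomized--past bound of Corollary \ref{cor:intfree}, but the paper applies that corollary to the partition $\alpha^T$ for a $T$ of \emph{minimal cardinality} among those with $\sH(\alpha^T \given \cF)/|T| < h$, derives the convexity inequality $\sH(\alpha^T \given \alpha^P \vee \cF)/|T \setminus P| < h$ for proper $P \subseteq T$ from that minimality, and then runs a mass-transport/counting argument over the map $c(y)$ selecting the $<_y$-least element of $T^{-1}\cdot y$ to show $\int \phi \, d\lambda^G < h$. You instead apply the corollary directly to $\alpha$ and prove the exact Shearer-type identity $\sH(\alpha^{T^{-1}} \given \Sigma) = \sum_{s \in T^{-1}} \int \sH(\alpha \given \alpha^{L_y \cap T^{-1}s^{-1}} \vee \Sigma)\, d\lambda^G(y)$ by averaging the chain rule over the uniform random order and changing variables $y \mapsto s \cdot y$; monotonicity then finishes. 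I checked the bookkeeping you flagged: with the paper's conventions $(g\cdot y)(t) = y(g^{-1}t)$ and $\alpha^T = \bigvee_{t\in T} t^{-1}\alpha$, one indeed gets $R(s,y) = L_{s\cdot y} \cap T^{-1}s^{-1}$, so the identity holds. Your version avoids the minimal-cardinality trick entirely and is arguably more transparent (it is essentially the classical infimum-rule argument of \cite{DFR} transported to the randomized past), at the cost of needing the exact identity rather than a one-sided estimate; the paper's version needs only inequalities but requires the extremal choice of $T$. One harmless slip: you wrote $\sH(\alpha \given \cF) \leq \sH(\alpha \given \Sigma)$, but since $\cF \subseteq \Sigma$ the inequality goes the other way; what you actually need and do have is $\sH(\alpha \given \Sigma) \leq \sH(\alpha^T \given \Sigma) < \infty$ (using $1_G \in T$), which is what licenses the application of Corollary \ref{cor:intfree} with $\Sigma$ in place of $\cF$.
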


In particular, if $G$ is sofic and $\alpha$ is a generating partition for $G \acts (X, \mu)$ then the sofic entropy is bounded above by $\inf_T \sH(\alpha^T) / |T|$. This bound for sofic entropy was independently discovered by Mikl\'{o}s Ab\'{e}rt, Tim Austin, Lewis Bowen, and Benjy Weiss. Peter Burton also independently obtained a topological version of this upper bound for topological sofic entropy \cite{Bu15}.

\begin{proof}
Since $\rh_{G,\mu}(\alpha \given \cF) = \rh_{G, \mu}(\alpha \given \cF \vee \sinv_G)$, without loss of generality we may assume that $\sinv_G \subseteq \cF$. The claim is vacuously true if $\sH(\alpha \given \cF) = \infty$. So assume that $\sH(\alpha \given \cF) < \infty$. Fix $h > \inf_T \sH(\alpha^T \given \cF) / |T|$. Let $T \subseteq G$ have minimum cardinality with the property that $\sH(\alpha^T \given \cF) / |T| < h$. Since the action of $G$ is measure-preserving, without loss of generality we may suppose that $1_G \in T$. If $P \subseteq T$ and $\varnothing \neq P \neq T$ then we have
$$h > \frac{1}{|T|} \cdot \sH(\alpha^T \given \cF) = \frac{|P|}{|T|} \cdot \frac{1}{|P|} \cdot \sH(\alpha^P \given \cF) + \frac{|T \setminus P|}{|T|} \cdot \frac{1}{|T \setminus P|} \cdot \sH(\alpha^T \given \alpha^P \vee \cF).$$
Since the right-hand side above is a convex combination and $\sH(\alpha^P \given \cF) / |P| \geq h$ by definition of $T$, it follows that
\begin{equation} \label{eqn:drop}
\frac{1}{|T \setminus P|} \cdot \sH(\alpha^T \given \alpha^P \vee \cF) < h
\end{equation}
for all $P \subseteq T$ with $P \neq T$. Define $\phi : [0,1]^G \rightarrow \R$ by $\phi(y) = \sH(\alpha^T \given \alpha^{T L_y} \vee \cF)$, where $L_y = \{g \in G : y(g^{-1}) < y(1_G)\}$. By Corollary \ref{cor:intfree} we have
\begin{equation} \label{eqn:drop2}
\rh_{G,\mu}(\alpha \given \cF) = \rh_{G,\mu}(\alpha^T \given \cF) \leq \int_{[0,1]^G} \phi \ d \lambda^G,
\end{equation}
where $\lambda$ is Lebesgue measure on $[0,1]$.

Let $Y$ be the set of $y \in [0, 1]^G$ such that the map $g \in G \mapsto y(g)$ is injective. Note that $Y$ is $G$-invariant, $G$ acts freely on $Y$, and $\lambda^G(Y) = 1$. For $y \in Y$ let $c(y)$ be the unique element $z$ of $T^{-1} \cdot y$ for which $z(1_G)$ is least. Since $(g \cdot z)(1_G) < z(1_G)$ if and only if $g \in L_z$, it is easy to see that
$$|c^{-1}(z)| = |T \cdot z \setminus T L_z \cdot z| = |T \setminus T L_z|.$$
Note that $T \not\subseteq T L_z$ when $c^{-1}(z) \neq \varnothing$. Define $f : Y \rightarrow \R$ by
$$f(y) = \frac{1}{|c^{-1}(c(y))|} \cdot \phi(c(y)) = \frac{1}{|T \setminus T L_{c(y)}|} \cdot \sH(\alpha^T \given \alpha^{T L_{c(y)}} \vee \cF).$$
By (\ref{eqn:drop}) $f(y) < h$ for all $y$ and hence
\begin{equation} \label{eqn:drop3}
\int_{[0,1]^G} f \ d \lambda^G < h.
\end{equation}

For $t \in T$ set
$$f_t(y) = \begin{cases}
f(y) & \text{if } c(y) = t^{-1} \cdot y\\
0 & \text{otherwise}.
\end{cases}$$
Then $f = \sum_{t \in T} f_t$. For $z \in Y$ note that $\phi(z) = 0$ if $c^{-1}(z) = \varnothing$, as this implies $T \subseteq T L_z$. So we have
$$\phi(z) = \sum_{y \in c^{-1}(z)} f(y) = \sum_{t \in T} f_t(t \cdot z).$$
Therefore by (\ref{eqn:drop2}), (\ref{eqn:drop3}), and $G$-invariance of $\lambda^G$ we have
\begin{align*}
\rh_{G, \mu}(\alpha \given \cF) \leq \int_{[0,1]^G} \phi \ d \lambda^G & = \int_{[0,1]^G} \sum_{t \in T} f_t \circ t \ d \lambda^G\\
 & = \int_{[0,1]^G} \sum_{t \in T} f_t \ d \lambda^G = \int_{[0,1]^G} f \ d \lambda^G < h.
\end{align*}
Now let $h$ tend to $\inf_W \sH(\alpha^W \given \cF) / |W|$.
\end{proof}

\section{Actions of finite-index subgroups} \label{sec:sub}

In the final two sections we discuss two additional applications of weak containment concepts. In this section we look at how Rokhlin entropy behaves for restricted actions of finite-index subgroups. We first observe a simple inequality.

\begin{lem} \label{lem:simple}
Let $G \acts (X, \mu)$ be a {\pmp} action, let $\cF$ be a $G$-invariant sub-$\sigma$-algebra, and let $\Gamma \leq G$ be a finite-index subgroup. Then for the restricted action $\Gamma \acts (X, \mu)$ we have $\rh_\Gamma(X, \mu \given \cF) \leq |G : \Gamma| \cdot \rh_G(X, \mu \given \cF)$.
\end{lem}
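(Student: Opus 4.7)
The plan is to lift any near-optimal generating partition for the $G$-action to a generating partition for the $\Gamma$-action by taking the join over a system of coset representatives, and to bound its conditional Shannon entropy by $|G:\Gamma|$ times that of the original.

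More concretely, I would first dispose of the trivial case $\rh_G(X,\mu\given\cF)=\infty$ and fix $\epsilon>0$. Then I would choose a countable partition $\alpha$ with $\salg_G(\alpha)\vee\cF\vee\sinv_G=\Borel(X)$ and $\sH(\alpha\given\cF\vee\sinv_G)<\rh_G(X,\mu\given\cF)+\epsilon$. Pick a left transversal $T\subseteq G$ for $G/\Gamma$, so that $G=\bigsqcup_{t\in T}t\Gamma$ and $|T|=|G:\Gamma|$. I claim that the partition $\alpha^T=\bigvee_{t\in T}t^{-1}\cdot\alpha$ is a generating partition for the $\Gamma$-action modulo $\cF\vee\sinv_\Gamma$: writing every $g\in G$ uniquely as $g=t\gamma$ with $t\in T,\gamma\in\Gamma$, we have $g^{-1}=\gamma^{-1}t^{-1}$, so
\begin{equation*}
\salg_\Gamma(\alpha^T)=\bigvee_{\gamma\in\Gamma}\gamma^{-1}\cdot\bigvee_{t\in T}t^{-1}\cdot\alpha=\bigvee_{g\in G}g^{-1}\cdot\alpha=\salg_G(\alpha).
\end{equation*}
Since $\sinv_G\subseteq\sinv_\Gamma$ and $\cF$ is $\Gamma$-invariant, this yields $\salg_\Gamma(\alpha^T)\vee\cF\vee\sinv_\Gamma=\Borel(X)$.

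Next I would estimate the entropy. Iterating Lemma \ref{lem:shan}.(iii) together with monotonicity in the conditioning (Lemma \ref{lem:shan}.(v)) gives
\begin{equation*}
\sH(\alpha^T\given\cF\vee\sinv_\Gamma)\le\sum_{t\in T}\sH(t^{-1}\cdot\alpha\given\cF\vee\sinv_\Gamma)\le\sum_{t\in T}\sH(t^{-1}\cdot\alpha\given\cF\vee\sinv_G),
\end{equation*}
where the second inequality uses $\sinv_G\subseteq\sinv_\Gamma$. Because $\mu$ is $G$-invariant and $\cF\vee\sinv_G$ is $G$-invariant, each summand equals $\sH(\alpha\given\cF\vee\sinv_G)$, so the total is bounded by $|T|\cdot\sH(\alpha\given\cF\vee\sinv_G)=|G:\Gamma|\cdot\sH(\alpha\given\cF\vee\sinv_G)$. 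By definition of Rokhlin entropy for $\Gamma\acts(X,\mu)$,
\begin{equation*}
\rh_\Gamma(X,\mu\given\cF)\le\sH(\alpha^T\given\cF\vee\sinv_\Gamma)\le|G:\Gamma|\cdot\bigl(\rh_G(X,\mu\given\cF)+\epsilon\bigr),
\end{equation*}
and letting $\epsilon\to 0$ completes the proof.

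There is no real obstacle here; the only point requiring a touch of care is the bookkeeping that shows $\salg_\Gamma(\alpha^T)=\salg_G(\alpha)$ and the correct use of $G$-invariance of $\cF\vee\sinv_G$ to identify $\sH(t^{-1}\cdot\alpha\given\cF\vee\sinv_G)$ with $\sH(\alpha\given\cF\vee\sinv_G)$, which is crucial to keep the factor at exactly $|G:\Gamma|$ rather than something larger.
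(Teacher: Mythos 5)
Your proposal is correct and follows essentially the same route as the paper's proof: both take a transversal $T$ for $G/\Gamma$, observe that $\salg_\Gamma(\alpha^T) \vee \cF \vee \sinv_\Gamma = \Borel(X)$ via $\Gamma T^{-1} = G$ and $\sinv_G \subseteq \sinv_\Gamma$, and then bound $\sH(\alpha^T \given \cF \vee \sinv_\Gamma)$ by $|T| \cdot \sH(\alpha \given \cF \vee \sinv_G)$ using subadditivity, monotonicity in the conditioning algebra, and $G$-invariance of $\mu$ and of $\cF \vee \sinv_G$. The only cosmetic difference is that you run an $\epsilon$-argument where the paper simply takes the infimum over all generating $\alpha$ at the end.
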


\begin{proof}
Let $T \subseteq G$ meet every coset of $\Gamma$ in $G / \Gamma$ precisely once. Then $|T| = |G : \Gamma|$. Fix a countable partition $\alpha$ satisfying $\salg_G(\alpha) \vee \cF \vee \sinv_G = \Borel(X)$. Since $\Gamma \cdot T^{-1} = G$ and $\sinv_\Gamma \supseteq \sinv_G$, we see that $\salg_\Gamma(\alpha^T) \vee \cF \vee \sinv_\Gamma = \Borel(X)$. Therefore, for every such $\alpha$ we have
$$\rh_\Gamma(X, \mu) \leq \sH(\alpha^T \given \cF \vee \sinv_\Gamma) \leq \sH(\alpha^T \given \cF \vee \sinv_G) \leq |G : \Gamma| \cdot \sH(\alpha \given \cF \vee \sinv_G).$$
Taking the infimum over all such $\alpha$ completes the proof.
\end{proof}

In this section we investigate the validity of the ``subgroup formula'' $\rh_\Gamma(X, \mu) = |G : \Gamma| \cdot \rh_G(X, \mu)$ (not surprisingly, for this equality to hold it is necessary that $\Stab(x) \leq \Gamma$ for $\mu$-almost-every $x \in X$; this follows by modifying the proof of Lemma \ref{lem:simple}). A positive answer to this question can be quickly obtained when the ergodic decomposition of $\Gamma \acts (X, \mu)$ has a particular form.

For a Borel space $X$ and a Borel action $G \acts X$, we write $\E_G(X)$ for the set of ergodic $G$-invariant Borel probability measures on $X$. Recall that $\E_G(X)$ has a natural standard Borel structure which is generated by the maps $\nu \mapsto \nu(A)$ for Borel $A \subseteq X$.

\begin{lem} \label{lem:split}
Let $G \acts (X, \mu)$ be a {\pmp} action, and let $\Gamma$ be a finite-index subgroup. Let $\mu = \int_{\E_G(X)} \nu \ d \tau(\nu)$ be the $G$-ergodic decomposition of $\mu$. The following are equivalent.
\begin{enumerate}
\item[\rm (1)] For $\tau$-almost-every $G$-ergodic measure $\nu \in \E_G(X)$, $\nu$ has precisely $|G : \Gamma|$ many $\Gamma$-ergodic components.
\item[\rm (2)] There is a $\Gamma$-invariant Borel partition $\{X_{\Gamma g} : \Gamma g \in \Gamma \backslash G\}$ of $X$ such that $X_{\Gamma g}$ meets almost-every $G$-orbit, $\mu(X_{\Gamma g}) = |G : \Gamma|^{-1}$, and $\sinv_\Gamma = \{X_{\Gamma g} : \Gamma g \in \Gamma \backslash G\} \vee \sinv_G$.
\item[\rm (3)] $G \acts (X, \mu)$ factors onto the finite action $G \acts (G / \Delta, u_{G / \Delta})$, where $\Delta$ is the maximal normal subgroup of $G$ contained in $\Gamma$.
\end{enumerate}
\end{lem}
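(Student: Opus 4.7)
My plan is to prove the three-way equivalence via (2) $\Rightarrow$ (1), (1) $\Leftrightarrow$ (3), (3) $\Rightarrow$ (2), with the $\Delta$-ergodic decomposition as the bridge between the measure-theoretic conditions (1), (2) and the algebraic condition (3). For (2) $\Rightarrow$ (1), I fix a $G$-ergodic $\nu$ in the decomposition; the hypothesis $\sinv_\Gamma = \{X_{\Gamma g}\} \vee \sinv_G$ together with the $\nu$-triviality of $\sinv_G$ forces $\sinv_\Gamma$ modulo $\nu$ to be the finite $|G : \Gamma|$-atom algebra generated by $\{X_{\Gamma g}\}$ (each atom has positive $\nu$-mass by the orbit-meeting condition), so the $\Gamma$-ergodic components of $\nu$ are precisely these $|G : \Gamma|$ atoms. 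For (3) $\Rightarrow$ (2), given $\pi : X \to G/\Delta$, I set $X_{\Gamma g} = \pi^{-1}(\Gamma g/\Delta)$: these are the preimages of the $|G : \Gamma|$ many $\Gamma$-orbits in $G/\Delta$ (using the normality of $\Delta$ and the inclusion $\Delta \subseteq \Gamma$), partitioning $G/\Delta$ into pieces of equal size, so the $\Gamma$-invariance, the mass condition $\mu(X_{\Gamma g}) = |G : \Gamma|^{-1}$, and the orbit-meeting property all follow immediately; the $\sigma$-algebra equality $\sinv_\Gamma = \{X_{\Gamma g}\} \vee \sinv_G$ is then a consequence of (1) combined with the (1) $\Rightarrow$ (2) direction established below.

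The heart of the proof is (1) $\Leftrightarrow$ (3), routed through the $\Delta$-ergodic decomposition. Because $\Delta \lhd G$, the group $G$ permutes the $\Delta$-ergodic components of any $G$-invariant measure, and $G$-ergodicity of $\nu$ makes this action transitive; the set of $\Delta$-components of $\nu$ is therefore a copy of $G/H_\nu$ as a $G$-set, for some $\Delta \subseteq H_\nu \subseteq G$. Since $\Gamma$ normalizes $\Delta$, the $\Gamma$-ergodic components of $\nu$ correspond bijectively to $\Gamma$-orbits on $G/H_\nu$, i.e., to the double cosets $\Gamma \backslash G / H_\nu$. The key orbit-counting lemma is that $|\Gamma \backslash G / H| \leq |\Gamma \backslash G| = |G : \Gamma|$ for every $H \supseteq \Delta$, with equality if and only if the $\Gamma$-equivariant surjection $G/\Delta \to G/H$ collapses no $\Gamma$-orbits, if and only if $H \subseteq \bigcap_g g^{-1}\Gamma g = \Delta$, if and only if $H = \Delta$. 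Thus (1) pins down $H_\nu = \Delta$, in which case the $\Delta$-decomposition of each $G$-ergodic component is a $G$-equivariant copy of $G/\Delta$ and yields (3); conversely, (3) forces $H_\nu = \Delta$ fiber-by-fiber, because each $\Delta$-ergodic component pushes to a point mass on $G/\Delta$ (since $\Delta$ acts trivially on $G/\Delta$ by normality), and $G$-ergodicity leaves exactly one $\Delta$-component per $\pi$-fiber, recovering (1).

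The main obstacle is the measurable assembly of the factor $X \to G/\Delta$ in (1) $\Rightarrow$ (3): the fiberwise identifications of $\Delta$-decompositions with $G/\Delta$ must be chosen Borel-coherently over $\E_G(X)$. I would handle this with the Lusin--Novikov uniformization theorem applied to the Borel, $|G : \Gamma|$-to-one factor map $\E_\Gamma(X) \to \E_G(X)$---the same machinery that also produces the partition in (1) $\Rightarrow$ (2), by selecting $|G : \Gamma|$ Borel sections and labeling them by $\Gamma \backslash G$. A corresponding Borel section of the $\Delta$-decomposition factor map over $\E_G(X)$, combined with the transitive $G$-action on each fiber of $\Delta$-ergodic components, upgrades to a genuine $G$-equivariant Borel map $X \to G/\Delta$, completing (1) $\Rightarrow$ (3).
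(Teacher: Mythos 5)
Your argument is correct, but it routes the key implication differently from the paper. The paper's cycle is (3) $\Rightarrow$ (2) $\Rightarrow$ (1) $\Rightarrow$ (3): for (3) $\Rightarrow$ (2) it verifies the equality $\sinv_\Gamma = \{X_{\Gamma g}\} \vee \sinv_G$ directly by an explicit device (for $A \in \sinv_\Gamma$ it records the $\sinv_G$-measurable function $x \mapsto \{\Gamma f(g\cdot x) : g \cdot x \in A\}$ and reconstructs $A$ from it), and for (1) $\Rightarrow$ (3) it takes $|G:\Gamma|$ Borel sections of the $|G:\Gamma|$-to-one map $\E_\Gamma(X) \to \E_G(X)$ to build a $\Gamma$-invariant partition $\{X_{\Gamma g}\}$ and then identifies the factor generated by all $G$-translates of that partition with $G/\Delta$ via a stabilizer computation ($x$ and $u\cdot x$ agree on $s\cdot\{X_{\Gamma g}\}$ iff $u \in s\Gamma s^{-1}$, and $\Delta = \bigcap_s s\Gamma s^{-1}$). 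You instead pass to the $\Delta$-ergodic decomposition: since $\Delta \lhd G$ has finite index, the $\Delta$-components of a $G$-ergodic $\nu$ form a finite transitive $G$-set $G/H_\nu$ with $H_\nu \supseteq \Delta$, the $\Gamma$-components correspond to $\Gamma\backslash G/H_\nu$, and the double-coset count $|\Gamma\backslash G/H| = |G:\Gamma| \Leftrightarrow H \subseteq \Delta$ pins down $H_\nu = \Delta$. This isolates the group-theoretic content more cleanly than the paper's stabilizer computation and makes (3) $\Rightarrow$ (1) essentially symmetric with (1) $\Rightarrow$ (3); the measurable-selection issues you defer to Lusin--Novikov are the same ones the paper handles with its Borel one-sided inverses, so nothing is lost there.

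One organizational wrinkle you should repair before writing this up: in your (3) $\Rightarrow$ (2) step you obtain the partition $X_{\Gamma g} = \pi^{-1}(\Gamma g\Delta)$ but defer the $\sigma$-algebra equality $\sinv_\Gamma = \{X_{\Gamma g}\} \vee \sinv_G$ to ``the (1) $\Rightarrow$ (2) direction established below'' --- yet your declared plan ((2) $\Rightarrow$ (1), (1) $\Leftrightarrow$ (3), (3) $\Rightarrow$ (2)) contains no standalone (1) $\Rightarrow$ (2), and deriving it as (1) $\Rightarrow$ (3) $\Rightarrow$ (2) would be circular. The fix is easy: either prove (1) $\Rightarrow$ (2) directly (the Lusin--Novikov sections give the partition, and then $\sinv_\Gamma$ mod null sets is the pullback of $\Borel(\E_\Gamma(X))$ under the $\Gamma$-ergodic decomposition map, which factors through the $G$-ergodic decomposition together with the $\Gamma\backslash G$-valued label, i.e.\ through $\{X_{\Gamma g}\}\vee\sinv_G$), and replace your cycle by (2) $\Rightarrow$ (1) $\Rightarrow$ (3) $\Rightarrow$ (1) $\Rightarrow$ (2); or verify the equality for $\pi^{-1}(\Gamma g\Delta)$ directly as the paper does. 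As written, that one sentence is the only place where a needed argument is genuinely absent rather than merely reorganized.
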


\begin{proof}
(3) $\Rightarrow$ (2). Say the map is $f : X \rightarrow G / \Delta$. Set $X_{\Gamma g} = f^{-1}(\Gamma g \Delta)$. Then $X_{\Gamma g}$ meets almost-every $G$-orbit and $\mu(X_{\Gamma g}) = |G : \Gamma|^{-1}$. Now set $\Gamma \backslash G / \Delta = \{\Gamma g \Delta : g \in G\}$ and define $\tilde{f}(x) = \Gamma f(x) \in \Gamma \backslash G / \Delta$. Fix $A \in \sinv_\Gamma$. For $x \in X$ define $\phi(x) = \{\tilde{f}(g \cdot x) : g \in G, \ g \cdot x \in A\}$. Then $\phi$ is $\sinv_G$-measurable and
$$A = \bigcup_{S \subseteq \Gamma \backslash G / \Delta} \Big(\tilde{f}^{-1}(S) \cap \phi^{-1}(S) \Big) \in \{X_{\Gamma g} : \Gamma g \in \Gamma \backslash G\} \vee \sinv_G.$$

(2) $\Rightarrow$ (1). For $\tau$-almost-every $\nu \in \E_G(X)$, we must have that each set $X_{\Gamma g}$ meets $\nu$-almost-every $G$-orbit. Fix $\nu \in \E_G(X)$ with this property. Then each $X_{\Gamma g}$ is $\Gamma$ invariant and satisfies $\nu(X_{\Gamma g}) > 0$, implying that $\nu$ has at least $|G : \Gamma|$ many $\Gamma$-ergodic components. On the other hand, if $\{Y_i : 1 \leq i \leq n\}$ is a Borel partition of a $\nu$-conull subset of $X$ into $\Gamma$-invariant sets of positive measure, then by ergodicity each $Y_i$ must meet $\nu$-almost-every $G$-orbit. Each $G$-orbit contains at most $|G : \Gamma|$ many $\Gamma$-orbits, whence $n \leq |G : \Gamma|$. We conclude that $\nu$ has precisely $|G : \Gamma|$ many $\Gamma$-ergodic components.

(1) $\Rightarrow$ (3). Remove a $\mu$-null set if necessary so that every $\nu \in \E_G(X)$ has $|G : \Gamma|$ many $\Gamma$-ergodic components. Define $f : \E_\Gamma(X) \rightarrow \E_G(X)$ by $f(\eta) = |G : \Gamma|^{-1} \cdot \sum_{g \Gamma \in G / \Gamma} g \cdot \eta$. Then $f$ is Borel and for $\nu \in \E_G(X)$, $f^{-1}(\nu)$ is the set of $\Gamma$-ergodic components of $\nu$. In particular, $|f^{-1}(\nu)| = |G : \Gamma|$. It follows that there are $|G : \Gamma|$-many one-sided Borel inverses (which we choose to index by $\Gamma \backslash G$) $\bar{f}_{\Gamma g} : \E_G(X) \rightarrow \E_\Gamma(X)$ satisfying $f^{-1}(\nu) = \{\bar{f}_{\Gamma g}(\nu) : \Gamma g \in \Gamma \backslash G\}$. Set $\mu_{\Gamma g} = \int \bar{f}_{\Gamma g}(\nu) \ d \tau(\nu)$. Then $\mu_{\Gamma g}$ and $\mu_{\Gamma u}$ are mutually singular whenever $\Gamma g \neq \Gamma u$. Thus there is a $\Gamma$-invariant Borel partition $\{X_{\Gamma g} : \Gamma g \in \Gamma \backslash G\}$ satisfying $\mu_{\Gamma g}(X_{\Gamma g}) = 1$ for every $\Gamma g \in \Gamma \backslash G$. Each $X_{\Gamma g}$ is $\Gamma$-invariant and meets almost-every orbit. It follows that for almost-every $x \in X$ and every $u \in G$, $x$ and $u \cdot x$ lie in the same piece of the partition $\{X_{\Gamma g} : \Gamma g \in \Gamma \backslash G\}$ if and only if $u \in \Gamma$. Therefore $x$ and $u \cdot x$ lie in the same piece of the partition $\{s \cdot X_{\Gamma g} : \Gamma g \in \Gamma \backslash G\}$ if and only if $u \in s \Gamma s^{-1}$. As $\Delta = \bigcap_{s \in G} s \Gamma s^{-1}$, it follows that $G \acts (G / \Delta, u_{G / \Delta})$ is the factor of $G \acts (X, \mu)$ associated to $\salg_G(\{X_{\Gamma g} : \Gamma g \in \Gamma \backslash G\})$.
\end{proof}

Now we check that when the equivalent conditions of the previous lemma are met, Rokhlin entropy satisfies a subgroup formula.

\begin{lem} \label{lem:subform}
Let $G \acts (X, \mu)$ be an aperiodic {\pmp} action, let $\cF$ be a $G$-invariant sub-$\sigma$-algebra, and let $\Gamma$ be a subgroup of finite index. If almost-every $G$-ergodic component decomposes into $|G : \Gamma|$-many $\Gamma$-ergodic components, then
$$\rh_\Gamma(X, \mu \given \cF) = |G : \Gamma| \cdot \rh_G(X, \mu \given \cF).$$
\end{lem}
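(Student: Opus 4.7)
The inequality $\rh_\Gamma(X,\mu\given\cF)\leq|G:\Gamma|\cdot\rh_G(X,\mu\given\cF)$ is Lemma~\ref{lem:simple}, so I would focus on the reverse. The plan is to reduce to the case of a normal subgroup. Let $\Delta=\bigcap_{g\in G}g\Gamma g^{-1}$ denote the maximal normal subgroup of $G$ contained in $\Gamma$. By the equivalence (1)$\Leftrightarrow$(3) of Lemma~\ref{lem:split}, the hypothesis is equivalent to the existence of a $G$-equivariant factor $\pi\colon(X,\mu)\to(G/\Delta,u_{G/\Delta})$, which is precisely the hypothesis for $(G,\Delta)$. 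To obtain the analogous hypothesis for $(\Gamma,\Delta)$, I would fix a right transversal $T\subseteq G$ for $\Gamma\backslash G$ so that each $g\in G$ writes uniquely as $g=\gamma t$ with $\gamma\in\Gamma$, $t\in T$, and define $\phi\colon X\to\Gamma/\Delta$ by $\phi(x)=\gamma\Delta$ whenever $\pi(x)=\gamma t\Delta$. Normality of $\Delta$ makes $\phi$ well-defined (a $\Delta$-ambiguity in the representative of $\pi(x)$ changes $\gamma$ only by an element of $\Delta$), $\Gamma$-equivariant, and a direct count gives $\phi_*\mu=u_{\Gamma/\Delta}$. Granted the normal case, applying it to $(G,\Delta)$ and to the aperiodic restricted $\Gamma$-action with subgroup $\Delta$ yields $|G:\Delta|\cdot\rh_G=\rh_\Delta=|\Gamma:\Delta|\cdot\rh_\Gamma$, giving the general formula.

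For the normal case $\Delta\triangleleft G$, I would denote by $\xi=\{X_{\Delta g}\}$ the partition from Lemma~\ref{lem:split}(2), by $\mathcal{Z}=\salg_G(\xi)=\pi^{-1}(\Borel(G/\Delta))$ the associated $G$-invariant sub-$\sigma$-algebra, by $Y=X_{\Delta e}$, and by $\mu_Y$ the measure $\mu$ conditioned on $Y$. Normality of $\Delta$ ensures that for each $t\in G$ the translation $y\mapsto ty$ identifies $(Y,\mu_Y)$ with $(X_{\Delta t},\mu_{X_{\Delta t}})$ as a $\Delta$-space up to the inner automorphism $\delta\mapsto t\delta t^{-1}$ of $\Delta$. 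Since Rokhlin entropy is invariant under reparameterizing the acting group by an automorphism, all pieces have equal $\Delta$-Rokhlin entropies. Combined with the identity $\sinv_G\res Y=\sinv_\Delta\res Y$ (obtained from the bijection $A\mapsto A\cap Y$ between $G$-invariant subsets of $X$ and $\Delta$-invariant subsets of $Y$), a standard decomposition and transport-of-partitions argument yields $\rh_\Delta(X,\mu\given\cF)=\rh_\Delta(Y,\mu_Y\given\cF\res Y)$.

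Given $\epsilon>0$ and a $\Delta$-generating partition $\alpha_0$ of $(Y,\mu_Y)$ modulo $\cF\res Y\vee(\sinv_\Delta\res Y)$ with Shannon entropy within $\epsilon$ of $\rh_\Delta(Y,\mu_Y\given\cF\res Y)$, I would construct an efficient $G$-generating partition of $X$. Set $\beta=\{A:A\in\alpha_0\}\cup\{X\setminus Y\}$ and choose a transversal $T$ of $G/\Delta$ with $1_G\in T$. The join $\bigvee_{t\in T}t\beta$ recovers $\xi$ (the coset of $x$ is the unique $t\in T$ for which $t\beta$ places $x$ in a subpiece of $tY$ rather than in the distinguished piece $X\setminus tY$) and on each $X_{\Delta t}$ equals $t\alpha_0$; combined with the $\Delta$-generating property of $t\alpha_0$ on $X_{\Delta t}$, this gives $\salg_G(\beta)\vee\cF\vee\sinv_G=\Borel(X)$. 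A direct piece-wise computation, using that $\beta$ is constant on $X\setminus Y$ once $\mathcal{Z}$ is known, yields
\begin{equation*}
\sH(\beta\given\cF\vee\mathcal{Z}\vee\sinv_G)=\tfrac{1}{|G:\Delta|}\sH_{\mu_Y}(\alpha_0\given\cF\res Y\vee(\sinv_\Delta\res Y)),
\end{equation*}
and hence $\rh_G(\Borel(X)\given\cF\vee\mathcal{Z})\leq\tfrac{1}{|G:\Delta|}\rh_\Delta(X,\mu\given\cF)$.

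The main obstacle is to absorb the $\mathcal{Z}$ contribution in the sub-additivity bound $\rh_G(X,\mu\given\cF)\leq\rh_G(\mathcal{Z}\given\cF)+\rh_G(\Borel(X)\given\cF\vee\mathcal{Z})$ from Lemma~\ref{lem:add2}, that is, to prove $\rh_G(\mathcal{Z}\given\cF)=0$. The plan is to exploit that the hypothesis forces $(Y,\mu_\omega\res Y)$ to be $\Delta$-ergodic for $\tau$-a.e.~$G$-ergodic component $\mu_\omega$ in the disintegration $\mu=\int\mu_\omega\,d\tau(\omega)$. For any $\epsilon>0$, using measurable selection in the fibers of this disintegration, I would choose a Borel set $A\subseteq Y$ with $\mu_\omega(A)=\epsilon$ for $\tau$-a.e.~$\omega$. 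Then $\Delta A$ is $\Delta$-invariant inside $Y$ and has positive measure in every component, so $\Delta$-ergodicity per component forces $\Delta A=Y$ modulo $\mu$-null, whence $g\cdot\Delta A=X_{\Delta g}$ for every $g\in G$ and $\mathcal{Z}\subseteq\salg_G(\{A,X\setminus A\})\vee\sinv_G$. Consequently
\begin{equation*}
\rh_G(\mathcal{Z}\given\cF)\leq\sH(\{A,X\setminus A\}\given\sinv_G)=-\epsilon\log\epsilon-(1-\epsilon)\log(1-\epsilon),
\end{equation*}
which tends to $0$ as $\epsilon\to 0$. Combining with the preceding paragraph completes the normal case, and hence the lemma.
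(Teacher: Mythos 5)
Your argument is correct, but it takes a genuinely longer route than the paper's. Both proofs rest on the same central device: a generating partition supported on a single piece of the coset partition $\{X_{\Gamma g}\}$ from Lemma \ref{lem:split}(2), extended trivially off that piece, is still $G$-generating modulo $\cF \vee \sinv_G$, and its conditional Shannon entropy is computed only on that piece. The difference is how the factor $|G:\Gamma|^{-1}$ is extracted. The paper works directly with $\Gamma$, with no normal-core reduction: it starts from an arbitrary $\Gamma$-generating partition $\beta$, writes $\sH(\beta \given \cF \vee \sinv_\Gamma)$ as the sum over cosets of $\mu(X_{\Gamma u}) \cdot \sH_{\mu_{\Gamma u}}(\beta \given \cF \vee \sinv_\Gamma)$, and selects by pigeonhole a coset where this term is at most $|G:\Gamma|^{-1} \cdot \sH(\beta \given \cF \vee \sinv_\Gamma)$; this sidesteps the fact that for non-normal $\Gamma$ the pieces $X_{\Gamma g}$ need not be mutually $\Gamma$-isomorphic. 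You instead pass to the normal core $\Delta$, where the pieces \emph{are} mutually $\Delta$-isomorphic up to inner automorphisms, so that the $\Delta$-entropy is equidistributed among them; this costs you the verification that the hypothesis of Lemma \ref{lem:split} holds for the pair $(\Gamma, \Delta)$ (your factor $\phi : X \rightarrow \Gamma / \Delta$ handles this correctly), an appeal to affineness of relative Rokhlin entropy, and a separate proof that $\rh_{G,\mu}(\salg_G(\xi) \given \cF) = 0$ --- which the paper gets in one line from $\rh_{G,\mu}(\sinv_\Gamma) \leq \rh_{\Gamma,\mu}(\sinv_\Gamma) = 0$, a result of \cite{AS} for aperiodic actions, and which your small-set ergodicity argument reproves in this special case (the measurable-selection step producing $A \subseteq Y$ with $\mu_\omega(A) = \epsilon$ is standard but does use aperiodicity, i.e.\ non-atomicity of the fiber measures). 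What your route buys is the explicit intermediate identity $\rh_\Delta(X, \mu \given \cF) = |\Gamma : \Delta| \cdot \rh_\Gamma(X, \mu \given \cF)$ and a more symmetric picture in the normal case; what the paper's buys is brevity, fewer external inputs, and no division at the end.
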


\begin{proof}
By Lemma \ref{lem:simple} we only need to show $\rh_\Gamma(X, \mu \given \cF) \geq |G : \Gamma| \cdot \rh_G(X, \mu \given \cF)$. Since $G$ acts aperiodically and $\Gamma$ is of finite-index, $\Gamma$ must act aperiodically. So by \cite[Lem. 9.5]{AS} we have $\rh_{G, \mu}(\sinv_\Gamma) \leq \rh_{\Gamma, \mu}(\sinv_\Gamma) = 0$.

Fix a partition $\beta$ satisfying $\salg_\Gamma(\beta) \vee \cF \vee \sinv_\Gamma = \Borel(X)$. Let $\{X_{\Gamma g} : \Gamma g \in \Gamma \backslash G\}$ be the $\Gamma$-invariant partition given by Lemma \ref{lem:split}. Write $\mu_{\Gamma g}$ for the normalized restriction of $\mu$ to $X_{\Gamma g}$. Then
$$\sum_{\Gamma g \in \Gamma \backslash G} \mu(X_{\Gamma g}) \cdot \sH_{\mu_{\Gamma g}}(\beta \given \cF \vee \sinv_\Gamma) = \sH(\beta \given \cF \vee \sinv_\Gamma).$$
So there is $\Gamma u$ with $\mu(X_{\Gamma u}) \cdot \sH_{\mu_{\Gamma u}}(\beta \given \cF \vee \sinv_\Gamma) \leq |G : \Gamma|^{-1} \cdot \sH(\beta \given \cF \vee \sinv_\Gamma)$. Set $\alpha = (\beta \res X_{\Gamma u}) \cup \{X \setminus X_{\Gamma u}\}$. Since $\sinv_\Gamma = \{X_{\Gamma g} : \Gamma g \in \Gamma \backslash G\} \vee \sinv_G$ we have that
$$\salg_G(\alpha) \vee \cF \vee \sinv_G \supseteq \Big( \salg_\Gamma(\beta) \vee \cF \vee \sinv_\Gamma \Big) \res X_{\Gamma u} = \Borel(X) \res X_{\Gamma u}.$$
Therefore $\salg_G(\alpha) \vee \cF \vee \sinv_G = \Borel(X)$. Thus by sub-additivity of Rokhlin entropy
\begin{align*}
\rh_G(X, \mu \given \cF) & \leq \rh_{G,\mu}(\sinv_\Gamma) + \sH(\alpha \given \cF \vee \sinv_\Gamma)\\
 & = \mu(X_{\Gamma u}) \cdot \sH_{\mu_{\Gamma u}}(\beta \given \cF \vee \sinv_\Gamma) \leq \frac{1}{|G : \Gamma|} \cdot \sH(\beta \given \cF \vee \sinv_\Gamma).
\end{align*}
Now take the infimum over $\beta$.
\end{proof}

As a corollary we obtain a lower bound to $\rh_\Gamma(X, \mu)$. Recall that an upper bound is given in Lemma \ref{lem:simple}.

\begin{cor} \label{cor:lower}
Let $G \acts (X, \mu)$ be an aperiodic {\pmp} action and let $\cF$ be a $G$-invariant sub-$\sigma$-algebra. Let $\Gamma \leq G$ be a finite-index subgroup and set $\Delta = \bigcap_{g \in G} g \Gamma g^{-1}$. Then
$$|G : \Gamma| \cdot \rh_G(X \times (G / \Delta), \mu \times u_{G / \Delta} \given \cF \vee \Borel(G / \Delta)) \leq \rh_\Gamma(X, \mu \given \cF),$$
with equality if $\Gamma = \Delta$ is normal.
\end{cor}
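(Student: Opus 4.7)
The plan is to apply Lemma~\ref{lem:subform} to the product action $G \acts (X \times G/\Delta, \mu \times u_{G/\Delta})$ and then to bound the resulting $\Gamma$-Rokhlin entropy above by $\rh_\Gamma(X, \mu \given \cF)$ by an explicit partition lift. Since the projection $X \times G/\Delta \to G/\Delta$ is a $G$-factor map onto the finite action $G \acts (G/\Delta, u_{G/\Delta})$, Lemma~\ref{lem:split} shows that almost every $G$-ergodic component of $\mu \times u_{G/\Delta}$ decomposes into exactly $|G:\Gamma|$ many $\Gamma$-ergodic components. Aperiodicity of $\mu$ forces the product to be aperiodic, so Lemma~\ref{lem:subform} applied with the $G$-invariant sub-$\sigma$-algebra $\cF \vee \Borel(G/\Delta)$ yields
$$\rh_\Gamma(X \times G/\Delta, \mu \times u_{G/\Delta} \given \cF \vee \Borel(G/\Delta)) = |G:\Gamma| \cdot \rh_G(X \times G/\Delta, \mu \times u_{G/\Delta} \given \cF \vee \Borel(G/\Delta)).$$
The main inequality thus reduces to showing $\rh_\Gamma(X \times G/\Delta, \mu \times u_{G/\Delta} \given \cF \vee \Borel(G/\Delta)) \leq \rh_\Gamma(X, \mu \given \cF)$.

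For this, fix a partition $\alpha$ of $X$ with $\salg_\Gamma(\alpha) \vee \cF \vee \sinv_\Gamma = \Borel(X)$ and nearly optimal conditional entropy, and lift $\alpha$ to a partition $\tilde\alpha$ of $X \times G/\Delta$ by placing a labeled copy of $\alpha$ on each fiber $X \times \{g\Delta\}$. The key fact is that since $\Delta \lhd G$ and $\Delta \leq \Gamma$, the $\Gamma$-stabilizer of every $g\Delta \in G/\Delta$ equals $\Gamma \cap g\Delta g^{-1} = \Delta$. From this one deduces two things: first, the restriction of $\sinv_\Gamma^{X \times G/\Delta}$ to any single fiber coincides with $\sinv_\Delta^X \supseteq \sinv_\Gamma^X$, so that
$$\sH(\tilde\alpha \given \cF \vee \Borel(G/\Delta) \vee \sinv_\Gamma^{X \times G/\Delta}) = \sH_\mu(\alpha \given \cF \vee \sinv_\Delta^X) \leq \sH_\mu(\alpha \given \cF \vee \sinv_\Gamma^X);$$
second, within each $\Gamma$-orbit in $G/\Delta$ the $\Gamma$-translates of $\tilde\alpha$ collectively recover $\Gamma \cdot \alpha$ on every fiber, so $\salg_\Gamma(\tilde\alpha) \vee \cF \vee \Borel(G/\Delta) \vee \sinv_\Gamma^{X \times G/\Delta} = \Borel(X \times G/\Delta)$. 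Taking the infimum over $\alpha$ completes this direction.

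When $\Gamma = \Delta$ is normal, the reverse inequality follows from a direct argument: $\Gamma$ then fixes $G/\Gamma$ pointwise, so $\Gamma \acts (X \times G/\Gamma, \mu \times u_{G/\Gamma})$ is literally a disjoint union of $|G:\Gamma|$ copies of $\Gamma \acts (X, \mu)$ indexed by $G/\Gamma$. Any generator $\beta$ for the product relative to $\cF \vee \Borel(G/\Gamma)$ restricts on each fiber to a $\Gamma$-generating partition of $X$ relative to $\cF$, whence its conditional Shannon entropy decomposes as an average of $|G:\Gamma|$ quantities each at least $\rh_\Gamma(X, \mu \given \cF)$, giving the matching lower bound after infimizing over $\beta$. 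The main obstacle throughout is carefully tracking how the invariant $\sigma$-algebras $\sinv_\Gamma^{X \times G/\Delta}$, $\sinv_\Gamma^X$, and $\sinv_\Delta^X$ interact when restricted across fibers, but normality of $\Delta$ makes every identification clean.
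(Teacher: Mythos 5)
Your proof is correct, and for the main inequality it follows exactly the paper's route: Lemma \ref{lem:split}.(3) applied to the projection $X \times (G/\Delta) \to G/\Delta$ verifies the hypothesis of Lemma \ref{lem:subform} for the product action, and the remaining comparison $\rh_\Gamma(X \times (G/\Delta), \mu \times u_{G/\Delta} \given \cF \vee \Borel(G/\Delta)) \leq \rh_\Gamma(X, \mu \given \cF)$ is precisely the step the paper dispatches with ``from the definition of relative Rokhlin entropy''; your explicit partition lift, including the observation that $\sinv_\Gamma^{X \times G/\Delta}$ restricts on each fiber to $\sinv_\Delta^X \supseteq \sinv_\Gamma^X$ because $\Stab_\Gamma(g\Delta) = \Gamma \cap g\Delta g^{-1} = \Delta$, is a valid and more detailed justification of that step. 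The only genuine divergence is in the equality case $\Gamma = \Delta$: the paper writes $\mu \times u_{G/\Delta}$ as the average of the $\Delta$-invariant measures $\mu \times \delta_{g\Delta}$, each isomorphic to $(X,\mu)$ as a $\Delta$-system, and invokes the affineness of Rokhlin entropy on the space of invariant measures from \cite{AS} together with $\Borel(G/\Delta) \subseteq \sinv_\Delta$; you instead prove the missing lower bound $\rh_\Delta(X \times (G/\Delta), \mu \times u_{G/\Delta} \given \cF \vee \Borel(G/\Delta)) \geq \rh_\Delta(X, \mu \given \cF)$ directly, by restricting an arbitrary relatively generating partition to each ($\Delta$-invariant) fiber and averaging the resulting conditional Shannon entropies. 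Your version is somewhat more elementary in that it avoids citing the affineness theorem, and it works here precisely because the decomposition is into finitely many atoms of an invariant partition rather than a general ergodic decomposition. Both arguments are sound, and combined with the general inequality specialized to $\Gamma = \Delta$ they yield the stated equality.
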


\begin{proof}
By using Lemma \ref{lem:split}.(3) and applying Lemma \ref{lem:subform} we obtain
\begin{align*}
|G  & : \Gamma| \cdot \rh_G(X \times (G / \Delta), \mu \times u_{G / \Delta} \given \cF \vee \Borel(G / \Delta))\\
 & = \rh_\Gamma(X \times (G / \Delta), \mu \times u_{G / \Delta} \given \cF \vee \Borel(G / \Delta)).
\end{align*}
From the definition of relative Rokhlin entropy we see that the last expression is at most $\rh_\Gamma(X, \mu \given \cF)$.

Now suppose that $\Gamma = \Delta$. By the previous paragraph, it suffices to show that
$$\rh_\Delta(X \times (G / \Delta), \mu \times u_{G / \Delta} \given \cF \vee \Borel(G / \Delta)) = \rh_\Delta(X, \mu \given \cF).$$
For each $g \Delta \in G / \Delta$, the actions $\Delta \acts (X, \mu)$ and $\Delta \acts (X \times (G / \Delta), \mu \times \delta_{g \Delta})$ are isomorphic. Since Rokhlin entropy is an affine function on the space of invariant measures \cite{AS} and $\sinv_\Delta \supseteq \Borel(G / \Delta)$, we have
\begin{equation*}
\rh_\Delta(X, \mu \given \cF) = \rh_\Delta(X \times (G / \Delta), \mu \times u_{G / \Delta} \given \cF) = \rh_\Delta(X \times (G / \Delta), \mu \times u_{G / \Delta} \given \cF \vee \Borel(G / \Delta)).
\end{equation*}
\end{proof}

We can now begin to see a connection between the entropy of restricted actions of subgroups and preservation of Rokhlin entropy under direct products. Recall that an action $G \acts (X, \mu)$ is \emph{finite} if there is a normal finite-index subgroup $\Delta \leq G$ such that $\Delta$ fixes every point in $X$. More generally, an action is called \emph{finitely modular} if it is an inverse limit of finite actions.

\begin{lem} \label{lem:ergfin}
Let $G \acts (X, \mu)$ be an aperiodic {\pmp} action, and let $\cF$ be a $G$-invariant sub-$\sigma$-algebra. For every finitely modular action $G \acts (Y, \nu)$ we have
\begin{align*}
\rh_G(X \times Y, \mu \times \nu \given \cF \vee \Borel(Y)) & \geq \inf_{\substack{\Delta \lhd G\\|G : \Delta| < \infty}} \rh_G(X \times (G / \Delta), \mu \times u_{G / \Delta} \given \cF \vee \Borel(G / \Delta))\\
 & = \inf_{\substack{\Delta \lhd G\\|G : \Delta| < \infty}} \frac{1}{|G : \Delta|} \cdot \rh_\Delta(X, \mu \given \cF).
\end{align*}
\end{lem}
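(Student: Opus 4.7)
The equality of the two right-hand expressions is immediate from Corollary~\ref{cor:lower} applied with $\Gamma = \Delta$ normal, which gives $\rh_\Delta(X, \mu \mid \cF) = |G : \Delta| \cdot \rh_G(X \times G/\Delta, \mu \times u_{G/\Delta} \mid \cF \vee \Borel(G/\Delta))$ for every finite-index normal $\Delta$. So I focus on the inequality.

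First suppose $G \acts (Y, \nu)$ is a finite action: let $\Delta_0 \lhd G$ be of finite index and act trivially on $Y$. Decompose $Y = \bigsqcup_i Y_i$ into its finitely many $G$-orbits, with each normalized orbit isomorphic to $(G/\Gamma_i, u_{G/\Gamma_i})$ and $\Gamma_i \supseteq \Delta_0$. Since $\{Y_i\} \subseteq \sinv_G^{X \times Y}$, splitting a generating partition of $X \times Y$ along the orbit decomposition (and using that the resulting conditional Shannon entropies add weighted by the $\nu(Y_i)$) yields
\begin{equation*}
\rh_G(X \times Y, \mu \times \nu \mid \cF \vee \Borel(Y)) = \sum_i \nu(Y_i) \cdot \rh_G(X \times G/\Gamma_i, \mu \times u_{G/\Gamma_i} \mid \cF \vee \Borel(G/\Gamma_i)).
\end{equation*}

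Now set $\Delta = \bigcap_i \mathrm{core}_G(\Gamma_i) \lhd G$, a finite-index normal subgroup contained in every $\Gamma_i$. The canonical $G$-equivariant quotient $G/\Delta \to G/\Gamma_i$ gives a factor map of $(X \times G/\Delta, \mu \times u_{G/\Delta})$ onto $(X \times G/\Gamma_i, \mu \times u_{G/\Gamma_i})$. Lifting a near-optimal generator of $X \times G/\Gamma_i$ along this finite-fiber factor (the fibers are resolved by $\Borel(G/\Delta)$ in the conditioning, so the pulled-back partition still generates, and the elementary fiber computation for Shannon entropy shows that the conditional entropy does not increase), one obtains
\begin{equation*}
\rh_G(X \times G/\Delta, \mu \times u_{G/\Delta} \mid \cF \vee \Borel(G/\Delta)) \leq \rh_G(X \times G/\Gamma_i, \mu \times u_{G/\Gamma_i} \mid \cF \vee \Borel(G/\Gamma_i))
\end{equation*}
for every $i$. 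Combining with the previous display and $\sum_i \nu(Y_i) = 1$ gives the inequality in the finite case.

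For a general finitely modular $(Y, \nu) = \varprojlim_n (Y_n, \nu_n)$, I would fix an $\epsilon$-optimal generator $\alpha$ of $X \times Y$ over $\cF \vee \Borel(Y) \vee \sinv_G$ and, using $\bigvee_n \Borel(Y_n) = \Borel(Y)$ together with Lemma~\ref{lem:shan}.(vi), pick $n$ so large that $\sH(\alpha \mid \cF \vee \Borel(Y_n) \vee \sinv_G^{X \times Y}) < \sH(\alpha \mid \cF \vee \Borel(Y) \vee \sinv_G^{X \times Y}) + \epsilon$; then approximate $\alpha$ in the Rokhlin metric by a partition measurable in $\Borel(X) \vee \Borel(Y_n)$, viewed as a partition of $X \times Y_n$, and invoke the finite case on $Y_n$. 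The main obstacle I anticipate is precisely this inverse-limit step: applying Theorem~\ref{thm:ks} directly with $\cF' = \cF \vee \Borel(Y)$ is tautological because $\Borel(Y_m) \subseteq \Borel(Y)$, so one must instead carefully reconcile the growing algebras $\sinv_G^{X \times Y_n}$ with the growing $\Borel(Y_n)$ and verify that the approximating partition genuinely generates $\Borel(X \times Y_n)$ modulo $\cF \vee \Borel(Y_n) \vee \sinv_G^{X \times Y_n}$ while keeping its conditional entropy within $\epsilon$ of the target.
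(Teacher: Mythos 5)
Your handling of the equality (via Corollary \ref{cor:lower}) and of the finite case is essentially correct and close to the paper's argument: the paper likewise reduces, by affineness of Rokhlin entropy on the space of invariant measures, to a transitive finite action $Y \cong G/\Gamma$, and then compares with $G/\Delta$ for a finite-index normal $\Delta \leq \Gamma$. The only difference is that where you lift a near-optimal generator by hand along $X \times G/\Delta \rightarrow X \times G/\Gamma_i$ (a valid computation: the pullback still generates modulo $\cF \vee \Borel(G/\Delta) \vee \sinv_G$, and refining the conditioning algebra only decreases conditional Shannon entropy), the paper simply notes that this factor map makes $(X \times G/\Delta, \mu \times u_{G/\Delta})$ weakly contain $(X \times G/\Gamma_i, \mu \times u_{G/\Gamma_i})$ as joinings with $(X,\mu)$ and cites Corollary \ref{cor:wcjoin}.(4).

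The inverse-limit step, however, is a genuine gap, and the route you sketch would fail. If $\alpha$ is an $\epsilon$-optimal partition with $\salg_G(\alpha) \vee \cF \vee \Borel(Y) \vee \sinv_G = \Borel(X \times Y)$ and $\beta \subseteq \Borel(X) \vee \Borel(Y_n)$ is close to $\alpha$ in the Rokhlin metric, there is no reason for $\salg_G(\beta) \vee \cF \vee \Borel(Y_n) \vee \sinv_G$ to be all of $\Borel(X \times Y_n)$: the generating property is not stable under small $\dR$-perturbations, and this is precisely the failure mode that makes Rokhlin entropy badly behaved under weak approximation. Note also that no soft monotonicity is available, since $\Borel(Y) \supseteq \Borel(Y_n)$ means the left-hand quantity conditions on \emph{more}. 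The paper's resolution is Corollary \ref{cor:upjoin}: because $G \acts (X,\mu)$ is aperiodic, the map $(b,\lambda) \mapsto \rh_{a \times b}(X \times Y, \lambda \given \cF \vee \Borel(Y))$ is upper-semicontinuous on $\sjoin(a)$, and $\mu \times \nu_n \rightarrow \mu \times \nu$ in that topology because $\Borel(Y) = \bigvee_n \Borel(Y_n)$; hence $\rh_G(X \times Y, \mu \times \nu \given \cF \vee \Borel(Y)) \geq \limsup_n \rh_G(X \times Y_n, \mu \times \nu_n \given \cF \vee \Borel(Y_n))$, which reduces everything to the finite case. That upper-semicontinuity is not obtained by perturbing generators; it rests on the finitary characterization of Rokhlin entropy for aperiodic actions from \cite{AS} used in Lemma \ref{lem:upjoin} (and this is where the aperiodicity hypothesis actually enters). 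To close your argument you should either invoke Corollary \ref{cor:upjoin} or reproduce that mechanism.
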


\begin{proof}
The equality of the last two terms is immediate from Corollary \ref{cor:lower}. Fix a finitely modular action $G \acts (Y, \nu)$. Express $G \acts (Y, \nu)$ as the inverse limit of finite actions $G \acts (Y_n, \nu_n)$. It is an easy consequence of the definitions that $G \acts (X \times Y_n, \mu \times \nu_n)$ converges to $G \acts (X \times Y, \mu \times \nu)$ in the topology on the space of joinings with $G \acts (X, \mu)$. So Corollary \ref{cor:upjoin} gives
$$\rh_G(X \times Y, \mu \times \nu \given \cF \vee \Borel(Y)) \geq \limsup_{n \rightarrow \infty} \rh_G(X \times Y_n, \mu \times \nu_n \given \cF \vee \Borel(Y_n)).$$
So we may assume that $G \acts (Y, \nu)$ is a finite action. Moreover, since Rokhlin entropy is an affine function on the space of $G$-invariant probability measures we may assume that $\nu$ is ergodic and $G$ acts transitively on $Y$. Set $\Delta = \bigcap_{y \in Y} \Stab(y)$ and note that $\Delta$ is a normal finite-index subgroup of $G$. By picking any $y_0 \in Y$, sending $\Delta$ to $y_0$, and extending equivariantly, we see that $G \acts (G / \Delta, u_{G / \Delta})$ factors onto $G \acts (Y, \nu)$. Therefore $G \acts (X \times (G / \Delta), \mu \times u_{G / \Delta})$ weakly contains $G \acts (X \times Y, \mu \times \nu)$ as joinings with $G \acts (X, \mu)$. So by Corollary \ref{cor:wcjoin}.(4)
\begin{equation*}
\rh_G(X \times (G / \Delta), \mu \times u_{G / \Delta} \given \cF \vee \Borel(G / \Delta)) \leq \rh_G(X \times Y, \mu \times \nu \given \cF \vee \Borel(Y)).\qedhere
\end{equation*}
\end{proof}

We now present the two main theorems of this section.

\begin{thm} \label{thm:subprod}
Let $G \acts (X, \mu)$ be an aperiodic {\pmp} action, let $\cF$ be a $G$-invariant sub-$\sigma$-algebra, and assume $\rh_G(X, \mu \given \cF) < \infty$. The following are equivalent.
\begin{enumerate}
\item[\rm (1)] $\rh_G(X, \mu \given \cF) = \rh_G(X \times Y, \mu \times \nu \given \cF \vee \Borel(Y))$ for every finitely modular action $G \acts (Y, \nu)$.
\item[\rm (2)] $\rh_\Gamma(X, \mu \given \cF) = |G : \Gamma| \cdot \rh_G(X, \mu \given \cF)$ for every finite-index subgroup $\Gamma \leq G$.
\end{enumerate}
\end{thm}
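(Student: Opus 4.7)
The plan is to extract both implications directly from the preceding machinery: Corollary \ref{cor:lower}, Lemma \ref{lem:simple}, and Lemma \ref{lem:ergfin} essentially package the arguments, with the normal core of $\Gamma$ providing the bridge between arbitrary finite-index subgroups and the normal finite-index subgroups that appear in those results.

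For (1) $\Rightarrow$ (2), fix a finite-index subgroup $\Gamma \leq G$, and let $\Delta = \bigcap_{g \in G} g \Gamma g^{-1}$ be its normal core, so that $\Delta \lhd G$ is of finite index and $\Delta \leq \Gamma$. Apply Corollary \ref{cor:lower} in the normal case ($\Gamma = \Delta$) to obtain
$$\rh_\Delta(X, \mu \given \cF) = |G : \Delta| \cdot \rh_G(X \times (G / \Delta), \mu \times u_{G / \Delta} \given \cF \vee \Borel(G / \Delta)).$$
Since $G \acts (G / \Delta, u_{G / \Delta})$ is a finite action (hence finitely modular), hypothesis (1) identifies the right-hand side with $|G : \Delta| \cdot \rh_G(X, \mu \given \cF)$. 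Thus $\rh_\Delta(X, \mu \given \cF) = |G : \Delta| \cdot \rh_G(X, \mu \given \cF)$. Now apply Lemma \ref{lem:simple} to the pair $\Delta \leq \Gamma$ (with $\Gamma$ playing the role of the ambient group) to get $\rh_\Delta(X, \mu \given \cF) \leq |\Gamma : \Delta| \cdot \rh_\Gamma(X, \mu \given \cF)$. Dividing through by $|\Gamma : \Delta|$ yields $|G : \Gamma| \cdot \rh_G(X, \mu \given \cF) \leq \rh_\Gamma(X, \mu \given \cF)$, and the reverse inequality is exactly Lemma \ref{lem:simple} for $\Gamma \leq G$.

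For (2) $\Rightarrow$ (1), let $G \acts (Y, \nu)$ be any finitely modular action. The bound $\rh_G(X \times Y, \mu \times \nu \given \cF \vee \Borel(Y)) \leq \rh_G(X, \mu \given \cF)$ is immediate from the definition of relative Rokhlin entropy (any partition witnessing $\rh_G(X, \mu \given \cF)$ also witnesses the left-hand side). For the reverse inequality, invoke Lemma \ref{lem:ergfin}:
$$\rh_G(X \times Y, \mu \times \nu \given \cF \vee \Borel(Y)) \geq \inf_{\substack{\Delta \lhd G \\ |G : \Delta| < \infty}} \frac{1}{|G : \Delta|} \cdot \rh_\Delta(X, \mu \given \cF).$$
Hypothesis (2), applied to each normal finite-index $\Delta$, replaces every term in the infimum with $\rh_G(X, \mu \given \cF)$, finishing the proof.

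There is no real obstacle here: the work has been front-loaded into Corollary \ref{cor:lower} (which needed the weak-containment machinery and the subgroup-splitting of Lemma \ref{lem:subform}) and Lemma \ref{lem:ergfin} (which used upper-semicontinuity on the joining space from Corollary \ref{cor:upjoin} to reduce finitely modular actions to finite quotients). The only mildly delicate point is that the direction (1) $\Rightarrow$ (2) handles non-normal $\Gamma$ by descending to the normal core and then using Lemma \ref{lem:simple} in reverse; the hypothesis $\rh_G(X, \mu \given \cF) < \infty$ ensures that all quantities involved are finite so that the subtraction/division of inequalities is legitimate.
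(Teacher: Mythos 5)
Your proof is correct and follows essentially the same route as the paper, whose proof is just a two-line pointer to Lemma \ref{lem:simple}, Corollary \ref{cor:lower}, and Lemma \ref{lem:ergfin}; you have simply filled in the details. The only cosmetic difference is in (1) $\Rightarrow$ (2): the paper would invoke the general-$\Gamma$ inequality of Corollary \ref{cor:lower} directly, whereas you use its equality case for the normal core $\Delta$ and then apply Lemma \ref{lem:simple} to $\Delta \leq \Gamma$ — both are valid and use the same ingredients.
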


\begin{proof}
The implication (1) $\Rightarrow$ (2) is immediate from Lemma \ref{lem:simple} and Corollary \ref{cor:lower}. For the implication (2) $\Rightarrow$ (1), one inequality is immediate from definitions and the other follows directly from Lemma \ref{lem:ergfin}.
\end{proof}

\begin{thm} \label{thm:sofic}
Let $G$ be a sofic group with sofic approximation $\Sigma$, and let $G \acts (X, \mu)$ be an aperiodic {\pmp} action. Assume that for all finite-index normal subgroups $\Delta \lhd G$ we have $h_G^\Sigma(G / \Delta, u_{G / \Delta}) \neq - \infty$. Then
\begin{enumerate}
\item[\rm (1)] for every finitely modular action $G \acts (Y, \nu)$
$$h_G^\Sigma(X, \mu) \leq \rh_G(X \times Y, \mu \times \nu \given \Borel(Y)) \leq \rh_G(X, \mu)$$
\item[\rm (2)] for every finite-index subgroup $\Gamma \leq G$
$$|G : \Gamma| \cdot h_G^\Sigma(X, \mu) \leq \rh_\Gamma(X, \mu) \leq |G : \Gamma| \cdot \rh_G(X, \mu)$$
\end{enumerate}
\end{thm}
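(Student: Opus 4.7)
The plan is to split the theorem into easy and hard halves. The right-hand inequalities require no work: in (1), any partition $\alpha$ of $X$ with $\salg_G(\alpha) \vee \sinv_G = \Borel(X)$, viewed as a partition of $X \times Y$, satisfies $\salg_G(\alpha) \vee \Borel(Y) \vee \sinv_G = \Borel(X \times Y)$, and its Shannon entropy only drops upon further conditioning by $\Borel(Y)$; the right-hand inequality of (2) is exactly Lemma \ref{lem:simple}. So the substantive content is the two left-hand inequalities, and I would prove (2) first (in fact only for normal subgroups) and then deduce (1).

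To prove (2) I would first reduce to the normal case. Given a finite-index $\Gamma \leq G$, let $\Delta = \bigcap_{g \in G} g \Gamma g^{-1}$ be its normal core, a finite-index normal subgroup contained in $\Gamma$. Granted the normal-case inequality $|G : \Delta| \cdot h_G^\Sigma(X, \mu) \leq \rh_\Delta(X, \mu)$, Lemma \ref{lem:simple} applied to the pair $\Delta \leq \Gamma$ gives $\rh_\Delta(X, \mu) \leq |\Gamma : \Delta| \cdot \rh_\Gamma(X, \mu)$, and the index identity $|G : \Delta| = |G : \Gamma| \cdot |\Gamma : \Delta|$ yields (2) for $\Gamma$. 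For the normal case itself I would invoke Bowen's subgroup formula for sofic entropy: the hypothesis $h_G^\Sigma(G/\Delta, u_{G/\Delta}) \neq -\infty$ guarantees that, for large $n$, the maps $\sigma_n : G \to \Sym(d_n)$ admit good approximate factor maps onto $G/\Delta$, which in turn allow one to partition $\{0, \ldots, d_n - 1\}$ into $|G : \Delta|$-many approximately $\Delta$-invariant blocks of size roughly $d_n / |G : \Delta|$. Restricting the $\sigma_n$ to these blocks produces a sofic approximation $\Sigma|_\Delta$ of $\Delta$ satisfying
$$h_\Delta^{\Sigma|_\Delta}(X, \mu) = |G : \Delta| \cdot h_G^\Sigma(X, \mu).$$
Combining this with the Bowen--Kerr--Li sofic-versus-Rokhlin inequality $h_\Delta^{\Sigma|_\Delta}(X, \mu) \leq \rh_\Delta(X, \mu)$ finishes the normal case.

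Part (1) then follows immediately from (2) via Lemma \ref{lem:ergfin} (applied with $\cF = \{\varnothing, X\}$), which gives
$$\rh_G(X \times Y, \mu \times \nu \given \Borel(Y)) \geq \inf_{\Delta \lhd G,\ |G : \Delta| < \infty} \frac{1}{|G : \Delta|} \cdot \rh_\Delta(X, \mu),$$
and (2) bounds each term on the right below by $h_G^\Sigma(X, \mu)$. The main obstacle in the plan is the invocation of Bowen's sofic subgroup formula: one must verify at the microstate level that, under the hypothesis on $h_G^\Sigma(G/\Delta, u_{G/\Delta})$, the approximate $\Delta$-invariant blocks genuinely exist, that restricting $G$-microstates for $(X, \mu)$ to a single block produces $\Delta$-microstates, and that the resulting count gives exactly the exponential factor $|G : \Delta|$. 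This multiplicativity is precisely what the hypothesis is designed to secure---without it the block structure collapses and the exponent can drop to $-\infty$---and it is the only ingredient not already developed in this paper.
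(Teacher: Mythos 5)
Your overall architecture --- easy right-hand inequalities, prove (2) for normal subgroups, pass to a general $\Gamma$ through its normal core, and deduce (1) from Lemma \ref{lem:ergfin} --- is exactly the skeleton of the paper's proof. The gap is in the normal case: you invoke a ``subgroup formula for sofic entropy'' $h_\Delta^{\Sigma|_\Delta}(X,\mu) = |G:\Delta|\cdot h_G^\Sigma(X,\mu)$ as a known theorem, but no such theorem is available; the introduction of this very paper records the subgroup formula as open for sofic entropy (as well as for Rokhlin entropy), and the point of Theorem \ref{thm:sofic} is to get around that. Moreover the microstate argument you sketch breaks precisely at the step ``restricting $G$-microstates for $(X,\mu)$ to a single block produces $\Delta$-microstates'': approximate equivariance does make the restriction approximately $\Delta$-equivariant and approximately injective on $\epsilon$-separated families, but the empirical distribution of a $G$-microstate over a single $\Delta$-block need not be close to $\mu$ --- it can be close to some other $\Delta$-invariant measure whose average over the $|G:\Delta|$ blocks is $\mu$. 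So the restrictions land in the microstate spaces $\Map(\cdot)$ of the wrong measures and cannot be used to lower-bound the separated-set counts for $(X,\mu)$ itself.

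The paper's proof is engineered to absorb exactly this defect. Instead of restricting microstates, it pairs the $G$-microstates for $(X,\mu)$ with the microstates $\psi_n$ for $G/\Delta$ (which exist by the hypothesis $h_G^\Sigma(G/\Delta, u_{G/\Delta}) \neq -\infty$) and runs a compactness argument in the space of joinings, as in the proof of the variational principle, to produce a joining $\lambda$ of $\mu$ with $u_{G/\Delta}$ satisfying $h_G^\Sigma(X\times (G/\Delta), \lambda) \geq h_G^\Sigma(X,\mu)$. It then passes to Rokhlin entropy via the sofic-$\leq$-Rokhlin inequality, writes $\lambda = \frac{1}{|G:\Delta|}\sum_{g\Delta} \lambda_{g\Delta}\times\delta_{g\Delta}$, and uses affineness of Rokhlin entropy on the space of $\Delta$-invariant measures together with the Rokhlin-entropy subgroup formula for actions factoring onto $G/\Delta$ (Lemma \ref{lem:subform}) to conclude $|G:\Delta|^{-1}\cdot\rh_\Delta(X,\mu) = \rh_G(X\times(G/\Delta),\lambda) \geq h_G^\Sigma(X,\mu)$. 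The possibly unequal fiber measures $\lambda_{g\Delta}$ are exactly the objects your block restrictions would produce, and it is affineness of Rokhlin entropy (not any additivity of sofic entropy) that lets one average them back to $\mu$. Unless you can actually prove the inequality $h_\Delta^{\Sigma|_\Delta}(X,\mu)\geq |G:\Delta|\cdot h_G^\Sigma(X,\mu)$ --- which would be a new result, not a citation --- your plan does not close.
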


\begin{proof}
Say $\Sigma = (\sigma_n : G \rightarrow \Sym(d_n))_{n \in \N}$. Fix a finite-index normal subgroup $\Delta \lhd G$. We first claim that there is a joining $\lambda$ of $\mu$ and $u_{G / \Delta}$ with $h_G^\Sigma(X \times (G / \Delta), \lambda) \geq h_G^\Sigma(X, \mu)$ (one can show that any joining $\lambda$ must satisfy the reverse inequality, but we won't need this). Our argument is inspired by the proof of the variational principle \cite{KL11a}. Without loss of generality, we may assume that $X$ is a compact metric space and that $G$ acts continuously on $X$. Let $\rho^X$ be a metric on $X$ giving the topology, let $\rho^{G / \Delta}$ be the discrete metric on $G / \Delta$, and let $\rho$ be the metric on $X \times G / \Delta$ given by $\rho((x, g \Delta), (y, h \Delta)) = \rho^X(x, y) + \rho^{G / \Delta}(g \Delta, h \Delta)$. Our assumption that $h_G^\Sigma(G / \Delta, u_{G / \Delta}) \neq - \infty$ implies there is a sequence $\psi_n : \{0, \ldots, d_n-1\} \rightarrow G / \Delta$ such that for every finite $T \subseteq G$, for every open neighborhood $U$ of $u_{G / \Delta}$, and every $\delta > 0$ we have $\psi_n \in \Map(\rho^{G / \Delta}, T, U, \delta, \sigma_n)$ for all sufficiently large $n \in \N$.

Write $J$ for the set of all joinings $\lambda$ of $\mu$ and $u_{G / \Delta}$. The set $J$ is compact in the weak$^*$-topology. Fix $\epsilon > 0$, and let us first show that there is $\lambda \in J$ with $h_G^{\Sigma, \epsilon}(X \times (G / \Delta), \lambda) \geq h_G^{\Sigma, \epsilon}(X, \mu)$. Towards a contradiction, suppose not. By compactness of $J$ and monotonicity properties of the sets $\Map(\cdot)$, there is a finite collection $\mathcal{U} = \{U_i : i \in I\}$ of open subsets of $\Prob(X \times (G / \Delta))$ which cover $J$, a finite set $T \subseteq G$, and $\delta > 0$ such that
$$\forall i \in I \quad \limsup_{n \rightarrow \infty} \frac{1}{d_n} \cdot \log N_\epsilon(\Map(\rho, T, U_i, \delta, \sigma_n), \rho_2) < h_G^{\Sigma, \epsilon}(X, \mu).$$
However, if we let $V$ be an open neighborhood of $\mu$ satisfying $V \times u_{G / \Delta} \subseteq \bigcup_{i \in I} U_i$ then for sufficiently large $n \in \N$ we have
$$\Map(\rho^X, T, V, \delta / 2, \sigma_n) \times \psi_n \subseteq \bigcup_{i \in I} \Map(\rho, T, U_i, \delta, \sigma_n).$$
Therefore for sufficiently large $n \in \N$
$$N_\epsilon(\Map(\rho^X, T, V, \delta / 2, \sigma_n) \times \psi_n, \rho_2^X) \leq \sum_{i \in I} N_\epsilon(\Map(\rho, T, U_i, \delta, \sigma_n), \rho_2).$$
This is a contradiction since
$$\limsup_{n \rightarrow \infty} \frac{1}{d_n} \cdot \log N_\epsilon(\Map(\rho^X, T, V, \delta / 2, \sigma_n) \times \psi_n, \rho_2^X) \geq h_G^{\Sigma, \epsilon}(X, \mu).$$
We conclude that the set
$$J_\epsilon = \{\lambda \in J : h_G^{\Sigma, \epsilon}(X \times (G / \Delta), \lambda) \geq h_G^{\Sigma, \epsilon}(X, \mu)\}$$
is non-empty. It quickly follows from the definitions that $J_\epsilon$ is also closed. Therefore $J_0 = \bigcap_{n \in \N} J_{1/n}$ is non-empty by compactness, and if $\lambda \in J_0$ then $h_G^\Sigma(X \times (G / \Delta), \lambda) \geq h_G^\Sigma(X, \mu)$. This completes the claim.

Since sofic entropy is a lower bound to Rokhlin entropy \cite{AS,B10b}, the above claim gives a joining $\lambda$ such that $\rh_G(X \times (G / \Delta), \lambda) \geq h_G^\Sigma(X, \mu)$. Writing $\lambda = \frac{1}{|G : \Delta|} \cdot \sum_{g \Delta \in G / \Delta} \lambda_{g \Delta} \times \delta_{g \Delta}$, we have that $\mu$ is the average of the $\Delta$-invariant measures $\lambda_{g \Delta}$. Since Rokhlin entropy is an affine function on the space of $\Delta$-invariant probability measures \cite{AS} and since $\Delta \acts (X, \lambda_{g \Delta})$ is isomorphic to $\Delta \acts (X \times (G / \Delta), \lambda_{g \Delta} \times \delta_{G \Delta})$ we have that
\begin{align*}
\rh_\Delta(X, \mu) & = \frac{1}{|G : \Delta|} \cdot \sum_{g \Delta \in G / \Delta} \rh_\Delta(X, \lambda_{g \Delta})\\
 & = \frac{1}{|G : \Delta|} \cdot \sum_{g \Delta \in G / \Delta} \rh_\Delta(X \times (G / \Delta), \lambda_{g \Delta} \times \delta_{g \Delta})\\
 & = \rh_\Delta(X \times (G / \Delta), \lambda).
\end{align*}
By applying Lemma \ref{lem:subform} we obtain $\rh_G(X \times (G / \Delta), \lambda) = |G : \Delta|^{-1} \cdot \rh_\Delta(X, \mu)$. By repeating this argument with $\mu \times u_{G / \Delta}$ in place of $\lambda$, we obtain
$$\rh_G(X \times (G / \Delta), \mu \times u_{G / \Delta}) = |G : \Delta|^{-1} \cdot \rh_\Delta(X, \mu) = \rh_G(X \times (G / \Delta), \lambda) \geq h_G^\Sigma(X, \mu).$$
Since $|G : \Delta| < \infty$, $\Delta$ must act aperiodically on $X$ and thus $\rh_{G, \mu \times u_{G / \Delta}}(\Borel(G / \Delta)) \leq \rh_{\Delta, \mu \times u_{G / \Delta}}(\Borel(G / \Delta)) = 0$ by \cite[Lem. 9.5]{AS} (the final equality uses $\Borel(G / \Delta) \subseteq \sinv_\Delta$). So sub-additivity of Rokhlin entropy implies that conditioning on $\Borel(G / \Delta)$ has no effect on Rokhlin entropy and thus
$$\rh_G(X \times (G / \Delta), \mu \times u_{G / \Delta} \given \Borel(G / \Delta)) = \rh_G(X \times (G / \Delta), \mu \times u_{G / \Delta}) \geq h_G^\Sigma(X, \mu).$$
This holds for every finite-index normal subgroup $\Delta \lhd G$, so applying Corollary \ref{cor:lower} and Lemma \ref{lem:ergfin} completes the proof.
\end{proof}

Recall that a countable residually finite group $G$ is said to have \emph{property MD} if the finitely modular actions are dense in the space of {\pmp} actions of $G$ with the weak topology \cite{K12}. Equivalently, $G$ has property MD if there is a finitely modular action which weakly contains all other {\pmp} actions of $G$ \cite[Prop. 4.8]{K12}. It is known that all residually finite amenable groups, all free groups, all free products of finite groups, all surface groups, and all fundamental groups of closed hyperbolic $3$-manifolds have property MD \cite{K12,BTD} (the last example relies upon Agol's virtual fibering theorem \cite{Ag13}). Also, property MD is preserved under passage to subgroups and extensions by residually finite amenable groups \cite{K12,BTD}. It is known that $SL_n(\Z)$ has property MD precisely when $n = 2$ ($SL_n(\Z)$ does not have property FD for $n > 2$ \cite{LS04} and thus does not have property MD \cite{K12}).

The main feature of the following corollary is that it discusses product actions $G \acts (X \times Y, \mu \times \nu)$ where $G \acts (Y, \nu)$ varies over all {\pmp} actions, rather than only the finitely modular actions.

\begin{cor}
Let $G$ be a residually finite group with property MD, let $G \acts (X, \mu)$ be an aperiodic {\pmp} action, and let $\cF$ be a $G$-invariant sub-$\sigma$-algebra with $\rh_G(X, \mu \given \cF) < \infty$. The following are equivalent.
\begin{enumerate}
\item[\rm (1)] $\rh_G(X, \mu \given \cF) = \rh_G(X \times Y, \mu \times \nu \given \cF \vee \Borel(Y))$ for all {\pmp} actions $G \acts (Y, \nu)$.
\item[\rm (2)] $\rh_\Gamma(X, \mu \given \cF) = |G : \Gamma| \cdot \rh_G(X, \mu \given \cF)$ for every finite-index subgroup $\Gamma \leq G$.
\end{enumerate}
Furthermore, if $\Sigma$ is a sofic approximation to $G$ with $h_G^\Sigma(G / \Delta, u_{G / \Delta}) \neq - \infty$ for every finite-index normal subgroup $\Delta \lhd G$ and $h_G^\Sigma(X, \mu) = \rh_G(X, \mu) < \infty$, then (1) and (2) hold with $\cF = \{\varnothing, X\}$.
\end{cor}

\begin{proof}
(1) implies Theorem \ref{thm:subprod}.(1) which implies (2). Now assume (2). Let $G \acts (Z, \eta)$ be a finitely modular action which weakly contains all other {\pmp} actions of $G$, and let $G \acts (Y, \nu)$ be any {\pmp} action of $G$. Then $G \acts (X \times Z, \mu \times \eta)$ weakly contains $G \acts (X \times Y, \mu \times \nu)$ as joinings with $G \acts (X, \mu)$. So Theorem \ref{thm:subprod} and Corollary \ref{cor:wcjoin} imply
$$\rh_G(X, \mu \given \cF) = \rh_G(X \times Z, \mu \times \eta \given \cF \vee \Borel(Z)) \leq \rh_G(X \times Y, \mu \times \nu \given \cF \vee \Borel(Y)) \leq \rh_G(X, \mu \given \cF).$$
This proves (1). The final statement follows from Theorem \ref{thm:sofic}.
\end{proof}

\section{Pinsker algebras} \label{sec:pinsker}

In this section we investigate whether the outer Pinsker algebra of a direct product is the join of the outer Pinsker algebras of the factors. For an action $G \acts (X, \mu)$ and a sub-$\sigma$-algebra $\cF$, we write $\Pi(\mu \given \cF)$ for the outer Rokhlin Pinsker algebra of $X$ relative to $\cF$
$$\Pi(\mu \given \cF) = \{A \subseteq X : \rh_{G,\mu}(\{A, X \setminus A\} \given \cF) = 0\}.$$
It follows from the countable sub-additivity of Rokhlin entropy that $\Pi(\mu \given \cF)$ is the largest $G$-invariant sub-$\sigma$-algebra which contains $\cF$ and satisfies $\rh_{G,\mu}(\Pi(\mu \given \cF) \given \cF) = 0$.

\begin{lem} \label{lem:mg}
Let $G \acts (X, \mu)$ be an aperiodic {\pmp} action and let $\cF$ be a $G$-invariant sub-$\sigma$-algebra. Then for every $\epsilon > 0$ there is a partition $\xi$ of $X$ with $\sH(\xi) < \epsilon$ and $\Pi(\mu \given \cF) \subseteq \salg_G(\xi) \vee \cF$.
\end{lem}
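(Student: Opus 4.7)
The plan combines three ingredients: the definition of the outer Pinsker algebra, Theorem \ref{thm:robin} (Seward--Tucker-Drob), and a Rokhlin-tower style coding that exploits aperiodicity.

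First, since $\rh_{G,\mu}(\Pi(\mu \given \cF) \given \cF) = 0$, for any $\delta > 0$ there is a countable partition $\alpha$ with $\sH(\alpha \given \cF \vee \sinv_G) < \delta$ and $\Pi(\mu \given \cF) \subseteq \salg_G(\alpha) \vee \cF \vee \sinv_G$. Setting $\delta = \epsilon / 3$ produces a partition of small \emph{conditional} Shannon entropy, but two issues remain: the containment still uses $\sinv_G$ rather than just $\cF$, and the unconditional entropy $\sH(\alpha)$ may be arbitrarily large.

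Second, I would apply Theorem \ref{thm:robin} to obtain a factor $f : (X,\mu) \to (Y,\nu)$ with $\rh_G(Y,\nu) < \epsilon / 3$ and $\Stab(f(x)) = \Stab(x)$ for every $x$. Choose a partition $\eta \subseteq f^{-1}(\Borel(Y))$ realizing this small Rokhlin entropy, so that $\sH(\eta \given \sinv_G) < \epsilon/3$ and $\salg_G(\eta) \vee \sinv_G \supseteq f^{-1}(\Borel(Y))$. Replacing $\alpha$ by $\alpha \vee \eta$, the added factor contributes the $\sinv_G$-content needed so that $\Pi(\mu \given \cF) \subseteq \salg_G(\alpha \vee \eta) \vee \cF$ holds without the $\sinv_G$ term.

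Third, using the aperiodicity of $(X,\mu)$ I would find a Borel set $M \subseteq X$ with $\mu(M)$ small whose $G$-translates cover $\mu$-almost all of $X$; such a set exists because orbits are infinite, yielding a complete Borel section. Define $\xi$ to equal the labels of $\alpha \vee \eta$ on $M$ and a single extra label on $X \setminus M$. After first truncating $\alpha$ on $M$ to a finite partition (possible since $\sH(\alpha \given \cF \vee \sinv_G) < \infty$), the Shannon entropy $\sH(\xi)$ is controlled by $\mu(M)$ and can be made less than $\epsilon$ by taking $\mu(M)$ small enough. The $G$-translates of $\xi$ recover $\alpha \vee \eta$ almost everywhere on $X$, so $\salg_G(\xi) \vee \cF \supseteq \salg_G(\alpha \vee \eta) \vee \cF \supseteq \Pi(\mu \given \cF)$.

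The main obstacle is verifying that $\salg_G(\alpha \vee \eta) \vee \cF$ actually contains the portion of $\sinv_G$ needed to see $\Pi(\mu \given \cF)$, i.e., that the low-entropy factor produced by Theorem \ref{thm:robin} captures the relevant $G$-invariant sub-$\sigma$-algebra. This is where aperiodicity is essential, and it is the most delicate part of the coding; the truncation and complete-section steps are then comparatively routine.
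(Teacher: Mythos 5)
You have correctly identified what needs to be proved, but the paper's own proof is a one-line reduction: by the definition of $\Pi(\mu \given \cF)$ one immediately gets, for every $\epsilon > 0$, a partition $\xi$ with $\sH(\xi \given \cF \vee \sinv_G) < \epsilon$ and $\Pi(\mu \given \cF) \subseteq \salg_G(\xi) \vee \cF \vee \sinv_G$, and the upgrade to unconditional entropy $\sH(\xi) < \epsilon$ together with a containment not involving $\sinv_G$ is then quoted from \cite{AS}. That upgrade is a genuinely hard theorem (a relative Krieger-type generator result for aperiodic actions), and the two devices you propose for reproving it both break down.

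First, the complete-section step fails. If $M$ is a Borel complete section of small measure and $\xi = \{A \cap M : A \in \alpha \vee \eta\} \cup \{X \setminus M\}$, then $\salg_G(\xi)$ determines, for a point $x$, only the $(\alpha \vee \eta)$-classes of those points of $G \cdot x$ lying in $M$; it does not determine the class of $x$ itself when $x \notin M$. Concretely, $g \cdot (A \cap M) = (g \cdot A) \cap (g \cdot M)$ and $g \cdot A$ is not a cell of $\alpha \vee \eta$, so the translates of $\xi$ cannot be reassembled into $A$. Recovering $\alpha \vee \eta$ from a partition of small unconditional Shannon entropy requires an actual coding of $(\alpha \vee \eta)$-names that exploits the smallness of $\sH(\alpha \given \cF \vee \sinv_G)$ --- this is precisely the content of the generator machinery in \cite{AS}, not a routine truncation. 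Second, your treatment of $\sinv_G$ is circular: Theorem \ref{thm:robin} produces a stabilizer-preserving factor of small Rokhlin entropy, but nothing in its statement guarantees $f^{-1}(\Borel(Y)) \supseteq \sinv_G$, and even granting that, a partition $\eta$ witnessing $\rh_G(Y, \nu) < \epsilon/3$ only satisfies $\salg_G(\eta) \vee \sinv_G \supseteq f^{-1}(\Borel(Y))$, so the $\sinv_G$ you are trying to eliminate reappears on the left-hand side. Since $\sinv_G \subseteq \Pi(\mu \given \cF)$ always holds, exhibiting $\sinv_G$ inside $\salg_G(\xi) \vee \cF$ with $\sH(\xi)$ small is itself one of the nontrivial aperiodicity results of \cite{AS}. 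The lemma is true, but your plan does not prove it; the viable route is the paper's, namely invoking \cite{AS}.
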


\begin{proof}
This would be by definition if the conclusion were $\sH(\xi \given \cF \vee \sinv_G) < \epsilon$ and $\Pi(\mu \given \cF) \subseteq \salg_G(\xi) \vee \cF \vee \sinv_G$. The stated claim follows immediately from \cite[Cor. 5.3]{AS} since $\rh_{G,\mu}(\Pi(\mu \given \cF) \given \cF) = 0$.
\end{proof}

The following lemma roughly says that a weak containment of joinings implies an inequality in the size of the relative outer Pinsker algebras. Of course, the two Pinsker algebras being compared reside in different spaces and thus the notion of how one is larger than the other is a bit subtle.

\begin{lem} \label{lem:cpejoin}
Let $G$ be a countable group, let $G \acts^a (X, \mu)$ and $G \acts^{b_i} (Y_i, \nu_i)$, $i = 1, 2$, be {\pmp} actions, and let $\lambda_i$ be a joining of $a$ with $b_i$. Also let $\cF \subseteq \Borel(X)$ be a $G$-invariant sub-$\sigma$-algebra. Assume that $(b_1, \lambda_1)$ weakly contains $(b_2, \lambda_2)$ as joinings with $a$ and that both actions $G \acts^{a \times b_i} (X \times Y_i, \lambda_i)$ are aperiodic. Then for every partition $\cP$ of $X$ with $\sH(\cP) < \infty$
\begin{enumerate}
\item[\rm (1)] $\sH_{\lambda_1}(\cP \given \Pi(\lambda_1 \given \cF)) \leq \sH_{\lambda_2}(\cP \given \Pi(\lambda_2 \given \cF))$, and
\item[\rm (2)] $\sH_{\lambda_1}(\cP \given \Pi(\lambda_1 \given \cF \vee \Borel(Y_1))) \leq \sH_{\lambda_2}(\cP \given \Pi(\lambda_2 \given \cF \vee \Borel(Y_2)))$.
\end{enumerate}
\end{lem}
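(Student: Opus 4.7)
Fix $\epsilon > 0$ and focus on clause (1); clause (2) is obtained by running the same argument with $\cF$ replaced by $\cF \vee \Borel(Y_i)$ throughout. The strategy is to find, on the $\lambda_1$ side, an approximate substitute for a Lemma \ref{lem:mg} generator of $\Pi(\lambda_2 \given \cF)$, and to control the resulting conditional Shannon entropies via weak containment of joinings.

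First I apply Lemma \ref{lem:mg} to the aperiodic action $(X \times Y_2, \lambda_2)$ with invariant sub-$\sigma$-algebra $\cF$ to produce a finite partition $\xi_2$ of $X \times Y_2$ with $\sH_{\lambda_2}(\xi_2) < \epsilon$ and $\Pi(\lambda_2 \given \cF) \subseteq \salg_G(\xi_2) \vee \cF$. By Lemma \ref{lem:shan}.(vi) I may then choose a finite $T \subseteq G$ with
\[
\sH_{\lambda_2}(\cP \given \xi_2^T \vee \cF) \leq \sH_{\lambda_2}(\cP \given \Pi(\lambda_2 \given \cF)) + \epsilon.
\]
Since the Borel product algebra generates $\Borel(X \times Y_2)$, I can approximate $\xi_2$ in $\dB_{\lambda_2}$ by a coarsening of $\alpha \vee \gamma_2$ where $\alpha \subseteq \Borel(X)$ and $\gamma_2 \subseteq \Borel(Y_2)$ are finite, and I can similarly approximate the portion of $\cF$ relevant to the above entropy by a finite partition $\eta \subseteq \cF$. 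Invoking Lemma \ref{lem:iso} together with the weak containment of $(b_1, \lambda_1)$ over $(b_2, \lambda_2)$ as joinings with $a$, I obtain a finite partition $\gamma_1 \subseteq \Borel(Y_1)$ for which the joint distribution of $\cP \vee \eta \vee \alpha^T \vee \gamma_1^T$ under $\lambda_1$ is arbitrarily close in $\ell^1$ to its analogue under $\lambda_2$. Letting $\xi_1 \leq \alpha \vee \gamma_1$ be the coarsening analogous to the one chosen for $\xi_2$, the Rokhlin-metric continuity estimates reviewed in Section \ref{sec:pre} yield $\sH_{\lambda_1}(\xi_1) < 2\epsilon$ together with
\[
\sH_{\lambda_1}(\cP \given \xi_1^T \vee \cF) \leq \sH_{\lambda_2}(\cP \given \Pi(\lambda_2 \given \cF)) + O(\epsilon).
\]

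The hard part will be the final bridging step: passing from $\sH_{\lambda_1}(\cP \given \xi_1^T \vee \cF)$ back to $\sH_{\lambda_1}(\cP \given \Pi(\lambda_1 \given \cF))$. Although $\sH_{\lambda_1}(\xi_1) < 2\epsilon$ forces $\rh_{\lambda_1}(\xi_1 \given \cF) < 2\epsilon$, the partition $\xi_1$ need not lie inside $\Pi(\lambda_1 \given \cF)$, so direct monotonicity of conditional Shannon entropy in the Pinsker direction is not available. My plan here is to apply Lemma \ref{lem:mg} a second time, now on the $(X \times Y_1, \lambda_1)$ side, to extract a small-entropy partition $\eta_1$ with $\Pi(\lambda_1 \given \cF) \subseteq \salg_G(\eta_1) \vee \cF$, and then to join $\xi_1$ with $\eta_1$ via the sub-additivity of Rokhlin entropy (Lemma \ref{lem:add2}) applied to the chain $\cF \subseteq \cF \vee \salg_G(\eta_1) \subseteq \cF \vee \salg_G(\eta_1 \vee \xi_1)$, controlling the extra information via $\sH_{\lambda_1}(\cP \given \Pi(\lambda_1 \given \cF)) \leq \sH_{\lambda_1}(\cP \given \xi_1^T \vee \cF) + \sH_{\lambda_1}(\xi_1^T \given \cF)$. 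A careful book-keeping of parameters, arranging the initial entropy bound on $\xi_2$ (and hence on $\xi_1$) to be small relative to $|T|$, which in turn constrains the initial call to Lemma \ref{lem:mg} to use a target entropy well below $\epsilon$, closes the estimate, and letting $\epsilon \to 0$ yields (1). The scheme for (2) is identical after enlarging $\cF$ by $\Borel(Y_i)$; Lemma \ref{lem:iso} and the rectangle approximation step go through since $\Borel(Y_i)$ is built from the very partitions of $Y_i$ being manipulated.
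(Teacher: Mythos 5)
Your first half --- extracting a small-entropy partition $\xi_2$ witnessing $\Pi(\lambda_2\given\cF)$ via Lemma \ref{lem:mg}, choosing $T$, approximating by a coarsening of a product partition, and transporting everything to the $\lambda_1$ side via weak containment of joinings and the Rokhlin-metric estimates --- is exactly what the paper does. The gap is in the bridging step; you diagnose correctly where the difficulty lies, but the proposed repair does not work, for two reasons. First, the parameter book-keeping is circular: $T$ is chosen \emph{after} $\xi_2$ (it must be large enough that $\sH_{\lambda_2}(\cP\given\xi_2^T\vee\cF)$ approximates $\sH_{\lambda_2}(\cP\given\salg_G(\xi_2)\vee\cF)$), so you cannot arrange $\sH(\xi_2)\ll 1/|T|$ in advance; shrinking $\sH(\xi_2)$ forces a fresh and possibly much larger $T$, with no a priori bound on how $|T|$ grows. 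Consequently the error term $\sH_{\lambda_1}(\xi_1^T\given\cF)\leq|T|\cdot\sH_{\lambda_1}(\xi_1)$ in your final inequality cannot be driven to $0$. Second, the detour through $\eta_1$ and Lemma \ref{lem:add2} buys nothing: sub-additivity only yields $\rh_{G,\lambda_1}(\salg_G(\eta_1\vee\xi_1)\given\cF)<3\epsilon$, which is small but not zero, so $\salg_G(\xi_1)\vee\cF$ is still not contained in $\Pi(\lambda_1\given\cF)$ and no monotonicity of conditional Shannon entropy becomes available; you are forced back to conditioning on $\cF$ alone and paying the $|T|\cdot\sH(\xi_1)$ penalty.

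The paper closes the gap with a tail-intersection argument. Run your construction for every $n$, producing partitions $\beta_1^n$ (your $\xi_1$ for the $n$-th choice of parameters) with $\sH_{\lambda_1}(\beta_1^n)<3/2^n$ and $\sH_{\lambda_1}(\cP\given\salg_G(\beta_1^n)\vee\cF)<\sH_{\lambda_2}(\cP\given\Pi(\lambda_2\given\cF))+3/2^n$, and set
$$\Sigma=\bigcap_{n\in\N}\bigvee_{k\geq n}\salg_G(\beta_1^k)\vee\cF.$$
Countable sub-additivity of Rokhlin entropy gives $\rh_{G,\lambda_1}(\Sigma\given\cF)\leq\inf_n\sum_{k\geq n}\sH(\beta_1^k)=0$, so $\Sigma\subseteq\Pi(\lambda_1\given\cF)$ \emph{exactly}, not merely approximately. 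Hence $\sH_{\lambda_1}(\cP\given\Pi(\lambda_1\given\cF))\leq\sH_{\lambda_1}(\cP\given\Sigma)$, and Lemma \ref{lem:shan}.(vii) applied to the decreasing sequence $\bigvee_{k\geq n}\salg_G(\beta_1^k)\vee\cF$ converts the individual estimates into $\sH_{\lambda_1}(\cP\given\Sigma)\leq\liminf_n\sH_{\lambda_1}(\cP\given\salg_G(\beta_1^n)\vee\cF)\leq\sH_{\lambda_2}(\cP\given\Pi(\lambda_2\given\cF))$. The essential idea your proposal is missing is that the Pinsker algebra absorbs a \emph{tail} of summably small Rokhlin entropy, which is what lets one avoid ever bounding $\sH(\xi_1^T\given\cF)$.
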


\begin{proof}
(2). Set $\Pi_1 = \Pi(\lambda_1 \given \cF \vee \Borel(Y_1))$ and $\Pi_2 = \Pi(\lambda_2 \given \cF \vee \Borel(Y_2))$. Fix a partition $\cP$ of $X$ with $\sH(\cP) < \infty$. Fix $n \in \N$. By Lemma \ref{lem:mg} we may let $\gamma_2$ be a partition of $X \times Y_2$ with $\sH_{\lambda_2}(\gamma_2) < 1 / 2^n$ and with $\Pi_2 \subseteq \salg_{a \times b_2(G)}(\gamma_2) \vee \cF \vee \Borel(Y_2)$. Pick a finite $T \subseteq G$, finite $\xi \subseteq \cF$, and finite $\chi_2 \subseteq \Borel(Y_2)$ with
\begin{equation} \label{eqn:wccpe2}
\sH_{\lambda_2}(\cP \given \gamma_2^{a \times b_2(T)} \vee \xi \vee \chi_2) < \sH_{\lambda_2}(\cP \given \Pi_2) + 1 / 2^n.
\end{equation}
Pick finite labeled partitions $\alpha \subseteq \Borel(X)$ and $\zeta_2 \subseteq \Borel(Y_2)$ and a coarsening $\beta_2^n \leq \alpha \vee \zeta_2$ with $\dR_{\lambda_2}(\beta_2^n, \gamma_2) < 1 / (2^n \cdot 2 |T|)$. Then we have
\begin{equation} \label{eqn:wccpe3}
\sH_{\lambda_2}(\cP \given (\beta_2^n)^{a \times b_2(T)} \vee \xi \vee \chi_2) < \sH_{\lambda_2}(\cP \given \gamma_2^{a \times b_2(T)} \vee \xi \vee \chi_2) + 1 / 2^n
\end{equation}
and also $\sH_{\lambda_2}(\beta_2^n) < \sH_{\lambda_2}(\gamma_2) + 1 / 2^n < 2 / 2^n$.

Fix $\kappa > 0$ to be specified in a moment. Since $(b_1, \lambda_1)$ weakly contains $(b_2, \lambda_2)$ as joinings with $a$, there are labeled partitions $\zeta_1, \chi_1 \subseteq \Borel(Y_1)$ satisfying
$$|\dist_{\lambda_1}(\cP \vee \xi \vee \alpha^{a(T)} \vee \zeta_1^{b_1(T)} \vee \chi_1) - \dist_{\lambda_2}(\cP \vee \xi \vee \alpha^{a(T)} \vee \zeta_2^{b_2(T)} \vee \chi_2)| < \kappa.$$
Using the natural correspondence between the labeled partitions $\zeta_1$ and $\zeta_2$, we can build $\beta_1^n \leq \alpha \vee \zeta_1$ as $\beta_2^n$ is built from $\alpha \vee \zeta_2$. Since $(\beta_1^n)^{a \times b_1(T)} \leq \alpha^{a(T)} \vee \zeta_1^{b_1(T)}$ and $(\beta_2^n)^{a \times b_2(T)} \leq \alpha^{a(T)} \vee \zeta_2^{b_2(T)}$, we see that
$$|\dist_{\lambda_1}(\cP \vee \xi \vee (\beta_1^n)^{a \times b_1(T)} \vee \chi_1) - \dist_{\lambda_2}(\cP \vee \xi \vee (\beta_2^n)^{a \times b_2(T)} \vee \chi_2)| < \kappa.$$
So for sufficiently small $\kappa$ we have
$$\sH_{\lambda_1}(\cP \given (\beta_1^n)^{a \times b_1(T)} \vee \xi \vee \chi_1) < \sH_{\lambda_2}(\cP \given (\beta_2^n)^{a \times b_2(T)} \vee \xi \vee \chi_2) + 1 / 2^n$$
and also
$$\sH_{\lambda_1}(\beta_1^n) < \sH_{\lambda_2}(\beta_2^n) + 1 / 2^n < 3 / 2^n.$$
It follows from (\ref{eqn:wccpe2}) and (\ref{eqn:wccpe3}) that
\begin{equation} \label{eqn:wccpe}
\sH_{\lambda_1}(\cP \given \salg_{a \times b_1(G)}(\beta_1^n) \vee \cF \vee \Borel(Y_1)) < \sH_{\lambda_2}(\cP \given \Pi_2) + 3 / 2^n.
\end{equation}

Consider $\Sigma = \bigcap_{n \in \N} \bigvee_{k \geq n} \salg_{a \times b_1(G)}(\beta_1^k) \vee \cF \vee \Borel(Y_1)$. We have
\begin{align*}
\rh_{G,\lambda_1}(\Sigma \given \cF \vee \Borel(Y_1)) & \leq \inf_{n \in \N} \rh_{G,\lambda_1} \left( \bigvee_{k \geq n} \beta_1^k \Given \cF \vee \Borel(Y_1) \right)\\
 & \leq \inf_{n \in \N} \sum_{k \geq n} \sH(\beta_1^k)\\
 & \leq \inf_{n \in \N} \sum_{k \geq n} 3 / 2^k\\
 & = 0.
\end{align*}
Therefore $\Sigma \subseteq \Pi_1$. Now by (\ref{eqn:wccpe}) we have
\begin{align*}
\sH_{\lambda_1}(\cP \given \Pi_1) & \leq \sH_{\lambda_1}(\cP \given \Sigma)\\
 & = \lim_{n \rightarrow \infty} \sH_{\lambda_1} \left( \cP \Given \bigvee_{k \geq n} \salg_{a \times b_1(G)}(\beta_1^k) \vee \cF \vee \Borel(Y_1) \right)\\
 & \leq \liminf_{n \rightarrow \infty} \sH_{\lambda_1}(\cP \given \salg_{a \times b_1(G)}(\beta_1^n) \vee \cF \vee \Borel(Y_1))\\
 & \leq \sH_{\lambda_2}(\cP \given \Pi_2).
\end{align*}

(1). The proof is identical up to excluding the partitions $\chi_1$ and $\chi_2$ and excluding $\Borel(Y_1)$ and $\Borel(Y_2)$ from certain expressions.
\end{proof}

In order to use the previous lemma, we observe a simple fact.

\begin{lem} \label{lem:equal}
Let $(X, \mu)$ be a standard probability space, let $\cF_1 \supseteq \cF_2$ be sub-$\sigma$-algebras, and let $\Sigma$ be a sub-$\sigma$-algebra with $\Sigma \vee \cF_2 = \Borel(X)$. If $\sH(\cP \given \cF_1) = \sH(\cP \given \cF_2)$ for all finite partitions $\cP \subseteq \Sigma$ then $\cF_1 = \cF_2$ mod null sets.
\end{lem}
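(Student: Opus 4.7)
The plan is to show that $\cF_1$ and $\Sigma$ are conditionally independent given $\cF_2$, and then to use the hypothesis $\Sigma \vee \cF_2 = \Borel(X)$ to upgrade this to $\cF_1 \subseteq \cF_2$ modulo null sets (the reverse inclusion is given).

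First I would establish the conditional independence. Fix a finite partition $\cP \subseteq \Sigma$ and a finite partition $\cQ \subseteq \cF_1$. Since $\cF_2 \subseteq \cQ \vee \cF_2 \subseteq \cF_1$, Lemma \ref{lem:shan}.(v) yields the sandwich
\[
\sH(\cP \given \cF_1) \leq \sH(\cP \given \cQ \vee \cF_2) \leq \sH(\cP \given \cF_2),
\]
whose outer terms coincide by hypothesis. Expanding $\sH(\cP \vee \cQ \given \cF_2)$ in two different ways via the chain rule (Lemma \ref{lem:shan}.(iii)) and cancelling gives $\sH(\cQ \given \cP \vee \cF_2) = \sH(\cQ \given \cF_2)$. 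Disintegrating over $\cF_2$ using the lemma that immediately follows Lemma \ref{lem:shan} and invoking Lemma \ref{lem:shan}.(ii) fiberwise, this equality says that $\cP$ and $\cQ$ are independent with respect to $\mu_y$ for $\nu$-almost-every fiber measure $\mu_y$, i.e., $\cP$ and $\cQ$ are conditionally independent given $\cF_2$.

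Next I would use this to finish. Fix $A \in \cF_1$ and set $f = \Eavg_\mu[1_A \given \cF_2]$. Applying the previous step with $\cQ = \{A, X \setminus A\}$, then for every $B \in \Sigma$ and every $C \in \cF_2$ we obtain
\[
\mu(A \cap B \cap C) = \int_C \Eavg[1_A 1_B \given \cF_2] \, d\mu = \int_C f \cdot \Eavg[1_B \given \cF_2] \, d\mu = \int_{B \cap C} f \, d\mu.
\]
The family of Borel sets $D$ for which $\mu(A \cap D) = \int_D f \, d\mu$ is a monotone class (closed under countable disjoint unions and complements) that contains the algebra generated by $\Sigma \cup \cF_2$; by the monotone class theorem it is all of $\Sigma \vee \cF_2 = \Borel(X)$. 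Hence $1_A = f$ $\mu$-a.e., so $A$ agrees modulo a $\mu$-null set with an element of $\cF_2$.

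The only delicate point, and the one I would take most care with, is the conditional version of Lemma \ref{lem:shan}.(ii) used in the first step: the paper states the equality-characterization of independence only unconditionally, so one must pass to the disintegration of $\mu$ over $\cF_2$ (replacing $\cF_2$ by a countably generated version if necessary, as noted in Section \ref{sec:pre}) and apply the unconditional statement fiberwise. Apart from this routine reduction, the argument is a short combination of the chain rule with a standard $\pi$-$\lambda$ extension.
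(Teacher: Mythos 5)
Your argument is correct, but it reaches the conclusion by a genuinely different route than the paper. The paper stays entirely inside entropy language: for finite $\cP \subseteq \Sigma$, $\zeta \subseteq \cF_2$ and any coarsening $\alpha \leq \cP \vee \zeta$, it plays the chain rule against the two monotonicity inequalities $\sH(\alpha \given \cF_1) \leq \sH(\alpha \given \cF_2)$ and $\sH(\cP \vee \zeta \given \alpha \vee \cF_1) \leq \sH(\cP \vee \zeta \given \alpha \vee \cF_2)$ to force $\sH(\alpha \given \cF_1) = \sH(\alpha \given \cF_2)$; since $\Sigma \vee \cF_2 = \Borel(X)$ such $\alpha$ are $\dR_\mu$-dense, the equality propagates to all finite-entropy partitions by continuity of $\sH(\cdot \given \cF)$ in the Rokhlin metric, and taking $\alpha = \{A, X \setminus A\}$ for $A \in \cF_1$ gives $\sH(\alpha \given \cF_2) = \sH(\alpha \given \cF_1) = 0$, i.e.\ $A \in \cF_2$. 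You instead extract from the same chain-rule manipulation the statement that $\Sigma$ and $\cF_1$ are conditionally independent over $\cF_2$, and then finish measure-theoretically: the identity $\mu(A \cap D) = \int_D \Eavg[1_A \given \cF_2]\, d\mu$ holds on the $\pi$-system $\{B \cap C : B \in \Sigma,\ C \in \cF_2\}$ and extends to $\Borel(X)$ by a Dynkin-class argument, forcing $1_A = \Eavg[1_A \given \cF_2]$ a.e. (A small terminological quibble: the family you describe, closed under complements and countable disjoint unions, is a $\lambda$-system rather than a monotone class, and the cleanest citation is the $\pi$--$\lambda$ theorem applied to the $\pi$-system of intersections $B \cap C$ rather than to the generated algebra; either version closes the argument.) What each approach buys: yours makes the underlying structure explicit --- the hypothesis is exactly relative independence of $\Sigma$ from $\cF_1$ over $\cF_2$, and $\Sigma \vee \cF_2 = \Borel(X)$ then forces $\cF_1$ to be relatively independent of itself --- and it avoids the Rokhlin-metric density and continuity facts, at the cost of the fiberwise reduction for the conditional version of Lemma \ref{lem:shan}.(ii), which you correctly identify and handle via the disintegration lemma. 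The paper's version avoids disintegrations entirely and is slightly shorter, but hides the independence statement.
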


\begin{proof}
Let $\cP \subseteq \Sigma$ and $\zeta \subseteq \cF_2$ be finite partitions and let $\alpha \leq \cP \vee \zeta$. Note that $\sH(\alpha \given \cF_1) \leq \sH(\alpha \given \cF_2)$ and $\sH(\cP \vee \zeta \given \alpha \vee \cF_1) \leq \sH(\cP \vee \zeta \given \alpha \vee \cF_2)$. Also note that $\sH(\cP \vee \zeta \given \cF_1) = \sH(\cP \given \cF_1) = \sH(\cP \given \cF_2) = \sH(\cP \vee \zeta \given \cF_2)$. Since
$$\sH(\alpha \given \cF_1) + \sH(\cP \vee \zeta \given \alpha \vee \cF_1) = \sH(\cP \vee \zeta \given \cF_1) = \sH(\cP \vee \zeta \given \cF_2) = \sH(\alpha \given \cF_2) + \sH(\cP \vee \zeta \given \alpha \vee \cF_2),$$
we must have that $\sH(\alpha \given \cF_1) = \sH(\alpha \given \cF_2)$. Since $\Sigma \vee \cF_2 = \Borel(X)$, the equality $\sH(\alpha \given \cF_1) = \sH(\alpha \given \cF_2)$ holds for a $\dR_\mu$-dense set of $\alpha$. It therefore holds for all partitions $\alpha$ with $\sH(\alpha) < \infty$. Now, taking $\alpha = \{A, X \setminus A\}$ for $A \in \cF_1$ we have $0 = \sH(\alpha \given \cF_1) = \sH(\alpha \given \cF_2)$ which implies that $\alpha \subseteq \cF_2$ mod null sets.
\end{proof}

We now present the main theorem of this section.

\begin{thm} \label{thm:cpeprod}
Let $G \acts (X, \mu)$ be an aperiodic {\pmp} action of stabilizer type $\theta$, and let $\cF$ be a $G$-invariant sub-$\sigma$-algebra. If $G \acts (Y, \nu)$ is a {\pmp} action of stabilizer type $\theta$ which is weakly contained in all {\pmp} actions of stabilizer type $\theta$, then
$$\Pi(\mu \relprod{\theta} \nu \given \cF \vee \Borel(Y)) = \Pi(\mu \given \cF) \vee \Borel(Y).$$
\end{thm}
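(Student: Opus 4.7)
The plan is to prove the two inclusions separately and combine them via Lemma \ref{lem:equal}. The inclusion $\Pi(\mu \given \cF) \vee \Borel(Y) \subseteq \Pi(\mu \relprod{\theta} \nu \given \cF \vee \Borel(Y))$ is immediate from Theorem \ref{thm:prod}.(1): for any $A \in \Pi(\mu \given \cF)$, the partition $\cP = \{A, X \setminus A\}$ satisfies $\rh_{G, \mu \relprod{\theta} \nu}(\cP \given \cF \vee \Borel(Y)) = \rh_{G, \mu}(\cP \given \cF) = 0$, so $A$ belongs to the right-hand Pinsker, and $\Borel(Y)$ is trivially contained there. Abbreviate $\Pi = \Pi(\mu \relprod{\theta} \nu \given \cF \vee \Borel(Y))$ and $\Pi_0 = \Pi(\mu \given \cF)$. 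For the reverse inclusion I would apply Lemma \ref{lem:equal} with $\cF_1 = \Pi$, $\cF_2 = \Pi_0 \vee \Borel(Y)$, and $\Sigma = \Borel(X)$ (so $\Sigma \vee \cF_2 = \Borel(X \times Y)$). The task reduces to the Shannon-entropy equality
$$\sH_{\mu \relprod{\theta}\nu}(\cP \given \Pi) = \sH_{\mu \relprod{\theta}\nu}(\cP \given \Pi_0 \vee \Borel(Y))$$
for every finite partition $\cP \subseteq \Borel(X)$; the direction $\leq$ is automatic from the first inclusion.

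For the nontrivial direction $\geq$, I would repeat the approximation scheme from the proof of Theorem \ref{thm:prod}, but targeted at the Pinsker algebra. Using Theorem \ref{thm:robin} iteratively, construct stabilizer-preserving factor maps $f_n : (X, \mu) \to (Z_n, \eta_n)$ with $\rh_G(Z_n, \eta_n) < 2^{-n}$ and with $f_{n+1}$ factoring through $f_n$; the inductive step applies Theorem \ref{thm:robin} to the aperiodic action $(Z_n, \eta_n)$ to produce a smaller stabilizer-preserving factor, which is then precomposed with $f_n$. Let $\rho_n = (\id \times f_n)_*(\mu)$ be the graph joining; it is stabilizer-preserving, and via $x \mapsto (x, f_n(x))$ the system $G \acts (X \times Z_n, \rho_n)$ is isomorphic to $G \acts (X, \mu)$. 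Corollary \ref{cor:awstab} ensures that $G \acts (X \times Z_n, \rho_n)$ weakly contains $G \acts (X \times Y, \mu \relprod{\theta}\nu)$ as joinings with $G \acts (X, \mu)$, so Lemma \ref{lem:cpejoin}.(2) combined with the isomorphism yields
$$\sH_\mu(\cP \given \Pi_n) \leq \sH_{\mu \relprod{\theta}\nu}(\cP \given \Pi), \qquad \Pi_n := \Pi(\mu \given \cF \vee f_n^{-1}(\Borel(Z_n))).$$

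The nested construction is what makes $f_n^{-1}(\Borel(Z_n))$, and hence $\Pi_n$, decreasing. Sub-additivity of outer Rokhlin entropy (Lemma \ref{lem:add2}) together with the defining property of the outer Pinsker gives $\rh_{G,\mu}(\Pi_n \given \cF) \leq \rh_G(Z_n, \eta_n) < 2^{-n}$, so $\Pi_\infty := \bigcap_n \Pi_n$ contains $\Pi_0$ and satisfies $\rh_{G,\mu}(\Pi_\infty \given \cF) = 0$, forcing $\Pi_\infty = \Pi_0$. Applying Lemma \ref{lem:shan}.(vii) to the decreasing sequence then gives $\sH_\mu(\cP \given \Pi_n) \nearrow \sH_\mu(\cP \given \Pi_0)$, and passing to the limit delivers $\sH_\mu(\cP \given \Pi_0) \leq \sH_{\mu \relprod{\theta}\nu}(\cP \given \Pi)$.

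To transfer this inequality into one involving $\Pi_0 \vee \Borel(Y)$ on the left, I would disintegrate $\mu \relprod{\theta}\nu$ over $\Borel(Y)$; the fiber over $y$ is $\mu_{\Stab(y)} \times \delta_y$, so the disintegration formula recorded just after Lemma \ref{lem:shan} gives
$$\sH_{\mu \relprod{\theta}\nu}(\cP \given \Pi_0 \vee \Borel(Y)) = \int \sH_{\mu_{\Stab(y)}}(\cP \given \Pi_0) \, d\nu(y) = \int \sH_{\mu_H}(\cP \given \Pi_0) \, d\theta(H) = \sH_\mu(\cP \given \Pi_0 \vee \mathscr{S}),$$
where $\mathscr{S} = \Stab^{-1}(\Borel(\Sub(G)))$ is the stabilizer sub-$\sigma$-algebra of $\Borel(X)$; this quantity is $\leq \sH_\mu(\cP \given \Pi_0)$ by monotonicity, so chaining with the preceding inequality closes the Shannon-entropy equality and hence the proof. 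The principal technical obstacle is ensuring that the approximating Pinsker algebras $\Pi_n$ form a genuinely decreasing sequence whose intersection is $\Pi_0$ (so that Lemma \ref{lem:shan}.(vii) applies); this is precisely why Theorem \ref{thm:robin} is invoked recursively on successive factors rather than independently on $(X, \mu)$ for each $n$.
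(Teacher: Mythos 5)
Your proof is correct and follows essentially the same route as the paper's: both reduce to a Shannon-entropy identity via Lemma \ref{lem:equal}, produce small stabilizer-preserving factors with Theorem \ref{thm:robin}, and transfer the conditional entropy across the weak containment of joinings supplied by Corollary \ref{cor:awstab} using Lemma \ref{lem:cpejoin}. The only difference is organizational: where you force monotonicity of the approximating relative Pinsker algebras by nesting the factors $f_{n+1}$ through $f_n$, the paper conditions on the tails $\bigvee_{k \geq n} \Psi_k$ of independently chosen factors; both devices yield a decreasing sequence of $G$-invariant algebras intersecting to $\Pi(\mu \given \cF)$, after which Lemma \ref{lem:shan}.(vii) applies identically.
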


\begin{proof}
Clearly $\Pi(\mu \given \cF) \vee \Borel(Y) \subseteq \Pi(\mu \relprod{\theta} \nu \given \cF \vee \Borel(Y))$. Fix a finite partition $\cP$ of $X$. For each $n \in \N$, apply Theorem \ref{thm:robin} to obtain a $G$-invariant sub-$\sigma$-algebra $\Psi_n \subseteq \Borel(X)$ such that $\rh_{G,\mu}(\Psi_n) < 1 / 2^n$ and such that the factor map associated to $\Psi_n$ preserves stabilizers. For each $n \in \N$ set
$$\Phi_n = \Pi \left( \mu \Given \bigvee_{k \geq n} \Psi_k \vee \cF \right) \supseteq \Pi(\mu \given \cF).$$
We have
$$\rh_{G,\mu} \left( \bigcap_{n \in \N} \Phi_n \Given \cF \right) \leq \inf_{n \in \N} \rh_{G,\mu}(\Phi_n \given \cF) \leq \inf_{n \in \N} \sum_{k \geq n} \rh_{G,\mu}(\Psi_k) = 0.$$
Therefore $\bigcap_{n \in \N} \Phi_n = \Pi(\mu \given \cF)$ and hence $\sH_\mu(\cP \given \Pi(\mu \given \cF)) = \lim_{n \rightarrow \infty} \sH_\mu( \cP \given \Phi_n)$. Fix $\delta > 0$ and fix $n \in \N$ with
$$\sH_\mu(\cP \given \Phi_n) > \sH_\mu(\cP \given \Pi(\mu \given \cF)) - \delta.$$

Let $G \acts (Z, \eta)$ be the factor of $(X, \mu)$ associated to $\Phi_n$. Note that the map $f : X \rightarrow Z$ preserves stabilizers by construction of $\Phi_n$. The factor map $f$ naturally produces a joining $\lambda = (\id \times f)_*(\mu)$. Note that $G \acts (X \times Z, \lambda)$ is isomorphic to $G \acts (X, \mu)$. By Corollary \ref{cor:awstab} $G \acts (X \times Z, \lambda)$ weakly contains $G \acts (X \times Y, \mu \relprod{\theta} \nu)$ as joinings with $G \acts (X, \mu)$. From our construction we have $\Pi(\lambda \given \cF \vee \Borel(Z)) = \Pi(\mu \given \Phi_n) = \Phi_n$. Lemma \ref{lem:cpejoin} gives 
\begin{align*}
\sH_{\mu \relprod{\theta} \nu}(\cP \given \Pi(\mu \given \cF) \vee \Borel(Y)) - \delta & \leq \sH_\mu(\cP \given \Pi(\mu \given \cF)) - \delta\\
 & < \sH_\mu(\cP \given \Phi_n)\\
 & = \sH_{\lambda}(\cP \given \Pi(\lambda \given \cF \vee \Borel(Z)))\\
 & \leq \sH_{\mu \relprod{\theta} \nu}(\cP \given \Pi(\mu \relprod{\theta} \nu \given \cF \vee \Borel(Y)))\\
 & \leq \sH_{\mu \relprod{\theta} \nu}(\cP \given \Pi(\mu \given \cF) \vee \Borel(Y)).
\end{align*}
Letting $\delta$ tend to $0$, we find that
$$\sH_{\mu \relprod{\theta} \nu}(\cP \given \Pi(\mu \given \cF) \vee \Borel(Y)) = \sH_{\mu \relprod{\theta} \nu}(\cP \given \Pi(\mu \relprod{\theta} \nu \given \cF \vee \Borel(Y)))$$
for all finite partitions $\cP$ of $X$. Now Lemma \ref{lem:equal} implies that $\Pi(\mu \given \cF) \vee \Borel(Y) = \Pi(\mu \relprod{\theta} \nu \given \cF \vee \Borel(Y))$.
\end{proof}

In the case of free actions, we can slightly reduce the assumptions of the above theorem. Specifically, we do not need to assume that $G$ acts freely on $(Y, \nu)$.

\begin{cor}
Let $G$ be a countably infinite group, let $G \acts (X, \mu)$ be a free {\pmp} action, and let $\cF$ be a $G$-invariant sub-$\sigma$-algebra. If $G \acts (Y, \nu)$ is a {\pmp} action which is weakly contained in all free actions of $G$, then
$$\Pi(\mu \times \nu \given \cF \vee \Borel(Y)) = \Pi(\mu \given \cF) \vee \Borel(Y).$$
\end{cor}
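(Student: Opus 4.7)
The plan is to reduce this corollary to Theorem \ref{thm:cpeprod} by interposing a free auxiliary action that sits above $G \acts (Y, \nu)$ in the weak containment order. First I would dispose of the easy inclusion $\Pi(\mu \given \cF) \vee \Borel(Y) \subseteq \Pi(\mu \times \nu \given \cF \vee \Borel(Y))$: the factor $\Borel(Y)$ sits inside $\cF \vee \Borel(Y)$ for free, while any $A \in \Pi(\mu \given \cF)$ can be captured by partitions of $X$ of arbitrarily small Shannon entropy lying in $\salg_G(\alpha) \vee \cF \vee \sinv_G$, and such partitions, viewed in the enlarged space $X \times Y$, still witness $A$ as an element of the outer Pinsker algebra of $\mu \times \nu$ relative to $\cF \vee \Borel(Y)$.

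For the reverse inclusion, I would fix a free Bernoulli shift $G \acts (Y', \nu')$, which by Ab\'{e}rt--Weiss is free and weakly contained in every free {\pmp} action of $G$ and in particular weakly contains $G \acts (Y, \nu)$. Taking direct products with $G \acts (X, \mu)$, it follows immediately from the definitions that $G \acts (X \times Y', \mu \times \nu')$ weakly contains $G \acts (X \times Y, \mu \times \nu)$ as joinings with $G \acts (X, \mu)$, and both joinings are aperiodic because $G$ is infinite and $\mu$ is free. Since $(X, \mu)$ and $(Y', \nu')$ are both free (stabilizer type $\theta = \delta_{\{1_G\}}$), Theorem \ref{thm:cpeprod} applies and yields $\Pi(\mu \times \nu' \given \cF \vee \Borel(Y')) = \Pi(\mu \given \cF) \vee \Borel(Y')$.

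Now for any finite partition $\cP$ of $X$, Lemma \ref{lem:cpejoin}(2) applied to the weak containment of joinings above gives
$$\sH_{\mu \times \nu'}\bigl(\cP \given \Pi(\mu \times \nu' \given \cF \vee \Borel(Y'))\bigr) \leq \sH_{\mu \times \nu}\bigl(\cP \given \Pi(\mu \times \nu \given \cF \vee \Borel(Y))\bigr).$$
Substituting Theorem \ref{thm:cpeprod} into the left-hand side and using that $\cP \subseteq \Borel(X)$ is independent of $\Borel(Y')$ under the product measure, the left-hand side simplifies to $\sH_\mu(\cP \given \Pi(\mu \given \cF))$. Meanwhile, the easy inclusion from the first paragraph forces the right-hand side to be at most $\sH_{\mu \times \nu}(\cP \given \Pi(\mu \given \cF) \vee \Borel(Y)) = \sH_\mu(\cP \given \Pi(\mu \given \cF))$ by the same product independence. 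Hence equality holds throughout for every finite $\cP \subseteq \Borel(X)$, and Lemma \ref{lem:equal} applied with $\Sigma = \Borel(X)$ (noting $\Borel(X) \vee \Borel(Y) = \Borel(X \times Y)$) forces $\Pi(\mu \times \nu \given \cF \vee \Borel(Y)) = \Pi(\mu \given \cF) \vee \Borel(Y)$.

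The main obstacle is conceptual rather than computational: one must recognize that the weak-containment-of-joinings machinery developed in Section \ref{sec:ineq} is precisely the right tool to transport the Pinsker identity from the free model $(Y', \nu')$, where Theorem \ref{thm:cpeprod} directly applies, back to the non-free $(Y, \nu)$. Once this is in place, the product-measure independence reductions and the appeal to Lemma \ref{lem:equal} are routine.
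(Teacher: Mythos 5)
Your proposal is correct and follows essentially the same route as the paper's proof: interpose a free Bernoulli shift $G \acts (Y', \nu')$, apply Theorem \ref{thm:cpeprod} to it, transport the resulting Pinsker identity to $(Y,\nu)$ via Lemma \ref{lem:cpejoin}.(2) and the chain of entropy (in)equalities, and conclude with Lemma \ref{lem:equal}. One small wording point: the fact that $(Y',\nu')$ weakly contains $(Y,\nu)$ follows from the hypothesis that $(Y,\nu)$ is weakly contained in \emph{all free} actions together with the freeness of $(Y',\nu')$, not from $(Y',\nu')$ being weakly contained in every free action --- that latter property is instead what you need so that Theorem \ref{thm:cpeprod} applies to $(Y',\nu')$ in the first place.
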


\begin{proof}
Let $G \acts (Y', \nu')$ be a free {\pmp} action which is weakly contained in all free actions of $G$ (such as a Bernoulli shift). By the previous theorem $\Pi(\mu \given \cF) \vee \Borel(Y') = \Pi(\mu \times \nu' \given \cF \vee \Borel(Y'))$. We have that $G \acts (X \times Y', \mu \times \nu')$ weakly contains $G \acts (X \times Y, \mu \times \nu)$ as joinings with $G \acts (X, \mu)$, so for every finite partition $\cP$ of $X$ Lemma \ref{lem:cpejoin} implies that
\begin{align*}
\sH_\mu(\cP \given \Pi(\mu \given \cF)) & = \sH_{\mu \times \nu'}(\cP \given \Pi(\mu \given \cF) \vee \Borel(Y'))\\
 & = \sH_{\mu \times \nu'}(\cP \given \Pi(\mu \times \nu' \given \cF \vee \Borel(Y')))\\
 & \leq \sH_{\mu \times \nu}(\cP \given \Pi(\mu \times \nu \given \cF \vee \Borel(Y)))\\
 & \leq \sH_{\mu \times \nu}(\cP \given \Pi(\mu \given \cF) \vee \Borel(Y))\\
 & = \sH_\mu(\cP \given \Pi(\mu \given \cF)).
\end{align*}
So $\sH_{\mu \times \nu}(\cP \given \Pi(\mu \given \cF) \vee \Borel(Y)) = \sH_{\mu \times \nu}(\cP \given \Pi(\mu \times \nu \given \cF \vee \Borel(Y)))$ for all finite partitions $\cP$ of $X$. Now apply Lemma \ref{lem:equal}.
\end{proof}

Finally, we obtain the strongest conclusion by placing weak containment assumptions on both $G \acts (X, \mu)$ and $G \acts (Y, \nu)$.

\begin{cor} \label{cor:pinsk2}
Let $G \acts (X, \mu)$ and $G \acts (Y, \nu)$ be aperiodic {\pmp} actions of stabilizer type $\theta$ which are weakly contained in all {\pmp} actions of stabilizer type $\theta$. Let $\cF$ and $\Sigma$ be $G$-invariant sub-$\sigma$-algebras of $X$ and $Y$, respectively. Then
$$\Pi(\mu \relprod{\theta} \nu \given \cF \vee \Sigma) = \Pi(\mu \given \cF) \vee \Pi(\nu \given \Sigma).$$
\end{cor}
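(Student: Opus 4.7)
The plan is to prove the two inclusions separately. For $\Pi(\mu\given\cF) \vee \Pi(\nu\given\Sigma) \subseteq \Pi(\mu \relprod{\theta}\nu \given \cF \vee \Sigma)$, I would lift each of the two outer Pinsker algebras to a sub-$\sigma$-algebra of $X \times Y$ in the natural way. A generating partition in $X$ of small Shannon entropy over $\cF$ induces, when pulled back, a partition of $X \times Y$ of no larger entropy over $\cF \vee \Sigma$; this shows $\rh_{G,\mu \relprod{\theta}\nu}(\Pi(\mu\given\cF) \given \cF \vee \Sigma) = 0$, and symmetrically on the $Y$-side. Sub-additivity of outer Rokhlin entropy (Lemma \ref{lem:add2}) then gives $\rh_{G,\mu \relprod{\theta}\nu}(\Pi(\mu\given\cF) \vee \Pi(\nu\given\Sigma) \given \cF \vee \Sigma) = 0$, and maximality of the outer Pinsker algebra on the right yields the desired inclusion.

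For the reverse inclusion, apply Theorem \ref{thm:cpeprod} twice symmetrically: once with $(Y,\nu)$ in the role of the weakly-contained-in-all reference, and once with the roles of $(X,\mu)$ and $(Y,\nu)$ swapped. This produces
$$\Pi(\mu \relprod{\theta}\nu \given \cF \vee \Borel(Y)) = \Pi(\mu\given\cF) \vee \Borel(Y) \quad \text{and} \quad \Pi(\mu \relprod{\theta}\nu \given \Sigma \vee \Borel(X)) = \Borel(X) \vee \Pi(\nu\given\Sigma).$$
Since $\cF \vee \Sigma$ is contained in both $\cF \vee \Borel(Y)$ and $\Sigma \vee \Borel(X)$, monotonicity of the outer Pinsker algebra in its conditioning algebra gives
$$\Pi(\mu \relprod{\theta}\nu \given \cF \vee \Sigma) \subseteq \bigl(\Pi(\mu\given\cF) \vee \Borel(Y)\bigr) \cap \bigl(\Borel(X) \vee \Pi(\nu\given\Sigma)\bigr).$$
Under $\mu \relprod{\theta}\nu$ the $\sigma$-algebras $\Borel(X)$ and $\Borel(Y)$ are conditionally independent given the stabilizer $\sigma$-algebra, so fiberwise over each fixed stabilizer $H$ the joining collapses to the product measure $\mu_H \times \nu_H$, for which the classical product-space identity $(\mathcal{A}_H \vee \Borel(Y_H)) \cap (\Borel(X_H) \vee \mathcal{B}_H) = \mathcal{A}_H \vee \mathcal{B}_H$ applies; reassembling over fibers identifies the intersection with $\Pi(\mu\given\cF) \vee \Pi(\nu\given\Sigma)$. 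An entirely entropy-theoretic alternative is to invoke Lemma \ref{lem:equal} to reduce the task to verifying Shannon entropy equalities for partitions of the form $\cP_X \vee \cP_Y$ (which are dense in the Rokhlin metric), the needed estimates being supplied by Lemma \ref{lem:cpejoin} and the joinings arising from Corollary \ref{cor:awstab} applied to stabilizer-preserving approximating factors on both sides.

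The principal obstacle is managing the stabilizer $\sigma$-algebra $\Theta$: it lies inside both $\Borel(X)$ and $\Borel(Y)$ under $\mu \relprod{\theta}\nu$, but need not lie inside $\Pi(\mu\given\cF)$ or $\Pi(\nu\given\Sigma)$ individually. This is precisely what the stabilizer-preserving factors produced by Theorem \ref{thm:robin} are designed to handle: approximating each outer Pinsker algebra by $G$-invariant sub-$\sigma$-algebras that do absorb the stabilizer information lets the fiberwise product-measure argument run on each approximation, and passing to the limit recovers the claimed identity.
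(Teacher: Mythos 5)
Your argument for $\Pi(\mu\given\cF)\vee\Pi(\nu\given\Sigma)\subseteq\Pi(\mu\relprod{\theta}\nu\given\cF\vee\Sigma)$ is fine and is what the paper dismisses as ``clear.'' For the reverse inclusion you take a genuinely different route. You sandwich $\Pi(\mu\relprod{\theta}\nu\given\cF\vee\Sigma)$ inside $\bigl(\Pi(\mu\given\cF)\vee\Borel(Y)\bigr)\cap\bigl(\Borel(X)\vee\Pi(\nu\given\Sigma)\bigr)$ by applying Theorem \ref{thm:cpeprod} twice, and then invoke a product-space intersection identity to collapse that intersection to $\Pi(\mu\given\cF)\vee\Pi(\nu\given\Sigma)$. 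The paper never forms this intersection: it stays entirely at the level of Shannon entropy, using the equality $\sH_\mu(\cP\given\Pi(\mu\given\cF))=\sH_{\mu\relprod{\theta}\nu}(\cP\given\Pi(\mu\given\cF)\vee\Borel(Y))$ extracted from the proof of Theorem \ref{thm:cpeprod} to show that any $\sigma$-algebra squeezed between $\Pi(\mu\given\cF)$ and $\Pi(\mu\given\cF)\vee\Borel(Y)$ conditions $\cP$ identically, arguing symmetrically for $\cQ$, adding via the chain rule, and finishing with the argument of Lemma \ref{lem:equal}. Your route is more structural and, if completed, yields the intersection identity as a bonus; the paper's route avoids having to establish any $\sigma$-algebra identity beyond what Lemma \ref{lem:equal} delivers.

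Two points in your sketch need to be firmed up. First, the identity $(\mathcal{A}\vee\Borel(Y))\cap(\Borel(X)\vee\mathcal{B})=\mathcal{A}\vee\mathcal{B}$ is asserted rather than proved. It is true for relatively independent joinings --- the two conditional expectations act on separate coordinates fiberwise over $\Sub(G)$, hence commute, and a commuting pair of conditional expectations composes to the conditional expectation onto the intersection of the two $\sigma$-algebras --- but this fiberwise reading requires that the stabilizer $\sigma$-algebra $\Theta$ be contained in both $\mathcal{A}=\Pi(\mu\given\cF)$ and $\mathcal{B}=\Pi(\nu\given\Sigma)$, so that these algebras disintegrate coherently over $\theta$. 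Second, your worry that $\Theta$ ``need not lie inside'' the individual Pinsker algebras is unfounded: for an aperiodic action, Theorem \ref{thm:robin} produces stabilizer-preserving factors of arbitrarily small Rokhlin entropy, so $\rh_{G,\mu}(\Theta\given\cF)=0$ and hence $\Theta\subseteq\Pi(\mu\given\cF)$, and likewise on the $Y$ side. That observation replaces your proposed approximation-and-limit scheme --- whose limit passage for $\sigma$-algebra identities you have not justified --- and lets the fiberwise product argument run directly. With these two repairs your approach goes through and is a legitimate alternative to the paper's entropy-only argument.
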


\begin{proof}
Set $\Pi_X = \Pi(\mu \given \cF)$, $\Pi_Y = \Pi(\nu \given \Sigma)$, and $\Pi_{X \times Y} = \Pi(\mu \relprod{\theta} \nu \given \cF \vee \Sigma)$. Clearly $\Pi_X \vee \Pi_Y \subseteq \Pi_{X \times Y}$. By Theorem \ref{thm:cpeprod} $\Pi_{X \times Y} \subseteq \Pi(\mu \relprod{\theta} \nu \given \cF \vee \Borel(Y)) = \Pi_X \vee \Borel(Y)$. Fix finite partitions $\cP$ and $\cQ$ of $X$ and $Y$, respectively. By looking back near the end of the proof of Theorem \ref{thm:cpeprod}, we see that
$$\sH_\mu(\cP \given \Pi_X) = \sH_{\mu \relprod{\theta} \nu}(\cP \given \Pi_X) = \sH_{\mu \relprod{\theta} \nu}(\cP \given \Pi_X \vee \Borel(Y)).$$
In particular, since $\Pi_{X \times Y} \vee \cQ$ and $\Pi_X \vee \Pi_Y \vee \cQ$ each contain $\Pi_X$ and are contained in $\Pi_X \vee \Borel(Y)$, monotonicity properties of Shannon entropy imply that
$$\sH_{\mu \relprod{\theta} \nu}(\cP \given \cQ \vee \Pi_{X \times Y}) = \sH_{\mu \relprod{\theta} \nu}(\cP \given \Pi_X \vee \Borel(Y)) = \sH_{\mu \relprod{\theta} \nu}(\cP \given \cQ \vee \Pi_X \vee \Pi_Y).$$
By reversing the roles of $X$ and $Y$, we also get
$$\sH_{\mu \relprod{\theta} \nu}(\cQ \given \Pi_{X \times Y}) = \sH_{\mu \relprod{\theta} \nu}(\cQ \given \Pi_X \vee \Pi_Y).$$
Therefore $\sH_{\mu \relprod{\theta} \nu}(\cP \vee \cQ \given \Pi_{X \times Y}) = \sH_{\mu \relprod{\theta} \nu}(\cP \vee \cQ \given \Pi_X \vee \Pi_Y)$. This holds for all finite partitions $\cP$ and $\cQ$ of $X$ and $Y$, respectively. We can now repeat the argument in the proof of Lemma \ref{lem:equal} in order to conclude $\Pi_{X \times Y} = \Pi_X \vee \Pi_Y$.
\end{proof}

\thebibliography{999}

\bibitem{AGV}
M. Ab\'{e}rt, Y. Glasner, and B. Vir\'{a}g,
\textit{Kesten's theorem for invariant random subgroups}, Duke Mathematical Journal 163 (2014), no. 3, 465--488.

\bibitem{AW13}
M. Ab\'{e}rt and B. Weiss,
\textit{Bernoulli actions are weakly contained in any free action}, Ergodic Theory and Dynamical Systems 23 (2013), no. 2, 323--333.

\bibitem{Ag13}
I. Agol,
\textit{The virtual Haken conjecture}, Doc. Math., 18 (2013), 1045--1087. With an appendix by I. Agol, D. Groves, and J. Manning.

\bibitem{AS}
A. Alpeev and B. Seward,
\textit{Krieger's finite generator theorem for actions of countable groups III}, preprint. https://arxiv.org/abs/1705.09707.

\bibitem{A15}
T. Austin,
\textit{Additivity properties of sofic entropy and measures on model spaces}, preprint. http://arxiv.org/abs/1510.02392.

\bibitem{B10b}
L. Bowen,
\textit{Measure conjugacy invariants for actions of countable sofic groups}, Journal of the American Mathematical Society 23 (2010), 217--245.

\bibitem{B12}
L. Bowen,
\textit{Sofic entropy and amenable groups}, Ergod. Th. \& Dynam. Sys. 32 (2012), no. 2, 427--466.

\bibitem{B12b}
L. Bowen,
\textit{Every countably infinite group is almost Ornstein}, Dynamical systems and group actions, 67--78, Contemp. Math., 567, Amer. Math. Soc., Providence, RI, 2012.

\bibitem{BTD}
L. Bowen and R. Tucker-Drob,
\textit{On a co-induction question of Kechris}, Israel Journal of Mathematics 194 (2013), no. 1, 209--224.

\bibitem{Bu15}
P. Burton,
\textit{Naive entropy of dynamical systems}, preprint. http://arxiv.org/abs/1503.06360v2.

\bibitem{Da01}
A. I. Danilenko,
\textit{Entropy theory from the orbital point of view}, Monatsh. Math. 134 (2001), 121--141.

\bibitem{DGRS}
A. Dooley, V. Golodets, D. Rudolph, and S. Sinel'shchikov,
\textit{Non-Bernoulli systems with completely positive entropy}, Ergodic Theory and Dynamical Systems 28 (2008), no. 1, 87--124.

\bibitem{Do11}
T. Downarowicz,
Entropy in Dynamical Systems. Cambridge University Press, New York, 2011.

\bibitem{DFR}
T. Downarowicz, B. Frej, and P.-P. Romagnoli,
\textit{Shearer's inequality and infimum rule for Shannon entropy and topological entropy}, preprint. http://arxiv.org/abs/1502.07459.

\bibitem{GS15}
D. Gaboriau and B. Seward,
\textit{Cost, $\ell^2$-Betti numbers, and the sofic entropy of some algebraic actions}, preprint. http://arxiv.org/abs/1509.02482.

\bibitem{GTW}
E. Glasner, J.-P. Thouvenot, B. Weiss,
\textit{Entropy theory without past}, Ergodic Theory and Dynamical Systems 20 (2000), no. 5, 1355--1370.

\bibitem{H15}
B. Hayes,
\textit{Mixing and spectral gap relative to Pinsker factors for sofic groups}, preprint. http://arxiv.org/abs/1509.07839.

\bibitem{K95}
A. Kechris,
Classical Descriptive Set Theory. Springer-Verlag, New York, 1995.

\bibitem{K12}
A. Kechris,
\textit{Weak containment in the space of actions of a free group}, Israel Journal of Mathematics 189 (2012), 461--507.

\bibitem{KST99}
A. Kechris, S. Solecki, and S. Todorcevic,
\textit{Borel chromatic numbers}, Adv. in Math. 141 (1999), 1--44.

\bibitem{KL11a}
D. Kerr and H. Li,
\textit{Entropy and the variational principle for actions of sofic groups}, Invent. Math. 186 (2011), 501--558.

\bibitem{KL13}
D. Kerr and H. Li,
\textit{Soficity, amenability, and dynamical entropy}, American Journal of Mathematics 135 (2013), 721--761.

\bibitem{KL11b}
D. Kerr and H. Li,
\textit{Bernoulli actions and infinite entropy}, Groups Geom. Dyn. 5 (2011), 663--672.

\bibitem{Ki}
J. C. Kieffer,
\textit{A generalized Shannon--McMillan Theorem for the action of an amenable group on a probability space}, Annals of Probability 3 (1975), no. 6, 1031--1037

\bibitem{LS04}
A. Lubotzky and Y. Shalom,
\textit{Finite representations in the unitary dual and Ramanujan groups}, Contemp. Math. 347 (2004), 173--189.


\bibitem{OW87}
D. Ornstein and B. Weiss,
\textit{Entropy and isomorphism theorems for actions of amenable groups}, Journal d'Analyse Math\'{e}matique 48 (1987), 1--141.

\bibitem{Pe08}
V. Pestov,
\textit{Hyperlinear and sofic groups: a brief guide}, Bull. Symbolic Logic 14 (2008), no. 4, 449--480.

\bibitem{Ro67}
V. A. Rokhlin,
\textit{Lectures on the entropy theory of transformations with invariant measure}, Uspehi Mat. Nauk 22 (1967), no. 5, 3--56.

\bibitem{S12a}
B. Seward,
\textit{A subgroup formula for f-invariant entropy}, Ergodic Theory and Dynamical Systems 34 (2014), no. 1, 263--298.

\bibitem{S14}
B. Seward,
\textit{Krieger's finite generator theorem for actions of countable groups I}, preprint. http://arxiv.org/abs/1405.3604.

\bibitem{S14a}
B. Seward,
\textit{Krieger's finite generator theorem for actions of countable groups II}, preprint. https://arxiv.org/abs/1501.03367.

\bibitem{ST14}
B. Seward and R. D. Tucker-Drob,
\textit{Borel structurability on the $2$-shift of a countable group}, preprint. http://arxiv.org/abs/1402.4184.

\bibitem{Th15}
A. Thom,
\textit{A remark about the spectral radius}, Int. Math. Res. Not. (2015), no. 10, 2856--2864.

\bibitem{Th}
A. Thom,
\textit{The expected degree of minimal spanning forests}, to appear in Combinatorica.

\bibitem{TD12}
R. Tucker-Drob,
\textit{Weak equivalence and non-classifiability of measure preserving actions}, to appear in Ergodic Theory and Dynamical Systems. http://arxiv.org/abs/1202.3101.

\end{document}